\documentclass{amsart}
\usepackage{amssymb,amsthm,accents}
\usepackage{nicefrac}
\usepackage{enumerate}

\usepackage[utf8]{inputenc}
\usepackage[T1]{fontenc}
\usepackage{subfigure}
\usepackage{ifpdf}

\ifpdf
   \usepackage[pdftex]{hyperref}
   \usepackage[pdftex]{graphicx}
   \newcommand{\myext}{pdf}
\else
   \usepackage[dvips]{hyperref}
   \usepackage[dvips]{graphicx}
   \newcommand{\myext}{eps}
\fi

\newenvironment{dscription}
{\begin{list}{$\bullet$}{%
    \setlength{\leftmargin}{0cm}%
    \setlength{\labelwidth}{\leftmargin}%
    
}{}}{\end{list}}

\newcommand{\NULL}{\mathcal N}
\newcommand{\MEAGER}{\mathcal M}
\DeclareMathOperator{\val}{val}
\DeclareMathOperator{\cov}{cov}
\DeclareMathOperator{\add}{add}
\DeclareMathOperator{\cof}{cof}
\DeclareMathOperator{\non}{non}
\newcommand{\prunedtree}{pruned-$\nicefrac12$ }
\newcommand{\plaintext}[1]{\textup{\textrm{#1}}}
\newcommand{\tttext}[1]{\textup{\textrm{\ttfamily #1}}}
\newcommand{\yy}{\n y} 
\newcommand{\ww}{{\tt w}}
\newcommand{\trklgth}[2]{\plaintext{trklg}^{#1}(#2)}
\newcommand{\al}{\aleph}
\newcommand{\epsint}{\varepsilon^{\cap}}
\newcommand{\slu}{{\bf u}}
\newcommand{\slv}{{\bf v}}
\newcommand{\slvdec}{{\bf v}^\plaintext{dec}}
\newcommand{\ca}{\mathfrak a}
\newcommand{\cb}{\mathfrak b}
\newcommand{\cc}{\mathfrak c}
\newcommand{\cs}{\mathfrak s}
\newcommand{\cd}{\mathfrak d}
\newcommand{\cK}{\textsf{K}}
\DeclareMathOperator{\glue}{glue}
\DeclareMathOperator{\half}{half}
\DeclareMathOperator{\POSS}{POSS}
\DeclareMathOperator{\poss}{poss}
\newcommand{\mdn}{m^\plaintext{dn}}
\newcommand{\mup}{m^\plaintext{up}}
\DeclareMathOperator{\nor}{nor}
\newcommand{\splitnor}{\nor_\plaintext{split}}
\newcommand{\lsnor}{\nor_\plaintext{limsup}}
\newcommand{\linor}{\nor_\plaintext{liminf}}
\newcommand{\sacksnor}{\nor_\plaintext{Sacks}}
\newcommand{\widthnorm}{\nor_\plaintext{width}}
\newcommand{\gol}{\plaintext{lognor}}
\newcommand{\norint}{\nor^{\cap}}
\newcommand{\norhalf}{\nor^{\div}}
\DeclareMathOperator{\sublevels}{sblvls}
\newcommand{\maxwidth}[1]{#1}
\DeclareMathOperator{\emptymaxwidth}{\ell}
\DeclareMathOperator{\maxposs}{maxposs}
\DeclareMathOperator{\supp}{supp}
\newcommand{\suppord}{\mathord{\supp}}

\newcommand{\QQQ}{{\mathbb Q}}

\numberwithin{equation}{subsection}

\newtheorem{thm}[equation]{Theorem}
\newtheorem{lem}[equation]{Lemma}
\newtheorem{cor}[equation]{Corollary}
\newtheorem{fact}[equation]{Fact}

\theoremstyle{definition}
\newtheorem*{construction}{Basic Construction}
\newtheorem*{defin}{Definition}
\newtheorem{definition}[equation]{Definition}
\newtheorem{defnandlem}[equation]{Definition and Lemma}
\newtheorem{remark}[equation]{Remark}
\newtheorem{example}[equation]{Example}
\newtheorem{notn}[equation]{Notation}
\newtheorem{assumption}[equation]{Assumption}

\newtheorem{nb}[equation]{Note}   

\newcommand{\n}[1]{\underaccent{\tilde}{#1}}

\newcommand{\typels}{\tttext{ls}}
\newcommand{\typeli}{\tttext{li}}
\newcommand{\typesk}{\tttext{sk}}
\newcommand{\typecn}{\tttext{cn}}
\newcommand{\typenn}{\tttext{nn}}
\newcommand{\typenm}{\tttext{nm}}
\newcommand{\typenonsk}{{\plaintext{non-}\tttext{sk}}}
\newcommand{\Xils}{\Xi_\typels}
\newcommand{\Xinm}{\Xi_\typenm}
\newcommand{\Xinn}{\Xi_\typenn}
\newcommand{\Xicn}{\Xi_\typecn}
\newcommand{\Xili}{\Xi_\typeli}
\newcommand{\Xisk}{\Xi_\typesk}
\newcommand{\Xinonsk}{\Xi_{\typenonsk}}

\newcommand{\proofclaim}[2]{
\begin{equation}\label{#1}
  \parbox{0.8\textwidth}{#2}
\end{equation}}
\newcommand{\proofclaimnl}[1]{
\begin{equation*}
  \parbox{0.8\textwidth}{#1}
\end{equation*}}

\begin{document}
\subjclass[2010]{03E17;03E35;03E40}
\date{\today}

\title[Five cardinal characteristics]{Creature forcing and\\ five cardinal characteristics in Cicho\'{n}'s diagram}
\author[A.~Fischer]{Arthur Fischer}
\address{Kurt G\"odel Research Center for Mathematical Logic\\
Universit\"at Wien\\
W\"ahringer Stra\ss e 25\\
1090 Wien, Austria}
\email{arthur.james.fischer@univie.ac.at}
\author[M.~Goldstern]{Martin Goldstern}
\address{Institut f\"ur Diskrete Mathematik und Geometrie\\
Technische Universit\"at Wien\\
Wiedner Hauptstra{\ss}e 8--10/104\\
1040 Wien, Austria}
\email{martin.goldstern@tuwien.ac.at}
\urladdr{http://www.tuwien.ac.at/goldstern/}
\author[J.~Kellner]{Jakob Kellner}
\address{Institut f\"ur Diskrete Mathematik und Geometrie\\
Technische Universit\"at Wien\\
Wiedner Hauptstra{\ss}e 8--10/104\\
1040 Wien, Austria}
\email{kellner@fsmat.at}
\urladdr{http://www.logic.univie.ac.at/$\sim$kellner/}
\author[S.~Shelah]{Saharon Shelah}
\address{Einstein Institute of Mathematics\\
Edmond J. Safra Campus, Givat Ram\\
The Hebrew University of Jerusalem\\
Jerusalem, 91904, Israel\\
and
Department of Mathematics\\
Rutgers University\\
New Brunswick, NJ 08854, USA}
\email{shelah@math.huji.ac.il}
\urladdr{http://shelah.logic.at/}
\thanks{
We gratefully acknowledge the following partial support: US National Science
Foundation Grant No. 0600940 (all authors); US-Israel Binational Science
Foundation grant 2006108 (fourth author); FWF Austrian Science Fund: 
P23875-N13 and I1272-N25 (first and third author); P24725-N25 (second author).
This is publication 1044 of the fourth author.}

\dedicatory{Dedicated to the memory of James E. Baumgartner (1943--2011)}
\begin{abstract}
   We use a (countable support) creature construction
   to show that consistently
   \[
   \mathfrak d=\aleph_1=
   \cov(\NULL)
   <
   \non(\MEAGER)
   <
   \non(\NULL)
   <
   \cof(\NULL)
   <
   2^{\aleph_0}.
   \]
   The same method shows the consistency of
   \[
   \mathfrak d=\aleph_1=
   \cov(\NULL)
   <
   \non(\NULL)
   <
   \non(\MEAGER)
   <
   \cof(\NULL)
   <
   2^{\aleph_0}.
   \]
\end{abstract}

\maketitle

\section{Introduction}

\subsection{The result and its history}
Let $\NULL$ denote the ideal of Lebesgue null sets, and $\MEAGER$ the ideal of meager sets.
We prove (see Theorem~\ref{thm:main}) that  consistently, several cardinal characteristics of Cicho\'n's Diagram (see Figure~\ref{fig:cichon}) are (simultaneously) different: 
\[
   \aleph_1=\cov(\NULL)=\mathfrak d<\non(\MEAGER)<\non(\NULL)<\cof(\NULL)<2^{\aleph_0}.
\]
\begin{figure}[h]
\centering
\scalebox{0.8}{\includegraphics{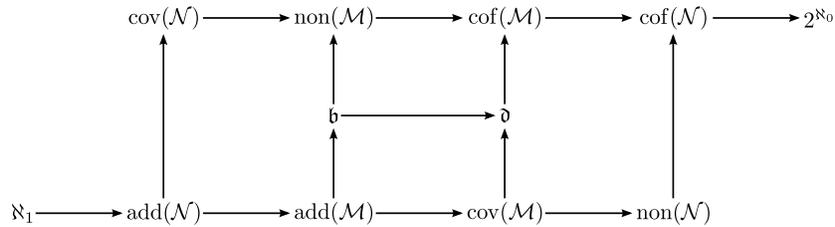}}
\caption{Cicho\'{n}'s diagram. An arrow between $\mathfrak x$ and $\mathfrak y$ indicates that 
$\mathfrak x\le \mathfrak y$. Moreover, 
$\max(\mathfrak d,\non(\MEAGER))=\cof(\MEAGER)$ and $\min(\mathfrak b,\cov(\MEAGER))=\add(\MEAGER)$.}
\label{fig:cichon}
\end{figure}
Since our model will satisfy $\mathfrak d=\aleph_1$,  will also have 
$ \non(\MEAGER) = \cof(\MEAGER) $.  The desired values of the cardinals 
$ \non(\MEAGER), \non(\NULL),  \cof(\NULL) , 2^{\aleph_0}$ can be chosen quite 
arbitrarily, as long as they are ordered as indicated and each satisfies
$\kappa^{\aleph_0}=\kappa$.

A (by now) classical series of
theorems~\cite{MR719666,MR1233917,MR781072,MR1071305,MR1022984,Krawczyk83,MR613787,MR735576,MR697963,MR1613600,MR1623206}
proves these (in)equalities in ZFC and shows that they are the only ones
provable. More precisely, all assignments of the values $\aleph_1$ and $\aleph_2$
to the characteristics in Cicho\'n's Diagram are consistent, provided they do
not contradict the above (in)equalities.  (A complete proof can be found
in~\cite[chapter 7]{MR1350295}.)

This does not answer the question whether three (or more) characteristics can
be made simultaneously different. The general expectation is that this should
always be possible, but may require quite complicated forcing methods.
We cannot use the two best understood methods, countable support iterations of
proper
forcings (as it forces $2^{\aleph_0}\le \aleph_2$) and, at least for the ``right hand side'' of the diagram, we cannot use finite support
iterations of ccc forcings in the straightforward way (as it adds
lots of Cohen reals, and thus increases $\cov(\MEAGER)$ to $2^{\aleph_0}$).

There are ways to overcome this obstacle. One way would be to first
increase the continuum in a ``long'' finite support iteration, resulting in
$\cov(\MEAGER)=2^{\aleph_0}$, and then ``collapsing'' $\cov(\MEAGER)$ in
another ``short'' finite support iteration.  In a much more sophisticated
version of this idea, Mej{\'{\i}}a~\cite{MR3047455} recently constructed
several models with many simultaneously different cardinal characteristics in
Cicho\'n's Diagram (building on work of Brendle~\cite{MR1129144},
Blass-Shelah~\cite{MR1005010} and Brendle-Fischer~\cite{MR2791343}).

We take a different approach, completely avoiding finite support, and use
something in between a countable and finite support
product (or, a form of iteration with very ``restricted memory'').

This construction avoids Cohen reals, it is in fact $\omega^\omega$-bounding,
resulting in $\mathfrak d=\aleph_1$. This way we get an independence result 
``orthogonal'' to the ccc/finite-support results of Mej{\'{\i}}a.

The fact that our construction is $\omega^\omega$-bounding is not incidental,
but rather a necessary consequence of the two features which, in our
construction, are needed to guarantee properness: a ``compact'' or ``finite
splitting'' version of pure decision, and fusion (which together give a strong
version of Baumgartner's Axiom A and in particular properness and $\omega^\omega$-bounding).

We think that our construction can be used for various other independence
results with $\mathfrak d=\aleph_1$, but the construction would require
considerable remodeling if we want to use it for similar results with
$\mathfrak d>\aleph_1$,  even more so for  $\mathfrak b>\aleph_1$.

\subsection{A very informal overview of the construction}

The obvious attempt to prove the theorem would be to 
find a forcing  for each cardinal characteristic $\mathfrak x$ that increases 
$\mathfrak x$ but leaves the other characteristics unchanged.
More specifically, find the following forcing notions.
\begin{itemize}
\item $\QQQ_\typenm$, adding a new meager set which will contain all old reals. \\
Adding many such sets will tend to make $\non(\MEAGER)$ large.
\item $\QQQ_\typenn$, adding a new measure zero set which will contain all old reals. \\
Adding many such sets will tend to make $\non(\NULL)$ large.
\item $\QQQ_\typecn$, adding a new measure zero set which is not contained in any old measure zero set. \\
Adding many such sets will tend to make $\cof(\NULL)$ large.
\item $\QQQ_\typesk$, adding a kind of Sacks real,
in the sense that the generic real does not change any other cardinal characteristic; in particular, every new real is bounded by an old real, is contained in an old measure zero set, etc.
\\
Adding many such reals will tend to make the continuum large. 
\end{itemize}

For each $t\in\{\typenm,\typenn,\typecn,\typesk\}$, our $\QQQ_t$ will be a
finitely splitting tree forcing; $\QQQ_\typenm$ will be ``lim-inf'' (think of a
tree forcing where we require large splitting at every node, not just
infinitely many along every branch; i.e., more like Laver or Cohen
than Miller or Sacks; however
note that in contrast to Laver all our forcings are finitely splitting); the
other ones will be ``lim-sup'' (think of forcings like Sacks or Silver).

We then fix for each $t$
a cardinal $\kappa_t$, and take some kind of product (or, iteration) of
$\kappa_t$ many copies of $\QQQ_t$, and hope for the best.
Here we arrive at the obvious problem: which product or iteration will work?
As mentioned above, neither a finite support iteration\footnote{To avoid giving a
wrong impression, our specific forcings $\QQQ_t$ will not be ccc, so a finite
support iteration would not work anyway.}
nor a countable support iteration will work, and it is
not clear why a product will not collapse the continuum. So we will introduce
a modification of the product construction. 

\medskip

The paper is divided into two parts. In part 1 we describe the ``general''
forcing construction (let us call it the ``framework''), in part 2, the
``application'', we use the framework to construct a specific forcing that
proves the main theorem.

{\bf Part 1:}
In Sections~\ref{sec:defQ1}--\ref{sec:proper} we present the ``framework''.
Starting with building blocks (so-called ``subatoms''), we define the forcing
$\QQQ$.  This is an instance of creature forcing. (The standard reference for
creature forcing is Ros{\l}anowski and Shelah~\cite{MR1613600}, but our
presentation will be self-contained.  Our framework is a continuation
of~\cite{MR2864397,MR2499421}, where the central requirement to get properness
was ``decisiveness''.  In this paper, decisiveness does not appear explicitly,
but is implicit in the way that the subatoms are combined to form so-called atoms.)

We fix a set $\Xi$ of indices.
(For the application, we will partition $\Xi$
into sets $\Xi_t$ of size $\kappa_t$ for $t\in \{\typenm,\typenn,\typecn,\typesk\}$ as above.) The forcing  $\QQQ$ will ``live'' on the product
$\Xi\times\omega$, i.e., a condition $p\in \QQQ$ will contain for certain
$(\xi,n)$ a ``creature'' $p(\xi,n)$, a finite object that gives some
information about the generic filter. 

More specifically, there is a countable
subset $\supp(p)\subseteq \Xi$, and for each $\xi\in\supp(p)$ the condition
up to some level $n_0(\xi)$
consists of a so-called trunk (where a
finite initial segment of the generic real $\yy_\xi$ is already completely determined), 
and
for all $n>n_0(\xi)$ there is a creature $p(\xi,n)$,
an element of a fixed finite set $\cK_{\xi,n}$,
which gives several (finitely many)
possibilities for the corresponding segment of the generic real $\yy_\xi$.
We assign a ``norm'' to the creature, a real number that measures the 
``number of possibilities'' (or, the amount of freedom that the creature
leaves for the generic). More possibilities means larger norm.

Moreover, for each $m$ there are only finitely many $\xi$ with $n_0(\xi)\le m$
(i.e., at each level $m$ there live only finitely many creatures of $p$).  We can
then set the norm of~$p$ at~$m$ to be the minimum of the norms of $p(\xi,n)$
over all $\xi$ ``active'' at level $m$.

A requirement for a $p$ to be a valid condition in 
$\QQQ$ is that the norms at level $m$ diverge to infinity for $m\to\infty$
(i.e., the lim-inf of the norms is infinite).

So far, $\QQQ$ seems to be a lim-inf forcing, but recall that we
want to use lim-inf as well as lim-sup.

So let us redefine $\QQQ$.
We will ``cheat'' by allowing ``gluing''.
We declare a subset of $\Xi$ to be the set $\Xils$ of ``lim-sup
indices'' (in the application this will be $\Xinn\cup\Xicn$).
Forget the ``norm of $p$ at level $m$'' and the lim-inf condition above.
Instead, we partition the set of levels $\omega$
into finite intervals
$\omega=I_0\cup I_1\cup\dots$ 
(this partition depends on the condition and can be coarsened
when we go to a stronger condition). For such an interval $I$,
we declare all creatures whose  levels belong to $I$ 
to constitute a ``compound creature'' with a ``compound norm'', intuitively computed as follows:
\begin{itemize}
\item for each $\xi\in \Xils$ we set $\nor(p,I,\xi)$
to be the maximum of the norms of $p(\xi,m)$ with $m\in I$;
\item for other $\xi$ we take the minimum rather than the maximum; and
\item we set $\nor(p,I)$ to be the minimum of $\nor(p,I,\xi)$
for all (finitely many) $\xi$ active at some level in $I$.
\end{itemize}
The new lim-inf condition is that $\nor(p,I_k)$
diverges to infinity with $k\to\infty$.

While this may give some basic idea about the construction, things really are
more complicated. We will require the well-known ``halving'' property of
creature forcing (to prove Axiom~A). Moreover, the Sacks part, i.e.,
$\QQQ_\typesk$ on the indices $\Xi_\typesk\subset \Xi$, does not fit
well into the framework as presented above and requires special treatment.
This will not be very complicated mathematically but will unfortunately make
our notation much more awkward and unpleasant. 

A central requirement on our building blocks (subatoms) will be another
well-known property of creature-forcing: ``bigness''. This is a kind of Ramsey property
connected to the statement that creatures at a level $m$ are ``much bigger''
than everything that ``happened below $m$''.

Using these requirements, we will show the following.
\begin{itemize}
  \item (Assuming CH in $V$) $\QQQ$ is $\al_2$-cc. (Accomplished via a standard $\Delta$-system argument.)
  \item We say that $p$ ``essentially decides'' a name $\n\tau$ of an ordinal
    if there is a level $m$ such that whenever we increase the trunk of $p$ up to $m$
    (for this there are only finitely many possibilities), we know the value of $\n \tau$.
    In other words, knowing the generic up to $m$ (on some finite set of indices),
    we also know the value of $\n\tau$.
  \item Pure decision and fusion.
    Given a name $\n\tau$ of an ordinal and a condition $p$,
    we can strengthen $p$ to a condition $q$ essentially deciding $\n\tau$.
    Moreover, we can do this in such a way that $p$ and 
    $q$ agree below a given level $h$
    and the norms above this level do not drop below a given bound.
    (This is called ``pure decision''.)

    This in turn implies ``fusion'' in that we can iterate this strengthening for infinitely many names $\n\tau_\ell$, resulting in a common extension $q_\infty$
    which essentially decides each $\n\tau_\ell$.

    (While fusion is an obvious property of the framework,
     pure decision is the central result of part~1, and will use the
     requirements on bigness and halving).
  \item
    The usual standard argument then gives continuous reading (every real is a continuous image of (countably many) generic reals),
    a strong version of Axiom A, and thus
    $\omega^\omega$-bounding  and properness.
    (Recall that we have ``finite splitting'', i.e., essentially deciding implies that there are only finitely
    many potential values.)
  \item We also get a Lipschitz variant of continuous reading, ``rapid reading'', which implies that the forcing adds no random reals
(and which will be essential for many of the proofs in part~2).
\end{itemize}

{\bf Part 2:}
In Sections~\ref{sec:specificQ}--\ref{ss:cn} we define the specific forcings
$\QQQ_t$ (or rather, the building blocks, i.e., the subatoms, for these
forcings) for $t\in \{\typenm,\typenn,\typecn\}$ (the Sacks case is already
dealt with in part~1).

We prove that these subatoms satisfy the bigness requirements of the 
framework, and we prove the various parts of the main theorem.

\goodbreak
\subsection*{Annotated Contents}
\noindent{\bf Part 1:} We present a \emph{forcing framework}.
\begin{description}
\item[Section~\ref{sec:defQ1}, p.\ \pageref{sec:defQ1}]
Starting with building blocks (the so-called subatomic families,
which are black boxes that will be described later)
we describe how to build a forcing $\QQQ$.
\item[Section~\ref{sec:Qprop}, p.~\pageref{sec:Qprop}]
We give some simple properties of $\QQQ$, including the $\aleph_2$-cc.
\item[Section~\ref{sec:complete.construction}, p.~\pageref{sec:complete.construction}]
We impose additional requirements on the subatomic families,
and give an inductive construction that shows how we can choose 
suitable subatomic families so that the requirements are satisfied.
\item[Section~\ref{sec:proper}, p.~\pageref{sec:proper}]
Using the additional requirements, we show that $\QQQ$ satisfies Axiom~A, is $\omega^\omega$-bounding
and has continuous and rapid reading.  This implies $\mathfrak d=\cov(\NULL)=\aleph_1$ in the generic extension.
\end{description}
\noindent{\bf Part 2:} We give the \emph{application}.
\begin{description}
\item[Section~\ref{sec:specificQ}, p.~\pageref{sec:specificQ}]
We present the specific forcing.
There are four ``types'' $t$, $\typenm$, $\typenn$, $\typecn$, and $\typesk$,
corresponding to 
$\non(\MEAGER)$, $\non(\NULL)$, $\cof(\NULL)$ and the continuum, respectively.
The $\typenm$-part will be lim-inf, $\typenn$ and $\typecn$ lim-sup
(and $\typesk$ lim-sup as well, but treated differently).
The actual definitions of the $t$-subatoms (other than Sacks) will be given in 
Sections~\ref{ss:nm}, \ref{ss:nn}, \ref{ss:cn}.
For each type $t$ the forcing will contain  a ``$t$-part'' of size $\kappa_t$.
\\
We formulate the main theorem: $\QQQ$ will force each invariant to be the respective
$\kappa_t$.
\\
We show that the Sacks part satisfies a Sacks property, which implies
$\cof(\NULL)\le \kappa_\typecn$ in the generic extension.
\\ 
Using the fact that 
only the $\typenm$-indices are ``lim-inf'', we show that $\non(\MEAGER)\le\kappa_{\typenm}$.
\item[Section~\ref{ss:nm}, p.~\pageref{ss:nm}] We define the 
	$\typenm$-subatoms and prove $\non(\MEAGER)\ge\kappa_{\typenm}$.
\item[Section~\ref{ss:nn}, p.~\pageref{ss:nn}] We define the 
	$\typenn$-subatoms and prove $\non(\NULL)\ge\kappa_{\typenn}$.
\item[Section~\ref{sec:counting}, p.~\pageref{sec:counting}] We mention
 some simple facts about counting, and use them to define the counting norm, $\gol$, for the $\typecn$ subatoms.
\item[Section~\ref{ss:cn}, p.~\pageref{ss:cn}] We define the 
	$\typecn$-subatoms and prove $\cof(\NULL)\ge\kappa_{\typecn}$.
   And finally, we show $\non(\NULL)\le \kappa_\typenn$.
\end{description}
\subsection{Acknowledgements} 
We are grateful to Diego Mej\'\i a for pointing out several
embarrassing oversights. 
We also thank the anonymous referee for pointing out additional errors, and making numerous helpful  suggestions for improving the text.

\goodbreak

\section{The definition of the forcing \texorpdfstring{$\QQQ$}{Q}}\label{sec:defQ1}

\subsection{Subatomic creatures}

\begin{definition}\label{def:subatoms}
Let $\POSS$ be a finite set.
A \emph{subatomic family} living on $\POSS$ consists of a finite set
$\cK$ (whose elements are called  \emph{subatomic creatures}, or \emph{subatoms}, for short), 
a quasiorder $\le$ on $\cK$ and functions $\poss$ and  $\nor$
with domain $\cK$,
satisfying the following for all $x \in \cK$:
\begin{itemize}
\item 
$\poss(x)$ is a nonempty subset of $\POSS$;
\item 
$\nor( x )$ is a nonnegative real number; and
\item
$y \le x$
implies 
$\poss(y ) \subseteq \poss( x )$.
\end{itemize}
To simplify notation, we further assume:
\begin{itemize}
\item if $|\poss(y)|=1$, then $\nor(y)<1$; and
\item for each   $x\in\cK$ and
$a\in\poss(x)$ there is a $y\le x$ with $\poss(y)=\{a\}$.
(Such a subatom will be called a \emph{singleton}.)
\end{itemize}
\end{definition}

\begin{notn}
Abusing notation, we will just write $\cK$ for the subatomic family
$(\cK,\mathord\le,\nor,\poss)$. If $y\le x$ we will also say that $y$ is ``stronger than
$x$'' or is ``a successor of $x$''.
\end{notn}

\begin{remark}\label{remark:simplesubatoms}
  Our subatomic families will also have the following properties (which might make
  the picture clearer, but will not be used in any proof).
  \begin{itemize}
    \item Each subatom $x$ is determined\footnote{The analogous statement will not be true for ``compound creatures'' (cf.~Definition~\ref{def:cc}) because of the halving parameters.}
     by $\poss(x)$ (i.e., the function
      $\poss:\cK\to 2^{\POSS}$ is injective).
      In particular $\nor(x)$ is determined
     by $\poss(x)$.
    \item $\poss(y)\subseteq \poss(x)$ implies $\nor(y)\le\nor(x)$.
    \item $y\leq x$ iff $\poss(y)\subseteq \poss(x)$.
  \end{itemize}
\end{remark}

In the usual way we often identify a natural number
$n$ with the set $\{ 0 , \ldots , n-1 \}$, and write $m \in n$ for $m < n$;
for example in the following definition.
\begin{definition}\label{def:big}
  Fix a natural number $B > 0$. We
  say that a subatom 
  $x\in\cK$ has \emph{$B$-bigness} if for each coloring
  $c:\poss(x)\to B$ there is a $y\le x$ such that $c\restriction\poss(y)$ is constant
  and $\nor(y)\ge\nor(x)-1$.\footnote{As only the number of ``colors'' is of importance, 
    we may consider the codomain of the coloring function to be any set of cardinality $B$.}
  We say that the subatomic family $\cK$ has \emph{$B$-bigness} if each $x\in \cK$ has $B$-bigness.
\end{definition}

Given a subatom $x$ in a fixed subatomic family $\cK$, we have the following facts.
\begin{itemize}
\item
  If $\nor(x) \leq 1$, then $x$ has $B$-bigness for all $B > 0$. (Any coloring $c : \poss(x) \to B$ 
  will be constant on $\poss(y)$ for any singleton $y \leq x$.)
\item
  If $\nor(x) \geq 2$, then $x$ cannot have $| \poss(x) |$-bigness. (The identity function 
  $c : \poss(x) \to \poss(x)$ is only constant on singleton sets, and any singleton subatom 
  has norm $< \nor(x) -1$.)
\item
  If $x$ has $B$-bigness, then $x$ has $B^\prime$-bigness for all $1 \leq B^\prime \leq B$.
\end{itemize}

\begin{example}
The basic example of a subatomic family with $B$-bigness is the following
``counting norm''.
For a fixed finite set $\POSS$, a subatom $x$ is a nonempty subset of $\POSS$, with 
$\poss(x):=x$, 
$y\leq x$ defined as $y\subseteq x$, and 
\[
  \nor(x):=\log_B{|x|}.
\]
We get a stronger variant of bigness if we divide the norm by $B$:
\[
  \nor'(x):=\frac{\log_B(|x|)}{B}.
\]
Then for each $F:\poss(x)\to B$ there is a $y\le x$ such that
$F\restriction \poss(y)$ is constant and $\nor'(y)\geq \nor'(x)-\nicefrac{1}{B}$.
\end{example}

\begin{remark}
The above example (in the version $\nor'$) is actually used for the 
$\non(\MEAGER)$-subatoms (cf.~\ref{def:Knm}).
The $\cof(\NULL)$-subatoms (cf.~Section~\ref{ss:cndef})
still use a counting norm, i.e.,
$\nor(x)$ only depends on the cardinality 
of $\poss(x)$, but the relation between $|\poss(x)|$ and $\nor(x)$
is more complicated.
The $\non(\NULL)$-subatoms (cf.~Section~\ref{ss:nndef})
will use a different kind of norm which does not just depend on the cardinality of $\poss(x)$,
but also on its structure.
\end{remark}

Given a subatomic family with $2$-bigness, it is straightforward to construct 
another subatomic family with arbitrary bigness by only altering the norm.
\begin{lem}\label{lem:bbigfrom2big}
If $\cK$ is a subatomic family with $2$-bigness, then given any $b \geq 1$ replacing the norm of $\cK$ with $\nor^\prime$ defined by $\nor^\prime (x) := \nicefrac{\nor(x)}{b}$ results in a subatomic family with $2^b$-bigness.
\end{lem}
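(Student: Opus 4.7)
The plan is to verify the axioms of a subatomic family for the modified norm (which is essentially trivial, since only the norm changes), and then to obtain $2^b$-bigness by applying $2$-bigness $b$ times, one for each bit of the target color.

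First I would check that $(\cK,\le,\poss,\nor')$ is still a subatomic family. The conditions on $\poss$ and $\le$ are unchanged, and $\nor' = \nor/b$ is still nonnegative. For the singleton condition: if $|\poss(y)|=1$, then $\nor(y)<1$ by hypothesis, so $\nor'(y)=\nor(y)/b < 1/b \le 1$. The existence of singleton successors is unchanged.

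For the main step, fix $x\in\cK$ and a coloring $c:\poss(x)\to 2^b$. I would identify $2^b$ with $\{0,1\}^b$ and let $c_i:\poss(x)\to 2$ denote the $i$-th bit of $c$, for $1\le i\le b$. I would then inductively build a descending chain $x=x_0\ge x_1\ge\cdots\ge x_b$ as follows: given $x_{i-1}\le x$, apply $2$-bigness of $\cK$ to the coloring $c_i\restriction\poss(x_{i-1})$ to obtain $x_i\le x_{i-1}$ such that $c_i\restriction\poss(x_i)$ is constant and $\nor(x_i)\ge\nor(x_{i-1})-1$. Setting $y := x_b$, each $c_i\restriction\poss(y)$ is constant, so $c\restriction\poss(y)$ itself is constant (a color in $2^b$ is determined by its $b$ bits). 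Telescoping the norm inequalities gives $\nor(y)\ge\nor(x)-b$, hence $\nor'(y)=\nor(y)/b\ge\nor(x)/b-1=\nor'(x)-1$, as required.

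There is no real obstacle here; the only thing worth being careful about is the bookkeeping in the induction (each appeal to $2$-bigness uses the coloring restricted to the current $\poss(x_{i-1})$, which is legal because $x_{i-1}\in\cK$ inherits $2$-bigness from $\cK$). The identification of $2^b$ with $\{0,1\}^b$ is justified by the footnote in Definition~\ref{def:big} that only the cardinality of the codomain matters.
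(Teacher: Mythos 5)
Your proof is correct and follows essentially the same approach as the paper: you build the descending chain $x=x_0\ge x_1\ge\cdots\ge x_b$ by applying $2$-bigness once per bit of the $2^b$-coloring, and the telescoping norm estimate $\nor(y)\ge\nor(x)-b$ gives the conclusion after dividing by $b$. The paper instead identifies $2^b$ with $\mathcal P(b)$ and uses $c_i(a)=1$ iff $i\in c(a)$, but that is just a notational variant of your bit decomposition.
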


\begin{proof}
Given $x \in \cK$, and a coloring $c : \poss(x) \to \mathcal{P} (b)$, use the 
$2$-bigness of the original subatomic family to inductively pick 
$x = x_0 \geq x_1 \geq \cdots \geq x_b = y$ so that for each $i < b$ we have 
$\nor(x_{i+1}) \geq \nor(x_i) - 1$ and $c_i \restriction \poss(x_{i+1})$ is constant, 
where $c_i : \poss(x_i) \to 2$ is defined by $c_i (a) = 1$ iff $i \in c(a)$.
Then $c \restriction \poss(y)$ is constant, and 
$\nor^\prime (y) = \nicefrac{\nor(y)}{b} \geq \nicefrac{(\nor(x)-b)}{b} = \nor^\prime(x) - 1$.
\end{proof}

\begin{remark}
Of course, any subatomic family $\cK$ can be made to have arbitrary bigness 
by simply ensuring that all subatoms have norm $\leq 1$.  The benefit of the method presented 
in Lemma~\ref{lem:bbigfrom2big} is that the norm of each subatom decreases proportionally to 
the logarithm of the desired bigness. As our construction depends on the existence of 
subatomic families with ``big'' bigness and also having subatoms with ``large'' norm, the above 
Lemma gives an indication of how this can be achieved.
\end{remark}

\subsection{Atomic creatures} \label{sec:atomic}

We now describe how to combine subatomic families to create so-called atoms.
Fix a natural number $J > 0$, and 
fix a parameter $\emptymaxwidth\in \omega $.
We will first define the ``measure'' of subsets of $J$ with respect to this parameter.
\begin{definition}\label{def:mudef}
  For $A\subseteq J$, we set 
\[
\mu^{\emptymaxwidth}(A):=\frac{\log_3(|A|)}{\emptymaxwidth+1}
\] (or $0$, if $A=\emptyset$).\footnote{So, technically $\mu^{\emptymaxwidth} (A)$ is defined to be $\nicefrac{\log_3 ( \max \{ |A| , 1 \} )}{\emptymaxwidth +1}$.}
\end{definition}

We will later use the following easy observation about the ``measure''.

\begin{lem} \label{lem:splitJ}
  Suppose $k \leq \emptymaxwidth$, and $A_0 , \ldots , A_k$ are subsets of $J$.
  Then there are pairwise disjoint sets $B_0 , \ldots , B_k$ such that $B_i \subseteq A_i$, 
  and $\mu^{\emptymaxwidth} ( B_i ) \geq \mu^{\emptymaxwidth} ( A_i ) - 1$ for all $i \leq k$.
\end{lem}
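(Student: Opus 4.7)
The plan is to first translate the measure-preservation condition into a cardinality statement, then produce the $B_i$'s greedily in a carefully chosen order. Since $\mu^{\emptymaxwidth}(A) = \log_3(|A|)/(\emptymaxwidth + 1)$, the requirement $\mu^{\emptymaxwidth}(B_i) \geq \mu^{\emptymaxwidth}(A_i) - 1$ is equivalent to $|B_i| \geq |A_i|/3^{\emptymaxwidth + 1}$, together with the escape hatch that $B_i = \emptyset$ is always allowed once $\mu^{\emptymaxwidth}(A_i) \leq 1$. My first move is therefore to set $B_i := \emptyset$ for every index with $|A_i| \leq 3^{\emptymaxwidth + 1}$; these contribute nothing and can be ignored for the rest of the argument. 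What remains are the ``big'' indices for which $|A_i| > 3^{\emptymaxwidth + 1}$, of which there are at most $k + 1 \leq \emptymaxwidth + 1$.

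For the big sets I would sort them in \emph{increasing} order of cardinality, say $|A_{i_0}| \leq \cdots \leq |A_{i_m}|$, and construct the $B_{i_s}$ one at a time: at step $s$, take any $B_{i_s} \subseteq A_{i_s} \setminus \bigcup_{r < s} B_{i_r}$ of size exactly $\lceil |A_{i_s}|/3^{\emptymaxwidth + 1} \rceil$. To see that such a $B_{i_s}$ exists I estimate the portion of $A_{i_s}$ already consumed: by the sort, each earlier $|B_{i_r}| \leq |A_{i_r}|/3^{\emptymaxwidth+1} + 1 \leq |A_{i_s}|/3^{\emptymaxwidth+1} + 1$, and there are at most $s \leq m \leq \emptymaxwidth$ such terms, so the intersection $|A_{i_s} \cap \bigcup_{r < s} B_{i_r}|$ is bounded by $\emptymaxwidth \cdot (|A_{i_s}|/3^{\emptymaxwidth + 1} + 1)$.

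What remains is then a purely numerical check: I need the amount of $A_{i_s}$ still available to be at least $\lceil |A_{i_s}|/3^{\emptymaxwidth + 1} \rceil$, which after simplification reduces to $|A_{i_s}|\bigl(1 - (\emptymaxwidth+1)/3^{\emptymaxwidth+1}\bigr) \geq \emptymaxwidth + 1$; combined with $|A_{i_s}| > 3^{\emptymaxwidth + 1}$ this reduces to the inequality $3^{\emptymaxwidth + 1} \geq 2(\emptymaxwidth + 1) + 1$, which is a trivial induction on $\emptymaxwidth$. The real obstacle here is choosing the processing order: if I processed the big sets from largest to smallest, the first (huge) $B_{i_0}$ could easily consume all of a later small $A_{i_s}$ and the scheme would collapse. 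Sorting by increasing size guarantees that the damage inflicted by earlier choices is controlled uniformly in terms of the current $|A_{i_s}|$, and the exponential gap between $3^{\emptymaxwidth + 1}$ and the linear bound $\emptymaxwidth + 1$ on the number of sets leaves more than enough slack.
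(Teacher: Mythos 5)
Your proof is correct, but it uses a genuinely different argument from the paper's. The paper resolves overlaps \emph{pairwise}: it iterates over all $\binom{k+1}{2}$ pairs $(i^0, i^1)$, and at each step partitions the current overlap $A^j_{i^0}\cap A^j_{i^1}$ into two pieces $Y\cup Z$ so that removing $Y$ from the $i^0$-side and $Z$ from the $i^1$-side shrinks each by at most a factor of~$3$. Since every set participates in at most $k$ pairs, each final $B_i$ satisfies $|B_i|\geq |A_i|/3^k$, which together with $k\leq\emptymaxwidth$ gives the required drop of at most $1$ in $\mu^{\emptymaxwidth}$. Your approach instead allocates each big set's full budget $\lceil |A_i|/3^{\emptymaxwidth+1}\rceil$ greedily, after sorting by increasing cardinality; the sort is the crux, since it bounds each previously carved-out piece by $|A_{i_s}|/3^{\emptymaxwidth+1}+1$ in terms of the \emph{current} set, and then a single linear estimate against the exponential $3^{\emptymaxwidth+1}$ closes the argument. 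The trade-off: the paper's pairwise scheme is symmetric and requires no preprocessing order, and it costs only a factor $3^k$ (the number of \emph{other} sets), which is a slightly sharper loss than your $3^{\emptymaxwidth+1}$; your scheme replaces the quadratically many pair-resolutions with a single linear pass plus one clean counting inequality, at the price of pinning each $B_i$ to exactly its minimal admissible size. Both reductions of the $\mu^{\emptymaxwidth}$ condition to $|B_i|\geq |A_i|/3^{\emptymaxwidth+1}$ (with the $B_i=\emptyset$ escape hatch for small $A_i$) are identical, and your numerical verification is sound.
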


\begin{proof}
Note that if for some $i \leq k$ we have that $\mu^{\emptymaxwidth} (A_i) \leq 1$, then simply picking $B_i := \emptyset$ will introduce no obstructions.
We may then assume that $\mu^{\emptymaxwidth} (A_i) > 1$ (meaning that $|A_i| \geq 3^{\emptymaxwidth+1}$) for each $i \leq k$.
We now inductively construct $(k+1)$-tuples $( A^j_0 , \ldots , A^j_k )$ ($j \leq n := \nicefrac{k(k+1)}{2}$ where $A^0_i = A_i$ for each $i \leq k$, and at stage $j < n$ we handle a distinct pair $(i^0,i^1 )$ with $i^0 < i^1 \leq k$ so that 
\begin{itemize}
\item
$A^{j+1}_{i^0} \subseteq A^j_{i^0}$, $| A^{j+1}_{i^0} | \geq \nicefrac{|A^j_{i^0}|}{3}$;
\item
$A^{j+1}_{i^0} \subseteq A^j_{i^1}$, $| A^{j+1}_{i^1} | \geq \nicefrac{|A^j_{i^1}|}{3}$; and 
\item
$A^{j+1}_{i^0} \cap A^{j+1}_{i^1} = \emptyset$.
\end{itemize}
(and $A^{j+1}_i = A^j_i$ for all other $i \leq k$).  As $|A_{i^0}| \geq 3^{\ell+1}$ it follows by the induction that $|A^j_{i^0}| \geq 3$, and similarly $| A^j_{i^1} | \geq 3$, and so it is possible to partition the intersection $A^j_{i^0} \cap A^j_{i^1}$ into $Y \cup Z$ so that $| A^j_{i^0} \setminus Y | \geq \nicefrac{|A^j_{i^0}|}{3}$ and $| A^j_{i^1} \setminus Z | \geq \nicefrac{|A^j_{i^1}|}{3}$. We may then take $A^{j+1}_{i^0} := A^j_{i^0} \setminus Y$ and $A^{j+1}_{i^1} := A^j_{i^1} \setminus Z$.

After these steps, set $B_i := A^n_i$ for each $i \leq k$.
It is clear that the $B_i$ are pairwise disjoint (since if $i^0 < i^1 \leq k$ at some stage $j$ we would have handled this pair, meaning that $A^{j+1}_{i^0} \cap A^{j+1}_{i^1} = \emptyset$, but $B_{i^0} \subseteq A^{j+1}_{i^0}$ and $B_{i^1} \subseteq A^{j+1}_{i^1}$).
As each $A_i$ was modified at most $k$ times in the inductive construction it follows that $|B_i| \geq \nicefrac{|A_i|}{3^k}$, and so $\mu^{\emptymaxwidth}(B_i) = \nicefrac{\log_3(|B_i|)}{\ell+1} \geq \nicefrac{\log_3 ( \nicefrac{|A_i|}{3^k} )}{\ell+1} \geq \nicefrac{\log_3 (|A_i|) - \ell}{\ell+1} \geq \mu^\ell(A_i)-1$.
\end{proof}

Suppose now that for each $j\in J$ we have a subatomic family $\cK_j$ living on a finite set $\POSS_j$.
We can now define the atoms built from the subatoms.
\begin{definition}\label{def:atomsfromsubatoms}
\begin{itemize}
  \item An \emph{atomic creature}, or \emph{atom}, $\ca$
    consists of a sequence $(x_j)_{j\in J}$
    where $x_j$ is a $\cK_j$-subatom for all $j \in J$.
  \item The \emph{norm} of an atom $\ca=(x_j)_{j\in J}$, $\nor(\ca)$, is the 
    maximal $r$
    for which there is a set $A\subseteq J$ with $\mu^{\emptymaxwidth}(A)\ge r$ and
    $\nor(x_j) \ge r$ for all $j\in A$.
    We say that such an $A$ ``witnesses the norm'' of $\ca$.
\end{itemize}
\end{definition}

So the norm of an atom is large if there is a
``large'' subset $A$ of $J$ such that all subatoms in $A$ are ``large''.

The following easy fact will be useful later.
\begin{fact}
  Suppose $A \subseteq J$ witnesses the norm of an atom $\ca = ( x_j )_{j \in
  J}$, and let $\cb = ( y_j )_{j \in J}$ be any atom which agrees with $\ca$ on
  all indices in $A$.  Then $\nor ( \cb ) \geq \nor ( \ca )$.
In particular, if $\nor ( y_j ) \leq \nor ( x_j )$ for all $j \notin A$, then $\nor ( \cb ) = \nor ( \ca )$.
\end{fact}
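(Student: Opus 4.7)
The plan is to unwind the definition of atomic norm (Definition~\ref{def:atomsfromsubatoms}) directly; both claims follow by exhibiting appropriate witnessing subsets of $J$.

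For the first assertion, let $r := \nor(\ca)$. Since $A$ witnesses $\nor(\ca)$, we have $\mu^{\emptymaxwidth}(A) \geq r$ and $\nor(x_j) \geq r$ for all $j \in A$. Because $\cb$ agrees with $\ca$ on $A$, we have $y_j = x_j$, and in particular $\nor(y_j) = \nor(x_j) \geq r$, for every $j \in A$. Hence the same set $A$ witnesses $\nor(\cb) \geq r = \nor(\ca)$.

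For the ``in particular'' assertion, assume additionally that $\nor(y_j) \leq \nor(x_j)$ for all $j \notin A$; we must establish the reverse inequality $\nor(\cb) \leq \nor(\ca)$. Let $s := \nor(\cb)$ and let $B \subseteq J$ be a witnessing set, so $\mu^{\emptymaxwidth}(B) \geq s$ and $\nor(y_j) \geq s$ for every $j \in B$. We claim the same set $B$ witnesses $\nor(\ca) \geq s$. Indeed, for $j \in B \cap A$ we have $x_j = y_j$ and hence $\nor(x_j) = \nor(y_j) \geq s$, while for $j \in B \setminus A$ the added hypothesis gives $\nor(x_j) \geq \nor(y_j) \geq s$. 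Combined with $\mu^{\emptymaxwidth}(B) \geq s$, this shows $\nor(\ca) \geq s = \nor(\cb)$, which together with the first part yields equality.

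There is no genuine obstacle here: the argument is a direct bookkeeping check against the definition, and the only thing to notice is that the witnessing set for one atom can be re-used verbatim for the other in each direction, using agreement on $A$ for indices inside $A$ and the norm comparison hypothesis for indices outside $A$.
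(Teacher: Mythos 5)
Your proof is correct; the paper states this as a ``Fact'' without proof, and your argument is precisely the routine unwinding of Definition~\ref{def:atomsfromsubatoms} that is intended: reuse the witnessing set $A$ (resp.\ $B$) for the other atom, using equality on $A$ and the norm comparison off $A$.
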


\subsection{Sacks columns}

Given a (finite) tree $T$, its \emph{splitting-size}, $\splitnor(T)$, is defined as the 
maximal $\ell\in\omega$ such that 
there is a subset $S\subseteq T$ (with the induced order) which is order
isomorphic to the complete binary tree $2^ {\le \ell}$ (of height $\ell$ with
$2^\ell$ many leaves).  Equivalently, $2^{\le \ell}$ order-embeds into $T$.

Given a finite subset $I$ of $\omega$ and $F\subseteq 2^I$, we can identify 
$F$ with the tree of its restrictions $T_F=F\cup \{\eta\restriction n:\, \eta\in F,\ n\in I\}$
(a tree of partial functions from $I$ to~$2$, ordered by inclusion).
We write $\splitnor(F)$ for $\splitnor(T_F)$.

The following establishes a basic combinatorial fact about
this norm.

\begin{defnandlem}\label{def:martinsgloriousdef}
There exists a function $f$ with the following properties.
\begin{itemize}
\item
For each $j$, $n$, $c$, whenever
$(2^{f(j,n,c)})^j$ is colored with $c$ colors
there are  subsets $A_1,\ldots, A_j$ of $2^{f(j,n,c)}$ 
such that the set $A_1\times \cdots\times  A_j$ is homogeneous,
and $\splitnor(A_i)\ge n$ for all $i$.%
\footnote{As in the case of the bigness of subatoms, only the number of ``colors'' of our coloring functions is of importance. Moreover, by the definition of the splitting norm it follows that $T_1 , \ldots , T_j$ are trees each of splitting size at least $f(j,n,c)$ and $\pi : T_1 \times \cdots \times T_j \to c$ is a coloring, then there are $A_i \subseteq T_i$ ($i \leq j$) such that $\splitnor(A_i) \geq n$ for each $i$ and $\pi \restriction A_1 \times \cdots \times A_j$ is constant.}%
\textsuperscript{,}%
\footnote{If $j = 1$ this condition becomes whenever $2^{f(1,n,c)}$ is colored with $c$ colors
there is a homogeneous subset $A$ of $2^{f(1,n,c)}$ such that $\splitnor(A)\ge n$.} 
\item
$f$ is monotone in each argument.
\end{itemize}
\end{defnandlem}

\begin{proof}
We define $f(j,n,c)$ recursively on $j$ by $f(1,n,c) = n \cdot c$, and $f(j+1,n,c) = f ( 1 , n , c^{2^{j \cdot f(j,n,c)}} ) = n \cdot c^{2^{j \cdot f(j,n,c)}}$.
Note that $f(j,n,1) = n$, and clearly any coloring $\pi : ( 2^n )^j \to 1$ is constant.
We may then assume that $c > 1$ for the remainder of the proof.

We first show by induction on $c$ that $f(1,n,c)$ is as required.
Suppose that $f(1,n,c)$ works for some $c \geq 1$, and let $\pi : 2^{n \cdot (c+1)} \to c+1$ be a coloring.
For $\eta \in 2^n$, let $[\eta] := \{ \nu \in 2^{n+c \cdot n} : \eta \subseteq \nu \}$.
Note that $\splitnor([\eta]) = 2^{c \cdot n}$ for each $\eta \in 2^n$.
If there is an $\eta \in 2^n$ such that $\pi \restriction [ \eta ]$ omits one of $0 , \ldots , c$, then $\pi \restriction [\eta]$ is a coloring with at most $c$ colors, and so there must be an $A \subseteq [\eta] \subseteq 2^{n+c\cdot n}$ such that $\splitnor(A) \geq n$ and $\pi \restriction A$ is constant.

Otherwise, for each $\eta \in 2^n$ there is an $\nu_\eta \in [\eta]$ such that $\pi ( \nu_\eta ) = 0$.
It follows that $A := \{ \nu_\eta : \eta \in 2^n \}$ has splitting size $n$, and $\pi \restriction A$ is constantly $0$.

Assume that $f(j,n,c)$ satisfies the desired property for some $j \geq 1$.
Set $p := f(j,n,c)$ and $q:= c^{2^{j \cdot p}}$, so that $f(j+1,n,c) = n \cdot q = f ( 1 , n , q )$.
Suppose $\pi : ( 2^{n \cdot q} )^{j+1} \to c$ is a coloring.
Define $T := \{ \eta \in 2^{n \cdot q} : \eta \restriction [ p , n \cdot q )\text{ is constantly }0 \}$.
Since $c \geq 2$ it follows that $p < n \cdot q$, and so $\splitnor(T) = p$.
For $\eta \in 2^{n \cdot q}$ define $\pi_\eta : T^j \to c$ by $\pi_\eta ( \eta_1 , \ldots , \eta_j ) = \pi ( \eta_1 , \ldots , \eta_j , \eta )$.
Note that the mapping $\eta \mapsto \pi_\eta$ is a coloring of $2^{n \cdot q}$ by at most $c^{(2^p)^j} = q$ many colors.
By the above it follows that there is an $A_{j+1} \subseteq 2^{n \cdot q}$ and a $\pi^* : T^j \to c$ such that $\splitnor (A_{j+1}) \geq n$ and $\pi_\eta = \pi^*$ for each $\eta \in A_{j+1}$.

Then as $\pi^*$ is a coloring of $T^j$ by at most $c$ colors, and as $\splitnor (T) = p = f(j,n,c)$ by hypothesis for each $i \leq j$ there are $A_i \subseteq T \subseteq 2^{n \cdot q}$ with $\splitnor (A_i) \geq n$ (for $i \leq j$) such that $A_1 \times \cdots \times A_j$ is homogeneous for $\pi^*$.
It then follows that $A_1 \times \cdots \times A_j \times A_{j+1}$ is homogeneous for $\pi$.
\end{proof}

\begin{definition}
Suppose that $I$ is a nonempty (finite) interval in $\omega$.
By a \emph{Sacks column on $I$} we mean a nonempty $\cs \subseteq 2^I$.
We say that another Sacks column $\cs'$ on $I$ is \emph{stronger} than $\cs$, 
and write $\cs' \leq \cs$, if $\cs' \subseteq \cs$.
\end{definition}

We can naturally take products of columns that are stacked above each other.
\begin{definition}\label{def:sacksproduct}
Let $\cs_1$ be a Sacks column on an interval $I_1$ and let $\cs_2$ be a Sacks column 
on an interval $I_2$.
If $\min ( I_2 ) = \max ( I_1 ) + 1$, then
the \emph{product} $\cs'=\cs_1\otimes\cs_2$ is the
Sacks column on $I_1 \cup I_2$
defined by $f\in\cs'$ iff
$f\restriction I_1\in\cs_1$ and
$f\restriction I_2\in\cs_2$.
\\
Iterating this, we can take products of finitely many properly stacked\footnote{Sacks 
columns $\cs_1, \ldots , \cs_n$ on intervals $I_1 , \ldots , I_n$, respectively, are 
called \emph{properly stacked} if $\min ( I_{i+1} ) = \max ( I_i ) + 1$ for each $i < n$.}
Sacks columns.
\end{definition}

We now define the norm of a Sacks column $\cs$ on an interval $I$.
Actually, we define a family of norms, using two parameters $B$ and $m$.
Later, we will virtually always use 
values of $B$ and $m$ determined by $\min ( I )$; more details will come
in Subsection~\ref{sec:compound} and Section~\ref{sec:complete.construction}.

\begin{definition}\label{def:sacknsor}
$\sacksnor^{B,m}(\cs) \ge n $ iff $n=0$ or $\splitnor(\cs)\ge F^B_{m}(n)$
where $F^B_{m}:\omega\to\omega$ is defined as follows: $F^B_{m}(0)=1$ and
$F^B_{m}(n+1)=f(\maxwidth{m},F^B_m(n),B)$, where we use
the function $f$ of Definition~\ref{def:martinsgloriousdef}.
\end{definition}

In other words, 
\begin{equation}\label{eq:sacksnor}
   \sacksnor^{B,m}(\cs) = \max(\{n\in \omega: F^B_{m}(n)\le \splitnor(\cs)\} \cup \{0\}).
\end{equation}

The exact definition of this norm will not be important in the rest of the
paper; we will only require the following properties.

\begin{lem}\label{lem:martinsgloriouscorollary}\ 
\begin{enumerate}
\item If $\cs, \cs'$ have the same splitting-size, 
   then $\sacksnor^{B,m}(\cs')=\sacksnor^{B,m}(\cs)$.
\item\label{item:two} If $\cs' \leq \cs$, $B'\ge B$ and $m'\ge m$, then $\sacksnor^{{B'},m'}(\cs')\le\sacksnor^{B,m}(\cs)$.
\item\label{item:three} $\sacksnor^{B,m}(\cs_1\otimes\cdots\otimes\cs_n)\ge \sacksnor^{B,m}(\cs_i)$
for all $1\le i\le n$.
\item 
If $I$
is large (with respect to $B$ and $m$),
then $\sacksnor^{B,m}(2^I)$ will be large.  More precisely, 
given $a\in \omega$, if $|I|>F^B_m(a)$, then $\sacksnor^{B,m}(2^I)\ge a$.
\item\label{item:bla} 
We will later use the following simple (but awkward) consequence.
Fix properly stacked intervals $I,I'$ 
and a 
Sacks column $\cs$ on $I \cup I'$. Then there is 
an $\tilde{\cs} \leq \cs$ such that 
\[
  \sacksnor^{B,m} ( \tilde{\cs} ) \geq \min 
    \left(
    \sacksnor^{B,m} ( \cs ) ,
    \sacksnor^{B,m} ( 2^I ) 
    \right),
\] 
and 
$| \tilde{\cs} | \leq | 2^I |$.
\item\label{item:big} (Bigness)
   For $i<\maxwidth{m}$,
   fix Sacks columns $\cs_i $ such that
   $ \sacksnor^{B,m}(\cs_i)\ge n+1$. 
 \\
   Then for any  ``coloring'' function $\pi:\prod_{i<\maxwidth{m}}\cs_i \to B$
    there are Sacks columns $\cs'_i\le\cs_i$ with 
   $\sacksnor^{B,m}(\cs'_i)\ge n$ such that
   $\pi$ is constant on $\prod_{i<\maxwidth{m}}\cs'_i $.
\end{enumerate}
\end{lem}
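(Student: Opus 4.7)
Items~(1)--(4) are essentially definitional. For~(1), the norm $\sacksnor^{B,m}$ is defined purely through $\splitnor$ via~\eqref{eq:sacksnor}. For~(2), I would verify by induction on $n$ that $F^{B',m'}(n) \ge F^B_m(n)$ whenever $B' \ge B$ and $m' \ge m$ (using the monotonicity of $f$ in each argument from Def./Lemma~\ref{def:martinsgloriousdef}), and combine this with the trivial inequality $\splitnor(\cs') \le \splitnor(\cs)$ following from $\cs' \subseteq \cs$. For~(3), any factor $\cs_i$ embeds order-preservingly into $T_{\cs_1 \otimes \cdots \otimes \cs_n}$ via fixed extensions in the remaining factors, so the product's splitting-size dominates that of every factor. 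For~(4), $T_{2^I}$ is the complete binary tree $2^{\le |I|}$, hence $\splitnor(2^I) = |I|$, and the hypothesis $|I| > F^B_m(a)$ gives $\sacksnor^{B,m}(2^I) \ge a$ via~\eqref{eq:sacksnor}.

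The key fact behind~(5) is elementary: if $\splitnor(\cs) \ge r$, then $\cs$ contains a subcolumn of cardinality at most $2^r$ with splitting-size $\ge r$, obtained by fixing any complete binary subtree of height $r$ inside $T_\cs$ and lifting each of its $2^r$ leaves to some full element of $\cs$. I would apply this with $r := F^B_m(\min(a,b))$, where $a := \sacksnor^{B,m}(\cs)$ and $b := \sacksnor^{B,m}(2^I)$: the inequality $r \le F^B_m(a) \le \splitnor(\cs)$ makes the construction possible, while $r \le F^B_m(b) \le \splitnor(2^I) = |I|$ (using~(4)) guarantees $|\tilde{\cs}| \le 2^r \le |2^I|$; by construction $\splitnor(\tilde{\cs}) \ge r$, whence $\sacksnor^{B,m}(\tilde{\cs}) \ge \min(a,b)$.

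Item~(6) is a direct application of the tree-theoretic version of Def./Lemma~\ref{def:martinsgloriousdef} (its footnoted generalization), and is precisely why the recursion $F^B_m(n+1) := f(\maxwidth{m}, F^B_m(n), B)$ was chosen. The hypothesis $\sacksnor^{B,m}(\cs_i) \ge n+1$ gives $\splitnor(T_{\cs_i}) \ge f(\maxwidth{m}, F^B_m(n), B)$, exactly the splitting hypothesis needed by the footnoted generalization with $j := \maxwidth{m}$, target splitting $F^B_m(n)$, and $B$ colors. After extending $\pi$ arbitrarily from a coloring of $\prod_i \cs_i$ to a coloring of $\prod_i T_{\cs_i}$, the lemma yields subsets $A_i \subseteq T_{\cs_i}$ of splitting-size $\ge F^B_m(n)$ on whose product the extended $\pi$ is constant; taking the maximal elements of the $A_i$ (and, when these are not already in $\cs_i$, extending them to full elements) gives Sacks subcolumns $\cs'_i \le \cs_i$ with $\sacksnor^{B,m}(\cs'_i) \ge n$ and $\pi$ constant on $\prod_i \cs'_i$. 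The main (if mild) obstacle in the whole lemma is item~(5), where the triple-optimization of splitting-size, cardinality, and inclusion in $\cs$ requires the thin complete binary subtree construction rather than subsetting $\cs$ directly; (6) is a clean invocation of the already-proved combinatorial Def./Lemma~\ref{def:martinsgloriousdef}.
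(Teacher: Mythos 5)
Your treatment of items (1)--(5) is correct and, for item (5), matches the paper's construction (find a copy of $2^{\le r}$ in $T_\cs$, extend its leaves uniquely to members of $\cs$, take downward closure); your choice of threshold $r := F^B_m(\min(a,b))$ is slightly smaller than the paper's $r := \min(\splitnor(\cs), |I|)$, but both satisfy all three requirements, so this is a cosmetic difference.

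Item (6) has a genuine gap. You extend $\pi$ \emph{arbitrarily} to a coloring of $\prod_i T_{\cs_i}$, apply the footnoted version of Lemma~\ref{def:martinsgloriousdef} to obtain $A_i\subseteq T_{\cs_i}$ on whose product the \emph{extended} $\pi$ is constant, and then pass from the maximal elements of $A_i$ to arbitrary leaf extensions in $\cs_i$. That last step loses homogeneity: the maximal elements of $A_i$ are in general proper nodes (not leaves), the constant value of extended-$\pi$ on $\prod A_i$ was produced by the arbitrary extension, and once you replace each maximal node by some full extension in $\cs_i$ you are evaluating the \emph{original} $\pi$ at a tuple that need not lie in $\prod A_i$. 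Nothing you have said controls that value. The fix is to set up the injections in the opposite direction: fix order-embeddings $\psi_i\colon 2^{\le p}\to T_{\cs_i}$ where $p := F^B_m(n+1)=f(\maxwidth{m},F^B_m(n),B)$, and for each leaf $\eta\in 2^p$ choose an extension $\phi_i(\eta)\in\cs_i$ of $\psi_i(\eta)$; pull the genuine coloring $\pi$ back along $\prod_i\phi_i$ to get $\tilde\pi\colon(2^p)^{\maxwidth{m}}\to B$, apply Lemma~\ref{def:martinsgloriousdef} directly to $\tilde\pi$ to get $\tilde A_i\subseteq 2^p$ with $\splitnor(\tilde A_i)\ge F^B_m(n)$ on whose product $\tilde\pi$ is constant, and set $\cs'_i:=\phi_i[\tilde A_i]\subseteq\cs_i$. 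Then $\pi$ is constant on $\prod_i\cs'_i$ by construction, and $\splitnor(\cs'_i)\ge F^B_m(n)$ because $\psi_i[T_{\tilde A_i}]\subseteq T_{\cs'_i}$ contains a copy of $2^{\le F^B_m(n)}$. This is what the paper means by ``(6) follows immediately from Lemma~\ref{def:martinsgloriousdef}''; it never involves coloring interior nodes of $T_{\cs_i}$.
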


\begin{proof}
For (\ref{item:bla}), just prune all unnecessary branches. In  more detail, 
note that $\splitnor(2^{I})=|I|$, and that 
$\sacksnor^{B,m}$ is determined by the splitting-size $\splitnor$.
So
we have to find $\tilde \cs\subseteq \cs$
with splitting size $r:=\min(\splitnor(\cs),|I|)$.
Obviously we can find the binary tree $2^{\le r}$ inside $\cs$ (as a suborder).
Extend each of its maximal elements (uniquely), and take the downwards closure.
This gives $\tilde \cs$.

(\ref{item:big}) follows immediately from Lemma~\ref{def:martinsgloriousdef}.
We have 
$\splitnor(\cs_i) \ge 
F^B_{m}(n+1)=f(\maxwidth{{m}},F^B_{m}(n),B)$; so by the 
characteristic property of the function $f$, for any
coloring 
   function $\pi:\prod_{i<\maxwidth{m}}\cs_i \to B$
    there are Sacks columns $\cs'_i\le\cs_i$ with 
   $\splitnor(\cs'_i)\ge F^B_{m}(n)$  such that
   $\pi$ is constant on $\prod_{i<\maxwidth{m}}\cs'_i $.
So $\sacksnor^{B,m}(\cs'_i)\ge n$.
\end{proof}

\subsection{Setting the stage}

We fix for the rest of this paper a nonempty (index) set~$\Xi$.  We furthermore
assume that $\Xi$ is partitioned into subsets $\Xi_{\typels} , \Xi_{\typeli} ,
\Xi_{\typesk}$ ($\Xi_{\typeli}$ is nonempty, but $\Xi_{\typels}$ and
$\Xi_{\typesk}$ could be empty).
For each $\xi \in \Xi$, we say that $\xi$ is of type \emph{lim-sup}, \emph{lim-inf} or \emph{Sacks} if $\xi$ is an element of $\Xi_{\typels}$, $\Xi_{\typeli}$, or $\Xi_{\typesk}$, respectively.
We set $\Xi_{\typenonsk} := \Xi_{\typels} \cup \Xi_{\typeli} = \Xi \setminus \Xi_{\typesk}$.

Our forcing will ``live'' on $\Xi\times \omega$. For $(\xi,\ell)\in
\Xi\times\omega$ we call $\xi$ the \emph{index} and $\ell$ the \emph{level}.

The ``frame'' of the forcing will be as follows.
\begin{definition}\label{def:frame}
\begin{enumerate}
  \item (For the ``Sacks part''.)
    We fix a sequence $( I_{\typesk,\ell} )_{\ell \in \omega}$ of properly stacked intervals in $\omega$.\footnote{I.e., 
    $I_{\typesk,\ell} = [ \min ( I_{\typesk,\ell} ) , \min ( I_{\typesk,\ell+1} ) )$ for all $\ell \in \omega$.}
    For simplicity we further assume that $\min ( I_{\typesk,0} ) = 0$.
   Given natural numbers $\ell < m$ we set 
 $I_{\typesk,[\ell,m)}:=\bigcup_{\ell \leq h < m} I_{\typesk,h}= 
 [\min (I_{\typesk,\ell}), \min(I_{\typesk,m}))$. 
  A Sacks column on $I_{\typesk,[\ell,m)}$ is also called a ``Sacks column 
   between $\ell$ and~$m$''. 
  \item 
    We fix for each level $\ell\in\omega$ some $J_\ell\in\omega \setminus \{ 0 \}$.
  A \emph{sublevel} is a pair $(\ell,j)$ for $\ell\in\omega$
  and $j\in J_\ell\cup\{-1\}$.  (The sublevel $(\ell,-1)$ 
  will be associated with the Sacks part at level $\ell$.)
  We will usually denote sublevels by $\slu$ or $\slv$.
  \item
  We say $\slv$ is below $\slu$, or $\slv<\slu$, if 
  $\slv$ lexicographically precedes $\slu$.
  Note that this order has order type $\omega$.
  \item
  A sublevel $(\ell,-1)$ is called a Sacks sublevel; 
  all other sublevels are called subatomic.
  Instead of $(\ell,-1)$ we will sometimes just write ``the sublevel $\ell$'',
  and we sometimes just write ``$\slv$ is below $\ell$''
  instead of $\slv<(\ell,-1)$.
  \item (For the ``non-Sacks part''.)
    For each subatomic sublevel $\slu$ and index $\xi\in \Xinonsk$
    we fix a subatomic family
    $\cK_{\xi,\slu}$ living on a finite set $\POSS_{\xi,\slu}$.
  \item  For each level $\ell\in\omega$ and index $\xi\in \Xinonsk$, 
    each 
sequence $(x_j)_{j\in J_\ell}$ with $x_j\in
\cK_{\xi,\slu}$  constitutes 
     (as in~\ref{def:atomsfromsubatoms})
     an atom $\ca$, where 
 we use $\maxwidth{\ell}$ as the parameter in
$\mu^{\maxwidth{\ell}}$ for the definition of the norm of the atom.
\end{enumerate}
\end{definition}

\begin{figure}[h]
\centering
\scalebox{0.4}{\includegraphics{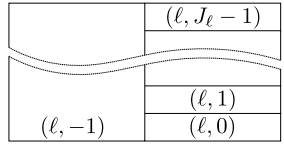}}
\caption{Diagram of the sublevels at level $\ell$, with the Sacks sublevel $(\ell,-1)$ occurring ``before'' the subatomic sublevels $(\ell,0), (\ell,1),  \ldots , (\ell,J_\ell-1)$.}
\label{fig:sublevels}
\end{figure}

To be able to use this frame to construct a reasonable (in particular, proper)
forcing, we will have to add several additional requirements
of the following form. The Sacks intervals
$I_{\typesk,\ell}$ (that ``appear'' at sublevel $\ell$) are ``large''
with respect to everything that was constructed in sublevels $\slv$ below $\ell$; and
the subatoms at a subatomic sublevel $\slu$ have ``large'' bigness with respect
to everything that was constructed at sublevels $\slv<\slu$.  The complete
construction with all requirements will be given in Section~\ref{sec:complete.construction}.

\subsection{Compound creatures} \label{sec:compound}

We can now define compound creatures, which are made up from subatomic
creatures and Sacks columns.  

\begin{definition}\label{def:cc}
A \emph{compound creature} $\cc$ consists of 
\begin{enumerate}
\item natural numbers $\mdn < \mup$;
\item a nonempty, finite\footnote{We could assume without loss of generality
that the size of $\supp$ is at most $\maxwidth{\mdn}$. This will be shown
in Lemma~\ref{lem:variousprunedcrap}.} subset $\supp$ of $\Xi$
\label{item:cc.nonempty}
\item for each $\xi\in \suppord\cap \Xi_\typesk$
a Sacks column $\cc(\xi)$ between $\mdn$ and $\mup$;
\item \label{item:cc.subatoms}
 for each $\xi\in \suppord\cap \Xi_\typenonsk$ and
 each subatomic  sublevel $\slu=(\ell,j)$ with $\mdn\le \ell<\mup$
 a subatom $\cc(\xi,\slu) \in \cK_{\xi,\slu}$; and
\item
for each $\mdn\le \ell<\mup$ a real number $d(\ell) \geq 0$, called the ``halving parameter'' of $\cc$ at level~$\ell$).\footnote{One could (without loss of generality, in some sense) restrict the halving parameter to a finite subset of the reals; then for fixed $\suppord,\mdn,\mup$ there are only finitely many compound creatures.}
\end{enumerate}
We additionally require ``modesty'':\footnote{Again, without this requirement, the resulting forcing poset would be equivalent.}
\begin{enumerate}
\setcounter{enumi}{5}
\item\label{item:modesty} 
for each subatomic sublevel $\slu$ with $\mdn < \slu < \mup$
there is at most one $\xi\in \suppord \cap \Xi_\typenonsk$
such that the subatom $\cc(\xi,\slu)$ is not a singleton.
\end{enumerate}
Note that by (\ref{item:cc.subatoms}) for each level $\ell$ with $\mdn \leq \ell < \mup$ and each $\xi \in \suppord \cap \Xi_\typenonsk$ there is a naturally defined atom $\cc ( \xi , \ell ) := ( \cc ( \xi , ( \ell , j ) ) )_{j \in J_\ell}$.
\end{definition}

We also write $\mdn(\cc) , \mup(\cc) , \supp(\cc)$, $d(\cc,h)$.

We will use the following assumptions (later there will be more;
a complete list will be given in Section~\ref{sec:complete.construction}).
\begin{assumption}\label{asm:bigfattreesandcreatures}
  Let $\ell\in\omega$.
 \begin{itemize}
    \item We fix natural numbers $B(\ell)$ and $\maxposs({<}\ell)$,
     such that for each $k\le \ell$ we have $B(k)\le B(\ell)$  and 
     $\maxposs({<}k)\le \maxposs({<}\ell)$.
(These parameters will be defined in Section~\ref{sec:complete.construction}.)
    \item
   We assume that $I_{\typesk,\ell}$ is large enough so that 
   there are Sacks trees of large norm. (More concretely,
     $\sacksnor^{B(\ell),\ell}(2^{I_{\typesk,\ell}}) \ge \ell.$)
    \item We assume that 
    $J_\ell$ is large enough such that $\mu^\ell(J_\ell)$ is big. 
    (More concretely,
   $\mu^\ell(J_\ell)\ge 2^{\ell\cdot \maxposs({<}\ell)}$).
    \item
   We assume that for every $\xi\in\Xinonsk$ and $j\in J_\ell$ 
   there is (at least) one subatom $x\in\cK_{\xi,(\ell,j)}$
   with $\nor(x)\ge 2^{\ell\cdot \maxposs({<}\ell)}$.
 \end{itemize}
\end{assumption}

Using these assumptions, we can now define the norm of a compound
creature.
\begin{definition}\label{def:norm}
The \emph{norm} of a compound creature $\cc$, $\nor (\cc)$, is defined to be the minimum of the following values.
\begin{enumerate}
\item
The ``width norm'':
\[ \widthnorm ( \supp (\cc) ) := \frac{\maxwidth{\mdn(\cc)}}{|\supp(\cc)|}. \]
\item
For each $\xi \in \suppord \cap \Xi_\typesk$ the ``Sacks norm'' at index $\xi$:
\[ \sacksnor ( \cc (\xi) ):= \sacksnor^{B(\mdn),\mdn} ( \cc (\xi) )\]
(with $\mdn:=\mdn(\cc)$) as defined in~\eqref{eq:sacksnor}.
\item
For each $\xi \in \suppord \cap \Xi_\typenonsk$ the ``lim-sup norm'' at index $\xi$:
\[ \lsnor(\cc,\xi):=\max(\nor(\cc(\xi,h)):\, \mdn\le h<\mup). \]
\item
For each $\mdn(\cc) \leq h < \mup(\cc)$ the ``lim-inf norm'' at level $h$:
\[ \linor^{\maxposs({{<}\mdn})}(\cc,h):=\frac{\log_2(N-d(\cc,h))}{\maxposs({<}\mdn)}, \]
where $N:=\min \{\nor(\cc(\xi,h)):\, \xi\in \suppord \cap\Xi_\typeli\}$.\footnote{As usual, if the logarithm results in a
negative number, or if we apply the logarithm to a negative number, we
instead define the
resulting norm to be $0$. So really we mean 
  $\linor^{\maxposs({{<}\mdn})}(\cc,h):=\frac{\log_2(\max(1,N-d(\cc,h))}{\maxposs({<}\mdn)}$.
}\textsuperscript{,}\footnote{The reason for the logarithm, and the use of the halving parameters, will become clear only in Section~\ref{ss:halving}.}
\end{enumerate}
\end{definition}

(So for both $\lsnor$ and $\linor$ we use the norms of atoms $\cc(\xi,h)$;
recall that the level $h$ of this atom is used in
Definition~\ref{def:atomsfromsubatoms} of $\nor(\cc(\xi,h))$, more
specifically, $\mu^h$ is used to measure the size of subsets of $J_h$.)

\begin{remark}
  As $\supp(\cc)$ is nonempty, the width norm (and thus $\nor(\cc)$ as well)
  is at most $\mdn(\cc)$.
\end{remark}

The assumptions imply the following.
\begin{lem}\label{lem:jhwethewt}
Fix $2<\mdn<\mup$ and $\suppord\subseteq
\Xi$ with $|\suppord|<\maxwidth{\mdn}$ and $\suppord\cap\Xisk$, $\suppord\cap \Xili$,
$\suppord\cap \Xils$ all nonempty.
Then there is a compound creature $\cc$ with
$\mdn(\cc)=\mdn$, $\mup(\cc)=\mup$, $\supp(\cc)=\suppord$ such that 
$\nor(\cc)= \widthnorm(\suppord)$.
\end{lem}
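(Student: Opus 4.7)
The plan is to build $\cc$ so that the width norm $W:=\widthnorm(\suppord)=\maxwidth{\mdn}/|\suppord|$ equals the minimum appearing in Definition~\ref{def:norm}, by making every other norm (Sacks, lim-sup, lim-inf) at least $W$. Note that $W>1$ since $|\suppord|<\maxwidth{\mdn}$, and $W\le\mdn$.

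\smallskip

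For the Sacks indices, the strategy is trivial: for each $\xi\in\suppord\cap\Xisk$ I would set $\cc(\xi):=2^{I_{\typesk,[\mdn,\mup)}}$. By Lemma~\ref{lem:martinsgloriouscorollary}(\ref{item:three}) and Assumption~\ref{asm:bigfattreesandcreatures}, the Sacks norm is at least $\sacksnor^{B(\mdn),\mdn}(2^{I_{\typesk,\mdn}})\ge \mdn\ge W$. The halving parameters are set to $d(\cc,h):=0$ for all $h$.

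\smallskip

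The genuinely non-trivial part is choosing the subatoms in a way that respects modesty (at most one non-singleton index per subatomic sublevel), while making lim-inf norms large at \emph{every} level $h$ and lim-sup norms large at \emph{some} level. The idea is to partition $J_\ell$ at each level $\ell\in[\mdn,\mup)$ among a chosen set of ``participating'' indices, giving each participant a large piece. At level $\ell=\mdn$ the participants will be all of $\suppord\cap\Xinonsk$; at levels $\mdn<\ell<\mup$ the participants will be only $\suppord\cap\Xili$. In both cases the number of participants is at most $|\suppord|-1<\maxwidth{\mdn}-1\le \maxwidth{\ell}$, so Lemma~\ref{lem:splitJ} (applied with each $A_i:=J_\ell$) yields pairwise disjoint sets $B_\xi^\ell\subseteq J_\ell$ indexed by the participants with $\mu^\ell(B_\xi^\ell)\ge \mu^\ell(J_\ell)-1\ge 2^{\ell\cdot\maxposs({<}\ell)}-1$. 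For each participant $\xi$ at level $\ell$ and each $j\in B_\xi^\ell$ I pick $\cc(\xi,(\ell,j))\in\cK_{\xi,(\ell,j)}$ with $\nor\ge 2^{\ell\cdot\maxposs({<}\ell)}$ (available by Assumption~\ref{asm:bigfattreesandcreatures}); for all other non-Sacks $\xi$ and $j$ (i.e.\ $j\notin B_\xi^\ell$), set $\cc(\xi,(\ell,j))$ to be any singleton of $\cK_{\xi,(\ell,j)}$ (which exist by Definition~\ref{def:subatoms}). Modesty clearly holds, since the sets $B_\xi^\ell$ for $\xi$ participating at level $\ell$ are pairwise disjoint.

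\smallskip

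Finally I would verify the norms. For each lim-inf $\xi$ and each $h\in[\mdn,\mup)$, the atom $\cc(\xi,h)$ has norm at least $\min(\mu^h(B_\xi^h),\,2^{h\cdot\maxposs({<}h)})\ge 2^{\mdn\cdot\maxposs({<}\mdn)}-1$ (with $B_\xi^h$ as witness), so
\[
  \linor^{\maxposs({<}\mdn)}(\cc,h)=\frac{\log_2 N}{\maxposs({<}\mdn)}\ge \frac{\log_2(2^{\mdn\cdot\maxposs({<}\mdn)}-1)}{\maxposs({<}\mdn)}\ge W,
\]
where $N$ is the minimum atom norm over lim-inf indices at level $h$. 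For each lim-sup $\xi\in\suppord\cap\Xils$, $\xi$ participates at level $\mdn$, so $\lsnor(\cc,\xi)\ge\nor(\cc(\xi,\mdn))\ge W$ by the same estimate. Combining with the Sacks bound, all norms are $\ge W$, and the width norm is exactly $W$, so $\nor(\cc)=W$ as required. The only subtle point—and what makes the statement non-trivial—is reconciling modesty with the lim-inf requirement at every level, which is exactly why Lemma~\ref{lem:splitJ} is invoked; the gigantic lower bound in Assumption~\ref{asm:bigfattreesandcreatures} on $\mu^\ell(J_\ell)$ provides ample slack for the partition.
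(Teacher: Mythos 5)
Your proof is correct and follows essentially the same route as the paper: use the ``large'' subatoms/Sacks columns guaranteed by Assumption~\ref{asm:bigfattreesandcreatures}, partition $J_\ell$ via Lemma~\ref{lem:splitJ} to repair modesty, then verify that each subnorm in Definition~\ref{def:norm} is at least the width norm. The only (harmless) deviation is a small economy: you let the lim-sup indices participate in the $J_\ell$-partition only at level $\mdn$, whereas the paper partitions among all of $\suppord\cap\Xinonsk$ at every level $\mdn\le\ell<\mup$; since $\lsnor$ is a maximum over levels, one level suffices, so both work.
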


\begin{proof}
We can first use for all subatoms and  Sacks columns the ``large'' ones 
guaranteed by the assumptions. However, this will in general
not satisfy modesty.
So we just apply Lemma~\ref{lem:splitJ} at each $\mdn\le\ell<\mup$,
resulting (for each $\ell$)
in disjoint sets $A^\ell_\xi\subseteq J_\ell$ for $\xi\in\suppord\cap\Xinonsk$.
We keep the large subatoms at the sublevels in $A^\ell_\xi$, and choose arbitrary
singleton subatoms at other sublevels. Now we have a compound creature, whose
norm is the minimum of the following:
\begin{itemize}
\item the width norm;
   \item  
the (unchanged) Sacks norms, which  are $\ge\mdn>\widthnorm(\suppord)$; 
\item the lim-sup norms, noting that all atoms at level $\ell$
have norm $\ge 2^{\ell\cdot \maxposs({<}\ell)}-1\ge 2^{\mdn\cdot \maxposs({<}\mdn)}-1>\widthnorm(\suppord)$,
so all lim-sup norms drop by at most $1$; and
  \item   the lim-inf norms, which  drop by
         an even smaller amount, due to the logarithm.
\qedhere
\end{itemize}
\end{proof}

\begin{fact}
  Let $\cc$ be a compound creature and $u\subseteq \supp(\cc)$ 
  such that $u\cap\Xisk$, $u\cap \Xili$, $u\cap \Xils$ are all nonempty.
  Then the naturally defined $\cc\restriction u$ is again a compound
  creature with norm at least $\nor(\cc)$.
\end{fact}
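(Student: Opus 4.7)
The natural definition of $\cc \restriction u$ is to keep $\mdn(\cc), \mup(\cc)$ and the halving parameters $d(\cc,h)$ unchanged, set $\supp(\cc\restriction u) := u$, and keep all Sacks columns $\cc(\xi)$ for $\xi \in u \cap \Xisk$ and all subatoms $\cc(\xi,\slu)$ for $\xi \in u \cap \Xinonsk$ as they are.

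First I would verify that $\cc\restriction u$ is a compound creature in the sense of Definition~\ref{def:cc}. The conditions $\mdn<\mup$ and the existence of Sacks columns and subatoms of the right type are inherited; the support $u$ is nonempty because each of $u\cap\Xisk$, $u\cap\Xili$, $u\cap\Xils$ is nonempty; and modesty (condition~(\ref{item:modesty})) is preserved under restriction, since removing indices from $\supp(\cc)$ cannot create a second non-singleton subatom at any sublevel.

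Next I would show that each of the four quantities whose minimum defines $\nor(\cc\restriction u)$ is at least the corresponding quantity for $\cc$. Since $\mdn$ is unchanged and $|u|\le|\supp(\cc)|$, the width norm satisfies
\[
\widthnorm(u)=\frac{\maxwidth{\mdn}}{|u|}\ge \frac{\maxwidth{\mdn}}{|\supp(\cc)|}=\widthnorm(\supp(\cc)).
\]
For each $\xi \in u \cap \Xisk$ the Sacks norm $\sacksnor(\cc(\xi))$ is literally unchanged, and similarly for each $\xi \in u \cap \Xinonsk$ the lim-sup norm $\lsnor(\cc,\xi)$ is unchanged; so taking the minimum over the smaller index set $u$ can only increase the Sacks and lim-sup contributions to $\nor(\cc\restriction u)$. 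Finally, for each level $\mdn\le h<\mup$ the lim-inf norm at $h$ is computed from
\[
N := \min\{\nor(\cc(\xi,h)):\xi\in u\cap\Xili\},
\]
and since $u\cap\Xili \subseteq \supp(\cc)\cap\Xili$, this $N$ is at least as large as the corresponding value for $\cc$; because $d(\cc,h)$ and $\maxposs({<}\mdn)$ are unchanged, the same monotonicity carries through the logarithm, giving a lim-inf norm at $h$ that is at least that of $\cc$.

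There is no real obstacle here: each constituent norm is either unchanged at a surviving index or is a minimum over a subset of the original indices, and the width norm only improves when the support shrinks. Taking the overall minimum then gives $\nor(\cc\restriction u)\ge\nor(\cc)$, which completes the proof.
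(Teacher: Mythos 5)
Your proof is correct and is exactly the routine verification the paper intends: the paper states this as a \emph{Fact} with no written proof, and your argument — width norm improves because $|u|\le|\supp(\cc)|$, the Sacks and lim-sup norms are pointwise unchanged so the minimum over the smaller index set can only rise, the lim-inf $N$ is a minimum over a subset so it too can only rise, and modesty and nonemptiness pass to the restriction — is precisely the direct check one would make.
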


\begin{definition}\label{def:purelystrongercompound}
  A compound creature $\cd$ is ``purely stronger'' than $\cc$,
  if $\cc$ and $\cd$ have the same $\mdn$, $\mup$, the same halving
  parameters, the same $\supp$; and if for each $\xi\in\suppord\cap\Xisk$
  the Sacks column $\cd(\xi)$ is stronger than $\cc(\xi)$ and
  for each subatomic sublevel $\slu$ that appears in $\cc$ and $\xi\in\suppord\cap\Xinonsk$
  the subatom $\cd(\xi,\slu)$ is stronger than $\cc(\xi,\slu)$.
  (In other words, the only difference between $\cc$ and  $\cd$  occurs 
  at the Sacks columns and the subatoms, where they become stronger.)

  For $r \geq 0$ we say that $\cd$ is ``$r$-purely stronger'' than $\cc$, if additionally
  $\nor(\cd)\ge\nor(\cc)-r$.
\end{definition}

To show that our forcing has the $\al_2$-cc, we will use the following property.
\begin{lem}\label{lem:unioncreature}
  Fix two compound creatures $\cc_1$ and $\cc_2$ with same $\mdn$ and $\mup$
  and the same halving parameters, with
  disjoint supports, and such that $\nor(\cc_1),\nor(\cc_2)>x$.
  Then there exists a
  compound creature $\cd$ 
  with same $\mdn$ and $\mup$ and support $\supp(\cc_1)\cup\supp(\cc_2)$
  such that $\nor(\cd)\ge \frac x{2}-1$ and 
  $\cd\restriction\supp(\cc_i)$ is purely stronger than $\cc_i$
  for $i=1,2$.

  More generally, the same is true if $\cc_1$ and $\cc_2$ are not
  necessarily disjoint, but identical on the
  intersection $u:=\supp(\cc_1)\cap\supp(\cc_2)$, i.e., $\cc_1\restriction u =
  \cc_2\restriction u$.
\end{lem}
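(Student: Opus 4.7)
The plan is to combine $\cc_1$ and $\cc_2$ component-wise, with the main technical work being to redistribute ``active'' subatoms across sublevels so as to satisfy the modesty requirement of Definition~\ref{def:cc}. First, set $\mdn(\cd) := \mdn$, $\mup(\cd) := \mup$, $\supp(\cd) := \supp(\cc_1) \cup \supp(\cc_2)$, and copy the halving parameters. For each Sacks index $\xi \in \supp(\cd) \cap \Xisk$, set $\cd(\xi) := \cc_i(\xi)$ for whichever $i \in \{1,2\}$ has $\xi \in \supp(\cc_i)$; this is well-defined because $\cc_1$ and $\cc_2$ agree on $u := \supp(\cc_1) \cap \supp(\cc_2)$ (the disjoint-supports case is just $u = \emptyset$).

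For the non-Sacks part, I first pick witness sets. For each $\xi \in \supp(\cd) \cap \Xili$ and each $h \in [\mdn, \mup)$, fix $A^h_\xi \subseteq J_h$ witnessing the atom norm $\nor(\cc_i(\xi, h))$; and for each $\xi \in \supp(\cd) \cap \Xils$, pick a level $h_\xi \in [\mdn, \mup)$ that realizes the lim-sup norm at $\xi$ and fix a witness $A^{h_\xi}_\xi$. Let $U_h \subseteq \supp(\cd) \cap \Xinonsk$ consist of all lim-inf indices together with those lim-sup $\xi$ whose chosen level is $h$. I may assume $x \ge 2$ (else $x/2 - 1 \le 0$ and the bound is trivial), so $|U_h| \le |\supp(\cd)| < 2\maxwidth{\mdn}/x \le \maxwidth{\mdn} \le \maxwidth{h}$, and Lemma~\ref{lem:splitJ} yields pairwise disjoint $B^h_\xi \subseteq A^h_\xi$ with $\mu^h(B^h_\xi) \ge \mu^h(A^h_\xi) - 1$. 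Then define $\cd(\xi, (h,j)) := \cc_i(\xi,(h,j))$ whenever $\xi \in U_h$ and $j \in B^h_\xi$, and otherwise take a singleton subatom below $\cc_i(\xi,(h,j))$.

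The verification proceeds as follows. Disjointness of the $B^h_\xi$ forces modesty, and by construction every subatom and Sacks column of $\cd$ is stronger than the corresponding one in $\cc_i$, so $\cd \restriction \supp(\cc_i)$ is purely stronger than $\cc_i$. The width norm of $\cd$ is $\maxwidth{\mdn}/|\supp(\cd)| \ge x/2$; Sacks norms are unchanged, hence $> x$; for each lim-sup $\xi$, the set $B^{h_\xi}_\xi$ witnesses a new atom norm at $h_\xi$ of at least $\nor(\cc_i(\xi, h_\xi)) - 1$, giving $\lsnor(\cd, \xi) > x - 1$; and for each level $h$ and lim-inf $\xi$, the new atom norm drops by at most $1$, so the minimum $N$ entering $\linor(\cd, h)$ drops by at most $1$, which the outer $\log_2(\cdot)/\maxposs$ converts into a loss of at most $1/\maxposs \le 1$. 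Every norm component thus exceeds $x/2 - 1$, with the width norm as the binding constraint.

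The main obstacle I expect is handling modesty cleanly while controlling the norm loss: without modesty one could simply take the pointwise ``union'' of the two compound creatures. The device of preselecting witnesses and applying Lemma~\ref{lem:splitJ} extracts pairwise disjoint subsets at a uniform cost of $1$ to each $\mu^h$-measure, and the only substantial norm loss --- the factor $\nicefrac{1}{2}$ --- is inherent in combining potentially same-sized supports and is absorbed by the width norm.
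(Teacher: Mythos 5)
Your proof is correct and takes essentially the same approach as the paper: the paper's own argument just says ``form the union and apply the procedure from the proof of Lemma~\ref{lem:jhwethewt},'' and your writeup unpacks exactly that procedure --- invoke Lemma~\ref{lem:splitJ} at each level to make the witness sets pairwise disjoint (restoring modesty), keep the original subatoms on those disjoint sets, use singletons elsewhere, and track the unit norm loss against the halved width norm. Your refinement of processing lim-sup indices only at the level where each lim-sup norm is realized is a harmless optimization that makes the norm verification a bit more transparent but does not change the essence or the bound.
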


\begin{proof}
Let $\cd'$ be the ``union'' of $\cc_1$ and $\cc_2$, which is defined in the obvious
way.\footnote{In particular $\supp ( \cd' ) = \supp ( \cc_1 ) \cup \supp ( \cc_2 )$, and the Sacks columns and subatoms of $\cd'$ at index $\xi \in \supp ( \cd' )$ are exactly those from either $\cc_1$ or $\cc_2$, depending on whether $\xi \in \supp ( \cc_1 )$ or $\xi \in \supp ( \cc_2 )$.}

As $\cd'$ may not satisfy the modesty requirement (\ref{item:modesty}) or Definition~\ref{def:cc}, 
we apply the procedure from the first part of the proof of Lemma~\ref{lem:jhwethewt} to ensure that the resulting object $\cd$ does.
Then $\cd$ is a compound creature with norm $\ge\frac x2-1$.  (The factor $\frac12$ comes from doubling the size of the support, which decreases the width norm.)
\end{proof}

\subsection{The elements (conditions) of the forcing poset \texorpdfstring{$\QQQ$}{Q}}

\begin{definition}\label{def:Q}
$\emptyset$ is the weakest condition. Any other condition
$p$ consists of $\ww^p$,
 $(p(h))_{h\in\ww^p}$ and $t^p$ such that the following are satisfied.
\begin{itemize}
  \item $\ww^p\subseteq \omega$ is infinite. 
  \item For each $h\in \ww^p$, $p(h)$ is a compound creature 
     whose $\mdn$ is $h$, and whose $\mup$ is the $\ww^P$-successor of $h$.
  \item For $h<h'$ in $\ww^p$, $\supp(p(h))\subseteq \supp(p(h'))$,
  \item $\lim_{h\in \ww^p}(\nor(p(h)))=\infty$.
  \item We set $\supp(p):=\bigcup_{h\in \ww^p}\supp(p(h))$ (a nonempty countable subset of $\Xi$).
  \item For $\xi\in\supp(p)$, we define $\trklgth{p}{\xi}$ (the ``trunk length'' at $\xi$)  to be the minimal $h$ such that
    $\xi\in\supp(p(h))$. 
  \item The ``trunk'' $t^p$ is a function which assigns 
    \begin{itemize}
      \item to each $\xi\in\supp(p)\cap \Xisk$ and $\ell<\trklgth{p}{\xi}$
        an element of $2^{I_{\typesk,\ell}}$; 
      \item to each $\xi\in\supp(p)\cap \Xinonsk$ and 
        subatomic sublevel $\slu$ below $\trklgth{p}{\xi}$
        an element of $\POSS_{\xi,\slu}$.
    \end{itemize}
\end{itemize}
\end{definition}

Note that Assumption~\ref{asm:bigfattreesandcreatures} guarantees that $\mathbb
Q$ is nonempty (cf. Lemma~\ref{lem:jhwethewt}).

\begin{notn}
Given $p\in \QQQ$, $h\in\ww^p$ and $\ell$ which is $\ge h$ and less than the $\ww^p$-successor
of $h$, and a sublevel $\slu=(\ell,j) $ we use the following notations.
\begin{itemize}
\item $\supp(p,\slu)=\supp(p,\ell):=\supp(p(h))$.
\item $d(p,\ell):=d(p(h),\ell)$ (the halving parameter of $p$ at level $\ell$).
\item
For $\xi\in \Xi_\typenonsk\cap \supp(p,\slu)$ and $j\neq-1$ we set
$p(\xi,\slu):=p(h)(\xi,\slu)$, the subatom located at index $\xi$ and sublevel $\slu$.
\item  For $\xi\in \Xi_\typesk \cap \supp(p(h))$ we 
set $p(\xi,h):=p(h)(\xi)$, the Sacks column at index $\xi$ starting at
level $h$ (note that we require $h\in\ww^p$).
\end{itemize}
\end{notn}

\subsection{The set of possibilities}\label{sec:poss}

We will now define the ``possibilities'' of a condition $p$, which give
information about the possible value of the generic objects $\yy_\xi$ and
which we will use to define the order of the forcing.
The possibilities of a condition $p$ come from three sources, informally described below.
\begin{itemize}
  \item The trunk $t^p$, where there is a unique possibility.
  \item The subatoms $p ( \xi , \slu )$ (each with a set of possibilities,  $\poss( p ( \xi , \slu ) )$).
  \item The Sacks columns $p ( \xi , h )$ (which we interpret as a set of possible branches)
     which ``live'' between $h\in\ww^p$ and 
     the $\ww^p$-successor $h^+$ of $h$.  The possibilities of the 
     whole Sacks column have to be counted as belonging to the 
     sublevel $(h, -1)$, i.e., we have to list them \emph{before} 
     the subatomic sublevel $(h,0)$, even though their domain 
     reaches up to just below $h^+$.

This property of the Sacks columns will make our notation quite awkward.  As a
consequence, the following section has the worst ratio of mathematical contents
to notational awkwardness. Things will improve later on. We promise.
\end{itemize}

We first (in~\ref{def:vera}) describe a way to define the set of possibilities
separately for each~$\xi\in\supp(p)$; all possibilities then are the product
over the $\xi$-possibilities. 

Then (in~\ref{def:verb}) we will describe a variant in which possibilities
at a sublevel~$\slu$ are defined, and all possibilities are a product over the
$\slu$-possibilities.

Both versions result in the same set of possibilities (up to an awkward
but canonical bijectionl; see Fact~\ref{lem:blabla}).  The first version is more
useful in formulating things such as ``a stronger condition has as smaller set
of possibilities'', whereas the second is the notion that will actually be used
later in proofs.

\begin{definition}\label{def:vera}
	Fix a condition $p$ and an index $\xi\in\supp(p)$.
	\begin{itemize}
		\item   
                  If $\xi \in \Xinonsk$, then for each subatomic sublevel $\slu = ( \ell , j )$ we define the set $\poss(p,\xi,{=}\slu)$ to be either the singleton $\{ t^p ( \xi , \slu ) \}$ (if $\ell < \trklgth{p}{\xi}$), or $\poss (x)$ for the subatom $x := p(\xi,\slu)$ (if $\ell \geq \trklgth{p}{\xi}$).
                         (In either case 
                        $\poss(p,\xi,{=}\slu) \subseteq
			\POSS_{\xi,\slu}$.)
			\\
			We set $\poss(p,\xi,{<}\slu) := \prod \{ \poss(p,\xi,{=}\slv) : \slv<\slu \text{ is a subatomic sublevel} \}$.
		\item If $\xi\in\Xisk$ and $\slu=(m,j)$ is a sublevel, we set $\ell$ to be either 
                      $m$ (if $j = -1$ and $m \in \ww^p$), or the least number $>m$ in $\{0,\dots,\trklgth{p}{\xi}-1\}\cup\ww^p$, (otherwise).
			We then define $\poss(p,\xi,{<}\slu)$ to be the set of all
			functions $\eta\in 2^{[0,\min(I_{\typesk,\ell}))}$
			compatible\footnote{In more detail, for each $h < \ell$ an element of $\{ 0 , \ldots , \trklgth{p}{\xi} - 1 \} \cup \ww^p$ 
                        if $h < \trklgth{p}{\xi}$, then $\eta \restriction I_{\typesk,h} = t^p ( \xi , h )$, 
                        and otherwise $\eta \restriction I_{\typesk,[h,h^\prime)} \in p ( \xi , h )$, where $h^\prime$ is the $\ww^p$-successor of $h$.}
                        with the trunk and the Sacks columns of $p$ 
			at~$\xi$.  
        \item
        We set $\poss(p,{<}\slu)$ to be $\prod_{\xi\in\supp(p)}\poss(p,\xi,{<}\slu)$.
\item
Recall that we identify $\ell$ with the sublevel $(\ell,-1)$, so we can 
write 
 $\poss(p,{<}\ell) $ instead of $ \poss(p,{<}(\ell,-1))$. 
\end{itemize}
\end{definition}

Note that each possibility below $\slu$ restricted to the non-Sacks part can
be seen as a ``rectangle'' with width $\supp(p)\cap\Xinonsk$ and height $\slu$;
whereas the restriction to the Sacks part is a rectangle with height in
$\ww^p$ (which is generally above $\slu$).
So together this gives an ``L-shaped'' domain.
Only in case $\slu=(\ell,-1)$ for $\ell\in\ww^p$ do we get a more pleasant overall
rectangular shape.

In the following alternative definition we ignore
a part of $p$ which is ``trivial'' because we have no freedom/choice left.
More specifically, we ignore the
trunk  and singleton subatoms (but not singleton Sacks columns). 
Also, we do not first
concentrate on some fixed index $\xi$, but directly define $\poss'(p,{=}\slu)$
for certain sublevels $\slu$.
\begin{definition}\label{def:verb}
  We define the set $\sublevels(p)$ of ``active'' sublevels of $p$ 
   by case distinction, and then for each $\slu\in\sublevels(p)$
  we define the object $\poss'(p,{=}\slu)$.
  \begin{itemize}
    \item If $\slu=(\ell,-1)$ is a Sacks sublevel, then
      $\slu\in\sublevels(p)$  iff $\ell\in\ww^p$. In this case we set 
      $S:=\supp(p,\ell)\cap \Xisk\neq \emptyset$, define $p(\slu)$
      to be the sequence $(p(\xi,\ell))_{\xi\in S}$
      of these Sacks columns,
      and set $\poss'(p,{=}\slu) $ to be the product of this sequence. 
    \item If $\slu=(\ell,j)$ is a subatomic sublevel, then
      $\slu\in\sublevels(p)$  iff $\ell\ge \min(\ww^p)$
      and if there is a non-singleton subatom at sublevel $\slu$, say at
      index $\xi$.
      In this case according to the modesty condition (\ref{item:modesty}) of Definition~\ref{def:cc}
      this is the only non-singleton subatom at $\slu$.
      We call $\xi$ the ``active index'' at $\slu$, set
      $p(\slu):=p(\xi,\slu)$
      (the ``active subatom'')
      and define $\poss'(p,{=}\slu):=\poss(p(\slu))$.
  \end{itemize}
  So $\sublevels(p)$ is  a subset (and thus suborder) of the set of all
  sublevels, also of order type $\omega$. 
  We set $\poss'(p,{<}\slu):=\prod \{ \poss'(p,{=}\slv) : \slv<\slu,\ \slv\in\sublevels(p) \}$.
\end{definition}

The definition of the following bijection $\iota$ is easy to see/understand,
but very awkward to formulate precisely, and hence left as an exercise.
\begin{fact}\label{lem:blabla}
There is a natural/canonical
correspondence   $\iota:\poss(p,{<}\slu)\to \poss'(p,{<}\slu)$.
Given an $\eta\in \poss(p,{<}\slu)$, we first omit from $\eta$
all the ``trivial'' information contained in the trunk and in the singleton
subatoms; and then ``relabel'' the resulting sequence (instead of a sequence
indexed by elements of $\xi$
we wish to have one indexed by elements of $\sublevels(p)$).
\end{fact}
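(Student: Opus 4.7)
The plan is to construct $\iota$ explicitly and verify that it is a bijection by defining an explicit inverse. Given $\eta = (\eta_\xi)_{\xi \in \supp(p)} \in \poss(p,{<}\slu)$, I define $\iota(\eta)$ as a sequence indexed by $\{\slv \in \sublevels(p) : \slv < \slu\}$, case-splitting on whether $\slv$ is a Sacks or subatomic sublevel. For a Sacks sublevel $\slv = (\ell,-1) \in \sublevels(p)$ (so $\ell \in \ww^p$), set $\iota(\eta)(\slv) := (\eta_\xi \restriction I_{\typesk,[\ell,\ell^+)})_{\xi \in \supp(p,\ell)\cap\Xisk}$, where $\ell^+$ is the $\ww^p$-successor of $\ell$; this is well-defined as an element of $\poss'(p,{=}\slv)$ because each $\eta_\xi$ for $\xi \in \Xisk\cap\supp(p)$ is by definition compatible with the Sacks columns of $p$ at $\xi$, so its restriction to $I_{\typesk,[\ell,\ell^+)}$ lies in $p(\xi,\ell)$. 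For a subatomic sublevel $\slv \in \sublevels(p)$ with active index $\xi$, simply set $\iota(\eta)(\slv) := \eta_\xi(\slv) \in \poss(p(\slv)) = \poss'(p,{=}\slv)$.

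To show bijectivity I would describe the inverse explicitly: given $\zeta \in \poss'(p,{<}\slu)$, I reconstruct $(\eta_\xi)_\xi$ component by component, filling in the ``trivial'' data that was discarded. For $\xi \in \Xisk\cap\supp(p)$, concatenate the trunk values $t^p(\xi,h)$ for $h < \trklgth{p}{\xi}$ with the values $\zeta(\slv)$ projected to coordinate $\xi$ for each Sacks sublevel $\slv = (\ell,-1)$ with $\trklgth{p}{\xi} \leq \ell$ and $\xi \in \supp(p,\ell)$; this yields an element of $\poss(p,\xi,{<}\slu)$. For $\xi \in \Xinonsk\cap\supp(p)$, and each subatomic sublevel $\slv < \slu$, set the $\slv$-coordinate to $t^p(\xi,\slv)$ if $\slv$ is below the trunk length, to $\zeta(\slv)$ if $\xi$ is the active index at $\slv$, and to the unique element of $\poss(p(\xi,\slv))$ otherwise (by modesty, Definition~\ref{def:cc}(\ref{item:modesty}), this last case covers precisely the singleton subatoms, so there is indeed a unique choice). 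That $\iota^{-1} \circ \iota$ and $\iota \circ \iota^{-1}$ are both the identity is immediate from the definitions once one observes that the data discarded by $\iota$ (trunk values and singleton coordinates) are entirely determined by $p$.

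The only real obstacle is notational bookkeeping: one must carefully juggle the asymmetry between Sacks indices (whose possibilities are counted at sublevel $(\ell,-1)$ but whose content spans the interval up to level $\ell^+$) and non-Sacks indices (whose sublevels align with the sublevel labels directly), while checking at each step that the trunk-length thresholds match up. Since everything in sight is defined as a product, once the indexing translation is pinned down there is nothing left to verify beyond that each coordinate lands in the correct set, which is immediate from Definitions~\ref{def:vera} and~\ref{def:verb}.
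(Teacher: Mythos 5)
The paper explicitly declines to prove this Fact (``easy to see/understand, but very awkward to formulate precisely, and hence left as an exercise''), so there is no paper proof to compare with; your writeup simply carries out that exercise and it is correct. Your construction of $\iota$ and its inverse matches the informal description in the Fact statement: you correctly use modesty to recover the singleton coordinates, you correctly handle the lexicographic/$\ww^p$ bookkeeping so that the domain $[0,\min(I_{\typesk,\ell}))$ of each Sacks component is tiled by trunk segments and the Sacks columns at sublevels $(\ell',-1)<\slu$ with $\ell'\in\ww^p$, and you correctly observe that compatibility (Definition~\ref{def:vera}) guarantees each restricted segment lies in the right column, so both maps land in the claimed codomains.
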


Later in this paper we will not distinguish between 
$\poss$ and $\poss'$; actually, we will mostly use $\poss'$, and often use the 
following trivial observation.
\begin{fact}\label{lem:trivialdd}
For $\slv<\slu$ in $\sublevels(p)$, 
\[
  \poss'(p,{<}\slu)=\poss'(p,{<}\slv)\times \poss'(p,{=}\slv)
  \times \poss'(p,{>}\slv),
\]
where we set $\poss'(p,{>}\slv):=\prod_{\slv'\in\sublevels(p),
\slv<\slv'<\slu}\poss(p,{=}\slv')$.
\\
$\poss'(p,{=}\slv)$ is a product of Sacks columns if $\slv$ is Sacks,
otherwise it is $\poss(x)$ for the active subatom at $\slv$.
\end{fact}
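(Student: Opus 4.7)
The plan is to unpack Definition~\ref{def:verb} and observe that the claimed identity is essentially the associativity of Cartesian products applied to a partition of the index set of the product. Both assertions in the fact should follow purely formally from the definitions, with no further combinatorial content.

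First I would write out, by Definition~\ref{def:verb},
\[
  \poss'(p,{<}\slu) \;=\; \prod \bigl\{\, \poss'(p,{=}\slw) \,:\, \slw \in \sublevels(p),\ \slw < \slu \,\bigr\}.
\]
Since $\slv \in \sublevels(p)$ and $\slv < \slu$, the index set of this product is the disjoint union
\[
  \{\slw \in \sublevels(p) : \slw < \slv\} \;\sqcup\; \{\slv\} \;\sqcup\; \{\slw \in \sublevels(p) : \slv < \slw < \slu\}.
\]
Associativity of the Cartesian product thus yields a canonical bijection between $\poss'(p,{<}\slu)$ and the product of three factors corresponding to these three pieces. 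By definition of $\poss'(p,{<}\slv)$ the first factor is $\poss'(p,{<}\slv)$, the second is the singleton-indexed product which we identify with $\poss'(p,{=}\slv)$, and the third is exactly $\poss'(p,{>}\slv)$ as defined in the statement. This gives the desired factorization.

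For the concluding remark, I would simply apply the two cases of Definition~\ref{def:verb} to $\slv \in \sublevels(p)$. If $\slv = (\ell,-1)$ is a Sacks sublevel, then by the first case $\poss'(p,{=}\slv)$ is defined as the product $\prod_{\xi \in \supp(p,\ell) \cap \Xisk} p(\xi,\ell)$ of Sacks columns. If instead $\slv = (\ell,j)$ is a subatomic sublevel, then by the second case there is a unique active index $\xi$ at $\slv$, the active subatom is $x := p(\slv) = p(\xi,\slv)$, and $\poss'(p,{=}\slv) = \poss(x)$ by definition. No obstacle is expected; the only subtle point is recognizing that the identification in the middle factor is the trivial identification of a one-factor product with its unique factor, and this is the sort of notational sleight that the authors' use of the word ``trivial'' in the name of the fact is signalling.
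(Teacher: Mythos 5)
Your proof is correct and takes the approach the paper expects: the paper states this as a "trivial observation" with no proof, and your unpacking of Definition~\ref{def:verb} together with the partition of $\{\slw\in\sublevels(p):\slw<\slu\}$ into the three pieces is exactly the intended justification. The second part likewise reads off directly from the two cases of Definition~\ref{def:verb}, as you note.
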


\subsection{The order of the forcing}\label{sec:order}

\begin{definition}\label{def:order}
  A condition $q$ is stronger than $p$, written $q\leq p$, iff the following conditions hold.
  \begin{enumerate}
    \item $\ww^q\subseteq \ww^p$.
    \item
      $\supp(p)\cap \supp(q(h))=\supp(p(h))$ for each $h\in \ww^q$.\footnote{This condition in particular implies that $\supp(q(h))\supseteq \supp(p(h))$ for each $h \in \ww^q$, that $\trklgth{q}{\xi} = \min\{ \ell\in \ww^q: \ell \ge \trklgth p\xi \}$ for each $\xi \in \supp (p)$, and that $t^q$ is defined on a  larger domain than $t^p$.}
    \item\label{item:wjk}
      The trunk $t^q$ of $q$ extends the trunk $t^p$ of $p$
      and is ``compatible'' with $p$ in the sense that for each $\xi \in \supp (p)$ 
      the singleton $\poss(q,\xi,{<}\trklgth{q}{\xi}) $ is a subset 
      of $ \poss(p,\xi,{<}\trklgth{q}{\xi}) $.\footnote{%
      Equivalently, for any $\eta\in\poss(q,{<}\min(\ww^q))$,
      the restriction of $\eta$ to $\supp(p)$ is in $\poss(p,{<}\min(\ww^q))$.}
      (I.e., the subatoms and Sacks columns of $p$ that disappeared have become part of the trunk of $q$
      which is compatible with the respective possibilities of $p$.)
    \item If $\xi\in\supp(p)\cap\Xi_\typenonsk$ and $\slu$ is a subatomic sublevel 
      above $\trklgth{q}{\xi}$,
      then the subatom $q(\xi,\slu)$ is stronger than $p(\xi,\slu)$.
    \item If $\xi\in\supp(p)\cap\Xi_\typesk$ and $h\in \ww^q$ such that
      $h\ge  \trklgth{q}{\xi}$, then the Sacks column $q(\xi,h)$ is stronger than
      (i.e., a subset of) the product of the Sacks columns $p(\xi,\ell)$
      for $\ell\in\ww^p$, $h\le \ell<h^+$, where $h^+$ is the $\ww^q$-successor 
      of $h$.
    \item The halving parameters do not decrease; i.e.,
    $d(q,\ell)\ge d(p,\ell)$ for all $\ell\in\omega$ with $\ell\ge\min(\ww^q)$.
  \end{enumerate}
\end{definition}

\section{Some simple properties of \texorpdfstring{$\QQQ$}{Q}}\label{sec:Qprop}

\subsection{Increasing the trunk}

We now introduce an obvious way to strengthen a condition: increase the trunk.

\begin{definition}\label{def:wedge1}
  Given $\ell\in\ww^p$ and $\eta\in \poss(p,{<}\ell)$, we define $p \wedge \eta$
  to be the condition $q$ resulting from replacing 
  the compound creatures below $\ell$ with the trunk~$\eta$.
  More formally, 
  $\ww^q:=\ww^p  \setminus\ell$, $q(k) := p(k)$ for all $k\in\ww^q$,
  $t^q(\xi,\slu) := \eta ( \xi , \slu )$ for each $\xi \in \supp(p) \cap \Xinonsk$ and each subatomic $\slu < \ell$,  
  and $t^q(\xi,h) = \eta ( \xi ) \restriction I_{\typesk,h}$ for each $\xi \in \supp(p) \cap \Xisk$ and each $h < \ell$.
\end{definition}

The definition of the order yields the following simple consequences.
\begin{fact}\label{lem:martinwantsthis}
Fix $p \in \QQQ$ and $\ell\in\ww^p$.
\begin{itemize}
  \item For $\eta\in \poss(p,{<}\ell)$, $p\wedge\eta\le p$.
  \item $\{p\wedge \eta:\, \eta\in \poss(p,{<}\ell)\}$ is predense below $p$.
  \item In particular, assuming that
    $p$ and $q$ are conditions that above some $\ell_1$
    have the same $\ww$ and the same compound creatures,\footnote{More formally, $\ell_1\in\ww^p$, $\ww^p\setminus\ell_1=\ww^q\setminus\ell_1$, and 
$p(h)=q(h)$ for all $h\in\ww^p\setminus\ell_1$. Note that this implies $\supp(p)=\supp(q)$.}
   and that $\poss(q,{<}\ell_1)\subseteq \poss(p,{<}\ell_1)$, 
   then $q\leq ^* p$.\footnote{Here, $q\leq^* p$ means that $q$ forces that $p$ belongs to the generic filter. Equivalently, every $r\leq q$
    is compatible with $p$.}
\end{itemize}
\end{fact}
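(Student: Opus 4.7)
The plan is to verify the three claims in order, with Parts~(2) and (3) building on Part~(1).

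\textbf{Part (1).} Setting $q := p \wedge \eta$ for $\eta \in \poss(p, {<}\ell)$, I verify each of the six clauses of Definition~\ref{def:order}. Clauses (1), (2), (4), (5), and (6) are immediate from Definition~\ref{def:wedge1}, since we retain $\ww^p \setminus \ell$, keep the same compound creatures above $\ell$, and do not change the halving parameters. Only clause~(3) (trunk compatibility) is nontrivial, and it holds precisely because $\eta \in \poss(p, {<}\ell)$: the extended trunk agrees, at each $\xi \in \supp(p)$ and sublevel below $\ell$, with a genuine possibility of the subatoms and Sacks columns of $p$ that have been ``absorbed'' into the new trunk.

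\textbf{Part (2).} Given any $r \leq p$, I want to produce $\eta \in \poss(p, {<}\ell)$ so that $r$ and $p \wedge \eta$ have a common extension. Since $\ww^r$ is infinite and $\ww^r \subseteq \ww^p$, pick $\ell^* \in \ww^r$ with $\ell^* \geq \ell$, and then any $\nu \in \poss(r, {<}\ell^*)$ (nonempty since every subatom has nonempty $\poss$ and every Sacks column is nonempty). Restricting $\nu$ to indices in $\supp(p)$ and to sublevels below $\ell$ yields an element $\eta \in \poss(p, {<}\ell)$: this is precisely what clauses (3)--(5) of Definition~\ref{def:order} guarantee, once translated through the correspondence $\iota$ of Fact~\ref{lem:blabla}. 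One then checks directly that $r \wedge \nu \leq p \wedge \eta$: the trunks agree below $\ell$ by construction; between $\ell$ and $\ell^*$ the compound creatures of $r$ already refine those of $p$ (because $r \leq p$); and above $\ell^*$ both $r \wedge \nu$ and $p \wedge \eta$ carry exactly the compound creatures of $r$.

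\textbf{Part (3).} Using the footnote's reformulation, it suffices to show every $r \leq q$ is compatible with $p$. Apply Part~(2) to $q$ at level $\ell_1$: the set $\{\,q \wedge \eta : \eta \in \poss(q, {<}\ell_1)\,\}$ is predense below $q$, so some $q \wedge \eta$ is compatible with $r$. By the hypothesis $\poss(q, {<}\ell_1) \subseteq \poss(p, {<}\ell_1)$, the condition $p \wedge \eta$ is also defined, and the assumed agreement of $p$ and $q$ on $\ww$ and compound creatures above $\ell_1$ lets one read $q \wedge \eta \leq p \wedge \eta \leq p$ directly from Definition~\ref{def:order}. Hence $r$ is compatible with $p$, proving $q \leq^* p$.

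The only delicate point is the bookkeeping in Part~(2): the active sublevels of $r$ need not coincide with those of $p$ (some creatures of $p$ may have been shrunk to singletons and absorbed into the trunk of $r$), so the projection $\nu \mapsto \eta$ is cleanest when phrased in the ``per-index'' formulation $\poss$ of Definition~\ref{def:vera}; Fact~\ref{lem:blabla} then lets us switch freely back to the ``per-sublevel'' $\poss'$ of Definition~\ref{def:verb} used in later proofs.
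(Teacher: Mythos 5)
Your proof is correct. The paper does not actually give a proof of this Fact: it is introduced with the phrase ``The definition of the order yields the following simple consequences'' and left as an immediate check, so there is no argument in the paper to compare yours against. Your verification supplies exactly the bookkeeping the paper elides. For Part (1) all six clauses of Definition~\ref{def:order} are checked, with (3) being the only one touching the hypothesis $\eta \in \poss(p,{<}\ell)$. For Part (2) the choice of $\ell^* \in \ww^r \cap [\ell,\infty)$ and the restriction of $\nu$ to $\supp(p)$ and to sublevels below $\ell$ is the right move, and you correctly identify that the ``per-index'' formulation $\poss$ of Definition~\ref{def:vera} rather than the ``per-sublevel'' $\poss'$ is cleaner here, precisely because the set of active sublevels is not monotone under strengthening. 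One small phrasing slip in Part (2) is saying ``the compound creatures of $r$ already refine those of $p$'' between $\ell$ and $\ell^*$: in $r\wedge\nu$ that region is entirely trunk, so what is actually used is that the new trunk values of $r\wedge\nu$ (namely $\nu$) are compatible with $p$'s compound creatures there, which follows from $\nu\in\poss(r,{<}\ell^*)$ and $r\leq p$; you essentially say this, just with slightly imprecise wording. In Part (3) the key observation is that under the stated hypotheses $q\wedge\eta$ and $p\wedge\eta$ in fact coincide (same $\ww$, support, compound creatures, trunk), which makes ``$q\wedge\eta\leq p\wedge\eta\leq p$'' trivially true; it would be worth making that equality explicit.
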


We can define a variant of $\wedge$, which works for any sublevel 
(not only those Sacks sublevels  $\slu=(\ell,-1)$ with $\ell\in \ww^p$).

\begin{definition}\label{def:wedge2}
  Given $\eta\in \poss(p,{<}\slu)$, we define $p\curlywedge \eta$
  as the condition $q$ obtained by replacing the according parts of $p$ with the singleton subatoms (or
  singleton Sacks columns) given by $\eta$.
  More formally, the only possible differences between $p$ and $q$ are that for each subatomic sublevel 
  $\slv < \slu$ and each $\xi \in \supp(p,\slv) \cap \Xinonsk$ the subatomic creature $q(\xi,\slv)$ is 
  the singleston subatom $\{ \eta ( \xi,\slv ) \}$, and for each $\ell \in \ww^p$ strictly below $\slu$ 
  and each $\xi \in \supp(p,\ell) \cap \Xisk$ the Sacks column $p(\xi,\ell)$ is the singleton 
  $\{ \eta ( \xi ) \restriction I_{\typesk,[\ell,\ell^+)} \}$, where $\ell^+$ is the $\ww^p$-successor of $\ell$.
\end{definition}

We can now define the generic sequence added by the forcing. (Note that the
generic filter will generally not be determined by this sequence, due to additional
information given by $\ww$ and the halving parameters.)

\begin{definition}
For $\xi\in \Xinonsk$, let  $\yy_\xi$ be (the name for) 
\[
  \{(\slu,a):\,\slu\text{ a subatomic sublevel and }(\exists p\in G)\ t^p(\xi,\slu)=a \}.
\]
For  $\xi\in \Xisk$, we set $\yy_\xi$ to be 
\[
  \bigcup\{t^p(\xi,\ell):\, p\in G,\, \ell<\trklgth{p}{\xi} \}.
\]
\end{definition}

\begin{fact} Let $\slu$ be a sublevel.
\begin{itemize}
\item
  For $\eta\in\poss(p,{<}\slu)$, $p\curlywedge \eta\le p$.
\item 
  If $\ell\in\ww^p$, $\slu=(\ell,-1)$ and $\eta\in\poss(p,{<}\ell)$, then 
  $p\curlywedge \eta\le ^* p\wedge\eta$ and $p\wedge\eta\le p\curlywedge \eta$.
\item 
  $\{p\curlywedge \eta:\, \eta\in \poss(p,{<}\slu)\}$ is predense below $p$.
\item 
  $p\curlywedge \eta$ and $p\curlywedge \eta'$ are incompatible if 
  $\eta' , \eta \in \poss(p,{<}\slu)$ are distinct.
\item
  $p\curlywedge \eta$ forces that $\bar\yy$ extends $\eta$, i.e., 
  that $\yy_\xi$ extends $\eta(\xi)$ for all $\xi\in\supp(p)$.
\\
  In particular, $p$ forces that $\bar\yy$ extends $t^p$.
\item
  $\eta\in\poss(p,{<}\slu)$ iff\/\footnote{For the direction 
  ``right to left'', which we will not need in this paper, 
  we of course  have to assume that $\eta$ has the right ``format'', i.e.,
  $\eta=\prod_{\xi\in\supp(p)}\eta(\xi)$ and each 
  $\eta(\xi)$ has the appropriate length/domain.} $p$ does not force that $\eta$ is
  incompatible with the generic reals $\bar{\yy}$.
\item 
  For $\eta\in\poss(p,{<}\slu)$, $p \Vdash \text{``}\bar\yy \text{ extends } \eta \Leftrightarrow p\curlywedge \eta\in G\text{.''}$
\item 
  $\QQQ$ forces that $\bar\yy$ is ``defined everywhere''; i.e., $\yy_\xi \in 2^{\omega}$ 
  for all $\xi \in \Xisk$, and $\yy_\eta ( \slu ) \in \POSS_{\xi,\slu}$ is defined for 
  all $\xi \in \Xinonsk$ and every subatomic sublevel $\slu$.
\end{itemize}
\end{fact}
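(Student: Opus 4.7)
Most of these bullets are verifications by definition-chasing against Definitions~\ref{def:order} and~\ref{def:wedge2}, together with Fact~\ref{lem:martinwantsthis}. I would organize the argument in the order listed.

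First, for $p\curlywedge\eta\le p$: the only changes in passing from $p$ to $p\curlywedge\eta$ are at sublevels $\slv<\slu$, where subatoms are replaced by their singleton refinements guaranteed by Definition~\ref{def:subatoms} and Sacks columns are replaced by singleton subsets given by $\eta$. Thus $\ww$, supports, and halving parameters are unchanged, while each replaced subatom/Sacks column is strengthened. The compatibility of the new trunk with $p$ (condition~(\ref{item:wjk}) of Definition~\ref{def:order}) is exactly the statement that $\eta\in\poss(p,{<}\slu)$. For the comparison with $\wedge$ at a Sacks sublevel $\slu=(\ell,-1)$, $\ell\in\ww^p$: $p\wedge\eta$ simply shifts $\ell$ into the trunk by removing levels below $\ell$ from $\ww$, while $p\curlywedge\eta$ keeps these levels in $\ww$ but makes them ``trivial'' (singletons). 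The possibilities below $\ell$ coincide, so by Fact~\ref{lem:martinwantsthis} we get $p\curlywedge\eta\le^* p\wedge\eta$, and $p\wedge\eta\le p\curlywedge\eta$ is immediate from the definition of $\le$ (the trunk of $p\wedge\eta$ is determined by $\eta$ and refines the singleton compound creatures of $p\curlywedge\eta$).

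Second, for predensity: given $r\le p$, the trunk $t^r$ together with the Sacks columns and subatoms of $r$ at sublevels ${<}\slu$ restricts (via the bijection $\iota$ of Fact~\ref{lem:blabla} applied in reverse) to some $\eta\in\poss(p,{<}\slu)$, and $r$ is compatible with $p\curlywedge\eta$ (the common refinement being $r$ itself, once one checks condition~(\ref{item:wjk}) against this $\eta$). For incompatibility of $p\curlywedge\eta$ and $p\curlywedge\eta'$ with $\eta\neq\eta'$: their trunks (extended to include the singletons) disagree on some sublevel $\slv<\slu$, so any common lower bound would have to have a trunk simultaneously extending both, which is impossible.

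Third, the forcing statements about $\bar\yy$. That $p\curlywedge\eta$ forces $\bar\yy$ to extend $\eta$ is immediate: its trunk contains (after $\iota$) the information in $\eta$, and any stronger condition further extends the trunk, so by the definition of $\yy_\xi$ the generic extends $\eta$. The characterization $\eta\in\poss(p,{<}\slu)$ iff $p$ does not force $\eta$ incompatible with $\bar\yy$ then follows: one direction is the previous item applied to $p\curlywedge\eta\le p$; the other direction says that if $\eta$ has the right format but fails to be a possibility, then somewhere below $\slu$ it picks a value outside $\poss(p(\xi,\slv))$ or outside the Sacks column $p(\xi,h)$, so $p$ already forces $\bar\yy$ to avoid $\eta$ at that coordinate. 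The biconditional $\bar\yy\text{ extends }\eta \Leftrightarrow p\curlywedge\eta\in G$ below $p$ is then standard: ``$\Leftarrow$'' is the forcing statement just established; for ``$\Rightarrow$'', any $r\in G$ refining $p$ and forcing $\bar\yy$ to extend $\eta$ must be compatible with $p\curlywedge\eta$ by the predensity plus incompatibility items, so $p\curlywedge\eta\in G$.

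The main (and really the only non-routine) obstacle is the last bullet: that $\QQQ$ forces $\bar\yy$ to be defined everywhere, which reduces to showing that for every $p\in\QQQ$, every $\xi\in\supp(p)$, and every $m$, the set of conditions $q\le p$ with $\trklgth{q}{\xi}>m$ is dense. Given $p$, use $\lim_{h\in\ww^p}\nor(p(h))=\infty$ to pick some $h^\ast\in\ww^p$ with $h^\ast>m$ and $\nor(p(h^\ast))$ arbitrarily large; choose any $\eta\in\poss(p,{<}h^\ast)$ and form $q:=p\wedge\eta$ (via Definition~\ref{def:wedge1}). Then $\min(\ww^q)=h^\ast>m$, and $\xi\in\supp(q)$ with $\trklgth{q}{\xi}\ge h^\ast>m$, so in $q$ the trunk has already assigned values at $\xi$ for all sublevels below $h^\ast$. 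For $\xi\notin\supp(p)$, first pass to a $q\le p$ with $\xi\in\supp(q)$ using the same technique combined with Lemma~\ref{lem:jhwethewt} to enlarge the support (adding a sufficiently large compound creature at some level high enough that the resulting norms still tend to infinity), and then apply the previous argument to $q$.
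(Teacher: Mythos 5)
The paper supplies an explicit argument only for the final bullet; the remaining items are left as routine consequences of the definitions. Your treatment of the last bullet matches the paper's stated route (enlarge the support so that $\xi\in\supp(q)$, then increase the trunk via $\wedge$). One remark: the paper cites Lemma~\ref{lem:unioncreature} as the pattern for enlarging the support, because the real issue there is merging the new $\xi$-column with $p$ while repairing modesty; you cite Lemma~\ref{lem:jhwethewt}, which supplies the large creatures at $\xi$ but does not by itself describe how to combine them with $p$'s existing compound creatures. The paper's citation is the more accurate one, and one should also say explicitly that $\xi$ must be added to \emph{all} $p(h)$ above some level (not "at some level"), since supports of conditions are required to increase along $\ww^p$.

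There is a genuine slip in your predensity argument. You claim that for $r\le p$ the data of $r$ below $\slu$ "restricts to some $\eta\in\poss(p,{<}\slu)$" and that "$r$ itself" witnesses compatibility of $r$ and $p\curlywedge\eta$. But $r$ need not have singleton subatoms/Sacks columns between $\trklgth{r}{\cdot}$ and $\slu$, so $r$ does not determine a unique $\eta$, and for any $\eta$ one picks, $r\not\le p\curlywedge\eta$ in general (the subatom $r(\xi,\slv)$ is typically a proper superset of the singleton $\{\eta(\xi,\slv)\}$, so condition~(4) of Definition~\ref{def:order} fails). The correct move is to first strengthen: choose $\nu\in\poss(r,{<}\slu)$, pass to $r\curlywedge\nu\le r$, which does have singletons below $\slu$, and let $\eta\in\poss(p,{<}\slu)$ be the possibility determined by the restriction of $\nu$ to $\supp(p)$; then $r\curlywedge\nu\le p\curlywedge\eta$ is the common extension. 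A similar misattribution occurs in the first bullet: you invoke condition~(\ref{item:wjk}) (trunk compatibility) to justify $p\curlywedge\eta\le p$, but $\curlywedge$ leaves $t^p$ unchanged, so that clause is vacuous; the place where $\eta\in\poss(p,{<}\slu)$ does work is in condition~(4) of Definition~\ref{def:order}, verifying that the replacement singleton subatoms and Sacks columns are in fact stronger than the corresponding creatures in $p$.
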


\begin{proof}[Proof of the last item.]
Given a condition $p$ and $\xi\in\Xi$, we have to show that we can find a $q\leq p$ 
with $\xi\in\supp(q)$. 
This is shown just as Lemma~\ref{lem:unioncreature}, using at $\xi$
the large Sacks columns/subatoms guaranteed by~\ref{asm:bigfattreesandcreatures}.
Then ``increasing the trunk'' shows that $\yy_\xi(n)$ is defined for all $n$.
\end{proof}

Note that we can use the equivalent $\poss'$ (defined in~\ref{def:verb})
instead of $\poss$.  Formally, we could use the bijection $\iota$
of~\ref{lem:blabla} and set $p\wedge\eta':= p\wedge \iota^{-1}(\eta')$ for
$\eta'\in \poss'(p,{<}l)$ (and $p\curlywedge\eta':= p\curlywedge
\iota^{-1}(\eta')$ for $\eta'\in \poss'(p,{<}\slu)$). But what we really mean is that
for some $\eta'\in\poss'$ we can define $p\wedge\eta'$ ($p\curlywedge
\eta'$) in the obvious and natural way; and this results in the same object as
when using $p\wedge\eta$ ($p\curlywedge\eta$) for the $\eta\in\poss$ that
corresponds to $\eta'$ (i.e., for $\eta=\iota^{-1}(\eta')$).

\subsection{The set of possibilities of stronger conditions}
If $q\leq p$, then $\poss(q,{<}\slu)$ is ``morally'' a subset of
$\poss(p,{<}\slu)$ for any $\slu$. 

If we just consider a sublevel $(\ell,-1)$ for $\ell\in\ww^q$ then this
is literally true:
\begin{quote}
  Assume that $q\leq p$,
  $\xi\in\supp(p)$ and $\ell\in\ww^q$. Then
  $\poss(q,\xi,{<}\ell)\subseteq \poss(p,\xi,{<}\ell)$.
\end{quote}

In the general case it is more cumbersome to make this explicit
for the Sacks part.    
However, we will only need the following.

\begin{lem}\label{lem:superlemma}
  Given $q\leq p$ and $\eta\in\poss(q,{<}\slu)$ there is a unique
  $\eta'\in\poss(p,{<}\slu)$ such that $q\curlywedge \eta\le
  p\curlywedge \eta'$.
\end{lem}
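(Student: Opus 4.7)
The plan is to construct $\eta'$ as a natural restriction of $\eta$, and derive uniqueness from an incompatibility fact listed after Definition~\ref{def:wedge2}.

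For uniqueness, suppose $q\curlywedge\eta\le p\curlywedge\eta'_i$ for $i=1,2$. Then $p\curlywedge\eta'_1$ and $p\curlywedge\eta'_2$ share the common extension $q\curlywedge\eta$ and so are compatible; but the previously listed fact that distinct $p\curlywedge\eta'_1$, $p\curlywedge\eta'_2$ (with $\eta'_1,\eta'_2\in\poss(p,{<}\slu)$) are incompatible forces $\eta'_1=\eta'_2$.

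For existence, first observe that $\supp(p)\subseteq\supp(q)$: any $\xi\in\supp(p(h))$ with $h\in\ww^p$ also lies in $\supp(p(h'))$ for any $h'\in\ww^q$ with $h'\ge h$, and clause (2) of Definition~\ref{def:order} then places $\xi$ in $\supp(q(h'))$. Define $\eta'$ by restricting $\eta$ to $\supp(p)$: for $\xi\in\supp(p)\cap\Xinonsk$ the subatomic coordinates of $\eta(\xi)$ transfer unchanged, since they live in the same sets $\POSS_{\xi,\slv}$; for $\xi\in\supp(p)\cap\Xisk$ additionally truncate the binary sequence $\eta(\xi)$ to the shorter $p$-domain $[0,\min(I_{\typesk,\ell_p}))$, where the $p$-endpoint $\ell_p$ from Definition~\ref{def:vera} satisfies $\ell_p\le\ell_q$ (the $q$-endpoint), because replacing $\ww^p$ by its subset $\ww^q$ can only shift the ``next relevant index'' upward.

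Checking $\eta'\in\poss(p,{<}\slu)$ splits coordinate-wise. At a non-Sacks index $\xi$, agreement with $t^p$ below $\trklgth{p}{\xi}$ follows from $t^q\supseteq t^p$; at subatomic sublevels $\slv\ge\trklgth{q}{\xi}$ one uses clause (4) of Definition~\ref{def:order} to get $\poss(q(\xi,\slv))\subseteq\poss(p(\xi,\slv))$; and the intermediate range $\trklgth{p}{\xi}\le\slv<\trklgth{q}{\xi}$ (where $q$'s trunk is longer than $p$'s) is covered by clause (3). At a Sacks index $\xi$, trunk compatibility again follows from $t^q\supseteq t^p$, and for each $h\in\ww^p\cap[\trklgth{p}{\xi},\ell_p)$ clause (5) says that the unique $q$-column covering $h$ is contained in the product of the corresponding consecutive $p$-columns, so its restriction to $I_{\typesk,h}$ lands in $p(\xi,h)$. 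Finally, $q\curlywedge\eta\le p\curlywedge\eta'$ is a routine verification of the six clauses of Definition~\ref{def:order}, transferred from $q\le p$ together with the observation that, by construction, the singletons introduced on both sides agree under our restriction map.

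I expect the main obstacle to be the Sacks bookkeeping: matching the two different binary-sequence domains $[0,\min(I_{\typesk,\ell_p}))$ and $[0,\min(I_{\typesk,\ell_q}))$, and tracking how one $q$-column embeds as a (stronger subset of the) product of several consecutive $p$-columns, is where almost all of the genuine work sits; the non-Sacks part is essentially immediate from the coordinate-wise form of $\poss$ and the clauses of the order.
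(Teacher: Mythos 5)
The uniqueness half matches the paper's argument exactly, and the proposed strategy (coordinate-wise restriction, subatomic coordinates transferred verbatim) is the right shape. But there is a concrete error in the Sacks case that breaks the construction.

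You claim that the endpoint $\ell_p$ from Definition~\ref{def:vera} always satisfies $\ell_p\le\ell_q$, ``because replacing $\ww^p$ by its subset $\ww^q$ can only shift the next relevant index upward,'' and you therefore propose to \emph{truncate} $\eta(\xi)$ for Sacks indices $\xi$. This is false. The set from which the next index is taken is $\{0,\dots,\trklgth{p}{\xi}-1\}\cup\ww^p$, not just $\ww^p$; and since $q\le p$ simultaneously shrinks $\ww$ (pushing the $\ww$-successor up) and lengthens the trunk (enlarging the initial segment, pulling the least candidate down), the two effects go in opposite directions. Concretely, if $\trklgth{p}{\xi}\le m<\trklgth{q}{\xi}$ (the ``old Sacks column swallowed into $q$'s trunk'' regime), then $\ell_q=m+1$ because $m+1$ lies in $\{0,\dots,\trklgth{q}{\xi}-1\}$, while $\ell_p$ is the least element of $\ww^p$ strictly above $m$, which can be far larger. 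So $\ell_p>\ell_q$ and truncation is not even defined: you cannot restrict $\eta(\xi)\in 2^{[0,\min I_{\typesk,\ell_q})}$ to a \emph{longer} domain. What is needed in this case is the opposite operation: \emph{extend} $\eta(\xi)$ up to the next $\ww^p$-level using the values supplied by the trunk $t^q$ (which exist precisely because $m<\trklgth{q}{\xi}$). The paper's proof makes exactly this three-way case distinction on the position of $\slu$ relative to $\trklgth{p}{\xi}$ and $\trklgth{q}{\xi}$; your case $\slu\ge\trklgth{q}{\xi}$ is handled correctly by truncation, and $\slu<\trklgth{p}{\xi}$ is the identity, but the intermediate case needs extension by the trunk, and your argument as written misses it. (You correctly anticipated that the Sacks bookkeeping is where the real work sits; this is the step where it goes wrong.)
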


\begin{proof}
  Uniqueness follows from the fact that $p\curlywedge \eta'$ and
  $p\curlywedge \eta''$ are incompatible for distinct $\eta',\eta''$
  in $\poss(p,{<}\slu)$.

  We define $\eta'(\xi)$ separately for each $\xi\in\supp(p)$.
  For $\xi\in\Xinonsk$ we just use $\eta'(\xi):=\eta(\xi)$.
  So assume $\xi\in \Xisk$. Let $k$ be the smallest element of $\ww^p$ above
  $\slu$.

  \begin{itemize}
    \item If $\slu$ is below $\trklgth{p}{\xi}$ (and therefore
      also below  $\trklgth{q}{\xi}$), then
      again we set $\eta'(\xi):=\eta(\xi)$.
    \item If $\slu$ is above $\trklgth{p}{\xi}$ but below $\trklgth{q}{\xi}$,
      then we extend $\eta(\xi)$ up to $k$ with the values given by the trunk 
      $t^q$.
      This gives $\eta'(\xi)$.
    \item If $\slu$ is above  $\trklgth{q}{\xi}\ge \trklgth{p}{\xi}$,
      then $\eta'(\xi)$ is the restriction of $\eta(\xi)$ to~$k$.\qedhere
  \end{itemize}
\end{proof}

\begin{remark}
Note that $q\leq p$ does not imply $\sublevels(q)\subseteq \sublevels(p)$, as
a previously ``inactive'' sublevel of $p$ can become active in $q$ (with active index outside of $\supp(p)$, of
course).
Also, $\slu$ can be an active subatomic sublevel in both $p$ and $q$,
but with different active indices.
The ``old'' active subatom at $\xi$ can shrink to a singleton in $q$,
while $q$ gains a new index with an active subatom (outside of $\supp(p)$).
Because of this, it is even more cumbersome to formulate an exact version of
``stronger conditions have fewer possibilities'' for $\poss'$ than it is for $\poss$.
\end{remark}

\subsection{\texorpdfstring{$\aleph_2$}{Aleph-2} chain condition}

\begin{lem}
  Assuming CH, $\QQQ$  is $\aleph_2$-cc.
\end{lem}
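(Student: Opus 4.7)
The plan is a standard $\Delta$-system argument combined with CH. Given any family $(p_\alpha)_{\alpha<\omega_2}$ of conditions in $\QQQ$, I first apply the $\Delta$-system lemma to the countable sets $\supp(p_\alpha)\subseteq\Xi$: under CH we have $\aleph_1^{\aleph_0}=\aleph_1<\aleph_2$, so the usual cardinal arithmetic hypothesis is met and I may thin to a subfamily of size $\aleph_2$ whose supports form a $\Delta$-system with common root $u\subseteq\Xi$.

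Next I would use CH to further thin. Each condition is determined by the countable data $\ww^p\subseteq\omega$, the trunk $t^p$, and the sequence $(p(h))_{h\in\ww^p}$. By the footnote after Definition~\ref{def:cc}, the halving parameters can be assumed to lie in a fixed countable set, so that for each choice of $\mdn$, $\mup$, $\suppord$ there are only finitely many compound creatures; hence the relevant data $(\ww^p,\ t^p\restriction u,\ (p(h)\restriction u)_{h\in \ww^p})$ is one of at most $2^{\aleph_0}=\aleph_1$ many objects. I thin to an $\aleph_2$-sized subfamily on which $\ww^{p_\alpha}=W$, $t^{p_\alpha}\restriction u=t^*$, and $p_\alpha(h)\restriction u=\cc^*_h$ for all $h\in W$ are constant.

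It then suffices to show that any two $p_\alpha,p_\beta$ in this subfamily are compatible. I build a common extension $q$ with $\ww^q:=W$ by letting each $q(h)$ be the compound creature produced by the ``more general'' variant of Lemma~\ref{lem:unioncreature} applied to $p_\alpha(h)$ and $p_\beta(h)$: these share $\mdn$, $\mup$ and halving parameters, agree on $u$ (both restrictions equal $\cc^*_h$), and the parts of their supports outside $u$ are disjoint. The lemma yields $q(h)$ with support $\supp(p_\alpha(h))\cup\supp(p_\beta(h))$, purely stronger than each $p_i(h)$ on its support, and of norm at least $\nicefrac{x_h}{2}-1$ where $x_h:=\min(\nor(p_\alpha(h)),\nor(p_\beta(h)))$. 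The trunk $t^q$ is the common extension of $t^{p_\alpha}$ and $t^{p_\beta}$, which agree on $u$; support monotonicity $\supp(q(h))\subseteq\supp(q(h'))$ along $W$ and the divergence $\nor(q(h))\to\infty$ follow from the corresponding facts for the $p_i$.

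It remains to verify that $q\leq p_\alpha,p_\beta$ in the sense of Definition~\ref{def:order}. Clauses (1), (4), (5), (6) follow from the ``purely stronger'' conclusion of Lemma~\ref{lem:unioncreature} together with the choices $\ww^q=W=\ww^{p_i}$ and equality of halving parameters; clause (3) holds because $t^q$ literally extends each $t^{p_i}$ on $\supp(p_i)$. The only mildly delicate point is clause (2), which requires $\supp(p_i)\cap\supp(q(h))=\supp(p_i(h))$, and this uses precisely that $\supp(p_\alpha(h))\cap u=\supp(\cc^*_h)=\supp(p_\beta(h))\cap u$. I expect the main obstacle to be purely bookkeeping rather than any mathematical subtlety: the counting of isomorphism types under CH and the careful matching of supports, trunks and halving parameters when merging two compound creatures with overlapping (but coinciding-on-$u$) supports.
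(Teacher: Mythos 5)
Your proof takes essentially the same route as the paper's: a $\Delta$-system on the countable supports (using CH for the cardinal-arithmetic hypothesis), a further CH-thinning to freeze the countable data, and then Lemma~\ref{lem:unioncreature} level by level to build a common extension whose order properties are verified directly. One small oversight: you claim the halving parameters of $p_\alpha(h)$ and $p_\beta(h)$ agree because $p_\alpha(h)\restriction u=\cc^*_h=p_\beta(h)\restriction u$, but the restriction $p(h)\restriction u$ is only a compound creature (and hence only carries the halving data) when $u$ meets $\supp(p(h))$ suitably; if $u$ is empty or misses the support at some level, the restriction tells you nothing about $d(p,\ell)$. The paper sidesteps this by thinning explicitly on the level functions $\ell\mapsto d(p,\ell)$ (using the footnote that these take values in a fixed countable set, so CH gives only $\aleph_1$ many such functions), and you should add that to your thinning step; the rest of your argument then goes through unchanged.
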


\begin{proof}
  Assume that $A=\{p_i:\, i\in\aleph_2\}$ is a set of conditions.
  By thinning out $A$ (only using CH and the $\Delta$-system lemma for 
  families of countable sets), 
  we may assume that there is a countable set  $\Delta\subseteq \Xi$
  such that for $p\neq q$ in~$A$ the following hold:
  \begin{itemize}
    \item $\ww^p=\ww^q$;
    \item $d(p,\ell)=d(q,\ell)$ for all $\ell\ge \min( \ww^p)$;
    \item $\Delta=\supp(p)\cap\supp(q)$, and, moreover, 
      $\supp(p,\ell)\cap\Delta=\supp(q,\ell)\cap\Delta$
      for all $\ell\in\ww^p$; and
    \item $p$ and $q$ are identical on $\Delta$, i.e., 
      for each $\ell\in\ww^p$ the compound creatures $p(\ell)$
      and $q(\ell)$ are identical on the intersection,
      as in Lemma~\ref{lem:unioncreature};
      and the trunks agree on $\Delta$, i.e., $t^p(\xi,\ell)$
      is the same as $t^q(\xi,\ell)$ for each $\xi\in \Delta\cap\Xisk$ and $\ell<h(\xi)$, and analogously for the subatomic sublevels.
  \end{itemize}

  As in Lemma~\ref{lem:unioncreature}
  we can (for each $p,q\in A$ and  $\ell\in\ww^p$)
  find a compound creature $\cd(\ell)$
  ``stronger than'' both $p(\ell)$ and $q(\ell)$.
  These creatures (together with the union of the trunks)
  form a condition stronger than both $p$ and $q$.  Hence $A$ is not an
  antichain.
\end{proof}

\subsection{Pruned conditions}

Let $p$ be a condition. All compound creatures $p(\ell)$ above some $\ell_0$
will have norm at least $1$. Note that by the definition of $\widthnorm$ this
implies that $|\supp(p,\ell)|\le\ell$.

The norm of a compound creature $\cc$ is at most $\mdn$
(where we set $\mdn:=\mdn(\cc)$).
We assumed that $\sacksnor^{B(\mdn),\mdn}(2^{I_{\typesk,\mdn}})$
is at least $\mdn$.
Let $\cs$ be any Sacks column in $\cc$.
By Lemma~\ref{lem:martinsgloriouscorollary}(\ref{item:bla})  (using 
$I:=I_{\typesk,\mdn}$ and $I':= I_{\typesk,[\mdn+1,\mup)}$),
 there
is an $\tilde \cs\subseteq \cs$ with
$|\tilde\cs|\le 2^{I_{\typesk,\mdn}}$ and 
$\sacksnor^{B(\mdn),\mdn}(\tilde\cs)\ge \min(\mdn,\sacksnor^{B(\mdn),\mdn}(\cs))$.
So when we replace $\cs$ by $\tilde\cs$ in $\cc$, the norm of the compound
creature does not change. Furthermore, if we replace all Sacks columns in 
$\cc$ with appropriate stengthenings, the resulting compound creature $\cd$
will be $0$-purely stronger than $\cc$.\footnote{See Definition~\ref{def:purelystrongercompound}.}

This leads us to the following definitions.

\begin{definition}
  We call a Sacks column $\cs$ between $\ell$ and $n$ \emph{Sacks-pruned}
  if $|\cs|\le 2^{|I_{\typesk,\ell}|}$.
  A compound creature is \emph{Sacks-pruned} if all its Sacks columns are.
  A condition $q$ is \emph{Sacks-pruned} if $q(h)$ is Sacks-pruned for all $h \in \ww^q$.
  A condition $p$ is \emph{pruned} if it is Sacks-pruned and all compound creatures $p(h)$ have norm bigger than~$1$.
\end{definition}

\begin{definition}\label{def:purelystrongercondition}
A condition $q$ is \emph{purely stronger} (\emph{$r$-purely stronger}) 
than $p$, if $\ww^q=\ww^p$, $t^q = t^p$, and $q(\ell)$ is purely stronger ($r$-purely stronger)
than $p(\ell)$ for all $\ell\in\ww^q$.
(Note that this implies $q\le p$.)
\end{definition}

For every condition $p$ there is a Sacks-pruned condition $q$ which is $0$-purely stronger than $p$.
Given $p\in \QQQ$ Sacks-pruned, $\ell\in\ww^p$ sufficiently large, and $\eta\in\poss(p,{<}\ell)$,
the condition $q=p\wedge \eta<p$ is pruned.

In particular, we have the following. 
\begin{fact}\label{lem:variousprunedcrap}\ 
\begin{itemize}
\item If $p$ is pruned, then 
$|\supp(p(h))|<\maxwidth{h}$ for all $h\in\ww^p$.
\item 
The set of pruned conditions in $\QQQ$ is dense.
\end{itemize}
\end{fact}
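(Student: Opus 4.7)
For the first bullet, the plan is to simply unpack Definition~\ref{def:norm}: if $p$ is pruned, every $p(h)$ has $\nor(p(h))>1$, and since $\nor(p(h))$ is bounded above by the width norm $\widthnorm(\supp(p(h)))=\maxwidth{h}/|\supp(p(h))|$, we immediately get $|\supp(p(h))|<\maxwidth{h}$. No work beyond reading off the minimum in Definition~\ref{def:norm} is needed.

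For the density of pruned conditions, the plan is to carry out the two-step procedure already foreshadowed in the paragraphs preceding the Fact. Given an arbitrary $p\in\QQQ$, I would first produce a Sacks-pruned $p'$ which is $0$-purely stronger than $p$: at each $h\in\ww^p$ and each $\xi\in\supp(p,h)\cap\Xisk$, apply Lemma~\ref{lem:martinsgloriouscorollary}(\ref{item:bla}) with $I:=I_{\typesk,h}$ and $I':=I_{\typesk,[h+1,h^+)}$ (where $h^+$ is the $\ww^p$-successor of $h$) to the Sacks column $p(\xi,h)$, obtaining a Sacks-pruned $\tilde{\cs}\le p(\xi,h)$ with $\sacksnor^{B(h),h}(\tilde{\cs})\ge\min(\sacksnor^{B(h),h}(p(\xi,h)),\,h)$. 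Define $p'(h)$ by replacing each Sacks column in $p(h)$ by its pruned version (leaving subatoms, halving parameters, $\mdn$, $\mup$, and $\supp$ unchanged). Since by Assumption~\ref{asm:bigfattreesandcreatures} we have $\sacksnor^{B(h),h}(2^{I_{\typesk,h}})\ge h$, and since $\nor(p(h))\le\widthnorm(\supp(p(h)))\le h=\mdn(p(h))$, the Sacks-norm coordinate entering the minimum in Definition~\ref{def:norm} is unchanged, so $\nor(p'(h))\ge\nor(p(h))$. Thus $p'\le p$ (it is purely stronger) and $p'$ is Sacks-pruned.

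In the second step, use that $\lim_{h\in\ww^{p'}}\nor(p'(h))=\infty$ to fix some $\ell_0\in\ww^{p'}$ such that $\nor(p'(h))>1$ for all $h\in\ww^{p'}$ with $h\ge\ell_0$. Pick any $\eta\in\poss(p',{<}\ell_0)$ (this is possible by Assumption~\ref{asm:bigfattreesandcreatures}, which ensures the sets of possibilities are nonempty) and set $q:=p'\wedge\eta$, as in Definition~\ref{def:wedge1}. Then $\ww^q=\ww^{p'}\setminus\ell_0$ and $q(h)=p'(h)$ for every $h\in\ww^q$, so $q$ remains Sacks-pruned, every compound creature of $q$ has norm $>1$, and $q\le p'\le p$ by Fact~\ref{lem:martinwantsthis}. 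Hence $q$ is pruned and stronger than $p$, so pruned conditions are dense.

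The only step that is not immediate is checking that the Sacks-pruning in Step~1 does not drop the norm of the compound creature; this is the small obstacle, and it is resolved by the combination of Lemma~\ref{lem:martinsgloriouscorollary}(\ref{item:bla}) with the lower bound $\sacksnor^{B(h),h}(2^{I_{\typesk,h}})\ge h$ from Assumption~\ref{asm:bigfattreesandcreatures} together with the observation (already noted in a remark before Lemma~\ref{lem:jhwethewt}) that $\nor(p(h))\le\mdn(p(h))=h$.
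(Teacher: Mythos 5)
Your proposal is correct and mirrors the paper's own two-step argument exactly: first Sacks-prune via Lemma~\ref{lem:martinsgloriouscorollary}(\ref{item:bla}) without changing any compound norm (because every compound norm is bounded above by $\widthnorm\le\mdn$ and $\sacksnor^{B(\mdn),\mdn}(2^{I_{\typesk,\mdn}})\ge\mdn$), then increase the trunk past the point where all norms exceed~$1$; the first bullet is exactly the width-norm computation you give. The only cosmetic quibble is that the Sacks-norm coordinate itself can decrease when passing to $\tilde\cs$ — what is unchanged is the resulting compound norm, since the pruned Sacks norm still dominates $\widthnorm$ — but your conclusion $\nor(p'(h))\ge\nor(p(h))$ is correct and this is the same observation the paper makes.
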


\subsection{Gluing}

So far we have increased trunks to strengthen conditions, as well as
taking disjoint unions and pure strengthenings.
This subsection introduces two more methods of strengthening conditions.

\begin{definition}
A compound creature $\cd$ is the result of \emph{increasing the halving parameters} in
$\cc$, if $\cd$ and $\cc$ are identical except that for each $\mdn \leq \ell < \mup$ we may have $d(\cd,\ell) > d(\cc,\ell)$.

Analogously, we define a condition $q$ to be the result of \emph{increasing the
halving parameters} in $p$.
(Again, this implies $q\le p$.)
\end{definition}

\begin{definition}\label{def:properlystacked}
We call a finite sequence of compound creatures $\cc_1,\dots,\cc_n$ \emph{properly
stacked} if $\mup(\cc_i)=\mdn(\cc_{i+1})$ and $\supp(\cc_i)\subseteq
\supp(\cc_{i+1})$ for each $i < n$.
Given such a sequence, we can glue it together to get the new creature $\cd=\glue(\cc_1,\dots,\cc_n)$ with the following description.
\begin{itemize}
\item $\mdn(\cd)=\mdn(\cc_1)$ and $\mup(\cd)=\mup(\cc_n)$
  (i.e., vertically the creature lives on the union of the levels of the old creatures).
\item $\supp(\cd)=\supp(\cc_1)$ (i.e., the rectangle-shape of the new creature 
  is the result of taking the union of the old rectangles and cutting off the 
  stuff that sticks out horizontally beyond the base). 
\item For $\xi\in\supp(\cd)\cap\Xi_\typenonsk$ and subatomic sublevels 
   $\slu$ between  $\mdn(\cd) $ and $\mup(\cd)$,
  the subatom $\cd(\xi,\slu)$ is $\cc_i(\xi,\slu)$ for the appropriate $i$.
\item For $\xi\in\supp(\cd)\cap\Xi_\typesk$, the Sacks column
  $\cd(\xi)$ is defined as the product 
  $\cc_1(\xi)\otimes \cdots\otimes \cc_n(\xi)$.
\end{itemize}
\end{definition}
By the definition of the norm (see~\ref{def:norm}), 
the monotonicity of $B$ and $\maxposs$ (Assumption~\ref{asm:bigfattreesandcreatures}) and Lemma~\ref{lem:martinsgloriouscorollary}(\ref{item:two}),(\ref{item:three}) we get 
\[ \nor(\glue(\cc_1,\dots,\cc_n))\ge \min(\nor(\cc_1),\dots,\nor(\cc_n)).\]

This gives another way to strengthen a condition $p$: shrinking the set $\ww$.
\begin{definition}
Given a
condition $p$ and an infinite subset $U$ of $\ww^p$ such that
$\min(U)=\min(\ww^p)$, we 
say that $q$ results from \emph{gluing $p$ along $U$} if
\begin{itemize}
  \item $\ww^q=U$; 
  \item for $h\in \ww^q$,
    if $h=h_1<h_2<\cdots<h_n$ enumerates the
    elements of $\ww^p$ that are $\ge h$ and less than 
    the $\ww^q$-successor of $h$, then the compound creature $q(h)$ is
    $\glue(p(h_1),\dots,p(h_n))$; and
  \item the new parts of the trunk are compatible with $p$.
\end{itemize}
\end{definition}
Note that $q$ is not uniquely determined by $p$ and $U$, as in general there are many choices to increase
the trunk (in the last item).
Of course, any such resulting $q$ is stronger than $p$.

\begin{figure}[h]
\centering
\subfigure[]{%
\label{subfig:cond}
\scalebox{0.4}{\includegraphics{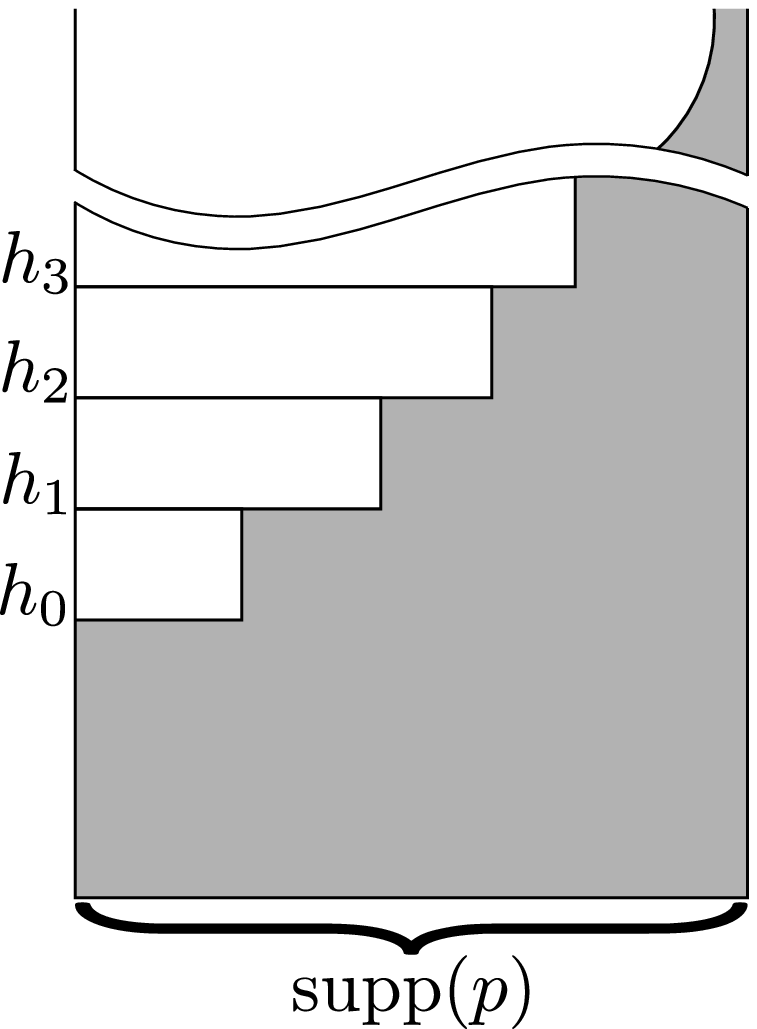}}
}
\quad
\subfigure[]{%
\label{subfig:cond-exttrunk}
\scalebox{0.4}{\includegraphics{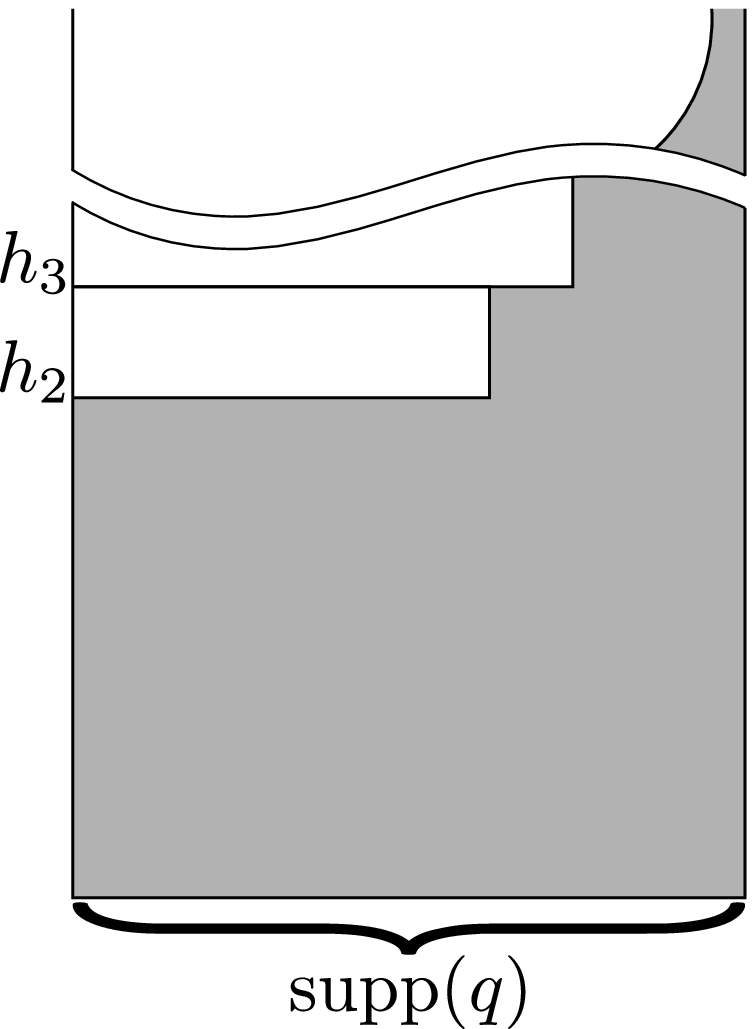}}%
}
\\
\subfigure[]{%
\label{subfig:cond-gluing}
\scalebox{0.4}{\includegraphics{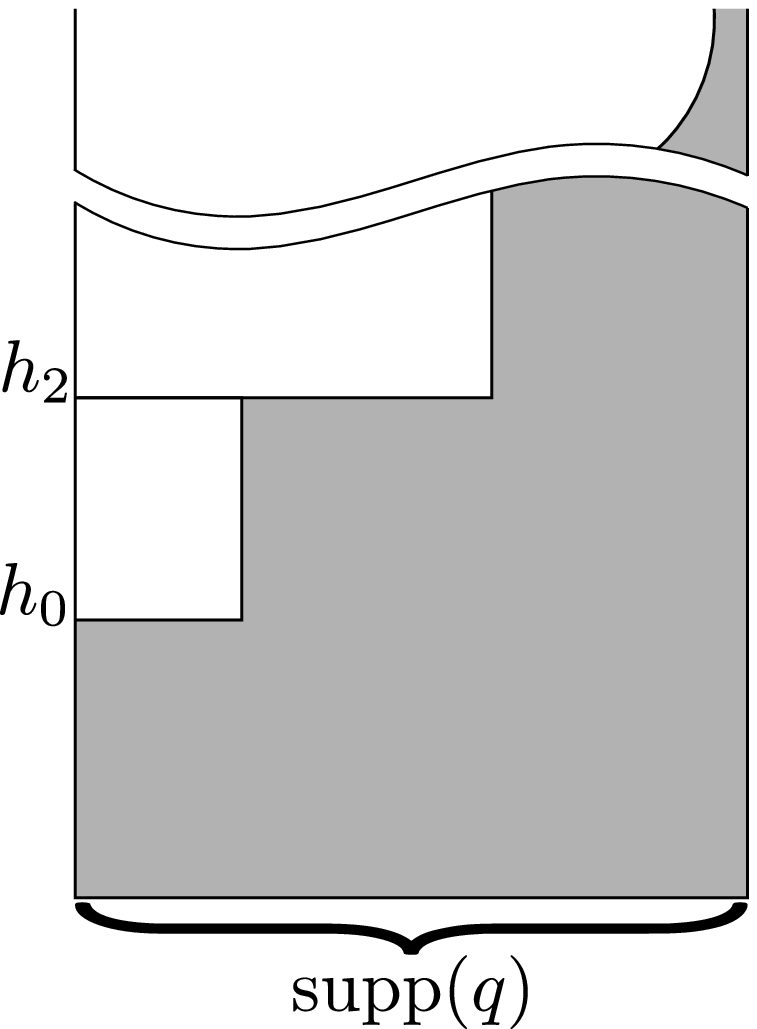}}%
}
\quad
\subfigure[]{%
\label{subfig:cond-disunion}
\scalebox{0.4}{\includegraphics{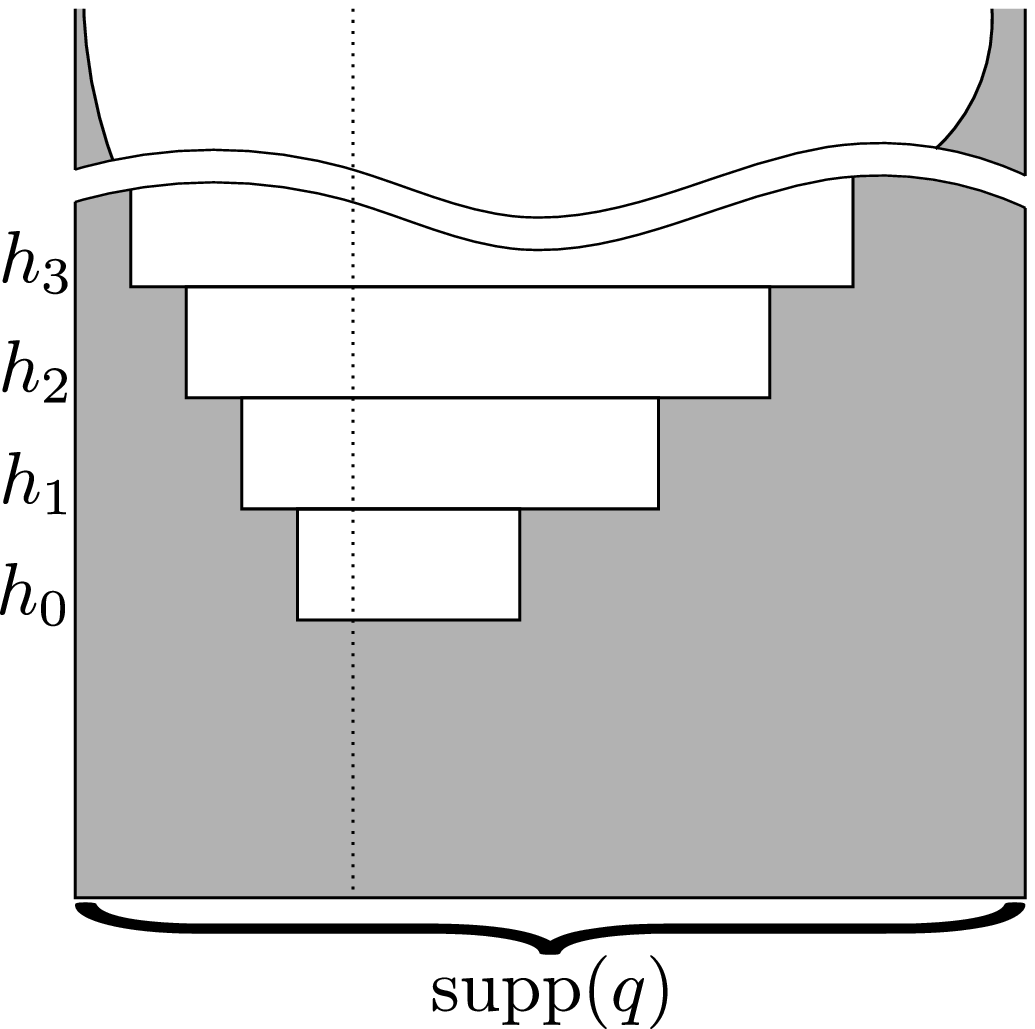}}%
}
\caption{\subref{subfig:cond} A schematic diagram of a condition $p$ of the forcing.  The $h_i$ indicate an increasing enumeration of $\ww^p$, while the shaded region represents the domain of the trunk function $t^p$.
\subref{subfig:cond-exttrunk} A condition $q = p \wedge \eta$, where $\eta \in \poss (p,{<}h_2)$. In particular, all of the compound creatures above level $h_2$ have been left unchanged, and the below level $h_2$ the condition $q$ consists entirely of trunk, with values determined by $\eta$.
\subref{subfig:cond-gluing} A condition $q$ obtained from $p$ by gluing the pairs of compound creatures $p(h_0), p(h_1)$ and $p(h_2), p(h_3)$.  Note that $\trklgth{q}{\eta} = h_2$ for any $\eta \in \supp(p)$ with $\trklgth{p}{\eta} = h_1$ (and similarly if $\trklgth{p}{\eta} = h_3$).
\subref{subfig:cond-disunion} A condition $q$ obtained as the ``disjoint union'' of $p$ and another condition (represented to the left of the dotted line) with the same $\ww$ and the same halving parameters at each level as $p$.
}
\label{fig:alltheconds}
\end{figure}

We have now seen five specific ways to strengthen a condition.
Actually, every $q\le p$ can be obtained from $p$ by a combination of these 
methods. (We will not use the following fact, nor the subsequent remark,
in the rest of the paper.) 

\begin{fact}
For $p,q\in \QQQ$, $q\leq p$ iff there are $p_1$, $p_2$, $p_3$ and $p_4$ such that:
\begin{enumerate}
  \item $p_1$ results from increasing the trunk in $p$, i.e.,
    $p_1=p\wedge\eta$ for some $\eta\in\poss(p,{<}\min(\ww^q))$
    (in fact, for the unique $\eta$ which is extended by $t^q$);
  \item $p_2\leq p_1$ results from gluing $p_1$ along $\ww^q$, as above.
  \item $p_3$ is purely stronger than $p_2$; 
  \item $p_4\leq p_3$ results from increasing halving parameters; and 
  \item $q$ is the naturally defined ``disjoint union'' of $p_4$ and some condition 
    $p'$ which has the same $\ww$ and halving parameters as $p_4$, $\supp(p')$ is 
    disjoint from $\supp(p_4)$, and which jointly satisfies ``modesty'' with $p_4$.
\end{enumerate}
\end{fact}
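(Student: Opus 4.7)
The ``if'' direction is immediate: each of the five operations (increasing the trunk, gluing along an infinite subset of $\ww$, purely strengthening, increasing halving parameters, and taking a suitable disjoint union) was shown earlier to produce a condition below the starting one. So the content is the ``only if'' direction, which I would prove by explicitly constructing the four intermediate conditions $p_1, p_2, p_3, p_4$ from the data of $q \leq p$.

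The plan is as follows. First, set $\ell_0 := \min(\ww^q)$. Since $q \leq p$, the trunk $t^q$ extends $t^p$ and for every $\xi \in \supp(p)$ the singleton $\poss(q,\xi,{<}\ell_0)$ lies inside $\poss(p,\xi,{<}\ell_0)$ (Definition~\ref{def:order}(\ref{item:wjk})). Hence there is a unique $\eta \in \poss(p,{<}\ell_0)$ determined by $t^q \restriction \supp(p)$; define $p_1 := p \wedge \eta$. By construction $\ww^{p_1} = \ww^p \setminus \ell_0$, $\supp(p_1) = \supp(p)$, the compound creatures of $p_1$ above $\ell_0$ are the same as those of $p$, and the trunk of $p_1$ agrees with $t^q$ restricted to $\supp(p)$ up to $\ell_0$.

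Next, let $p_2$ be obtained from $p_1$ by gluing along $\ww^q$. This is legitimate because $\ww^q \subseteq \ww^p \setminus \ell_0 = \ww^{p_1}$ and $\min(\ww^q) = \min(\ww^{p_1}) = \ell_0$; the compound creature $p_2(h)$ for $h \in \ww^q$ is $\glue(p_1(h_1),\ldots,p_1(h_n))$ where $h = h_1 < \cdots < h_n$ enumerates the elements of $\ww^p$ in $[h,h^+)$ with $h^+$ the $\ww^q$-successor of $h$. Now define $p_3$ by replacing, for each $h \in \ww^q$ and each $\xi \in \supp(p)$, the Sacks column of $p_2(h)$ at $\xi$ by the Sacks column $q(\xi,h)$ and each non-Sacks subatom $p_2(h)(\xi,\slu)$ by $q(\xi,\slu)$. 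Clauses (4) and (5) of Definition~\ref{def:order} guarantee that these replacements give subatoms/Sacks columns that are indeed stronger, so $p_3$ is purely stronger than $p_2$. Then let $p_4$ be obtained from $p_3$ by setting, for each $\ell \geq \ell_0$, the halving parameter to be $d(q,\ell) \geq d(p,\ell) = d(p_3,\ell)$; this is clause (6) of Definition~\ref{def:order}.

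Finally, let $p'$ be the ``rest'' of $q$: it has $\ww^{p'} = \ww^q$, the same halving parameters as $p_4$, and at each $h \in \ww^q$ the compound creature $p'(h)$ consists of the Sacks columns and subatoms of $q(h)$ at indices $\xi \in \supp(q) \setminus \supp(p)$, with the induced trunk. By condition (2) of Definition~\ref{def:order}, $\supp(q(h)) \cap \supp(p) = \supp(p(h)) = \supp(p_4(h))$, so the supports of $p_4(h)$ and $p'(h)$ are disjoint, and modesty at each subatomic sublevel of $q(h)$ translates into the joint modesty of $p_4(h)$ and $p'(h)$ (at any sublevel, $q(h)$ has at most one non-singleton non-Sacks subatom, which lies on exactly one side of the partition). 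The naturally defined ``disjoint union'' (in the sense of Lemma~\ref{lem:unioncreature}) of $p_4$ and $p'$ then reassembles $q$ on the nose, giving the required decomposition.

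The main obstacle is the bookkeeping at step~(3): one must check that after gluing, the Sacks column $p_2(\xi,h) = p_1(\xi,h_1) \otimes \cdots \otimes p_1(\xi,h_n)$ really does contain $q(\xi,h)$ as a subset, which is exactly the content of clause (5) of Definition~\ref{def:order}, and analogously that $q(\xi,\slu) \leq p_2(\xi,\slu) = p(\xi,\slu)$ at every non-Sacks subatomic sublevel above the trunk. The other delicate point is to verify that modesty is preserved along the way and that the disjoint-union step indeed reproduces the compound creatures of $q$ rather than just a weaker approximation; this is where one uses that $p_4$ already inherits from $q$ all of the Sacks columns, subatoms, and halving parameters on the $\supp(p)$ side.
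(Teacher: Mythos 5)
Your proposal is correct, and it is precisely the construction that the paper has in mind: the paper itself gives no proof of this \emph{Fact} (it is explicitly noted that the fact will not be used), but the surrounding \emph{Remark} points at exactly your sequence, including the observation that gluing must precede pure strengthening because glued Sacks columns are products whereas $q$'s columns need not be. Two small points worth making explicit. First, in the gluing step (2), the definition of ``gluing along $U$'' leaves the new portions of the trunk free (any $p$-compatible choice); to make the decomposition reassemble $q$ exactly you must pick those values from $t^q$ -- Definition~\ref{def:order}(\ref{item:wjk}) is what guarantees these are $p$-compatible. Second, step (5) degenerates when $\supp(q)=\supp(p)$: the ``rest'' $p'$ would have empty support, which is not a legal condition; one should either allow $p'$ to be the empty condition $\emptyset$ (with the disjoint union reducing to $p_4$) or treat that case separately -- this is a small imprecision inherited from the statement of the Fact, not an error in your argument. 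It would also be worth a line verifying that $p_3$ and $p_4$ really are conditions (i.e., that the compound norms still diverge), which follows because $p_3(h)$ is $q(h)$ restricted to $\supp(p(h))\subseteq\supp(q(h))$ with halving parameters $d(p,\cdot)\le d(q,\cdot)$, so $\nor(p_3(h))\ge\nor(q(h))\to\infty$, and likewise for $p_4$.
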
 

\begin{remark}
\begin{itemize}
\item
Every $q$ obtained by the above construction is stronger than $p$,
provided it is a condition. Note that constructions (1), (2) and (5) always result in conditions (for (5), this is the same argument as in~\ref{lem:unioncreature}),
whereas constructions (3) and (4) will generally decrease the norms of the compound creatures in an uncontrolled fashion. 
So to get a condition, we have to make sure 
that the norms of the new compound creatures still converge
to infinity. Also, to be able to find a suitable $p'$ in
(5), we should make enough room for modesty in (3).
\item
The order is not entirely irrelevant, as gluing (2) has to be done before pure
strengthening (3), since glued Sacks columns always have the form of products, 
whereas generally the Sacks columns in $q$ will not be of this form.
\end{itemize}
\end{remark}

We will later use the following specific gluing construction.

\begin{lem}\label{lem:blab}
  Assume that
  $\cc_0, \dots, \cc_n$ is a properly stacked sequence of compound creatures, 
  $n>0$, and $\nor(\cc_i)\ge M$ for all $i\le n$.  
  Pick for each  $i<n$  some compound creatures $\cd_i$, purely stronger
  than $\cc_i$, such that 
  $\cd_i$ and $\cc_i$ agree on the lim-inf part
  (but $\cd_i$ could consist of singletons on the lim-sup
   and the Sacks part). 
   Set $\cd_n=\cc_n$.
  Then $\glue(\cd_0,\dots,\cd_n)$ has norm $\ge M$ as well.
\end{lem}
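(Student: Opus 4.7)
Set $\cd := \glue(\cd_0, \ldots, \cd_n)$ and $\mdn := \mdn(\cc_0) = \mdn(\cd)$. Recall from Definition~\ref{def:norm} that $\nor(\cd)$ is the minimum of four quantities: the width norm, the Sacks norms (indexed by $\xi \in \supp(\cd) \cap \Xisk$), the lim-sup norms (indexed by $\xi \in \supp(\cd) \cap \Xi_\typenonsk$), and the lim-inf norms (indexed by level $h$). My plan is to verify each of the four is $\ge M$.

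The width norm is immediate: gluing preserves $\supp$ and $\mdn$ of the first creature, so $\widthnorm(\supp(\cd)) = \widthnorm(\supp(\cc_0)) \ge \nor(\cc_0) \ge M$. For the Sacks norms, fix $\xi \in \supp(\cd) \cap \Xisk$; since $\supp(\cc_0) \subseteq \supp(\cc_n)$ and $\cd(\xi) = \cd_0(\xi) \otimes \cdots \otimes \cd_n(\xi)$, item (\ref{item:three}) of Lemma~\ref{lem:martinsgloriouscorollary} gives $\sacksnor^{B(\mdn),\mdn}(\cd(\xi)) \ge \sacksnor^{B(\mdn),\mdn}(\cd_n(\xi))$. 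Since $\cd_n = \cc_n$ and $\mdn \le \mdn(\cc_n)$ (with correspondingly monotone $B$ by Assumption~\ref{asm:bigfattreesandcreatures}), item (\ref{item:two}) of the same lemma gives $\sacksnor^{B(\mdn),\mdn}(\cc_n(\xi)) \ge \sacksnor^{B(\mdn(\cc_n)),\mdn(\cc_n)}(\cc_n(\xi)) \ge \nor(\cc_n) \ge M$.

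For the lim-sup norms, fix $\xi \in \supp(\cd) \cap \Xi_\typenonsk$. By definition $\lsnor(\cd,\xi)$ is the maximum of $\nor(\cd(\xi,h))$ over all $h \in [\mdn,\mup(\cd))$. Since $\cd_n = \cc_n$, the atoms $\cd(\xi,h) = \cc_n(\xi,h)$ are unchanged on $h \in [\mdn(\cc_n),\mup(\cc_n))$, so the max is at least $\lsnor(\cc_n,\xi) \ge \nor(\cc_n) \ge M$. For the lim-inf norms, fix a level $h \in [\mdn,\mup(\cd))$ and let $i \le n$ be such that $h \in [\mdn(\cc_i),\mup(\cc_i))$. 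Since $\cd_i$ agrees with $\cc_i$ on all lim-inf subatoms and on the halving parameter at $h$, writing $N := \min\{\nor(\cd(\xi,h)): \xi \in \supp(\cd,h) \cap \Xili\}$ and using that $\supp(\cd,h) = \supp(\cc_0) \subseteq \supp(\cc_i)$, we get $N \ge \min\{\nor(\cc_i(\xi,h)): \xi \in \supp(\cc_i,h) \cap \Xili\}$ (the minimum is over a potentially larger set for $\cc_i$). Combined with $\maxposs({<}\mdn) \le \maxposs({<}\mdn(\cc_i))$, this yields
\[
  \linor^{\maxposs({<}\mdn)}(\cd,h) \ge \linor^{\maxposs({<}\mdn(\cc_i))}(\cc_i,h) \ge \nor(\cc_i) \ge M.
\]

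The only subtlety is tracking the parameter shifts (from $\mdn(\cc_i)$ down to $\mdn$) in the Sacks and lim-inf norms; but in both cases monotonicity works in our favor—smaller $\mdn$ means smaller $B$ and smaller $\maxposs$, and by Lemma~\ref{lem:martinsgloriouscorollary}(\ref{item:two}) and the fact that $\maxposs$ appears in the denominator of $\linor$, this can only increase the relevant norms. Combining the four bounds gives $\nor(\cd) \ge M$, as required.
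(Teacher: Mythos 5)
Your proof is correct and takes essentially the same approach as the paper's (which is a two-line sketch: Sacks and lim-sup norms survive because $\cd_n = \cc_n$, lim-inf norms survive because nothing changed on the lim-inf part). You have simply made the monotonicity bookkeeping for $B$, $\mdn$, and $\maxposs$ explicit, which is consistent with the paper's appeal to Lemma~\ref{lem:martinsgloriouscorollary}(\ref{item:two}),(\ref{item:three}).
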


\begin{proof}
  The lim-sup norm and the Sacks-norms will be large because $\nor(\cd_n)=\nor(\cc_n)\ge M$.
  The lim-inf norm will be large because we did not change anything on the lim-inf part.
\end{proof}

\subsection{Projections and complete subforcings}

\begin{lem}\label{lem:quotient}
Assume that $\Xili\subseteq\Xi'\subseteq \Xi$.\footnote{If we do not assume 
$\Xi'\supseteq \Xili$ we get problems with the lim-inf norm when we combine the 
increased halving parameters of $q'$ with the lim-inf creatures in $p_1$.}
Let $\QQQ_{\Xi'}\subseteq \QQQ$ consist of all $p\in\QQQ$ with $\supp(p)\subseteq \Xi'$.
Then $\QQQ_{\Xi'}$ is a complete subforcing of $\QQQ$, and the restriction map is a projection
on an open dense subset.
\\
Of course, $\QQQ_{\Xi'}$ will satisfy all the properties that we will prove generally for
$\QQQ$ (as $\QQQ_{\Xi'}$ is defined just like $\QQQ$, only with a different underlying index set).
\end{lem}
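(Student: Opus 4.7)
The plan is to show that the restriction map $\pi(p) := p \restriction \Xi'$ is a projection from the open dense subset
\[
  D := \{ p \in \QQQ : \supp(p(h)) \cap \Xi' \neq \emptyset \text{ for every } h \in \ww^p \}
\]
onto $\QQQ_{\Xi'}$; this is well known to imply that $\QQQ_{\Xi'}$ is a complete subforcing. Openness of $D$ is immediate since supports only grow under $\leq$. For density: given $p \in \QQQ$, fix any $\xi_0 \in \Xi'$ (nonempty, as $\Xili \subseteq \Xi'$ and $\Xili$ is nonempty), first strengthen $p$ so that $\xi_0 \in \supp(p)$, as in the proof of Lemma~\ref{lem:unioncreature} using Assumption~\ref{asm:bigfattreesandcreatures}, then increase the trunk past $\trklgth{p}{\xi_0}$ so that $\xi_0 \in \supp(p(h))$ for every remaining $h \in \ww^p$.

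I then verify that $\pi : D \to \QQQ_{\Xi'}$, defined by $\ww^{\pi(p)} := \ww^p$ and $\pi(p)(h) := p(h) \restriction \Xi'$, is well-defined and order-preserving. The restricted compound creature has nonempty support (by definition of $D$), inherits modesty (restriction only removes non-singletons), and has norm at least $\nor(p(h))$. The last point uses $\Xili \subseteq \Xi'$ essentially: the lim-inf norm at each level depends only on $\supp(p(h)) \cap \Xili \subseteq \Xi'$, which is fully retained; the width norm can only increase when the support shrinks; and the Sacks and per-index lim-sup norms are unaffected at surviving indices. Order preservation on $D$ is then routine from Definition~\ref{def:order}.

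The heart of the proof is the lifting property: given $p \in D$ and $q' \leq \pi(p)$ in $\QQQ_{\Xi'}$, construct $q \leq p$ in $D$ with $\pi(q) \leq q'$. Since $\ww^{q'} \subseteq \ww^{\pi(p)} = \ww^p$, first increase the trunk of $p$ by some $\eta$ which equals $t^{q'}$ on $\Xi'$ and is an arbitrary compatible choice on $\Xi \setminus \Xi'$, to align $\min \ww^p$ with $\min \ww^{q'}$; then glue $p$ along $\ww^{q'}$ to obtain $\bar p \leq p$ with $\ww^{\bar p} = \ww^{q'}$. For each $h \in \ww^{q'}$, apply Lemma~\ref{lem:unioncreature} to the disjoint-support pair $q'(h)$ and $\bar p(h) \restriction (\Xi \setminus \Xi')$ (forcing common halving parameter $d(q',h) \geq d(p,h)$ is harmless, since the non-$\Xi'$ side has no lim-inf index and is thus unaffected by halving): this yields a compound creature $q(h)$ with
\[
  \nor(q(h)) \geq \tfrac{1}{2}\min(\nor(q'(h)), \nor(\bar p(h))) - 1,
\]
whose restriction to $\Xi'$ is purely stronger than $q'(h)$ and whose restriction to $\Xi \setminus \Xi'$ is purely stronger than $\bar p(h) \restriction (\Xi \setminus \Xi')$. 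The trunk $t^q$ is defined by extending $t^{q'}$ on $\Xi'$ and $t^{\bar p}$ on $\Xi \setminus \Xi'$.

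The main obstacle, absorbed into Lemma~\ref{lem:unioncreature}, is modesty: at a subatomic sublevel $\slu$, $q'(h)$ and $\bar p(h)$ may each contribute a non-singleton subatom (at distinct indices, one in $\Xi'$ and one outside), violating Definition~\ref{def:cc}\,(\ref{item:modesty}). The key observation in our setting is that any such non-singleton in $\bar p(h) \restriction (\Xi \setminus \Xi')$ necessarily sits at a lim-sup index (since $\Xili \subseteq \Xi'$), so the Lemma~\ref{lem:splitJ}-based modesty fix used in Lemmas~\ref{lem:jhwethewt} and~\ref{lem:unioncreature} only affects lim-sup atomic norms, not the lim-inf norm that lives entirely on the $\Xi'$-side. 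Since $\nor(q'(h)) \to \infty$ (as $q'$ is a condition) and $\nor(\bar p(h)) \to \infty$ (gluing preserves the minimum of component norms), the displayed bound shows $\nor(q(h)) \to \infty$, so $q \in \QQQ$; clearly $q \in D$ since $\supp(q(h)) \cap \Xi' \supseteq \supp(q'(h)) \neq \emptyset$. A direct check against Definition~\ref{def:order} gives $q \leq \bar p \leq p$ and $q \leq q'$, whence $\pi(q) \leq q'$ by the order preservation of $\pi$. Completeness of $\QQQ_{\Xi'}$ as a subforcing is then standard: any $p \in \QQQ$ has a strengthening $p' \in D$, and $\pi(p')$ serves as a reduction of $p$ in $\QQQ_{\Xi'}$ via the lifting just established.
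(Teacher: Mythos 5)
Your proof follows essentially the same strategy as the paper's: given $p$ in a dense set and $q'\leq p\restriction\Xi'$ in $\QQQ_{\Xi'}$, you increase the trunk to $\min(\ww^{q'})$, glue $p$ along $\ww^{q'}$, match halving parameters (harmless for the non-$\Xi'$ part precisely because $\Xili\subseteq\Xi'$), and take a Lemma~\ref{lem:unioncreature}-style disjoint union with $q'$. Two of your clarifications are actually worth noting: first, the paper defines the dense set as $\{p:\supp(p)\cap\Xili\neq\emptyset\}$, but on that set the restriction $p\restriction\Xi'$ need not be a condition (the bottom compound creatures can have empty support after restriction); your stronger requirement that $\supp(p(h))\cap\Xi'\neq\emptyset$ for every $h\in\ww^p$ fixes this and is easily seen to be open and dense. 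Second, the paper asserts that the disjoint union $q$ satisfies $q\restriction\Xi'=q'$ exactly, but the modesty repair (Lemma~\ref{lem:splitJ}) in Lemma~\ref{lem:unioncreature} can shrink subatoms on the $\Xi'$-side when both halves are active at the same sublevel, so what one actually obtains is that $q\restriction\Xi'$ is purely stronger than $q'$; you correctly observe that this is all the projection property needs.
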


\begin{proof}
The dense set $D$ is the set of all conditions $p$ with 
$\supp(p)\cap\Xili\neq\emptyset$.
Fix $p\in D$, set $p':=p\restriction\Xi'$, and assume that $q'\leq p'$ is in $\QQQ'$.
It is enough to show that $q'$ is compatible with $p$.
To do this we will construct $q\leq p$ such that $q'=q\restriction\Xi'$.

Set $p_1:=p\restriction (\Xi\setminus\Xi')$. Increase the trunk of $p_1$ to $\min(\ww^{q'})$, glue along $\ww^{q'}$, and increase the halving parameters to match those of $q'$ to get a condition $q_1\leq p_1$ with $\ww^{q_1}=\ww^{q'}$.\footnote{As $\supp(p_1) \cap \Xili = \emptyset$ it follows that increasing the halving parameters does not affect the norms of the compound creatures, and therefore $q_1$ is a condition of $\QQQ$.}
Letting $q$ be the disjoint union of $q_1$ and $q'$, it follows that $q$ is a condition of $\QQQ$, and clearly $q \restriction \Xi' = q'$.
\end{proof}

\section{An inductive construction of \texorpdfstring{$\QQQ$}{Q}}\label{sec:complete.construction}

We will now review the ``framework'' from Definition~\ref{def:frame}, finally giving all the
assumptions (including the previous Assumption~\ref{asm:bigfattreesandcreatures}) that are required to make the
forcing proper.

In the following construction, we have the freedom to choose the following
(as long as the assumptions are satisfied).

\begin{itemize}
  \item $\Xi = \Xils \cup \Xili \cup \Xisk$, as in Definition~\ref{def:frame}.
  \item Natural numbers $H({<}\slu)$ (for each sublevel $\slu$)
    such that $H$ is increasing.
   \\ \textit{Remark.} The function $H$ gives us the possibility to impose additional 
    demands on the bigness $B$ (as given in \eqref{eq:cc4b}, below).  It is not needed to get properness and $\omega^\omega$-bounding, but will be used
later\footnote{Here is a very informal description of how $H$ will be used. 
The basic requirement is that at each sublevel $\slu$ we have bigness (namely $B(\slu)$)  which is large with respect to everything that happened below.  However, the notion of ``large with respect to'' will slightly  depend on 
the actual construction that increases the relevant cardinal characteristic.  The parameter $H$ will allow us to accommodate these different interpretations.  The function $H$ will be used as a parameter when defining ``rapid reading'' in Definition~\ref{def:continuousreading}.}
in our specific constructions.
  \item For each $\xi \in \Xinonsk$ and each subatomic sublevel $\slu$ the subatomic family $\cK_{\xi,\slu}$ living on some finite set 
     $\POSS_{\xi,\slu}$.
\end{itemize}

The other parameters are determined by the construction.
\begin{itemize}
  \item  Natural numbers $\maxposs({<}\slu)$ for each sublevel $\slu$.
\\
This will turn out to be an upper bound to the cardinality 
 of $\poss(p,{<}\slu)$ for any pruned condition $p$.
 \item For each sublevel $\slu$, we set
\begin{equation}\label{eq:cc4b}
  B(\slu):=2^{H({<}\slu)\cdot\maxposs({<}\slu)}
\end{equation}
(and we set $B((0,-1)):=2$).
$B(\slu)$ is the bigness required for the subatoms (or Sacks columns) at $\slu$.
  \item The Sacks intervals $I_{\typesk,\ell}$ and subatomic index sets $J_\ell$, for each $\ell\in\omega$, as in Definition~\ref{def:frame}.
\end{itemize}
Note that, as usual,
   for a Sacks  sublevel $\slu = (\ell,-1)$ we may write $B(\ell)$ for 
   $B(\slu)$.   We similarly use $\maxposs({<}\ell)$ and $H({<}\ell)$.

By induction of $\ell$ we now make the following definitions and requirements.
(We also set the ``initial values'' $\maxposs({<}(0,-1)):=1$ and $I_{\typesk,-1}=\{-1\}$.)
\begin{construction}\ 
  \begin{enumerate}[$(*1)$]
    \item
          We require that $H({<}\ell)>\maxposs({<}\ell)+\ell+2$.
    \item
          \textit{The Sacks sublevel.}
          We let $I_{\typesk,\ell}$ be the interval starting at $\max(I_{\typesk,\ell-1})+1$
          and of minimal size such that 
          $\sacksnor^{B(\ell),\ell}(2^{I_{\typesk,\ell}})\ge \ell$.
          \\
          This gives us ``bigness'' in the form of Lemma~\ref{lem:martinsgloriouscorollary}(\ref{item:big}) for $B:=B(\ell)$.
   \item
          We set $\maxposs({<}(\ell,0)):=\maxposs({<}\ell)\cdot 2^{|I_{\typesk,\ell}|\cdot \maxwidth{\ell}}$.
    \item We set $J_\ell:=3^{\maxwidth{(\ell+1)}\cdot 2^{\ell\cdot\maxposs({<}\ell)}}$.  So $\mu^{\maxwidth{\ell}}(J_\ell)=2^{\ell\cdot \maxposs({<}\ell)}$.\footnote{$\mu^\ell$ is defined in~\ref{def:mudef}.}
    \item\label{item:sa} \textit{The subatomic sublevels.} By induction on $j\in J_\ell$ we now deal with the sublevel $\slu=(\ell,j)$.
      \begin{enumerate}
        \item  For each $\xi\in\Xinonsk$, we require that $\cK_{\xi,\slu}$ is a subatomic family living on some finite set $\POSS_{\xi,\slu}$.
        \item\label{si:sanorm}  For each $\xi\in\Xinonsk$, we require that there is a subatom $x\in \cK_{\xi,\slu}$ with norm at least $2^{\ell\cdot \maxposs({<}\ell)}$. 
        \item\label{si:sabig}  For each $\xi\in\Xinonsk$, we require that $\cK_{\xi,\slu}$ is $B(\slu)$-big.
        \item\label{si:sabound}  We require that there is a uniform bound $M(\slu)=\max(\{|\POSS_{\xi,\slu}|:\, \xi\in\Xinonsk\}) $. Then we set, for 
        $\slv$ the successor sublevel of $\slu$,  
\[ 
\maxposs({<}\slv):=\maxposs({<}\slu)\cdot {M(\slu)^{\maxwidth{\ell+1}}}.
\]
               (In particular this defines $\maxposs({<}(\ell+1,-1))$ if $\slu=(\ell,J_\ell-1)$.)
       \end{enumerate}
  \end{enumerate}         
\end{construction}

The assumptions guarantee that the previous 
Assumption~\ref{asm:bigfattreesandcreatures} is satisfied (in particular
that there are compound creatures with norm $\mdn$,
and that $\mathbb Q\neq \emptyset$).

By induction, we immediately get the following (which is the reason for the name
``maxposs'').

\begin{fact} \label{fact:p.pruned} Let $p$ be pruned.
  Then $|\poss(p,{<}\slu)|\le \maxposs({<}\slu)$ for $\slu\in \sublevels(p)$.
  In particular, $|\poss(p,{<}h)|\le \maxposs({<}h)$ for $h\in\ww^p$.
\end{fact}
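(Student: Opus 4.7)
The plan is a straightforward induction on the sublevel $\slu$ (in the lexicographic order, which has order type $\omega$), showing the sharper statement that $|\poss(p,{<}\slu)|\le\maxposs({<}\slu)$ for \emph{every} sublevel $\slu$, not just those in $\sublevels(p)$. By Fact~\ref{lem:blabla} we may as well work throughout with $\poss'$ in place of $\poss$, since the bijection $\iota$ preserves cardinality. The base case is $\slu=(0,-1)$: then $\poss'(p,{<}\slu)$ is an empty product, of cardinality $1$, matching the initial value $\maxposs({<}(0,-1))=1$.

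For the inductive step, let $\slv$ be the successor of $\slu$ in the lexicographic order. By Fact~\ref{lem:trivialdd},
\[
  |\poss'(p,{<}\slv)| \;=\; |\poss'(p,{<}\slu)|\cdot |\poss'(p,{=}\slu)|
\]
if $\slu\in\sublevels(p)$, while $|\poss'(p,{<}\slv)|=|\poss'(p,{<}\slu)|$ otherwise (and the desired bound is immediate from monotonicity of $\maxposs$). So the task reduces to bounding $|\poss'(p,{=}\slu)|$ in each of the two ``active'' cases.

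If $\slu=(\ell,-1)$ is a Sacks sublevel in $\sublevels(p)$ (so $\ell\in\ww^p$), then $\poss'(p,{=}\slu)$ is the product of the Sacks columns $p(\xi,\ell)$ over $\xi\in\supp(p,\ell)\cap\Xisk$. Since $p$ is pruned, each such column has at most $2^{|I_{\typesk,\ell}|}$ elements, and the width norm of $p(\ell)$ exceeds $1$, giving $|\supp(p,\ell)|<\maxwidth{\ell}$. Hence $|\poss'(p,{=}\slu)|\le 2^{|I_{\typesk,\ell}|\cdot\maxwidth{\ell}}$, and step $(*3)$ gives exactly $\maxposs({<}\slu)\cdot 2^{|I_{\typesk,\ell}|\cdot\maxwidth{\ell}}=\maxposs({<}(\ell,0))=\maxposs({<}\slv)$. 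If $\slu=(\ell,j)$ is a subatomic sublevel in $\sublevels(p)$, then by the modesty clause of Definition~\ref{def:cc} there is a unique active index $\xi$, and $\poss'(p,{=}\slu)=\poss(p(\xi,\slu))\subseteq\POSS_{\xi,\slu}$ has size at most $M(\slu)$; step $(*5)(\ref{si:sabound})$ then yields $\maxposs({<}\slu)\cdot M(\slu)\le\maxposs({<}\slu)\cdot M(\slu)^{\maxwidth{\ell+1}}=\maxposs({<}\slv)$, with considerable slack.

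The ``in particular'' clause follows by identifying each $h\in\ww^p$ with the Sacks sublevel $(h,-1)$. The only mild subtlety is bookkeeping: keeping careful track of which sublevels are ``active'' in $p$ (and hence contribute a non-trivial factor), and matching the two contributions to the corresponding clauses of the inductive construction; no step involves anything beyond the pruned/Sacks-pruned width and size bounds together with the modesty condition.
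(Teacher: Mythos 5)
Your proof is correct and is essentially the induction the paper asserts (the paper itself only says ``By induction, we immediately get the following'' without spelling out the details). You supply exactly the intended argument: induct on sublevels via $\poss'$ and Fact~\ref{lem:trivialdd}, bounding the Sacks factor by $2^{|I_{\typesk,\ell}|\cdot\maxwidth{\ell}}$ using Sacks-pruning and the width-norm bound, and the subatomic factor by $M(\slu)$ using modesty, matching the defining recursion for $\maxposs$ in steps $(*3)$ and $(*5)(\text{d})$.
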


The following shows that each $p(\slu)$ is $B(\slu)$-big.
\begin{fact}
\label{fact:bigisramsey} 
    Let $p$ be a pruned condition, and let $\slu$
    be an active sublevel of $p$ (which can be Sacks or subatomic).

   Then whenever $F:   \poss'(p,{=}\slu)\to B(\slu)$ is a coloring, 
   there is a strengthening $q(\slu)$ of $p(\slu)$
   (i.e., either $q(\slu)$ is a subatom stronger than $p(\slu)$, or $q(\slu)$
   is a sequence of Sacks columns such that each one is stronger than the according
   column in $p(\slu)$)
   such that the subatomic norm (or, each Sacks norm) 
   decreases by at most $1$ and such that $F\restriction \poss'(q(\slu))$\footnote{Here $\poss'(q(\slu))$ is either $\poss(q(\slu))$ if $\slu$ is a subatomic sublevel, or the product of the Sacks columns from $q(\slu)$ if $\slu$ is a Sacks sublevel.} is constant.
\end{fact}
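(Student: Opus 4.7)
The plan is to split on whether $\slu$ is subatomic or the Sacks sublevel, and in each case to reduce directly to a ``bigness'' statement that was already established: Definition~\ref{def:big} / requirement~$(*5)$\eqref{si:sabig} for the subatomic case, and Lemma~\ref{lem:martinsgloriouscorollary}(\ref{item:big}) for the Sacks case. The main point to verify is that the bigness parameter we have available ($B(\slu)$) is at least as large as what the coloring $F$ demands, which holds by construction.

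First I would handle the subatomic case. If $\slu=(\ell,j)$ is a subatomic sublevel, then by modesty there is a unique active index $\xi$ and $p(\slu)$ is the subatom $p(\xi,\slu)\in \cK_{\xi,\slu}$, with $\poss'(p,{=}\slu)=\poss(p(\slu))$. Since $p$ is pruned, $\slu\in\sublevels(p)$ lies above $\min(\ww^p)$, so $\cK_{\xi,\slu}$ is the family chosen at stage~\eqref{item:sa} of the construction; in particular it has $B(\slu)$-bigness by $(*5)$\eqref{si:sabig}. Applying this bigness to $F$ directly produces a subatom $q(\slu)\le p(\slu)$ with $F\restriction\poss(q(\slu))$ constant and $\nor(q(\slu))\ge\nor(p(\slu))-1$, which is exactly what is required.

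Next I would handle the Sacks case $\slu=(\ell,-1)$, $\ell\in\ww^p$. Here $p(\slu)$ is the tuple $(p(\xi,\ell))_{\xi\in S}$ with $S:=\supp(p,\ell)\cap\Xisk$, and $\poss'(p,{=}\slu)=\prod_{\xi\in S}p(\xi,\ell)$. Since $p$ is pruned, Fact~\ref{lem:variousprunedcrap} gives $|S|\le|\supp(p,\ell)|<\maxwidth{\ell}$. Let $N:=\min_{\xi\in S}\sacksnor^{B(\ell),\ell}(p(\xi,\ell))$. If $N=0$ the conclusion is trivial: replace each $p(\xi,\ell)$ by a singleton branch, so the product of possibilities is a singleton on which $F$ is constant and the Sacks norms cannot drop by more than $1$. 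If $N\ge 1$, I want to apply the bigness statement Lemma~\ref{lem:martinsgloriouscorollary}(\ref{item:big}) with parameters $B=B(\ell)$, $m=\ell$, $n=N-1$. As stated this lemma takes a product of $\maxwidth{\ell}$ many columns, whereas $|S|$ may be strictly smaller; I would bridge this gap in either of two essentially equivalent ways: pad $p(\slu)$ with $\maxwidth{\ell}-|S|$ dummy columns $2^{I_{\typesk,[\ell,\mup)}}$ (these have Sacks norm $\ge\ell\ge N$ by $(*2)$ and Lemma~\ref{lem:martinsgloriouscorollary}(\ref{item:three})), extend $F$ to ignore the dummy coordinates, and then restrict the resulting homogeneous tuple back to $S$; alternatively invoke the monotonicity of $f$ in its first argument (Definition~\ref{def:martinsgloriousdef}) directly, since $\splitnor(p(\xi,\ell))\ge F^{B(\ell)}_{\ell}(N)=f(\maxwidth{\ell},F^{B(\ell)}_{\ell}(N-1),B(\ell))\ge f(|S|,F^{B(\ell)}_{\ell}(N-1),B(\ell))$. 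Either way, the lemma produces Sacks columns $q(\xi,\ell)\le p(\xi,\ell)$ with $\sacksnor^{B(\ell),\ell}(q(\xi,\ell))\ge N-1$ on which $F$ is constant.

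The main (minor) obstacle is just this bookkeeping mismatch between $|S|$ and $\maxwidth{\ell}$ in the Sacks case; the mathematical content is already packaged in Lemma~\ref{lem:martinsgloriouscorollary}(\ref{item:big}) and in the choice of $B(\slu)$ via~\eqref{eq:cc4b} (which is much larger than the number of colors $F$ can ever use), so no further delicate estimate is needed.
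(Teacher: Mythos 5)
The paper states this as a ``Fact'' with no displayed proof, treating it as immediate from the inductive construction of Section~3, so there is no explicit paper argument to compare against; your route (split into the subatomic and Sacks cases and reduce to the bigness already built into the construction) is the intended one. The subatomic case is exactly right. In the Sacks case both of your bridges across the mismatch between $|S|$ and $\maxwidth{\ell}$ work; the second one (using monotonicity of $f$ in its first argument plus the footnote to Definition~\ref{def:martinsgloriousdef} that allows arbitrary trees of the stated splitting size) is cleaner, and is precisely what the definition of $F^B_m$ was set up to deliver.

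One small imprecision: in your $N=0$ branch, replacing each $p(\xi,\ell)$ by a singleton does \emph{not} by itself guarantee that each individual Sacks norm drops by at most $1$ --- a column that happened to have positive norm would drop all the way to $0$. The rescue is that this branch is vacuous: for a pruned $p$ one has $\nor(p(\ell))>1$ for every $\ell\in\ww^p$, and by Definition~\ref{def:norm} the compound norm is a lower bound for each $\sacksnor(p(\xi,\ell))$, so in fact $N\geq 2$. Thus only the $N\geq 1$ case ever occurs and your main argument carries it. It would be cleaner to observe this up front and drop the $N=0$ case altogether.
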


As $B(\slu)$ is much larger than $\maxposs({<}\slu)$, we also get a version of
``compound bigness'' (we will not directly use the following version, but we
will use similar constructions). First note that a function $G: \poss'(p,{\le}\slu)\to H({<}\slu)$ may be interpreted as
$F: \poss'(p,{=}\slu)\to H({<}\slu)^Y$ for $Y:=\poss'(p,{<}\slu)$ (cf.~\ref{lem:trivialdd}).
As $|\poss'(p,{<}\slu)|\le \maxposs({<}\slu)$, and $B(\slu)$ is big with respect to
$\maxposs({<}\slu)$ and $H({<}\slu)$, we can use the previous item and strengthen
$p(\slu)$ to make $G$ independent of the possibilities at $\slu$.

Iterating this downwards we get the following.
\begin{fact}
Let $p$ be pruned, and let $\slv < \slu$ be active sublevels of $p$.
\begin{itemize}
\item
   If $G: \poss'(p,{<}\slu)\to H({<}\slv)$ is a coloring, then we can strengthen the $p(\slu')$
   to $q(\slu')$ for $\slv\leq \slu'<\slu$, decreasing all subatomic/Sacks norms (and therefore also all compound norms)
   by at most $1$, such that $G$ restricted to $\poss'(q,{<}\slu)$ 
   only depends on $\poss'(q,{<}\slv)$.
\item
   In particular, if $G: \poss'(p,{<}\slu)\to 2$, then we can strengthen 
   $p$ to $q$ as above such that $G\restriction  \poss'(q,{<}\slu)$ is constant.
\end{itemize}
\end{fact}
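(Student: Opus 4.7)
The plan is to iterate the single-sublevel bigness reduction (the ``compound bigness'' paragraph just above the Fact) downward from $\slu$ to $\slv$. Enumerate the active sublevels of $p$ lying in $[\slv, \slu)$ in strictly decreasing order as $\slu_k > \slu_{k-1} > \cdots > \slu_0 = \slv$. I will construct $q$ from $p$ by replacing each $p(\slu_i)$ by a strengthening $q(\slu_i)$, processing the indices in the order $k, k-1, \ldots, 0$. Since each creature is touched only once, its subatomic (or Sacks) norm drops by at most $1$, giving the required bound on the norm drop.

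For the inductive step at index $i$, let $\tilde{p}$ be the condition obtained from $p$ by replacing $p(\slu_j)$ with the already-chosen $q(\slu_j)$ for $j > i$; by the inductive hypothesis, $G$ restricted to $\poss'(\tilde{p}, {<}\slu)$ is independent of each factor $\poss'(\tilde{p}, {=}\slu_j)$ for $j > i$. Note that $\tilde{p}$ agrees with $p$ at all sublevels $\le \slu_i$, so $\tilde{p}$ is pruned below $\slu_i$ and $\slu_i$ is still active in $\tilde{p}$ with $\tilde{p}(\slu_i) = p(\slu_i)$. Using the factorization
\[
  \poss'(\tilde{p}, {<}\slu) \;=\; \poss'(\tilde{p}, {<}\slu_i) \,\times\, \poss'(p, {=}\slu_i) \,\times\, \prod_{j > i} \poss'(\tilde{p}, {=}\slu_j),
\]
reinterpret $G$ as a coloring
\[
  F_i : \poss'(p, {=}\slu_i) \longrightarrow H({<}\slv)^{\poss'(\tilde{p}, {<}\slu_i)}.
\]
The codomain has size at most $H({<}\slv)^{\maxposs({<}\slu_i)} \le 2^{H({<}\slu_i)\cdot \maxposs({<}\slu_i)} = B(\slu_i)$, using that $H$ is increasing and $\log_2 H({<}\slv) \le H({<}\slv) \le H({<}\slu_i)$. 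Applying Fact~\ref{fact:bigisramsey} at $\slu_i$ in $\tilde{p}$ yields $q(\slu_i) \le p(\slu_i)$ with norm dropping by at most $1$ and on which $F_i$ is constant; this is exactly the statement that $G$ no longer depends on $\poss'(q, {=}\slu_i)$, completing the step.

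After step $0$ the modified $G$ depends only on $\poss'(q, {<}\slv)$, which is the first bullet. For the second bullet I apply the first with $\slv$ chosen as the smallest active sublevel of $p$ strictly below $\slu$ (if no such sublevel exists then $\poss'(p, {<}\slu)$ is already trivial and $G$ is vacuously constant); with this choice $\poss'(q, {<}\slv)$ is a one-point product, so ``$G$ depends only on $\poss'(q, {<}\slv)$'' collapses to ``$G$ is constant.'' I expect no serious obstacle; the only real bookkeeping care is that some $q(\slu_i)$ may happen to become a singleton (so that $\slu_i$ becomes inactive in $q$), but then the corresponding factor is trivial and nothing breaks. Likewise one should note that shrinking a lower factor later cannot destroy constancy already established at higher factors, so the strictly downward processing order is what makes the argument go through.
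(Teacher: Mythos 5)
Your proposal is correct and is essentially the same downward iteration of single-sublevel compound bigness that the paper has in mind (its "proof" is just the sentence "Iterating this downwards we get the following" after the preceding paragraph). All the key points are in order: the factorization via Fact~\ref{lem:trivialdd}, the codomain bound $H({<}\slv)^{|\poss'(\tilde p,{<}\slu_i)|}\le H({<}\slv)^{\maxposs({<}\slu_i)}\le B(\slu_i)$ using monotonicity of $H$, the observation that $\tilde p$ agrees with $p$ at and below $\slu_i$ so the pruned counting bound and the small-support requirement for Fact~\ref{fact:bigisramsey} still apply, and the fact that each creature is touched at most once so every subatomic/Sacks norm drops by at most $1$ (from which the bound on compound norms follows). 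The handling of the degenerate cases (singleton factors, $\slu$ being the minimum active sublevel) is also fine.
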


\section{Properness, \texorpdfstring{$\omega^\omega$}{omega-omega}-bounding and rapid reading}\label{sec:proper}
\subsection{Bigness, rapid reading from continuous reading}\label{sec:bigness}
(Remark: This section is the straight-forward modification of
\cite[Lemma 1.13]{MR2864397}.)

\begin{definition}\label{def:continuousreading}
\begin{itemize}
   \item
   Let  $\n \tau$ be the name of an ordinal. We say that 
   $\n \tau$ is decided below the sublevel $\slu$
   (with respect to the condition $p$), if 
   $p\curlywedge \eta$ decides the value of $\n\tau$ for 
   all $\eta\in\poss(p,{<}\slu)$; in other words, there is a 
   function $R:\poss(p,{<}\slu)\to  \plaintext{Ord}$
   such that $p\curlywedge \eta \Vdash \n \tau = R(\eta)$ for all
   $\eta\in\poss(p,{<}\slu)$. 
  \item We also write ``$\n \tau$ is decided $<\slu$''; and we write
    ``$\n \tau$ is decided $\le\slu$'' for the obvious concept
    (i.e., ``$\n \tau$ is decided $<\slv$'', where $\slv$ is the 
    successor sublevel of $\slu$).
  \item 
  $p$ \emph{essentially decides} $\n\tau$, if there is some 
  sublevel $\slu$
  such that $\tau$ is decided below $\slu$.
  \item
   Let $\n r$ be the name of an $\omega$-sequence of ordinals.
   We say that a condition $p$ continuously reads $\n r$, 
   if all $\n r(m)$ are essentially decided by $p$.    
   \item
  $p$ rapidly reads $\n r\in 2^\omega$, if, for each sublevel $\slu$, 
  $\n r\restriction H({<}\slu)$ is decided below 
  $\slu$.
  \item
    Let $\Xi_0 \subseteq \Xi$.  We say that $p$ ``reads $\n r$ continuously
    only using indices in~$\Xi_0$'' if $p$ reads $\n r$
    continuously and moreover (using the relevant functions $R$ mentioned above)
    the value of $R(\eta)$ depends only on $\eta \restriction 
    \Xi_0$.  

   In other words:  For every $n$ there exists a sublevel $\slu$ 
   such that $p\curlywedge \eta$ decides the value of $\n r(n)$ for 
   all $\eta\in\poss(p,{<}\slu)$, and whenever $\eta\restriction \Xi_0 = \eta'\restriction \Xi_0$, then 
    $p\curlywedge \eta$  and 
    $p\curlywedge \eta'$  agree on the value of $\n r(n)$. 
  \item  We define the notion ``reads $\n r$ rapidly only using indices in $\Xi_0$'' similarly. 
  \item Instead of  ``only using indices in $\Xi\setminus\Xi_1$'' 
we also write ``not using indices in $\Xi_1$''.
\end{itemize}
\end{definition}

Note that for $X\supseteq \Xili$, a real $\n r$ is read continuously from $X$
iff it exists in the  $\QQQ_X$-extension (cf.~\ref{lem:quotient}).

\begin{remark}
   For a fixed condition $p$, the possibilities (at all sublevels) 
  form an infinite tree in the obvious way.  The set of branches $T_p$ of this 
   tree carries a natural topology.   
   $p$ continuously   reads $\tau$ iff there is a continuous function $F$
   on $T_p$ 
   in the ground model
   such that $p$ forces $\n \tau  =  \tilde F(\bar \yy)$, 
   where $\tilde F$ is the canonical extension of $F$.

  In our case, the tree is finitely splitting, so $T_p$ is compact, and continuous is the same as uniformly continuous.  (Note that the definition above really 
uses a uniform notion of continuity.) 

   Rapid reading corresponds to a form of Lipschitz continuity.

\end{remark}

\begin{lem}\label{lem:strongerstillcont}
\begin{enumerate}
   \item  
If $p$ continuously (or: rapidly) reads $\n r$ and $q\leq p$ 
with $\supp(q)\supseteq \supp(p)$,
then 
$q$ continuously (or: rapidly) reads $\n r$. The same holds
if we add ``only using $\Xi_0$'' or: ``not using  $\Xi_1$''.
\item  If $q\le^* p$, and $\n \tau $ is a name of an ordinal
essentially decided by $p$, then also $q$ essentially decides
$\tau$. 
\end{enumerate}
\end{lem}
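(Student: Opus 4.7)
The plan is to transport every essential decision from $p$ to $q$ by pulling back along the canonical projection of possibilities provided by Lemma~\ref{lem:superlemma} (for part~1) and by a $\leq^*$-analogue of it (for part~2).

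For (1), assume $q \leq p$ and that $p$ continuously reads $\n r$, witnessed for each $m$ by a sublevel $\slu_m$ and a function $R_m \colon \poss(p,{<}\slu_m) \to \plaintext{Ord}$ such that $p \curlywedge \eta' \Vdash \n r(m) = R_m(\eta')$. For each $\eta \in \poss(q,{<}\slu_m)$, Lemma~\ref{lem:superlemma} produces a unique $\eta' \in \poss(p,{<}\slu_m)$ with $q \curlywedge \eta \leq p \curlywedge \eta'$, and hence $q \curlywedge \eta \Vdash \n r(m) = R_m(\eta')$. Defining $R'_m(\eta) := R_m(\eta')$ shows $q$ decides $\n r(m)$ below $\slu_m$, so $q$ continuously reads $\n r$; the identical argument applied to $\n r \restriction H({<}\slu)$ at an arbitrary sublevel $\slu$ handles rapid reading. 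For the ``only using $\Xi_0$'' refinement, observe that the proof of Lemma~\ref{lem:superlemma} constructs $\eta'$ coordinatewise in $\xi \in \supp(p)$, so $\eta' \restriction \Xi_0$ depends only on $\eta \restriction \Xi_0$, and any dependence of $R_m$ solely on $\eta' \restriction \Xi_0$ translates into a dependence of $R'_m$ solely on $\eta \restriction \Xi_0$.

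For (2), first observe that $q \leq^* p$ forces $\supp(p) \subseteq \supp(q)$ and that the trunks, subatoms, and Sacks columns of $q$ at each $\xi \in \supp(p)$ refine those of $p$: otherwise one could strengthen $q$ to some $r \leq q$ by adjoining $\xi$ to its support with a trunk value lying outside the possibilities dictated by $p$, yielding an $r$ incompatible with $p$. Let $\slu$ witness that $p$ essentially decides $\n\tau$ via $R \colon \poss(p,{<}\slu) \to \plaintext{Ord}$, and choose any sublevel of the form $\slv = (\ell,-1)$ with $\ell \in \ww^q$ and $\ell$ above $\slu$. For each $\eta \in \poss(q,{<}\slv)$, restricting $\eta$ to indices in $\supp(p)$ and to sublevels below $\slu$ (using the correspondence $\iota$ of Fact~\ref{lem:blabla} on the Sacks side) yields a well-defined $\eta' \in \poss(p,{<}\slu)$. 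Since $q \curlywedge \eta$ forces both $p \in G$ and that the generic extends $\eta$ and hence $\eta'$, we conclude $q \curlywedge \eta \Vdash p \curlywedge \eta' \in G$, and therefore $q \curlywedge \eta \Vdash \n\tau = R(\eta')$. Thus $q$ essentially decides $\n\tau$ below $\slv$.

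The main obstacle is almost entirely notational: the Sacks columns of $q$ live on the coarser partition induced by $\ww^q \subseteq \ww^p$ and may have been glued from several of $p$'s columns, so the projection $\eta \mapsto \eta'$ on the Sacks indices requires the restrict-and-re-split bookkeeping already carried out in Lemma~\ref{lem:superlemma} and Fact~\ref{lem:blabla}. The subtle structural implication for (2), namely that $q \leq^* p$ already refines $p$ index-by-index, is what allows this same bookkeeping to work in the $\leq^*$-setting; once it is in place, no further combinatorial idea is required.
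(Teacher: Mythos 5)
Your argument follows the same route as the paper. For (1) you invoke Lemma~\ref{lem:superlemma} to pull back the deciding functions along the canonical projection $\eta\mapsto\eta'$, which is exactly what the paper's proof says (``The formal proof uses Lemma~\ref{lem:superlemma}''), and your observation that $\eta'$ is built coordinatewise so the ``only using $\Xi_0$'' refinement transports is a correct reading of that lemma's proof. For (2) the paper gives only the one-liner ``$p$ forces that $\n\tau$ is decided by a finite case distinction; so $q$ forces the same''; your version fills in what that one-liner is implicitly relying on, namely that $q\le^* p$ forces $\supp(p)\subseteq\supp(q)$ and that $q$ refines $p$'s constraints on each $\yy_\xi$, so that a sufficiently high $\slv\in\ww^q$ pins down the unique $p\curlywedge\eta'$ in $G$ and hence the value $R(\eta')$ of $\n\tau$; this is a reasonable and essentially correct elaboration of the same idea. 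Two small notational points: the correspondence $\iota$ of Fact~\ref{lem:blabla} relates $\poss$ and $\poss'$ for one and the same condition and sublevel, so it is not quite the right citation for the cross-condition restriction map you need on the Sacks side (that is really the bookkeeping of Lemma~\ref{lem:superlemma}); and one should say ``$q$'s constraints on $\yy_\xi$ (as a tree) refine $p$'s'' rather than ``$q$'s Sacks columns refine $p$'s'', since $q\le^* p$ does not give $\ww^q\subseteq\ww^p$, so the columns live on different interval partitions.
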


\begin{proof}
(1) Intuitively, this is clear: If $q\le p$ and $\eta\in\poss(q,{<}\slu)$
then $\eta$ morally is an element of $\poss(p,{<}\slu)$,
and $q\curlywedge\eta\leq p\curlywedge \eta$. 

The formal proof uses Lemma~\ref{lem:superlemma}.

(2) $p$ forces that $\n \tau $ is decided by a finite case
distinction; so $q$ forces the same. 
\end{proof}

\begin{lem}\label{lem:fewrealsreadcontinuously}
  In $V$,  let $\kappa$ be $\max(\aleph_0,|\Xi_0|)^{\aleph_0}$. Then in the
  extension, there are at most $\kappa$ many 
  reals which are continuously read only using\footnote{More
   formally: reals $r$ such that there is a $p\in G$ 
  and a name $\n r$ such that $p$ continuously reads $\n r$ only using
  $\Xi_0$ and  such that $G$ evaluates $\n r $ to $r$.}
   indices
  in $\Xi_0$.
\end{lem}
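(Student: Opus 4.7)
The plan is to define, in $V$, a set $N$ of canonical names such that every real in $V[G]$ that is continuously read only using $\Xi_0$ is the evaluation of some $\nu \in N$, and then to bound $|N|$ by $\kappa$. The key observation is that such a canonical name depends only on the $\Xi_0$-part of a witnessing condition together with a continuous reading function, and hence carries at most countably much data drawn from $\Xi_0$.

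Fix such a real $r$, witnessed by $p \in G$ (which we may assume pruned by Fact~\ref{lem:variousprunedcrap}) and a name $\n r$ with $\n r[G] = r$. By definition, for each $n \in \omega$ there is a sublevel $\slu_n$ and a function $R_n : \poss(p, {<}\slu_n) \to 2$ such that $p \curlywedge \eta \Vdash \n r(n) = R_n(\eta)$ for all $\eta \in \poss(p, {<}\slu_n)$, and $R_n(\eta)$ depends only on $\eta \restriction \Xi_0$. Set $A := \supp(p) \cap \Xi_0$, a countable subset of $\Xi_0$, and let $T_p$ denote the countable finitely-branching tree whose nodes at level $n$ are the $\Xi_0$-projections of $\poss(p, {<}\slu_n)$, labelled by the subatoms, Sacks columns, trunk, halving parameters and $\ww^p$-data restricted to $A$. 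The sequence $(R_n)_n$ induces a continuous function $F : [T_p] \to 2^\omega$, and we put $\nu := (A, T_p, F) \in N$. Since $p \in G$, the generic sequence $\bar\yy \restriction A$ determines a branch through $T_p$ whose image under $F$ is $r$, so $\nu$ together with the generic recovers $r$.

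Counting in $V$: there are at most $|\Xi_0|^{\aleph_0}$ choices for the countable set $A$; once $A$ is fixed, $T_p$ (a countable tree with finitely many finitely-labelled nodes per level) has at most $2^{\aleph_0}$ possibilities; and since $[T_p]$ is a metric compactum, there are at most $2^{\aleph_0}$ continuous functions $F : [T_p] \to 2^\omega$. Thus $|N| \leq |\Xi_0|^{\aleph_0} \cdot 2^{\aleph_0}$, and using $2^{\aleph_0} = \aleph_0^{\aleph_0}$ this product is at most $\max(\aleph_0, |\Xi_0|)^{\aleph_0} = \kappa$. Since every real in $V[G]$ continuously read from $\Xi_0$ arises as the evaluation of some $\nu \in N$, there are at most $\kappa$ such reals.

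The main obstacle is pinning down the $\Xi_0$-projected tree $T_p$ together with its continuous reading function $F$ precisely, so that the bound on $|N|$ becomes visible. The hypothesis that the $R_n$ depend only on $\eta \restriction \Xi_0$ is exactly what is needed for them to factor through the $\Xi_0$-projection, and the fact that a pruned $p$ has at most $\maxposs({<}\slu_n)$ possibilities at sublevel $\slu_n$ (Fact~\ref{fact:p.pruned}) guarantees that the projected tree really is countable and finitely branching in $V$.
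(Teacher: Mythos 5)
Your proof is correct and follows essentially the same route as the paper's: you extract from a witnessing pair $(p, \n r)$ a canonical ``name'' consisting of the $\Xi_0$-restricted condition data (your $A$, $T_p$) together with the continuous reading function $F$, then count such names by countable subsets of $\Xi_0$ times $2^{\aleph_0}$; the paper packages the same thing as the restricted condition $p' = p\restriction\Xi_0$ and an associated name $\n r'$, and counts identically.
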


\begin{proof} This is the usual ``nice names'' argument:
  Given $p$ continuously reading $\n r$. 
  We can define the obvious name $\n r'$ continuously read by
  $p'=p\restriction \Xi_0$, such that $p$ forces $\n r=\n r'$.
  There are at most $\kappa$ many countable subsets of $\Xi_0$,
  and therefore only $\kappa$ many conditions $p'$ with 
  $\supp(p')\subseteq \Xi_0$. Given such a condition $p'$,
  there are only $2^{\aleph_0}$ many ways to continuously read
  a real (with respect to $p'$).
\end{proof}

We will first show that we can ``densely'' get from continuous reading to rapid reading.
Later we will show that ``densely'' we can continuously read reals.  Both proofs are the obvious modifications of the corresponding proofs in \cite{MR2864397}.

\begin{lem}\label{lem:rapidreading}
  Assume that $p$ continuously reads $\n r\in 2^\omega$,
  then there is a $q\leq p$ rapidly reading~$\n r$.   

  The same is true if we add  ``only using $\Xi_0$''. 	
\end{lem}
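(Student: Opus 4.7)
Assume without loss of generality that $p$ is pruned (Fact~\ref{lem:variousprunedcrap}). By continuous reading, for each $n\in\omega$ there is a least sublevel $\slv_n$ such that $\n r(n)$ is decided ${<}\slv_n$ by $p$. Rapid reading, on the other hand, demands that $\n r(n)$ be decided ${<}\slu_n^{*}$ for every $n$, where $\slu_n^{*}$ is the least sublevel with $n<H({<}\slu_n^{*})$; this is well-defined and tends to infinity since $H$ is increasing and, by ${(*1)}$, unbounded. Writing $\Delta_n:=[\slu_n^{*},\slv_n)$ (empty iff bit $n$ is already rapid), the task is to push the essential decision of each $\n r(n)$ downward through the sublevels in $\Delta_n$.

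The main engine is the compound bigness fact at the end of Section~\ref{sec:complete.construction}. We process $n=0,1,2,\ldots$ in order, producing a decreasing sequence $p=p_0\geq p_1\geq\cdots$ with the same $\ww$, trunk and support, each $p_{n+1}$ purely stronger than $p_n$, such that after stage $n$ the bits $\n r(0),\ldots,\n r(n)$ are decided below their targets $\slu_k^{*}$. At step $n$, bit $\n r(n)$ is essentially decided by $p_n$ through a function $R_n\colon\poss'(p_n,{<}\slv_n)\to 2$. Since the codomain has size $2\leq H({<}\slu_n^{*})$, a single application of the compound bigness fact with $\slv=\slu_n^{*}$ and $\slu=\slv_n$ strengthens the subatom or Sacks column of $p_n$ at each $\slu\in\Delta_n$ with subatomic/Sacks norm loss at most~$1$, and yields $p_{n+1}$ in which $R_n$ factors through $\poss'(p_{n+1},{<}\slu_n^{*})$; i.e., bit $n$ is now decided ${<}\slu_n^{*}$. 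The already-decided bits $\n r(0),\ldots,\n r(n-1)$ remain decided at their targets because strengthening only shrinks the possibility sets.

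For the norm accounting, observe that a sublevel $\slu$ is shrunk at stage~$n$ precisely when $\slu\in\Delta_n$, which forces $n<H({<}\slu)$; hence $\slu$ is shrunk at most $H({<}\slu)$ times in total, and the limit condition $q:=\lim_n p_n$ has subatomic or Sacks norm at $\slu$ at least $\nor(p(\slu))-H({<}\slu)$. By item ${(*5)}(\ref{si:sanorm})$ the subatoms at a sublevel $\slu=(\ell,j)$ can have norm up to $2^{\ell\cdot\maxposs({<}\ell)}$, and item~${(*2)}$ provides large Sacks norms; both dwarf the additive loss $H({<}\slu)$ once we first glue $p$ along a sufficiently sparse $\ww^q\subseteq\ww^p$ (Definition~\ref{def:properlystacked}, Lemma~\ref{lem:blab}) to guarantee slack in each of the four components (width, Sacks, lim-sup, lim-inf) of the compound norm. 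The norms $\nor(q(h))$ then tend to infinity, so $q$ is a valid condition with $q\leq p$ that rapidly reads $\n r$. The ``only using indices in $\Xi_0$'' variant is then immediate, since every operation above is local to sublevels of $p$ and introduces no dependence of the decision functions on indices outside $\Xi_0$.

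The main obstacle is the norm bookkeeping for the \emph{lim-inf} part, whose formula $\nicefrac{\log_2(N-d)}{\maxposs({<}\mdn)}$ involves a minimum $N$ of atom norms and the halving parameter $d$: one must verify that $N-d$ remains much larger than $1$ after many subatoms comprising an atom have each been shrunk up to $H({<}\slu)$ times. This is made to work by the aggressive growth of $B$ and $H$ against $\maxposs$ built into Construction $(*1)$--$(*5)$, whereby the atom norms available before shrinking are exponential in $\ell\cdot\maxposs({<}\ell)$ while the cumulative loss at each sublevel is only a single value of $H$.
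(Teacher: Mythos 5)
Your scheme---shrink once per bit $n$ through the window $\Delta_n$ using the compound-bigness fact, and observe that a sublevel $\slu$ is touched at most $H({<}\slu)$ times---has a genuine norm-bookkeeping gap that I do not see how to close along this route. The cumulative loss of $H({<}\slu)$ at sublevel $\slu$ must come out of the subatom and Sacks-column norms that $p$ \emph{actually has}, and nothing forces those to exceed $H({<}\slu)$. Continuous reading places no lower bound on the subatom norms sitting in $p$; the only guarantee coming from $p\in\QQQ$ is that the compound norms $\nor(p(h))$ tend to infinity, which can happen arbitrarily slowly, whereas $H({<}\slu)$ grows hyper-exponentially via Definition~\ref{eq:hfromhprime} (it dominates terms like $\max(I_{\typesk,\ell})$ and the $H'$-values, themselves of order $2^{\binom{2^{|I|}}{2^{|I|-b}}}$). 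Your appeal to $(*5)$(b) conflates two different things: that item only asserts that the \emph{family} $\cK_{\xi,\slu}$ contains a subatom of norm $\ge 2^{\ell\cdot\maxposs({<}\ell)}$, not that the subatom $p(\xi,\slu)$ in your given condition has such a norm. Since every $q\le p$ has possibility sets that only shrink, you cannot first pass to a condition with larger subatoms. Nor does gluing along a sparse $\ww$ save you: gluing merges compound creatures, raising $\mup$ and therefore widening the range over which lim-sup/lim-inf are taken, but it leaves each individual subatomic and Sacks norm exactly what it was in $p$. It provides no slack against an additive loss of $H({<}\slu)$.

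The paper's own proof is structured precisely to avoid repeated shrinking at a fixed sublevel. For each candidate cut-off $\slu$ it runs a \emph{single} downward induction through the sublevels $\slu'\le\slu$, applying bigness once at each $\slu'$ (hence a norm drop of at most~$1$) while carrying a deferred decision function $\psi^\slu_{\slu'}$ ``modulo'' the intermediate strengthenings; since at each $\slv$ there are only finitely many possible $(\cd^\slu_\slv,\psi^\slu_\slv)$, König's Lemma extracts a coherent infinite branch $(\cd^*_\slv,\psi^*_\slv)$, and the $q$ built from that branch has each subatomic/Sacks norm within~$1$ of $p$'s. Your bit-by-bit scheme, though conceptually simpler to state, forfeits that once-only property, and that is exactly what the norm arithmetic cannot survive.
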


\begin{proof}
Without loss of generality we can assume that $p$ is pruned
(use Lemmas~\ref{lem:variousprunedcrap} and~\ref{lem:strongerstillcont}).

For a sublevel $\slu$, we set
\begin{equation}\label{eq:lkwjetoiw}
  \slvdec(\slu)\text{  is the maximal sublevel
such that }\n r\restriction H({<}\slvdec(\slu))\text{ is decided below }\slu,
\end{equation}
The function $\slvdec$ is nondecreasing; and
continuous reading implies that $\slvdec$ is an unbounded function
on the sublevels; but $\slvdec$ can generally grow very slowly.
($p$ ``rapidly reads $\n r$'' would mean that $\slvdec(\slu)\ge\slu$
for all $\slu$.)

For all sublevels $\slv\le \slu$ we set
\begin{equation}
\n x^{\slu}_{\slv}:= \n r\restriction(H({<}\min(\slv,\slvdec(\slu))))
\text{ (which is by definition decided below $\slu$).}
\end{equation}
There are at most 
\begin{equation}\label{eq:Cbound}
   2^{H({<}\slv)}
\end{equation} many possibilities for $\n x^{\slu}_{\slv}$, as 
$H(({<}\min(\slv,\slvdec(\slu))))\le H({<}\slv)$.

\begin{dscription}
\item[1]
For now, fix a Sacks  sublevel $\slu=(\ell,-1)$ with $\ell\in\ww^p$.

We will define (or rather: pick) by 
downwards induction on $\slu'\in\sublevels(p)$, $\slu'\le \slu$,
objects
$\cd^{\slu}_{\slu'}$, 
which are 
either a sequence of Sacks columns (if $\slu'$ is Sacks) or
a subatom; 
and functions $\psi^{\slu}_{\slu'}$. 
\item[1a]
For $\slu'= \slu$, we set $\cd^\slu_\slu :=p(\slu)$,
i.e., the sequence of Sacks columns of level $\ell$. 
We let $\psi^\slu_\slu$ be the function
with domain $\poss(p,{<}\slu)$ which 
assigns to each $\eta\in \poss(p,{<}\slu)$
the corresponding value of $\n x^\slu_\slu$.

In other words:
$p\curlywedge \eta$ forces that $\n x^{\slu}_{\slu}=\psi^{\slu}_{\slu}(\eta)$ for each $\eta\in \poss(p,{<}\slu)$.
\item[1b] We continue the induction on $\slu'$.  For now, we write 
   $\cd':= \cd^\slu_{\slu'}  $,
   $\psi':= \psi^\slu_{\slu'}  $, and 
   $x':= x^\slu_{\slu'}  $.
\begin{itemize}
  \item If $\slu'$ is subatomic,
    then we choose for $\cd'$ a subatom stronger than the active subatom $p(\slu')$,
    with $\nor(\cd')\ge \nor(p(\slu'))-1$.
  \item Otherwise, i.e., if $\slu'=(\ell',-1)$ is Sacks with $\ell'\in \ww^p$, 
    set $S:=\supp(p,\ell')\cap\Xisk\neq \emptyset$. 
    Then $\cd'$ is a sequence $(\cs'_\xi)_{\xi\in S}$
    of Sacks columns 
    such that $\cs'_\xi\subseteq p(\xi,\ell')$ and 
    $\sacksnor(\cs'_\xi)\ge \sacksnor(p(\xi,\ell))-1$
    for each $\xi\in S$.
  \item 
    $\psi'$
    is a function with domain
    $\poss(p,{<}\slu')$ such that
\proofclaim{eq:bla2342}{modulo
    $(\slv:\, \slu'\le \slv <  \slu)$, each $\eta\in\poss(p,{<}\slu')$
    decides $\n x'$ to be $\psi'(\eta)$,} by which we mean: 
\proofclaimnl{
    $p\curlywedge\eta$   
    forces the following: If the generic $\bar\yy$ is compatible with
    $\cd^{\slu}_{\slv}$ for each sublevel $\slv\in\sublevels(p)$ with $\slu'\le \slv<\slu$, then
    $\n x' =\psi'(\eta)$.
} 
  \end{itemize}
  How can we find such $\cd'$, $\psi'$?

  Let $\slu''$ be the smallest element of $\sublevels(p)$ 
    above $\slu'$.
    By induction we already know that
    $\psi'':=\psi^{\slu}_{\slu''}$ 
    is a function with domain
    $\poss(p,{<}\slu'')$ such that 
    modulo
    $(\slv:\, \slu''\le \slv< \slu)$
    each $\eta\in\poss(p,{<}\slu'')$
    decides $\n x'':=\n x^{\slu}_{\slu''}$ to be $\psi''(\eta)$. 
    
    Let $\psi''_0(\eta)$ be the restriction of $\psi''(\eta)$ to 
    $H({<}\min(\slu',\slvdec(\slu)))$, i.e., 
    $\psi_0''$  maps each $\eta\in  \poss(p,{<}\slu'')$ to a 
restriction of $\n x''$, which   
    is a potential value for $\n x'$.
   
    We can write\footnote{cf.~\ref{lem:trivialdd}}
    $\psi_0''$ as a function $A\times B\to C$,
    for $A:=\poss(p,{<}\slu')$, $B=\poss(p,{=}\slu')$ and
    $C$ is the set of possible values of $\n x'$, which has, according
to~\eqref{eq:Cbound}, size $\le2^{H({<}\slu')}$.
    This defines a function from $B$ to 
$C^A$, a set of cardinality $ \le 2^{\maxposs({<}\slu')\cdot H({<}\slu')}$;
  so according to~\eqref{eq:cc4b} and Fact~\ref{fact:bigisramsey} 
  we can use bigness at sublevel $\slu'$ to find 
  $\cd'$ such
  that $\psi_0''$ does not depend on sublevel $\slu'$.
  This naturally defines $\psi'$.

\item[2] We perform this downwards induction from
each Sacks sublevel $\slu$ of~$p$.
So this defines for each $\slv<\slu$ in $\sublevels(p)$ the objects
$\cd^{\slu}_{\slv}$ and $\psi^{\slu}_{\slv}$, satisfying (which is just
~\ref{eq:bla2342}):
\proofclaim{eq:bla2342b}{modulo
    $(\slv':\, \slv\le \slv'\le \slu)$, each $\eta\in\poss(p,{<}\slv)$
    decides $\n x^{\slu}_{\slv}$ to be $\psi^{\slu}_{\slv}(\eta)$.}  
   Also, the norms of each Sacks column and subatom drop by at most 1.
\item[3]
Note that for a given $\slv$, there are only finitely many possibilities
for 
$\cd^{\slu}_{\slv}$ and $\psi^{\slu}_{\slv}$.
So by König's Lemma 
there is a sequence $(\cd^{*}_{\slv},\psi^{*}_{\slv})_{\slv\in\sublevels(p)}$
such that 
\proofclaim{eq:bla2342c}{for each sublevel $\slv'$ there is an $\slu>\slv'$
such that $\cd^{\slu}_{\slv''}=\cd^{*}_{\slv''}$ and $\psi^{\slu}_{\slv''}=\psi^{*}_{\slv''}$ for all $\slv''\le \slv'$.}
\item[4]
We now construct $q$ by replacing the subatoms and Sacks columns in $p$ at
sublevel~$\slv$ with $\cd^{*}_{\slv}$ (for each $\slv\in\sublevels(p)$).
So $q$ has the same $\ww$ as $p$, the same supports, the same halving 
parameters and the same trunk; and all norms decrease by at most~$1$.
We claim that $q$ rapidly reads $\n r$, i.e., we claim that
each $\eta\in\poss(q,{<}\slv)$ decides $\n r\restriction H({<}\slv)$.
\item[5]
Pick a $\slv'>\slv$ such that $\slvdec(\slv')\ge \slv$.
According to the definition~\eqref{eq:lkwjetoiw}, this means that
$\n r\restriction H({<}\slv)$ is decided below $\slv'$.
Then pick $\slu>\slv'$ as in~\eqref{eq:bla2342c}.
Recall (from~\eqref{eq:bla2342b}) that $\n x^{\slu}_{\slv}$ is decided below
$\slv$ by $\psi^{\slu}_{\slv}$ modulo the sequence $(\cd^{\slu}_{\slv''}:\, 
\slv\le \slv''< \slu)$.
Recall that $\slvdec(\slv')\ge\slv$ and $\slu\ge\slv'$.
So $\min(\slvdec(\slu),\slv)=\slv$, therefore
$\n x^{\slu}_{\slv}=\n r\restriction H({<}\slv)$.
And, since $\slvdec(\slv')\ge\slv$, $\n x^{\slu}_{\slv}$ is decided already (by the original condition $p$)
below $\slv'$. So we can omit the assumption that the generic
is compatible with $\cd^{\slu}_{\slu''}$
for any $\slv'\le \slu''< \slu$ and still correctly compute 
$\n x^{\slu}_{\slv}$ with $\psi^{\slu}_{\slv}$ modulo
$(\cd^{\slu}_{\slu''}:\, \slv\le \slu''<\slv')$.

In particular,
$\psi^{\slu}_{\slv}=\psi^{*}_{\slv}$ correctly
computes $\n x^{\slu}_{\slv}=\n r\restriction H({<}\slv)$ 
modulo $q$ (since $q$ contains $\cd^{\slu}_{\slu''}=\cd^{*}_{\slu''}$
for each $\slu''<\slv'$.)\qedhere
\end{dscription}
\end{proof}

\subsection{Halving and unhalving}\label{ss:halving}

We will now, for the first and only time in this paper, make use of the halving
parameter. We will show how to ``halve'' a condition $q$ to $\half(q)$, and
then ``unhalve'' any $r\leq \half(q)$ with ``positive norms'' to some $s\leq^*
q$ with ``large norms''.  This fact will only be used in the next section, to
show pure decision.

We repeat the definition of the lim-inf norm from~\ref{def:norm}:
\[
  \linor^{\maxposs({{<}\mdn})}(\cc,h)=\frac{\log_2(N^\cc_h-d(\cc,h))}{\maxposs({<}\mdn)}
  \quad\text{for }
  N^\cc_h:=\min \{\nor(\cc(\xi,h)):\, \xi\in \suppord \cap\Xi_\typeli\}.
\]
If we increase $d:=d(\cc,h)$ to 
\begin{equation}\label{eq:dprime}
d':=d+\frac{N^\cc_h-d}2=\frac{N^\cc_h+d}2,
\end{equation}
then
the resulting lim-inf norm (hence also the compound norm) 
decreases by at most $\nicefrac{1}{\maxposs({<}\mdn)}$.

\begin{definition}
  Given a compound creature $\cc$, we set $\half(\cc)$ to
  be the same compound creature as $\cc$, except that we replace
  each halving parameter $d(h)$ by the $d'(h)$ described above.
\\
  So $\nor(\half(\cc))\ge \nor(\cc)-\nicefrac{1}{\maxposs({<}\mdn)}$.
\\
  Similarly, given a condition $p$ and a level $h\in\ww^p$,
  we set $\half(p,{\ge} h)$ to be the same as $p$,
  except that all compound creatures $p(\ell)$ for $\ell\ge h$
  are halved (and nothing changes below $h$).
\end{definition}

The point of halving is the following:
Assume that the norms in $q$ are ``large'' and that $r\leq \half(q)$ has norms
that are just $>0$.  Then there is an ``unhalved version'' of $r$, an $s\leq
q$, such that the norms in $s$ are ``large'' and still $s\leq^* r$.

In more detail:
\begin{lem}[Unhalving]\label{lem:unhalving}
  Fix 
  \begin{itemize}
    \item $M\in \mathbb R$,
    \item a condition $q$,
    \item $h\in\ww^q$ such that
      $\nor(q(\ell))\ge M$
      for all $\ell\ge h$ in $\ww^q$,
    \item a condition $r\leq \half(q,{\ge} h)$ such that
      $\min(\ww^r)=h$ and $\nor(r(\ell))>0$
      for all $\ell$ in $\ww^r$.
  \end{itemize}
  Then there is an $s$ such that
  \begin{enumerate}
    \item $s\le q$.
    \item $h=\min(\ww^s)$. 
    \item Writing $h_1$ for the  successor of $h$ in $\ww^s$, we have $\nor(s,\ell)\ge M$ for all $\ell\ge h_1$ in $\ww^s$. 
    \item\label{item:wlog} $\supp(s,h)=\supp(q,h)$.
    \item  Above $h_1$, $s$ is the same as $r$, i.e.:
       \begin{itemize}
             \item 
             For $\ell\ge h_1 $:
             $\ell\in\ww^s$ iff $\ell\in\ww^r$, and for such $\ell$ we have
	          $s(\ell)=r(\ell)$.  
	  \item  The trunks agree above $h_1$. 
             \item So in particular, $\supp(s)=\supp(r)$, and the norms do not 
             change above $h_1$ (hence are $\ge M$). 
       \end{itemize}
    \item $\nor(s,h)\ge M-\nicefrac{1}{\maxposs({<}h)}$.
    \item $\poss(s,{<}h_1)\subseteq \poss(r,{<}h_1)$.
  \end{enumerate}
\end{lem}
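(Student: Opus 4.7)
The plan is to define $s$ by taking $r$'s structure (subatoms, Sacks columns, and levels $\ww^r$) but resetting the halving parameters back to those of $q$; this ``unhalves'' the lim-inf norms while the required $\poss$-containment in $r$ is preserved by construction. Specifically, I would set $\ww^s := \ww^r$, and for each $\ell \in \ww^s$ with $\ell > h$ let $s(\ell)$ agree with $r(\ell)$ except that $d(s,m) := d(q,m)$ for every $m \in [\ell, \ell^+)$ (where $\ell^+$ is the $\ww^s$-successor of $\ell$). For the initial compound creature $s(h)$, I would take $\mdn(s(h)) := h$, $\mup(s(h)) := h_1$, $\supp(s(h)) := \supp(q(h))$, use the subatoms and Sacks columns of $r(h)$ restricted to $\supp(q(h))$, and set $d(s,m) := d(q,m)$ for $m \in [h, h_1)$. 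The trunk $t^s$ extends $t^r$, with values for the indices $\xi \in \supp(r(h)) \setminus \supp(q(h))$ (now relegated to $s$'s trunk below $h_1$) filled in using any fixed $\eta \in \poss(r,{<}h_1)$.

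The core calculation is the lim-inf unhalving estimate needed for item~(6). Fix $m \in [h, h_1)$, let $\ell' \in \ww^q$ be the element with $m \in [\ell', (\ell')^+)$, and write $N^s_m$, $N^r_m$, $N^q_m$ for the minimum subatom norm at the lim-inf indices (in $\supp(s(h))$, $\supp(r(h))$, and $\supp(q(\ell'))$ respectively) at level $m$. The inclusion $\supp(q(h)) \subseteq \supp(r(h))$ gives $N^s_m \geq N^r_m$. The hypothesis $\nor(r(h)) > 0$ yields $N^r_m > d(r,m) + 1$, and $r \leq \half(q,{\ge}h)$ forces $d(r,m) \geq (N^q_m + d(q,m))/2$. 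Combined with $\nor(q(\ell')) \geq M$ (so $N^q_m - d(q,m) \geq 2^{M \cdot \maxposs({<}\ell')}$) and $\maxposs({<}\ell') \geq \maxposs({<}h)$, this produces
\[
  N^s_m - d(q,m) > \frac{N^q_m - d(q,m)}{2} + 1 \geq 2^{M \cdot \maxposs({<}h) - 1},
\]
so $\linor(s(h), m) \geq M - \nicefrac{1}{\maxposs({<}h)}$. Together with $\widthnorm(\supp(q(h))) \geq M$ (inherited from $\nor(q(h)) \geq M$), this establishes item~(6).

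Items (1), (2), (4), (5), and (7) follow essentially from the construction: (2) is $\min(\ww^s) = \min(\ww^r) = h$, (4) holds by definition, (7) holds because each subatom and Sacks column of $s(h)$ equals the corresponding one in $r(h)$ on $\supp(q(h)) \subseteq \supp(r(h))$, so each $\eta \in \poss(s,{<}h_1)$ is the restriction of an element of $\poss(r,{<}h_1)$, and (5) holds because above $h_1$ only halving parameters differ between $s$ and $r$ (and resetting them in $s$ only raises lim-inf norms). For (1), i.e.\ $s \leq q$, the subatoms and Sacks columns of $s$ come from $r$ and are thus stronger than those of $q$ (halving is oblivious to these), and the halving parameters of $s$ match those of $q$, so Definition~\ref{def:order} is satisfied. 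The hard part I anticipate is controlling the lim-sup and Sacks components of $\nor(s(h))$ (and of $\nor(s(\ell))$ for $\ell \geq h_1$ in $\ww^s$): halving does not affect these components, so unhalving alone cannot recover large bounds from the mere hypothesis $\nor(r(\ell)) > 0$. One must argue that the way $r$ arises (in the intended applications, such as the pure decision argument in the next section) preserves these components in a controlled way, so that taking $r$'s structure restricted to $\supp(q(h))$ yields lim-sup and Sacks norms that remain comparable to $M$.
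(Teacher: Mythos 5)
Your proposal contains a genuine gap that you correctly sense but do not resolve, and there is a second gap you did not notice. Both come from the same source: you set $\ww^s := \ww^r$, which makes $h_1$ the $\ww^r$-successor of $h$, and that choice is too small for the lemma to be provable.

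Concretely: your own closing paragraph flags that unhalving has no effect on the lim-sup and Sacks components of $\nor(s(h))$, and that $\nor(r(h)) > 0$ gives you essentially nothing for those components; this is exactly right, and it is not a problem that ``the intended applications'' can wave away — the lemma is stated for an arbitrary $r \leq \half(q,{\ge} h)$ with positive compound norms, and your $s(h)$ inherits whatever tiny lim-sup/Sacks norms $r(h)$ happens to have. Worse, there is a second failure you did not remark on: item~(3) requires $\nor(s,\ell) \geq M$ for all $\ell \geq h_1$ in $\ww^s$, but in your construction $s(\ell)$ is $r(\ell)$ with halving parameters reset, so its lim-sup and Sacks norms equal those of $r(\ell)$, which the hypothesis bounds only by $> 0$. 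Unhalving repairs only the lim-inf norm (the logarithm and the halving parameter appear only there), so $\nor(s(\ell))$ can be as small as $\nor(r(\ell))$ is. Items~(3) and the parenthetical ``hence are $\ge M$'' in item~(5) then fail.

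The missing idea, which the paper uses, is to not take $h_1$ to be the first level of $\ww^r$ above $h$. Instead, first pick $h_0 \in \ww^r$ large enough that $\nor(r(\ell)) > M$ for all $\ell \geq h_0$ in $\ww^r$ (this is possible because $r$ is a condition, so its compound norms diverge), and let $h_1$ be the $\ww^r$-successor of $h_0$. Put $\ww^s := \{h\} \cup (\ww^r \setminus h_1)$, and build $s(h)$ by: (i) gluing $r(h), \dots, r(h_0)$ into one compound creature, (ii) restricting its support to $\supp(q,h)$, and (iii) replacing the halving parameters at each $m \in [h, h_1)$ by $d^q(m)$. Gluing is what rescues the lim-sup and Sacks components: by Lemma~\ref{lem:martinsgloriouscorollary}(\ref{item:two}),(\ref{item:three}) and the maximum in $\lsnor$, the Sacks and lim-sup norms of the glued creature are bounded below by the corresponding norms of the top piece $r(h_0)$, which are $> M$ by the choice of $h_0$. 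The width norm is $\geq M$ because $\supp(\cd) = \supp(q,h)$ and $\mdn(\cd) = h$, giving the width norm of $q(h)$. The lim-inf norm is handled by an unhalving calculation essentially equivalent to yours (your inequality chain $N^s_m - d(q,m) > \frac{N^q_m - d(q,m)}{2}$ and the rest is sound modulo notation). Above $h_1$ one keeps $s(\ell) := r(\ell)$ verbatim (no resetting of halving parameters there — it is unnecessary, since $h_1$ was chosen past the point where $r$'s norms exceed $M$), settling item~(3) and item~(5) directly.

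So the lim-inf estimate you carry out is the right calculation, but the overall construction is wrong without the ``pick $h_0$ far out and glue'' step; that step is precisely what converts the weak hypothesis $\nor(r(\ell)) > 0$ into the strong conclusion $\nor(s(\ell)) \geq M$ for the lim-sup and Sacks components.
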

Note that (5) and (7) implies $s\le^* r$ (by~\ref{lem:martinwantsthis}). So
(by~\ref{lem:strongerstillcont}),
if $r$ essentially decides a name $\n\tau$, then so does $s$.

\begin{proof}
    First fix $h_0\in\ww^r$ bigger than $h$ such that
    $\nor(r(\ell))>M$ for all $\ell\ge h_0$.
    Let $h_1$ be the $\ww^r$-successor
    of $h_0$.

    We set $\ww^s:=\{h\}\cup \ww^r\setminus h_1$. 
    The trunk $t^s$ will extend $t^r$ (and will contain some additional 
    in the ``area'' $[h,h_1)\times (\supp(r,h_0)\setminus\supp(q,h))$).

    For $\ell\ge h_1$ in $\ww^s$, we set $s(\ell):=r(\ell)$. 

    We set $\cd_0:=\glue(r(h),\dots,r(h_0))$,
    and choose arbitrary $r$-compatible elements for the new parts of the trunk $t^s$.
    We then let $\cd_1$ be the restriction of $\cd_0$ to $\supp(q,h)$
    (again, choosing $r$-compatible elements for the new parts of the trunk $t^s$).

    Now we construct $\cd$ from $\cd_1$ by replacing each halving parameter
    $d^{\cd_1}(k)$ by $d^q(k)$ (for all $h\le k<h_1$).
    We set $s(h)=\cd$. This completes the construction of the condition $s$.

    It is straightforward to check that the requirements are satisfied. We 
    will show
    $\nor(s(h))=\nor(\cd)\ge M-\nicefrac1{\maxposs({<}h)}$:

    The norm of $\cd$ is the minimum of several subnorms:
    \begin{itemize} 
      \item The width norm, which is $\ge M$, 
        as $\supp(\cd)=\supp(q,h)$ and  $\nor(q(h))\ge M$.
      \item The Sacks norms of the Sacks columns $\cd(\xi)=r(\xi,h)\otimes\cdots\otimes r(\xi,h_0)$ for $\xi\in\supp(\cd)\cap\Xisk$:
\begin{align*}
    \sacksnor(\cd(\xi)) &= \sacksnor^{B(h),h}(\cd(\xi))\ge
    \sacksnor^{B(h),h}(r(\xi,h_0)) \ge
     \\
     &\ge \sacksnor^{B(h_0),h_0}(r(\xi,h_0))
     = \sacksnor(r(\xi,h_0))
     \ge M,
\end{align*}
      by~\ref{lem:martinsgloriouscorollary}.
      \item The lim-sup norms:
$\lsnor(\cd,\xi)\ge\lsnor(r(h_0),\xi)\ge M$.
      \item So it remains to deal with the lim-inf norm.
    \end{itemize}
    So we have to
    show that for $h\le \ell<h_1$,
\begin{equation}\label{eq:jheiur}
  \linor^{\maxposs({{<}h})}(\cd,\ell)=\frac{\log_2(N^\cd_\ell-d(\cc,\ell))}{\maxposs({<}h)}\ge M-\frac{1}{\maxposs({<}h)},
\end{equation}
where
  $N^\cd_\ell:=\min \{\nor(\cd(\xi,\ell)):\, \xi\in \supp(\cd) \cap\Xi_\typeli\}$.

Recall $d'(\ell)$ as defined in~\eqref{eq:dprime}.
These are the halving parameters used in $\half(q)$, and since 
$r\leq \half(q)$ we know
that $d^r(\ell)\geq d'(\ell)$ (where $d^r$ are the halving parameters
used in $r$).

Let $m\in \ww^r$ correspond to $\ell$
(i.e., $m\le\ell$ and $\ell$ less than the $\ww^r$-successor of $m$).
As $\nor(r(m))>0$,
we know that
\[
  0<\linor^{\maxposs({<}m)}(r(m),\ell)\le
\linor^{\maxposs({<}h)}(r(m),\ell)\le
\frac{\log_2(N^\cd_\ell-d^r(\ell))}{\maxposs({<}h)}
\]
for $N^\cd_\ell$ as above.\footnote{The last $\le$ holds
since $r(m)$ contains the same subatoms as $\cd$ 
(on the common support; however the support of $r(m)$ may be larger,
therefore the last inequality is not necessarily an equality).}

Fix any $\xi\in\supp(q,h)\cap \Xili$. 
Let $k\in\ww^q$ correspond to $\ell$ (as above), and set 
$\cc=q(k)$. The inequality above gives
$0< \log_2(\nor(\cd(\ell,\xi))-d^r(\ell))$, which implies
\[
  \nor(\cd(\xi,\ell))>d^r(\ell)\ge d'(\ell)=d^q(\ell)+\frac{N^\cc_\ell-d^q(\ell)}{2}.
\]
So $\nor(\cd(\ell,\xi))-d^q(\ell)>\frac{N^\cc_\ell-d^q(\ell)}{2}$ for all $\xi$,
and so
\begin{align*}
\linor^{\maxposs({<}h)}(\cd,\ell)&\ge
\linor^{\maxposs({<}h)}(\cc,\ell)-\frac1{\maxposs({<}h)}\\
&\ge 
\linor^{\maxposs({<}k)}(\cc,\ell)-\frac1{\maxposs({<}h)}\\
&\ge 
M-\frac1{\maxposs({<}h)}.\qedhere
\end{align*}
\end{proof}

\subsection{Halving and pure decision}\label{ss:puredecision}
(Remark: This section is a straightforward modification of
\cite[Lemma 1.17]{MR2864397}.)

\begin{lem}\label{lem:blablbagh}
Suppose that $\n \tau$ is a name for an element of $V$, that $p_0
\in \mathbb Q$, that $M_0 \in \ww^{p_0}$ and $n_0 \geq 1$ are such
that $\nor ( p_0 ( h ) ) \geq n_0 + 2$ for all $h \in \ww^{p_0} \setminus M_0$.
Then there is a condition~$q$ such that:
\begin{itemize}
\item $q \leq p_0$.
\item $q$ essentially decides $\n \tau$.
\item Below $M_0$, $q$ and $p_0$ are identical,\footnote{$\supp(q)$ can be larger than $\supp(p)$, so below $M_0$ there will be new parts of the trunk $t^q$.}
   i.e.: $\ww^q \cap M_0 = \ww^{p_0} \cap M_0$ and
$q(h)=p_0 (h)$ for all $h \in \ww^q \cap M_0$.
\item $\nor ( q(h) ) \geq n_0$ for all $h \in \ww^q \setminus M_0$.
\end{itemize}
\end{lem}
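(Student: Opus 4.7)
The argument follows the pure decision lemma of~\cite[Lemma~1.17]{MR2864397}, using halving and unhalving together with bigness. WLOG assume $p_0$ is pruned (Fact~\ref{lem:variousprunedcrap}). I keep the creatures $p_0(h)$ for $h \in \ww^{p_0}$ with $h < M_0$ and the corresponding part of the trunk completely unchanged throughout the construction, so that the final $q$ automatically satisfies $\ww^q \cap M_0 = \ww^{p_0} \cap M_0$ and $q(h) = p_0(h)$ there, and I focus on what happens at levels $\ge M_0$. Set $\tilde p := \half(p_0,{\ge}M_0)$; then $\tilde p \le p_0$, agrees with $p_0$ below $M_0$, and satisfies $\nor(\tilde p(\ell)) \ge n_0 + 2 - 1/\maxposs({<}M_0) > n_0 + 1$ for every $\ell \in \ww^{\tilde p}$ with $\ell \ge M_0$.

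\textbf{Two phases.} In the main phase I construct an intermediate condition $\tilde r \le \tilde p$ that agrees with $\tilde p$ below $M_0$, has $\nor(\tilde r(\ell)) > 0$ (but possibly very small) for every $\ell \in \ww^{\tilde r}$ with $\ell \ge M_0$, and essentially decides $\n\tau$. In the final phase I apply the Unhalving Lemma~\ref{lem:unhalving} with $q := p_0$, $h := M_0$, $M := n_0 + 2$, and $r := \tilde r$; the output $s \le p_0$ satisfies $\nor(s(M_0)) \ge n_0 + 2 - 1/\maxposs({<}M_0) \ge n_0$ (item~(6) of~\ref{lem:unhalving}) and $\nor(s(\ell)) \ge n_0 + 2$ for $\ell > M_0$ in $\ww^s$ (item~(3)), and $s \le^* \tilde r$ by items~(5),(7) together with Fact~\ref{lem:martinwantsthis}. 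Thus Lemma~\ref{lem:strongerstillcont}(2) guarantees that $s$ still essentially decides $\n\tau$, and setting $q := s$ (after reattaching the unchanged below-$M_0$ part of $p_0$) completes the proof.

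\textbf{Constructing $\tilde r$ (the main obstacle).} This is where the halving trick pays off: because $\tilde r$ only needs positive norms, there is ``budget'' to apply bigness (each application costing up to $1$ unit of norm) during the fusion --- a loss that would be fatal in $q$ itself but is affordable in $\tilde r$, since we recover the large norm via unhalving. I build $\tilde r$ by a finite-stage fusion along sublevels $\slu_0 < \slu_1 < \cdots$ with $\slu_0 = (M_0,-1)$, together with pure strengthenings $\tilde p = \tilde r^0 \ge \tilde r^1 \ge \cdots$ where $\tilde r^{k+1}$ differs from $\tilde r^k$ only between $\slu_k$ and $\slu_{k+1}$. At stage $k$, for each possibility $\eta \in \poss(\tilde r^k,{<}\slu_k)$, I check whether some extension of $\tilde r^k \curlywedge \eta$ with positive norms at every level already essentially decides $\n\tau$. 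Since $\n\tau$ names an element of $V$, density of deciding conditions plus truncation (cut a deciding extension at its decision level and splice in $\tilde p$'s positive-norm creatures above) guarantees such a positive-norm, essentially-deciding extension exists whenever $\tilde r^k \curlywedge \eta$ does not already essentially decide. Bigness at $\slu_k$ (Fact~\ref{fact:bigisramsey}), applied to the coloring $\eta' \mapsto (\text{decision value or ``not yet''})$ on $\poss(\tilde r^k,{=}\slu_k)$, collapses this coloring to a constant at the cost of at most $1$ in norm, yielding $\tilde r^{k+1}$. A compactness/K\"onig's Lemma argument, analogous to the one in the proof of Lemma~\ref{lem:rapidreading}, shows that the process must terminate: an infinite branch of ``not yet decided'' would give a condition below $\tilde p$ for which $\n\tau$ is essentially decided at no sublevel, contradicting the density of deciding conditions combined with the fact that any deciding condition essentially decides.
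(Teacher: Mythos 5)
Your overall plan---``halve once, build a positive-norm $\tilde r\le\half(p_0,\ge M_0)$ that essentially decides, then unhalve at the very end''---is a genuine reorganization of the paper's argument, but I believe it has a real gap in the ``main obstacle'' step, and the end-game step as you state it is also not a valid application of Lemma~\ref{lem:unhalving}.

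First, the small issue: Lemma~\ref{lem:unhalving} requires $\min(\ww^r)=h$, whereas your $\tilde r$ agrees with $p_0$ below $M_0$ and hence has $\min(\ww^{\tilde r})<M_0$ (in general). You can repair this by noting that the compound creature $s(M_0)$ produced in the proof of~\ref{lem:unhalving} is in fact independent of the trunk of $r$ below $M_0$ (it is built from $r(M_0),\dots,r(h_0)$, $\supp(q,M_0)$ and $q$'s halving parameters), so applying the lemma fibre-by-fibre over $\eta\in\poss(\tilde r,{<}M_0)$ does give a single mergeable condition $q$; but this is a nontrivial observation, not ``reattaching the unchanged below-$M_0$ part'', and you would have to spell it out (or prove a variant of~\ref{lem:unhalving} without the $\min(\ww^r)=h$ hypothesis).

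The central gap is in the construction of $\tilde r$. You propose to fuse over sublevels and at each $\slu_k$ apply Fact~\ref{fact:bigisramsey} to the coloring $\eta'\mapsto(\text{decision value or ``not yet''})$; but since $\n\tau$ is a name for an arbitrary element of $V$, this coloring has unboundedly many colors, so bigness does not apply. The correct coloring is the $2$-coloring ``$\tilde r^k\curlywedge(\eta^\frown\eta')$ is already decided / not yet''. However, even with the right $2$-coloring, your K\"onig's-Lemma contradiction step does not close: homogenizing a ``decided?'' coloring by bigness only propagates a decision that already exists downward; it cannot create one. What you then need is: for every $\eta\in\poss(\tilde r,{<}M_0)$ there is a positive-norm extension of $\tilde r\curlywedge\eta$ with $\min(\ww)=M_0$ that essentially decides. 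You write that ``density of deciding conditions plus truncation'' supplies this, but deciding conditions are dense only at the price of arbitrary trunk extension: you get a deciding $t\le\tilde r\curlywedge\eta$ with $\min(\ww^t)$ possibly very large, and there is no obvious way to ``truncate'' $t$ back to level $M_0$ while staying below $\tilde r\curlywedge\eta$ and keeping positive norms. That transfer---from an essentially deciding extension at some high level down to an essentially deciding extension with $\min(\ww)=M_0$ and controlled norms---is exactly what the unhalving trick accomplishes. In the paper's proof this is embedded into the construction: the ``mini-steps'' at each $M_k$ decide or halve for each $\eta$, giving property~\eqref{eq:blabkf} (via~\eqref{eq:blubblubb}, whose proof is precisely the unhalving argument), and it is~\eqref{eq:blabkf} that makes the K\"onig's-Lemma step in Stage~3 close. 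By postponing the single halve/unhalve to the very end, you have stripped this machinery out of the fusion, and the ``contradiction with density'' no longer follows. So as written the construction of $\tilde r$ is incomplete, and I do not see how to finish it without essentially re-importing the mini-step $+$ unhalving machinery into the fusion, which is the paper's route.
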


\begin{proof}
We may assume that $p_0$ is pruned. 
Our proof will consist of several steps:

\noindent \emph{1. Using halving; the mini-steps.}

Suppose that we are given $p \in \mathbb Q$, $M \in \ww^p$, and $n \geq 1$ such that $\nor ( p(h) ) > n$ for all $h \in \ww^p \setminus M$.
We show how to construct an extension of $p$, denoted $r(p,M,n)$.

First enumerate $\poss ( p , {<}M )$ as $(\eta^1 , \ldots , \eta^m)$.
Note that $m \leq \maxposs ({<} M )$.
Setting $p^0 = p$, we inductively construct conditions $p^1 , \ldots , p^m$ 
and the auxiliary conditions $\tilde p^1,\dots,\tilde p^m$ so that for each $k < m$
the following holds:
\begin{enumerate}
\item $\tilde p^{k+1}$ is $p^k$ where we replace everything below
  $M$ (and in $\supp(p)$) with $\eta^{k+1}$. 
  \\
  Remarks:
  \begin{itemize}
  \item
  By (3) below, we will get $\min( \ww^{\tilde p^{k+1}})  = M$.
  \item
  If $k=0$, then $\tilde p^1$ is just $p\wedge \eta^1$.
  But for $k>0$, $\eta^{k+1}$ will not be in $\poss(p^k,{<}M)$,
  so we cannot use the notation $\tilde p^{k+1}=p^k\wedge \eta^{k+1}$.
  \item
  Note that generally $\supp(p^k)$ will be larger than $\supp(p)$,
  so we do not replace the whole trunk below $M$ by $\eta^{k+1}$,
  but just the part in $\supp(p)$.
  \end{itemize}
\item $p^{k+1} \leq \tilde p^{k+1}$.  Note that we do not have $p_{k+1}\le
  p_k$, for trivial reasons: their trunks are incompatible. 
\item $\min( \ww^{p^{k+1}})  = M$.
\\ Remarks:
\begin{itemize}
  \item So by strengthening $\tilde p^{k+1}$ to $p^{k+1}$, we do not increase the overall trunk-length $\min(\ww)$.
  \item Note that we do not assume that $\ww^{p^{k+1}}=\ww^{p^{k}}\setminus M$, i.e., generally the $\ww$-sets will become
    thinner due to gluing.
\end{itemize}
\item $\supp ( p^{k+1} , M ) = \supp ( p , M )$.
\begin{itemize}
  \item Remark: This only holds at level $M$: Generally, $\supp ( p^{k+1})$ will be larger than $\supp ( p^{k})$.
\end{itemize}
\item $\nor ( p^{k+1} , h ) > n - \frac{k+1}{\maxposs({<}M)}$ for all $h \in \ww^{p^{k+1}} \setminus M$.
\item One of the following two cases holds:
\begin{itemize}
\item \textbf{(decide)} $p^{k+1}$ essentially decides $\n \tau$.
\item \textbf{(halve)} $p^{k+1} = \half(\tilde p^{k+1},{\ge}M)$.
\end{itemize}
More explicitly: If the deciding case is possible, then we use it.
Only if it is not possible, we halve.
\end{enumerate}

We then define $r = r( p , M , n )$ as follows:
Below $M$, $r$ is identical to $p$; and
above (including) $M$, $r$ is identical to $p^m$ (the last one of the $p^k$ constructed above).
In more detail:
\begin{itemize}
\item $\ww^r = ( \ww^p \cap M ) \cup ( \ww^{p^{m}} \setminus M )$;
i.e., below $M$ the levels of $r$ are the ones of $p$;
and above (including) $M$ the levels of $r$ are the ones of $p^m$.
\item $r ( h ) = p ( h )$ for all $h \in \ww^r \cap M$;
\item $r ( h ) = p^m ( h )$ for all $h \in \ww^r \setminus M$;
\item  This determines the domain of  $t^r$;
   and we set  $t^r $ to be $t^{p^m} $ restricted to this domain. 
\end{itemize}

$r=r(p,M,n)$ has the following properties:
\proofclaim{eq:blubblubb}{
\begin{itemize}
  \item $r\in \QQQ$, $r\le p$.
  \item $\nor(r(\ell))> n-1$ for all $\ell\ge M$ in $\ww^r$.
  \item If $\eta\in\poss(r,{<}M)$ and if there is a $s\leq r\wedge\eta$
    such that $s$ essentially decides $\n\tau$,
    $\min(\ww^s)=M$ and $\nor(s(\ell))>0$
    for all $\ell\ge M$ in $\ww^s$, then $r\wedge\eta$
    essentially decides $\n\tau$.
\end{itemize}}

\begin{proof}[Proof of \eqref{eq:blubblubb}]
 $\eta$ extends some $\eta^{k+1}\in\poss(p,{<}M)$;
so $s\le r\wedge\eta \le p^{k+1}\le \tilde p^{k+1}$. 
All we have to show is that $p^{k+1}$ was constructed
using the ``decide'' case. Assume towards a contradiction
that the ``halve'' case was used. 
Then $s$ is stronger than $\half(\tilde p^{k+1},{\ge}M)$,
so we can unhalve it (using Lemma~\ref{lem:unhalving})
to get some $s'\leq \tilde p^{k+1}$ with large norm
such that $s'\leq^* s$, showing that we could have used the
``decide'' case after all.
This ends the proof of~\eqref{eq:blubblubb}.
\end{proof} 

\noindent \emph{2. Iterations of the mini-steps; the condition $q$.}

Given $p_0 , M_0 , n_0$ as in the statement of the Lemma, we inductively construct conditions $p_k$ and natural numbers $M_k$ for $k \geq 1$.
Given $p_k$ and $M_k$, our construction of $p_{k+1}$ and $M_{k+1}$ is as follows:
Choose $M_{k+1} \in \ww^{p_k}$ bigger than $ M_k$
such that
\[
\nor ( p_k (h) ) > k + n_0 + 3\text{ for all }h \in \ww^{p_k} \setminus M_{k+1}.
\]
Then set $p'_{k+1} = r( p_k , M_{k+1} , k + n_0 + 3 )$,
and construct $p_{k+1}$ by gluing together everything between (including) $M_k$ and (excluding) $M_{k+1}$.

The sequence of conditions $( p_k )_{k\in\omega}$ converges to a condition of $\QQQ$, which we will denote by $q$.
Note that $r\leq q$ implies that $\ww^r$ is a subset of $(\ww^{p_0}\cap M_0)\cup\{M_0,M_1,M_2,\dots\}$ (as we have glued everything between each $M_i$ and $M_{i+1}$).

It is clear that $q \leq p_0$, and that $\nor ( q , h ) > n_0 + 1$ for all $h \in \ww^{q} \setminus M_0$.

We will later show that $q$ essentially decides $\n\tau$ (thus proving the
lemma).

The following property will be central:
\proofclaim{eq:blabkf}{Assume that $\eta\in\poss(q,{<}M_\ell)$ 
for some $\ell\in\omega$, and $r\leq q\wedge \eta$ essentially decides $\n\tau$
and $\min(\ww^r)=M_\ell$ and each $r(m)$ has norm $>1$ for each $m\in\ww^r$.
\\
Then $q\wedge\eta$ essentially decides $\n\tau$.
}

Proof of~\eqref{eq:blabkf}: $\eta$ (or rather: a restriction of $\eta$ to
$\supp(p)$) was considered as a possible trunk $\eta^{k+1}$ in the ``mini-step''
when constructing $r(p_{\ell-1},M_{\ell},\ell+n_0+2)$. So we can
use~\eqref{eq:blubblubb}.  This ends the proof of~\eqref{eq:blabkf}.

\noindent \emph{3. Using bigness to thin out $q$ to prove essentially deciding.}

We now repeat the construction of the proof of Lemma~\ref{lem:rapidreading},
but this time we do not homogenize on the potential values of some $\n x$,
but rather on whether $q\curlywedge \eta$ essentially decides $\n \tau$
or not.

For now, fix a sublevel $\slu=(\ell,-1)$ above $(M_0,-1)$ with $\ell\in\ww^q$.
\begin{itemize}
\item We set $\cd^\slu_\slu$ to be the 
collection of Sacks columns $q(\slu)$.
We set $B^\slu_\slu$ to be the set of $\eta\in\poss(q,{<}\slu)$
such that $q\curlywedge \eta$ essentially decides $\n\tau$.
\item By downwards induction on $\slu'\in\sublevels(q)$, $(M_0,-1)\le \slu'<\slu$, we 
construct $\cd^\slu_{\slu'}$ and $B^\slu_{\slu'}$ such that the following is
satisfied:
\begin{itemize}
  \item $\cd^\slu_{\slu'}$ is a strengthening of the  subatom (or: collection of Sacks columns)  $q(\slu')$, the norm decreases by at most 1.
  \item (Homogeneity) $B^\slu_{\slu'}$ is a subset of $\poss(q,{<}{\slu'})$,
   such that
   for  each $\eta\in B^\slu_{\slu'}$ and each $\nu\in\poss(\cd^\slu_{\slu'})$
   $\eta^\frown \nu\in B^\slu_{\slu'+1}$; and 
   analogously for each $\eta\in \poss(q,{<}{\slu'})\setminus B^\slu_{\slu'}$ and each $\nu\in\poss(\cd^\slu_{\slu'})$,
    $\eta^\frown \nu\notin B^\slu_{\slu'+1}$.
\end{itemize}
(Just as in the case of rapid reading, we can find these objects using bigness:
Assume that $\slu''$ is the $\sublevels(q)$-successor of $\slu'$; by induction
there is a function $F$ which maps each $\eta\in \poss(q,{<}{\slu''}$ to $\{\in
B,\notin B\}$; we thin out $q(\slu')$ to $\cd^\slu_{\slu'}$ such that for each
$\nu\in\poss(q,{<}\slu')$ each extension of $\nu$ compatible with
$\cd^\slu_{\slu'}$ has the same $F$-value $F^*(\nu)$; this in turn defines
$B^\slu_{\slu'}$.)
\item 
Assume that $\slv<\slu$ as above, that $\eta\in\poss(q,{<}\slv)$, that
$q\curlywedge \eta$ essentially decides $\n \tau$ and 
that $\eta'\in \poss(q,{<}\slu)$ extends $\eta$.
Then trivially $q\curlywedge \eta'$ also essentially decides $\n\tau$.
So we get:
\proofclaim{eq:hrqjwq}{If $q\curlywedge \eta$ essentially decides $\n \tau$
for $\eta\in\poss(q,{<}\slv)$, then $\eta\in B^\slu_\slv$ 
for any $\slu>\slv$.}
\item We now show the converse:
\proofclaim{eq:fefwe}{Whenever $\eta\in B^\slu_{\slu'}$ for some sublevel 
$\slu'$ of the form $(M_{\ell'},-1)\leq \slu$ for some $\ell'$, then
$q\wedge \eta$ essentially decides $\n\tau$.}
(Equivalently: $q\curlywedge \eta$ essentially decides $\n\tau$, as
$q\curlywedge \eta=^*q\wedge \eta$.)
Proof: We can modify $q$ to a stronger condition $r$ 
using $\eta$ as trunk and using 
$\cd^\slu_{\slu''}$ for all $\slu'\le \slu''\le\slu$.
Any $\eta'\in \poss(r,{<}\slu)$ is in $B^\slu_{\slu}$,
so $q\curlywedge \eta'=^*r \curlywedge\eta'$ essentially decides 
$\n\tau$. So $r$ essentially decides $\n\tau$. Also,
each compound creature in $r$ has norm $>1$, so we can use~\eqref{eq:blabkf}.
This ends the proof of~\eqref{eq:fefwe}.
\item
So to show that $q$ essentially decides $\n\tau$, it is enough
to show that for all $\eta\in\poss(q,{<}(M_0,-1))$
there is a $\slu$ such that $\eta\in B^\slu_{(M_0,-1)}$.
\item
As in the rapid reading case, we choose an ``infinite branch''
$(\cd^*_{\slv},B^*_{\slv})$. I.e.: for each $\slv'$ there is a $\slu>\slv'$
such that $(\cd^\slu_{\slv},B^\slu_{\slv})=( \cd^*_{\slv},B^*_{\slv})$ for each
$\slv\le \slv'$.
This defines a condition $q_1\leq q$. 
\item
To show that $q $ essentially decides $\n\tau$,
it is enough to show $\eta\in B^*_{(M_0,-1)}$ for all
$\eta\in\poss(q,{<}M_0)=\poss(q_1,{<}(M_0,-1))$.

So fix such an $\eta$. Find any $r\leq q_1\wedge \eta$ deciding $\n\tau$.
Without loss of generality, $\min(\ww^r)=M_{\ell}$ for some $\ell$,
and each compound creature in $r$ has norm at least $1$.
Let $\eta'>\eta$ be the trunk of $r$ (restricted to $\supp(q)$
and $M_{\ell}$).
According to~\eqref{eq:blabkf}, $q\wedge \eta'$ essentially decides $\n\tau$.

Pick some $\slu>(M_\ell,-1)$ such that $(\cd^\slu_{\slv},B^\slu_{\slv})=
(\cd^*_{\slv},B^*_{\slv})$ for each $\slv\le (M_\ell,-1)$.
According to~\eqref{eq:hrqjwq}, $\eta'\in B^*_{\slv}$.
By homogeneity, $\eta\in B^*_{(M_0,-1)}$.
So according to~\eqref{eq:fefwe}, $q\wedge\eta$ essentially decides $\n\tau$.
\qedhere
\end{itemize}
\end{proof}

\subsection{Properness, \texorpdfstring{$\omega^\omega$}{omega-omega}-bounding, rapid reading, no randoms}

A standard argument now gives the following:
\begin{thm}
$\mathbb Q$ satisfies (the finite/$\omega^\omega$-bounding version of) Baumgartner's Axiom~A, in particular it is proper and $\omega^\omega$-bounding and (assuming CH in the ground model) preserves all cofinalities.
Also, $\mathbb Q$ rapidly reads every $\n r\in 2^\omega$.
\end{thm}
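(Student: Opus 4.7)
The plan is to bundle the technical tools from Sections~\ref{sec:bigness}--\ref{ss:puredecision} into the standard Axiom~A package. I would first define the Axiom~A ordering $(\leq_n)_{n\in\omega}$ by saying $q\leq_n p$ iff $q\leq p$, the first $n$ elements of $\ww^q$ coincide with those of $\ww^p$, the compound creatures of $q$ and $p$ agree up to that height (so in particular $t^q$ and $t^p$ agree on $\supp(p)$ up to that height), and $\nor(q(h))\geq n$ for every $h\in\ww^q$ strictly above that threshold. Fusion for this sequence is immediate from the construction of $\mathbb Q$: a $\leq_n$-decreasing chain $p_0\geq_0 p_1\geq_1 p_2\geq_2\cdots$ stabilizes on larger and larger initial segments (including the trunk and finitely many compound creatures), and the union of these stabilized pieces is a condition because the norms above the $n$-th retained level are $\geq n$ and hence tend to infinity.

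Next I would verify the finite/$\omega^\omega$-bounding refinement of Axiom~A: for any name $\n\tau$ for an ordinal, any $p$, and any $n$, there is $q\leq_n p$ and a \emph{finite} set of ordinals $F$ with $q\Vdash\n\tau\in F$. For this I would first strengthen $p$ (if needed) to a pruned condition, gluing so that $\nor(p(h))\geq n+2$ above some $M_0\in\ww^p$ where the first $n$ coordinates of $\ww^p$ lie below $M_0$. Then Lemma~\ref{lem:blablbagh} produces $q\leq p$ essentially deciding $\n\tau$, identical to $p$ below $M_0$ and with $\nor(q(h))\geq n$ above $M_0$, i.e.\ $q\leq_n p$. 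Since $q$ essentially decides $\n\tau$ at some sublevel $\slu$ and (by Fact~\ref{fact:p.pruned}) $|\poss(q,{<}\slu)|\leq \maxposs({<}\slu)<\infty$, the set $F$ of possible values of $\n\tau$ is finite. This is the substantive step, but all the real work is already done in Section~\ref{ss:puredecision}; the rest is bookkeeping.

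From this finite Axiom~A the remaining conclusions are the standard formal consequences. Properness follows because the usual diagonal fusion argument associates to each countable elementary submodel $N\ni\mathbb Q$ and each $p\in N\cap\mathbb Q$ an $(N,\mathbb Q)$-generic condition: enumerate the dense sets (or maximal antichains) of $\mathbb Q$ that lie in $N$ and apply the finite-Axiom~A step to each successively inside $N$, taking a fusion limit. The same diagonal fusion applied to the names $\n\tau(m)$ of an $\omega$-sequence of ordinals gives $q\leq p$ that essentially (hence finitely) decides each $\n\tau(m)$, so $\mathbb Q$ is $\omega^\omega$-bounding and in fact continuously reads every new real. Cofinality preservation then follows from properness (preserves $\aleph_1$), from the $\aleph_2$-cc under CH shown in Section~\ref{sec:Qprop} (preserves $\aleph_2$), and from $\omega^\omega$-bounding together with the chain condition for larger cardinals.

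Finally, for rapid reading: given any $\n r\in 2^\omega$, the fusion argument from the previous paragraph yields some $q\leq p$ continuously reading $\n r$, and then Lemma~\ref{lem:rapidreading} upgrades this, within a dense set, to a condition rapidly reading $\n r$. Density of the set of such conditions suffices for the statement that $\mathbb Q$ rapidly reads every $\n r\in 2^\omega$. The main obstacle was Lemma~\ref{lem:blablbagh} (pure decision via halving), which has already been established; what remains is merely checking that the chosen $\leq_n$ behaves correctly under fusion and that finite splitting upgrades ``essentially decides'' to ``decides among finitely many values'', both of which are immediate from the setup in Section~\ref{sec:complete.construction}.
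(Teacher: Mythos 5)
Your proposal takes essentially the same route as the paper: define the same kind of $\le_n$ relation, observe that fusion sequences converge, invoke Lemma~\ref{lem:blablbagh} for the finite (pruned, hence $\maxposs$-bounded) decision step, and then read off properness, $\omega^\omega$-bounding, and (via Lemma~\ref{lem:rapidreading}) rapid reading. The paper's own proof is just a terser statement of the same argument, so your rendering is correct.
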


\begin{proof}
We already know that we can rapidly read each real if we can
continuously read it.

We define $q\leq_n p$ as: $q\leq p$ and there is an $h\in\ww^q$, $h\ge n$,
such that $q$ and $p$ are identical below $h$ and $\nor(q(\ell))>n$
for all $\ell\ge h$.

It is clear that any  sequence $p_0\ge_0 p_1\ge_1 p_2\ge_2\dots$ has a limit;
and Lemma~\ref{lem:blablbagh} shows that for any name $\n\tau$ of an ordinal, $n\in\omega$
and $p\in\QQQ$, there is a $q\le_n p$ such that modulo $q$ there
are only finitely many possibilities for $\n\tau$.
\end{proof}

Rapid reading gives us:
\begin{lem}\label{lem:norandoms}
  Every new real is contained in a ground model null set, i.e., no random reals are added.
So assuming CH in the ground model, we will have $\cov(\NULL)=\al_1$ in the extension.
\end{lem}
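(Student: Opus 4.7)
The plan is to exploit rapid reading together with the quantitative bound from Assumption~$(*1)$ in Section~\ref{sec:complete.construction}. Given a name $\n r \in 2^\omega$, the preceding theorem guarantees that $\QQQ$ rapidly reads $\n r$, so (together with Fact~\ref{lem:variousprunedcrap}) the set of pruned conditions rapidly reading $\n r$ is dense. Fix such a $p$. Rapid reading furnishes, for each sublevel $\slu$, a function $R_\slu : \poss(p,{<}\slu) \to 2^{H({<}\slu)}$ with $p \curlywedge \eta \Vdash \n r \restriction H({<}\slu) = R_\slu(\eta)$ for every $\eta \in \poss(p,{<}\slu)$.

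I would then define, in $V$, the clopen sets
\[
N_\ell := \bigl\{ x \in 2^\omega : x \restriction H({<}(\ell,-1)) \in R_{(\ell,-1)}[\poss(p,{<}(\ell,-1))] \bigr\}.
\]
By Fact~\ref{fact:p.pruned}, $|\poss(p,{<}(\ell,-1))| \le \maxposs({<}\ell)$, so $\mu(N_\ell) \le \maxposs({<}\ell) \cdot 2^{-H({<}\ell)}$. The requirement $H({<}\ell) > \maxposs({<}\ell) + \ell + 2$ from $(*1)$, combined with $n \le 2^n$, yields $\mu(N_\ell) \le 2^{-\ell-2}$. Setting $N := \bigcap_{\ell \in \omega} N_\ell$ gives a $G_\delta$ null set coded in $V$. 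For each $\ell$, the predense family $\{ p \curlywedge \eta : \eta \in \poss(p,{<}(\ell,-1)) \}$ yields a $\eta_\ell$ with $p \curlywedge \eta_\ell \in G$, so $p \Vdash \n r \restriction H({<}(\ell,-1)) = R_{(\ell,-1)}(\eta_\ell) \in R_{(\ell,-1)}[\poss(p,{<}(\ell,-1))]$; hence $p \Vdash \n r \in N_\ell$ for every $\ell$, and so $p \Vdash \n r \in N$. Density over $p$ shows that every real of $V[G]$ lies in a ground model null set.

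For the last sentence of the lemma, assume CH in $V$. Then $V$ contains only $\al_1$ many Borel codes for null sets; each such code absolutely defines a null set in $V[G]$, and by the first part these sets jointly cover $2^\omega \cap V[G]$. Therefore $\cov(\NULL)^{V[G]} \le \al_1$, while $\al_1 \le \cov(\NULL)$ holds in ZFC.

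There is no serious obstacle once rapid reading is established; the argument is essentially bookkeeping. The only delicate point is ensuring the measure estimate $\mu(N_\ell) \to 0$, and this is precisely what the strong choice $H({<}\ell) > \maxposs({<}\ell) + \ell + 2$ in $(*1)$ was engineered to deliver.
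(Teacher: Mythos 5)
Your proof is correct and follows essentially the same route as the paper's: use rapid reading to bound the number of possibilities for $\n r \restriction H({<}\ell)$ by $\maxposs({<}\ell)$, compare against $2^{H({<}\ell)}$ to see the relative measure vanish, and intersect over $\ell$ to get a ground-model null set that the condition forces $\n r$ into. (The paper restricts to $\ell \in \ww^q$, which keeps Fact~\ref{fact:p.pruned} directly applicable; you should note you only need the intersection over $\ell \in \ww^p$, where the $\maxposs$ bound is stated.)
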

\begin{proof}
  Let $\n r$ be the name of an element of $2^\omega$ and $p$
  a condition. Let $q\leq p$ rapidly read $\n r$.
  So for all
  $\ell\in\ww^q$, $\n r\restriction H({<}\ell)$ is determined by each $\eta\in\poss(q,{<}\ell)$.
  Hence, the set $A^q_\ell$ of possibilities for $\n r\restriction H({<}\ell)$ has size at most $\maxposs({<}\ell)<H({<}\ell)<\nicefrac{2^{H({<}\ell)}}{\ell}$. So $A^q_\ell$ has ``relative 
size'' $<\nicefrac1{\ell}$, and the sequence
$(A^q_\ell)_{\ell\in\omega}$ defines (in the ground model)
the null set 
\[ N=\{ s\in 2^\omega:\, 
    (\forall \ell\in\ww^q)\   s\restriction H({<}\ell)\in  A^q_\ell\}.
\]
  And $q$ forces that $\n r\in N$.
\end{proof}

\section{The specific forcing and the main theorem}\label{sec:specificQ}

\subsection{The forcing}\label{ss:forcing88}

Recall that $\Xi$ is partitioned into $\Xisk$, $\Xili$ and $\Xils$.
We now further partition $\Xils$ into $\Xinn$ and $\Xicn$.
So every $\xi\in \Xi$ has one of the following four types:
\begin{itemize}
  \item type $\typesk$ (Sacks) for $\xi\in\Xisk$,
  \item type $\typecn$ (cofinality null) for $\xi\in\Xicn$,
  \item type $\typenn$ (non null) for $\xi\in\Xinn$, and
  \item type $\typenm$ (non meager) for $\xi\in\Xili$. So $\typenm$ is  the only lim-inf type.
\end{itemize}
Let $\kappa_t$ be the size of $\Xi_t$.

In the inductive construction of $\QQQ$ in Section~\ref{sec:complete.construction}, several assumptions are made in the subatom stages $\slu$.
We will satisfy those assumptions in the
following way:

For each type $t\in\{\typecn,\typenn,\typenm\}$ we assume that we have a family
of subatomic families $\cK'_{t,b}$ indexed by a parameter $b$, such that for
each $b\in\omega$, $\cK'_{t,b}$  is a subatomic family living on some
$\POSS'_{t,b}$ satisfying $b$-bigness. Actually, we will require a stronger
variant of $b$-bigness such that we can find an homogeneous successor subatom
while decreasing the norm not by $1$ but by at most $1/b$. I.e., we require:
\proofclaim{eq:parambig}{For $x\in\cK'_{t,b}$ and $F:\poss(x)\to b$ there is
a $y\le x$ such that $\nor(y)\ge \nor(x)-\nicefrac1b$ and $F\restriction\poss(y)$ is constant.}

Additionally we require that 
\proofclaim{eq:paramlarge}{there is at least one subatom in $\cK'_{t,b}$ with norm $\ge b$}.

Then we set for each subatomic sublevel $\slu=(\ell,j)$
\begin{equation}\label{eq:defbA}
  b(\slu):=B(\slu)\cdot (b(\slv)+1)+1,
\end{equation}
where $\slv$ is the largest\footnote{If $\slu$ is $(0,0)$, the smallest of all subatomic sublevels,
we just set $b(\slu):=B(\slu)$.  By the way, it would be enough to set $b(\slu):=B(\slu)$,
as this sequence would be increasing sufficiently fast, but this would require two extra lines of
calculations.}
subatomic sublevel smaller than $\slu$.
So the sequence $b(\slu)$ is strictly (actually: very quickly) increasing.
According to the definition~\ref{eq:cc4b} of $B(\slu)$, we also get:
\begin{lem}
\label{lem:defb}
  $b(\slu)\ge 2\cdot \maxposs({<}\slu)$, 
and even $b(\slu)\ge 
 2^{
         (\text{number of sublevels below $\slu$})\cdot\maxposs({<}\slu)}$.
\end{lem}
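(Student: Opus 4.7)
Both inequalities will follow from a single observation: that $b(\slu) \ge B(\slu) = 2^{H({<}\slu) \cdot \maxposs({<}\slu)}$ for every subatomic sublevel $\slu$. For the smallest subatomic sublevel $\slu = (0,0)$ this is by the footnote definition $b(\slu) := B(\slu)$; for any other subatomic sublevel the recursion~\eqref{eq:defbA} gives
\[
  b(\slu) = B(\slu)\cdot(b(\slv)+1) + 1 \;\ge\; B(\slu).
\]
The first inequality is then immediate: since $H({<}\slu) \ge 1$ and $\maxposs({<}\slu) \ge 1$, one has $B(\slu) \ge 2^{\maxposs({<}\slu)} \ge 2\maxposs({<}\slu)$, using only the elementary inequality $2^n \ge 2n$ for $n \ge 1$.

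For the second inequality, write $k = k(\slu)$ for the number of sublevels below $\slu$. It suffices to show $H({<}\slu) \ge k$, since this yields $B(\slu) \ge 2^{k \cdot \maxposs({<}\slu)}$. Because requirement $(*1)$ of the construction gives $H({<}\slu) > \maxposs({<}\slu) + \ell + 2$, everything reduces to the purely numerical claim $\maxposs({<}\slu) \ge k(\slu)$ for \emph{every} sublevel $\slu$ (Sacks as well as subatomic).

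I would prove this last claim by induction on sublevels. The base case $\slu = (0,-1)$ is trivial ($1 \ge 0$). For the inductive step, going from a sublevel $\slu$ to its immediate successor $\slv$, the count $k$ increases by exactly $1$, while $\maxposs$ gets multiplied by at least $2$: the factor from $(\ell,-1)$ to $(\ell,0)$ is $2^{|I_{\typesk,\ell}|\cdot \ell}$, which is $\ge 2$ for $\ell \ge 1$, and the factor when passing through a subatomic sublevel is $M(\slu)^{\ell+1} \ge 2$. The key small observation here, and the only mildly delicate point in the whole argument, is that $M(\slu) \ge 2$: requirement $(*5)(b)$ produces a subatom of norm $\ge 2^{\ell\cdot\maxposs({<}\ell)} \ge 1$, which by Definition~\ref{def:subatoms} cannot be a singleton, and hence forces $|\POSS_{\xi,\slu}| \ge 2$. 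The sole degenerate transition is $(0,-1) \to (0,0)$, where $\maxposs$ stays at $1$; this is handled directly, since $k((0,0)) = 1$ as well, so $\maxposs = k$ is still maintained.
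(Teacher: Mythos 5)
Your overall architecture is right: $b(\slu)\ge B(\slu)=2^{H({<}\slu)\cdot\maxposs({<}\slu)}$ handles the first inequality immediately, and the second inequality reduces to showing that $H({<}\slu)$ is at least the number $k(\slu)$ of sublevels below $\slu$. The side claim $\maxposs({<}\slu)\ge k(\slu)$ that you prove by induction is also true, and your observation that $M(\slu)\ge 2$ (via item $(*5)$(b) and the singleton-norm clause of Definition~\ref{def:subatoms}) is a nice way to see that $\maxposs$ at least doubles at each non-degenerate step.

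The gap is in the step where you invoke $(*1)$. Requirement $(*1)$ reads $H({<}\ell)>\maxposs({<}\ell)+\ell+2$, and here $\ell$ is a \emph{level}, i.e.\ the inequality is imposed only at the Sacks sublevel $(\ell,-1)$. You assert the stronger $H({<}\slu)>\maxposs({<}\slu)+\ell+2$ for an arbitrary (in particular subatomic) sublevel $\slu=(\ell,j)$, and that is not what $(*1)$ says. This matters: within a fixed level $\ell$, $\maxposs({<}(\ell,j))$ gets multiplied by $M((\ell,j'))^{\ell+1}\ge 2$ at every subatomic step, so it grows exponentially in $j$, while $(*1)$ imposes no further constraint on $H$ until the next Sacks sublevel. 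Thus ``$H({<}\slu)>\maxposs({<}\slu)$'' is precisely the kind of inequality that can fail in the interior of a level, and the chain $H({<}\slu)>\maxposs({<}\slu)\ge k(\slu)$ collapses. Monotonicity of $H$ combined with $(*1)$ would only give $H({<}\slu)\ge H({<}\ell)>\maxposs({<}\ell)+\ell+2$, and $\maxposs({<}\ell)+\ell+2$ is far too small compared to $k(\slu)$ for $j$ large (there are $J_\ell$ subatomic sublevels at level $\ell$, and $J_\ell\gg\maxposs({<}\ell)$).

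The intended argument is more direct and avoids the $\maxposs$ detour entirely: $H$ increases \emph{strictly} from sublevel to sublevel --- explicitly so under Definition~\ref{eq:hfromhprime}, where each of the three cases adds at least $1$ to $H({<}\slu')$ --- and $H({<}(0,-1))=3$. Hence $H({<}\slu)\ge 3+k(\slu)>k(\slu)$ for every sublevel $\slu$, and therefore $b(\slu)\ge B(\slu)\ge 2^{k(\slu)\cdot\maxposs({<}\slu)}$. Your first-inequality argument then goes through unchanged. (Your auxiliary claim $\maxposs({<}\slu)\ge k(\slu)$ is not needed once one has $H({<}\slu)\ge k(\slu)$ directly.)
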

Then we set (for all $\xi\in\Xi_t$)
\[
  \cK_{\xi,\slu}:=\cK'_{t,b(\slu)}.
\]

This way we automatically satisfy requirements~(b) and~(c) 
of item~$(*\ref{item:sa})$ on page~\pageref{item:sa}.  And since there are only four, i.e., finitely many,
types, there is automatically a bound $M$ on $|\POSS_{\xi,\slu}|$ as
required in~(d).

Strong bigness gives us the following property:
\begin{lem}\label{lem:schoeneslemma}
   Let $I$ be a finite set of subatomic sublevels (and thus $I$ is naturally ordered).
   Let $\slv$ be the minimum of $I$.
   For each $\slu\in I$  let $\xi_\slu\in\typenonsk$ and  $x_\slu$ a subatom in $\cK_{\xi_\slu,\slu}$.
   Let $F:\prod_{\slu\in I}\poss(x_\slu)\to b(\slv)$.
   Then there are $y_\slu<x_\slu$ with $\nor(y_\slu)\ge \nor(x_\slu)-\nicefrac1{b(\slu)}$
   and such that $F\restriction \prod_{\slu\in I}\poss(y_\slu)$ is constant.
\end{lem}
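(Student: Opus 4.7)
The plan is a downward recursion on the enumeration $I = \{\slu_0 < \slu_1 < \cdots < \slu_{n-1}\}$ (with $\slu_0 = \slv$), producing subatoms $y_{\slu_k}$ one at a time from $k = n-1$ down to $k = 0$ together with auxiliary colorings $F_k \colon \prod_{j \le k}\poss(x_{\slu_j}) \to b(\slv)$ starting from $F_{n-1} := F$. At step $k \ge 1$ I would curry $F_k$ on its last coordinate, viewing it as a map $\widetilde F_k \colon \poss(x_{\slu_k}) \to b(\slv)^{K_k}$ with $K_k := \prod_{j<k}|\poss(x_{\slu_j})|$. Since $\cK_{\xi_{\slu_k},\slu_k}$ is by construction of the form $\cK'_{t,b(\slu_k)}$ and hence enjoys the strong $b(\slu_k)$-bigness of \eqref{eq:parambig}, once the size bound $b(\slv)^{K_k} \le b(\slu_k)$ is verified this yields a subatom $y_{\slu_k} \le x_{\slu_k}$ with $\nor(y_{\slu_k}) \ge \nor(x_{\slu_k}) - \nicefrac{1}{b(\slu_k)}$ on which $\widetilde F_k$ is constant; the constant value, itself a function of the remaining coordinates, becomes $F_{k-1}$. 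At the terminal step $k = 0$ the coloring $F_0 \colon \poss(x_\slv) \to b(\slv)$ has codomain of size exactly $b(\slv)$, so strong $b(\slv)$-bigness applies directly to deliver $y_\slv$, and the tuple $(y_\slu)_{\slu \in I}$ then homogenises $F$ with the required norm losses.

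The one nontrivial point is the size estimate $b(\slv)^{K_k} \le b(\slu_k)$. Iterating the recursion \eqref{eq:defbA} along the chain $\slv = \slu_0 < \slu_1 < \cdots < \slu_k$, and using that the largest subatomic sublevel strictly below $\slu_j$ is at least $\slu_{j-1}$, one obtains $b(\slu_k) \ge \prod_{j=1}^{k} B(\slu_j) \cdot b(\slv)$. On the other hand $K_k \le \prod_{j<k}|\POSS_{\xi_{\slu_j},\slu_j}|$ is a subproduct of the factors entering the $\maxposs$-recursion at the intermediate subatomic sublevels, so in particular $K_k \le \maxposs({<}\slu_k)$. Combined with the identity $\log_2 B(\slu_k) = H({<}\slu_k)\cdot\maxposs({<}\slu_k)$ and an upper bound on $\log_2 b(\slv)$ of order $H({<}\slv)\cdot\maxposs({<}\slv)$ (up to a factor depending on the number of sublevels up to $\slv$, obtained by unrolling \eqref{eq:defbA}), the desired inequality reduces to a linear estimate in $H$ and $\maxposs$ that holds with ample slack -- essentially the same computation that underlies the compound bigness argument at the end of Section \ref{sec:complete.construction}.

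The main technical obstacle is really only bookkeeping: tracking how the curried colorings compose through the recursion and how the cumulative norm losses distribute across the sublevels. The combinatorial content itself is a single application of strong bigness per sublevel.
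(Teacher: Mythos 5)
Your proposal is correct and follows essentially the same route as the paper's proof: downward induction from the top of $I$, currying $F_k$ at the highest remaining coordinate, and a single application of strong $b(\slu_k)$-bigness per sublevel, with the crux being the size bound $b(\slv)^{K_k}\le b(\slu_k)$. You in fact state that bound more precisely than the paper's own one-line justification; for the verification, it suffices to note that a single unrolling $b(\slu_k)\ge B(\slu_k)\cdot b(\slv)$ together with $K_k\le\maxposs({<}\slu_k)$ reduces the matter to $\log_2 b(\slv)\le H({<}\slu_k)$, and this follows because $H({<}\slu_k)$ accumulates the term $\max_t H'(t,{=}b(\slv))$, which already exceeds $\log_2 b(\slv)$ by the choice of the intervals $I$ in each $\cK'_{t,b}$ -- a bound of the same flavour as, though not literally identical to, the $\log_2 H\le H$ computation behind compound bigness.
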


\begin{proof}
  We construct $y_\slu$ by downwards induction on $\slu\in I$: 
  Let $\slu'$ be the maximum of $I$, then
  $F$ can be written as function from $\poss(x_{\slu'})$ to
  $b(\slv)^P$, where $P=\prod_{\slu\in I\setminus\{\slu'\}}\poss(x_\slu)$.
  As $|P|$ is less than the number of sublevels below $\slu'$ times
  $\maxposs({<}\slu')$, we get $|P|<b(\slu')$, and thus 
  can use strong bigness to get $y_{\slu'}<x_{\slu'}$.

  Now continue by induction.
\end{proof}

The families $\cK'_{t,r}$ that we will actually use are described in
Section~\ref{ss:cn} for $t=\typecn$, Section~\ref{ss:nn} for $t=\typenn$,
and Section~\ref{ss:nm} for $t=\typenm$.

In addition, we will define there for each $\cK'_{t,b}$ a number $H'(t,{=}b)$, and
in the inductive construction, we define $H$ as follows: 

\begin{defin}
   \label{eq:hfromhprime}
$H({<}(0,-1)):= 3$. 
If $\slu = (\ell,j)$ is a sublevel with immediate predecessor $\slu'$,
we define $H({<}\slu)=H({\le}\slu')$ in cases by: 
\begin{itemize}
\item     
  For a Sacks sublevel $\slu$  (i.e., $j=-1$), 
   $ H({<} \ell )=H({<}\slu) := 2+\ell+\maxposs({<}\ell) +  H({<}\slu') + \max( \{H'(t,{=}b(\slu')):\, t\in\{\typenm,\typenn,\typecn\}\})$. 
\item  For $j=0$: 
  $H({<}\slu):=1+H({<}\slu') + \max(I_{\typesk,\ell}) $. 
\item  For $j > 0$, 
  $H({<}\slu):=1+ H({<}\slu') +  \max \{H'(t,{=}b(\slu')):\, t\in\{\typenm,\typenn,\typecn\}\})$. 
\end{itemize}
\end{defin}

So in particular, if $p$ rapidly reads $\n r$, then for all
$t\in\{\typenm,\typenn,\typecn\}$ and all subatomic sublevels $\slu$
\begin{equation}\label{eq:Hprimerapid}
  \n r\restriction H'(t,{=}\slu)\text{ is decided }\le b(\slu) .
\end{equation}

Note that once we fix the parametrized subatomic families $\cK'_{t,b}$ and
$H'(t,{=}b)$ (and the cardinalities $\kappa_t$), we have specified everything required to construct $\QQQ$, and
$\QQQ$ will satisfy Baumgartner's Axiom~A, will be $\omega^\omega$-bounding, and, assuming CH,
will have the $\al_2$-cc.  We also get rapid reading.

\subsection{The main theorem}
We will show:
\begin{thm}\label{thm:main}
   Assume (in $V$) CH, 
   $\kappa_\typenm \le \kappa_\typenn\le\kappa_\typecn\le\kappa_\typesk$
   and $\kappa_t^{\aleph_0}=\kappa_t$ for $t\in\{\typenm,\typenn,\typecn,\typesk\}$.
   Then there is a forcing  $\mathbb  Q$ which   forces
   \begin{enumerate}
     \item $\cov(\NULL)=\mathfrak d=\aleph_1$,
     \item $\non(\MEAGER)=\cof(\MEAGER)=\kappa_\typenm$,
     \item $\non(\NULL)=\kappa_\typenn$,
     \item $\cof(\NULL)=\kappa_\typecn$,
     \item $2^{\aleph_0}=\kappa_\typesk$.
   \end{enumerate}
   Moreover, $\mathbb Q$ preserves all cardinals and all cofinalities. 
\end{thm}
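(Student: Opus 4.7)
The plan is to assemble the five equalities from ingredients proved (or promised) elsewhere in the paper, once we fix $\Xi$ partitioned with $|\Xi_t|=\kappa_t$ and instantiate the parametrized subatomic families $\cK'_{t,b}$ as described in Subsection~\ref{ss:forcing88} and Sections~\ref{ss:nm}, \ref{ss:nn}, \ref{ss:cn}. From Part~1 the resulting $\QQQ$ is proper, $\omega^\omega$-bounding, satisfies Axiom~A, rapidly reads all new reals, and (using CH) is $\aleph_2$-cc; these already give preservation of all cofinalities (and hence cardinals), as well as the upper bound $2^{\aleph_0}\le \kappa_\typesk$ via the standard nice-name counting together with $\kappa_\typesk^{\aleph_0}=\kappa_\typesk$. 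Item~(1) then follows at once: $\omega^\omega$-bounding gives $\mathfrak d=\aleph_1$, and Lemma~\ref{lem:norandoms} gives $\cov(\NULL)=\aleph_1$.

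Next I would verify $2^{\aleph_0}\ge \kappa_\typesk$ by noting that for distinct $\xi\in\Xi_\typesk$ the generic reals $\yy_\xi$ are pairwise distinct, since any two conditions can be strengthened to force $\yy_\xi\neq \yy_{\xi'}$ using the Sacks columns. The four middle equalities are then obtained by pairing the ``$\le$'' halves, announced in Section~\ref{sec:specificQ}, with the ``$\ge$'' halves to be proved in Part~2. Specifically: the Sacks property of the $\typesk$-part yields $\cof(\NULL)\le \kappa_\typecn$; the fact that $\typenm$ is the only lim-inf type yields $\non(\MEAGER)\le \kappa_\typenm$; Section~\ref{ss:cn} will give $\non(\NULL)\le\kappa_\typenn$; and the matching lower bounds $\non(\MEAGER)\ge \kappa_\typenm$, $\non(\NULL)\ge \kappa_\typenn$, $\cof(\NULL)\ge \kappa_\typecn$ are furnished by Sections~\ref{ss:nm}, \ref{ss:nn}, \ref{ss:cn}. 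Finally, $\cof(\MEAGER)=\kappa_\typenm$ follows from the ZFC identity $\cof(\MEAGER)=\max(\mathfrak d,\non(\MEAGER))$ from Figure~\ref{fig:cichon}, using $\mathfrak d=\aleph_1\le \kappa_\typenm$.

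At the level of this theorem statement there is really nothing to prove beyond this bookkeeping; the substance is in Part~2. The main conceptual obstacle there, which I would flag now, is the asymmetry between upper and lower bounds. The upper bounds require showing that every real in the extension is, in a precise sense, ``read from only a small subset'' of $\Xi$, so that $\kappa_t$-many generics of the appropriate type suffice to witness the relevant cardinal characteristic; this is exactly what rapid reading plus Lemma~\ref{lem:fewrealsreadcontinuously} plus the complete-embedding property of $\QQQ_{\Xi'}$ (Lemma~\ref{lem:quotient}) are designed to deliver, together with the specific combinatorial features of each subatomic family. The lower bounds run in the opposite direction: given any set of $<\kappa_t$ old reals (or null sets, or meager sets), one must produce a generic $\yy_\xi$ escaping them, and this is where the strong bigness encoded in Lemma~\ref{lem:schoeneslemma} and the parameter $H'(t,\cdot)$ governing rapid reading~\eqref{eq:Hprimerapid} do the actual work, via Ramsey/fusion arguments that will be spelled out in Sections~\ref{ss:nm}--\ref{ss:cn}.
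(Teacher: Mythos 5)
Your proposal follows essentially the same bookkeeping route as the paper's proof: it derives (1) from $\omega^\omega$-bounding and Lemma~\ref{lem:norandoms}, (5) from the distinctness of Sacks generics and Lemma~\ref{lem:fewrealsreadcontinuously}, preservation from Axiom~A plus $\aleph_2$-cc, and (2)--(4) by pairing the upper bounds $\non(\MEAGER)\le\kappa_\typenm$ (Lemma~\ref{lem:nmleknm}), $\non(\NULL)\le\kappa_\typenn$ (Corollary~\ref{lem:nnleknn}), and $\cof(\NULL)\le\kappa_\typecn$ (Corollary~\ref{cor:cnlekcn}) with the matching lower bounds from Sections~\ref{ss:nm}--\ref{ss:cn}, invoking the ZFC identity $\cof(\MEAGER)=\max(\mathfrak d,\non(\MEAGER))$ exactly as the paper does. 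This matches the paper's reduction of the theorem to the cited lemmas and corollaries.
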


As mentioned above, we fix disjoint index sets $\Xi_t$ ($t\in \{\typesk,\typecn,\typenn,\typenm\}$)
of respective sizes $\kappa_t$, and we construct $\QQQ$ as described 
above.   
Then the following points are obvious or have already been shown:
\begin{enumerate}
\item[(1)]
     $\mathfrak d=\aleph_1$, since $\QQQ$ is $\omega^\omega$-bounding.
     And it was already shown in Lemma~\ref{lem:norandoms} that no
     random reals are added, so $\cov(\NULL)=\aleph_1$.
\item[(5)]
     If $\alpha\neq \beta\in\Xisk$, then the generic reals at 
     $\alpha$ and $\beta$ are forced to be different, so we have
     at least $\kappa_\typesk$ many reals.
     Every real in the extension is read continuously, so
     by Lemma~\ref{lem:fewrealsreadcontinuously} there are 
     at most $\kappa_\typesk^{\aleph_0}=\kappa_\typesk$ many reals.
\item[($\bullet$)]
     The ``moreover'' part is clear because $\mathbb Q$ satisfies Baumgartner's
       Axiom~A and has the $\aleph_2$-cc.
\end{enumerate}

In the rest of the paper, we will describe the families $K'_{t,b}$
and $H'(t,{=}b)$ and  prove the remaining parts
of the main theorem:
\begin{itemize}
  \item[(2)]
    In ZFC, 
    $\max(\mathfrak d,\non(\MEAGER))=\cof(\MEAGER)$. And
    $\non(\MEAGER)\le \kappa_\typenm$
    is shown in~\ref{lem:nmleknm}, and $\ge$ in~\ref{cor:nmgeknm}.
  \item[(3)]
    $\non(\NULL)\le \kappa_\typenn$
    is shown in~\ref{lem:nnleknn}; and $\ge$ in~\ref{cor:nngeknn}.
  \item[(4)] $\cof(\NULL)\le \kappa_\typecn$
    is shown in~\ref{cor:cnlekcn}; and $\ge$ in~\ref{cor:cngekcn}.
\end{itemize}

\subsection{The Sacks part: \texorpdfstring{$\cof(\NULL)\le \kappa_\typecn$}{cof(N) <= kappa-cn}}\label{sec:SacksB}

We will show that every null set added by $\QQQ$ is contained
in a null set which is already added by the
non-Sacks part.

We will first show that the quotient $\QQQ/\QQQ_{\Xinonsk}$
(in other words: the extension from the universe obtained not using the 
sacks coordinates to the full generic extension)
has the Sacks property.

Recall that the Sacks property states (or, depending on the definition, 
is equivalent to): Every function in $\omega^\omega$ in the extension
is caught by an $(n+2) $-slalom from the ground model. (I.e., there is a function  $S:\omega\to [\omega]^{<\omega}$ in the
ground model  with 
$|S(n)|\le n+2$, and $f(n)\in S(n)$ for all~$n$.)

The Laver property is similar, but only applies to functions $f$ in the extension which are bounded by a ground model function. 

We get
\begin{lem}\label{lem:laversacks}
\begin{enumerate}
\item Laver property is equivalent to:
\begin{quote}
       Whenever $\n r\in 2^\omega$ is in the extension
       and $G:\omega\to\omega$ in the ground model, 
       \\
       then there is in the ground model a tree $T$ 
       (without terminal nodes) such that $\n r\in [T]$ and 
        $|T\restriction 2^{G(n)}|<n+2$ for all $n$.
\end{quote}
\item The Sacks property is equivalent to
  the conjunction of Laver property and $\omega^\omega$-bounding.
\item If an extension has the Sacks property, then any new null set
is contained in an old null set.
\end{enumerate}
\end{lem}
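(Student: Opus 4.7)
The plan is to verify three classical facts about preservation properties of forcing extensions; the proofs are routine coding arguments and none of the three items should present a serious obstacle.

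For item~(1), I will translate between the slalom formulation of the Laver property---every $f \in V[G] \cap \omega^\omega$ that is bounded by some $g \in V$ is caught by a ground-model slalom $S$ with $|S(n)| \leq n+1$---and the tree formulation in the statement. From slaloms to trees: given $\n r \in 2^\omega$ in the extension and $G \in V$, apply the Laver property to $f(n) := \n r \restriction G(n)$, regarded (via any fixed ground-model coding) as an element of $2^{G(n)}$, hence bounded by the ground-model function $n \mapsto 2^{G(n)}$. The resulting slalom satisfies $S(n) \subseteq 2^{G(n)}$, $|S(n)| \leq n+1$, and $f(n) \in S(n)$, and then $T := \{t \in 2^{<\omega} : \forall n,\ G(n) \leq |t| \Rightarrow t \restriction G(n) \in S(n)\}$ lies in $V$, has $\n r$ as a branch, and satisfies $|T \cap 2^{G(n)}| \leq |S(n)| < n+2$; removing terminal nodes is a ground-model operation that does not affect $[T]$. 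The converse is analogous: code a function $f \leq g$ (with $g \in V$) as a real $\n r$ using block lengths determined by $g$, apply the tree formulation with the corresponding $G$, and extract a slalom from the level sets of the resulting tree.

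For item~(2), the Sacks property plainly entails the Laver property (it applies to all $f$, not only bounded ones) and also $\omega^\omega$-bounding (the ground-model function $n \mapsto \max S(n)$ dominates $f$). Conversely, given any $f \in V[G] \cap \omega^\omega$, $\omega^\omega$-bounding produces a dominating $g \in V$, and then the Laver property delivers the required slalom.

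For item~(3), I will fix in $V$ a canonical enumeration $(C_i)_{i \in \omega}$ of basic clopen subsets of $2^\omega$ together with the ground-model sets of indices $J_n := \{i : \mu(C_i) \leq 2^{-2n}\}$. Any null set $N \in V[G]$ admits a representation $N \subseteq \bigcap_k \bigcup_{n \geq k} C_{f(n)}$ for some $f \in V[G]$ with $f(n) \in J_n$, via standard measure approximation. The Sacks property then supplies $S \in V$ with $f(n) \in S(n) \subseteq J_n$ and $|S(n)| \leq n+2$, and the ground-model set $M := \bigcap_k \bigcup_{n \geq k} \bigcup_{i \in S(n)} C_i$ contains $N$ and has $\mu(M) \leq \lim_{k \to \infty} \sum_{n \geq k} (n+2) \cdot 2^{-2n} = 0$. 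The only point that requires care is the measure bookkeeping: the decay $2^{-2n}$ must outpace the slalom size $n+2$, which it safely does.
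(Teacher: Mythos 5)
Your argument is correct and runs along the same classical lines as the paper's; the only real difference is one of completeness. The paper proves only the tree-to-Laver direction of~(1) in detail (remarking the converse is ``similarly easy'') and cites Bartoszy\'nski--Judah, Theorem~2.3.12, for~(2) and~(3), whereas you spell out the Laver-to-tree direction, sketch the converse, and fill in~(2) and~(3) with the standard slalom plus Borel--Cantelli computation.
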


\begin{proof}
For the well known (2) and (3) see, e.g.,~\cite[Theorem 2.3.12]{MR1350295}.
For (1), we only show how to get the Laver property (which is enough for this paper,
and the other direction is similarly easy).

Suppose that $g : \omega \to \omega$ is given.
Enumerate $\{ (n,m) :\, m \leq g(n) \}$ in lexicographic order as $(n_i,m_i)$.
Define a function $G : \omega \to \omega$ by 
\[
\textstyle
G(n) = \min \{ i : n_i > n \} = n+1 + \sum_{k \le n} g(k).
\]
(For convenience we will think of $G(-1) = 0$.)
Note that according to the enumeration given above, every function $r : \omega \to 2$ determines a subset of $\prod_{n < \omega} ( g(n) + 1 )$ by $\{ (n_i,m_i) : r(i) = 1 \}$.
Accordingly, certain functions $r$ induce a function bounded by $g$: those functions $r$ such that given any $n$ there is a unique $m \leq g(n)$ such that $(n,m)$ is in the subset determined by $r$ as described above.
(Equivalently, for each $n$ there is a unique $G(n-1) \leq i < G(n)$ such that $r(i) = 1$.)
Given such an $r$, by $\val(r,n)$ we denote $m_i$ where $G(n-1) \leq i < G(n)$ is such that $r(i) = 1$.

Note that given any function $f$ bounded by $g$ there is a unique function $r_f : \omega \to 2$ (which determines a function bounded by $g$ as described above) such that $\val(r_f,n) = f(n)$ for all $n$.

Suppose that $\n f$ is a name for a function bounded by the ground model function $g$.
Let $\n r_f$ be a name for the function $\omega \to 2$ as described above, and let $T$ be the tree guaranteed
to exist by the assumption (using the function $G$ defined from $g$ above). We may assume that all branches $x$ of 
$T$ determine a function bounded by $g$ as described above.
Now define a slalom $S$ by $S(n)= \{\val(x,n): x\in [T]\}$.   It is clear that $S$ catches $\n f$.
\end{proof}

We now prove our version of the Laver property for the 	quotient. 
As the whole forcing is $\omega^\omega$-bounding, this implies the Sacks
property. 

\begin{lem}\label{lem:blubb}
\begin{enumerate}
\item 
	Assume that $p$ is a condition,
        $\n r\in2^\omega$ a name 
        and $G:\omega\to\omega$ is in~$V$.
	Then there is a $q\leq p$ and a name
        $\n T\subseteq 2^{<\omega}$ (of a tree without terminal nodes) such that:
        $q$ continuously reads
        $\n T$
	not using any Sacks indices; $q$
	forces $r\in[\n T]$; and
        $|\n T\restriction 2^{G(n)}|<n+2$
        for all $n$.
\item   Therefore the quotient $\QQQ/\QQQ_{\Xinonsk}$ has the Laver property (and thus the Sacks property). 
\end{enumerate}
\end{lem}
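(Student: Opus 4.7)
I sketch (1) first; (2) then follows by combining (1) with Lemma~\ref{lem:laversacks}(1), together with the fact that $\QQQ$ is $\omega^\omega$-bounding (so the quotient is too, and Lemma~\ref{lem:laversacks}(2) delivers the full Sacks property).

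The overall strategy is a fusion of the ``Laver-property for Sacks'' style, carried out in the compound-creature framework.  By Lemma~\ref{lem:rapidreading} I first replace $p$ by a pruned $p_1\leq p$ that rapidly reads $\n r$; for every sublevel $\slu$ this gives a function $R_\slu\colon \poss(p_1,{<}\slu)\to 2^{H({<}\slu)}$ with $p_1\curlywedge\eta \Vdash \n r\restriction H({<}\slu)=R_\slu(\eta)$.  Pick an increasing sequence $h_0<h_1<\cdots$ in $\ww^{p_1}$ with $H({<}h_{n+1})\geq G(n)$ and $\nor(p_1(h))>n+3$ for all $h\geq h_n$ in $\ww^{p_1}$, widely enough spaced that the bigness estimates below apply.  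After gluing along $\{h_n:n\in\omega\}$ I may assume $\ww^{p_1}=\{h_n:n\in\omega\}$, so each compound creature $p_1(h_n)$ has exactly one Sacks sublevel, $(h_n,-1)$.  Write $\poss(p_1,{<}h_{n+1})=A_{n+1}\times B_{n+1}$ for the split into non-Sacks and Sacks possibilities.

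The fusion step.  I construct $q\leq p_1$ by modifying only the Sacks columns of $p_1$ (so $q$ agrees with $p_1$ on the non-Sacks side) in such a way that for every $n$ and every $\bar a\in A_{n+1}$, the image
\[
   \bigl\{\, R_{h_{n+1}}(\bar a,b)\restriction G(n) \;:\; b\in B_{n+1}\,\bigr\}\ \subseteq\ 2^{G(n)}
\]
has size at most $n+1$.  The tools are (i) Sacks bigness, Lemma~\ref{lem:martinsgloriouscorollary}(\ref{item:big}), applied at the Sacks sublevel $(h_k,-1)$ of each compound creature to homogenize the coloring determined by $R_{h_{n+1}}\restriction G(n)$ in the direction of the Sacks possibilities at $(h_k,-1)$ (the relevant codomain has size at most $2^{G(n)\cdot \maxposs({<}h_{n+1})}$, which is well below $B((h_k,-1))=2^{H({<}(h_k,-1))\cdot\maxposs({<}(h_k,-1))}$ by the choice of the $h_n$); (ii) Lemma~\ref{lem:martinsgloriouscorollary}(\ref{item:bla}) to prune each Sacks column at $h_k$ to size at most $2^{|I_{\typesk,h_k}|}$ while preserving its Sacks norm; and (iii) regluing, so that whenever a thinning has dropped the Sacks norm at some $h_k$ below the fusion-level bound, the affected compound creature is absorbed into the next one, whose Sacks norm then dominates by Lemma~\ref{lem:martinsgloriouscorollary}(\ref{item:three}).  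Interleaving these operations in a fusion sequence $p_1\geq q_0\geq q_1\geq\cdots$ with $q_n$ agreeing with $q_{n-1}$ below $h_n$ and invoking bigness only boundedly many times per compound creature, the starting norm $n+3$ absorbs all losses, and the limit $q$ is a valid condition satisfying the image bound above.

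Define the name $\n T\subseteq 2^{<\omega}$ by, for each $n$,
\[
   \n T\cap 2^{G(n)}\;:=\;\bigl\{\, R_{h_{n+1}}(\bar a_{n+1},b)\restriction G(n) \;:\; b\in B_{n+1}\,\bigr\},
\]
where $\bar a_{n+1}\in A_{n+1}$ is the projection of the non-Sacks generic to level ${<}h_{n+1}$, and close under initial segments (extending arbitrarily to avoid terminal nodes).  The right-hand side depends only on $\bar a_{n+1}$, so $\n T$ is continuously read using no Sacks indices; $q$ forces $\n r\restriction G(n) \in \n T\cap 2^{G(n)}$, giving $\n r\in[\n T]$; and $|\n T\cap 2^{G(n)}|\leq n+1 < n+2$ by the image bound.

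For (2), given any name $\n f\in\omega^\omega$ bounded by a ground-model $g$, encode it as $\n r_f\in 2^\omega$ and define $G$ from $g$ as in the proof of Lemma~\ref{lem:laversacks}(1); applying (1) to $\n r_f$ and $G$ produces $q\leq p$ and a tree $\n T$ which, being continuously read from $\Xinonsk$, lives in the $\QQQ_{\Xinonsk}$-extension.  The slalom extracted from $\n T$ as in Lemma~\ref{lem:laversacks}(1) catches $\n f$ in that extension, whence $\QQQ/\QQQ_{\Xinonsk}$ has the Laver property; combined with $\omega^\omega$-bounding of the quotient and Lemma~\ref{lem:laversacks}(2), this gives the Sacks property.

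The main obstacle is the delicate bookkeeping in the fusion step: one must verify simultaneously that (a) the Sacks bigness $B((h_k,-1))$ dominates the codomain size at each homogenization, (b) the cumulative norm losses from bigness, pruning and regluing at each compound creature stay below the starting norm $n+3$, and (c) the combined effect of all these thinnings actually delivers the tight bound $n+1$ on the image size (rather than merely some image bound), which requires the interplay of bigness with a ``split one extra branch per fusion step'' scheme just as in the classical Laver-property proof for Sacks forcing.
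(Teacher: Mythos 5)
Your target coincides with the paper's: control, at each stage $n$, the number of distinct Sacks-direction projections (equivalently, the image of the reading function restricted to length $G(n)$) so that it stays below $n+2$, and then let $\n T$ be the tree traced out by these. The paper's proof does exactly this, working with the set $S_\ell$ of Sacks-projections of possibilities and proving $|S_\ell|<\ell+2$.

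The gap lies in the main mechanism you invoke. Your central tool is Sacks bigness (Lemma~\ref{lem:martinsgloriouscorollary}(\ref{item:big})), used to homogenize the coloring $R_{h_{n+1}}\restriction G(n)$ ``in the direction of the Sacks possibilities at $(h_k,-1)$''. But the bigness parameter $B((h_k,-1))$ is fixed once and for all during the inductive construction of Section~\ref{sec:complete.construction}; it does not grow with later choices of the $h_n$. The codomain you must homogenize over has size roughly $2^{G(n)\cdot\maxposs({<}h_{n+1})}$, and since $\maxposs({<}h_{n+1})>\maxposs({<}h_k)$ for $n\ge k$ and $G(n)$ is unbounded, this eventually dwarfs $B((h_k,-1))=2^{H({<}h_k)\cdot\maxposs({<}h_k)}$. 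Your parenthetical ``which is well below $B((h_k,-1))\ldots$ by the choice of the $h_n$'' is therefore false: spacing the $h_n$ further apart only inflates the codomain while leaving $B(h_k)$ unchanged. (For the rapid-reading argument, Lemma~\ref{lem:rapidreading}, this discrepancy does not arise because one only homogenizes a segment of $\n r$ of length $H({<}\slu')$ at sublevel $\slu'$ itself, exactly the scale at which $B(\slu')$ is calibrated.) A secondary issue: full homogenization would drive the image size to $1$, not $n+1$, so one cannot recover the desired linear growth purely from Ramsey-thinning.

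The paper's proof needs no Ramsey theory at all. After gluing so that $\ww^p=\{G'(0),G'(1),\dots\}$, it builds $q$ by a direct counting scheme: between consecutive $\ww^q$-points $G'(h_n)$ and $G'(h_{n+1})$, every Sacks column is replaced by a singleton except, for each $\xi\in\Sigma$, exactly one level $\ell(\xi)$ where a subtree isomorphic to $2^s$ is retained (with $s$ the minimal height giving $\sacksnor^*\ge n$). The crucial point is that the branching levels $\ell(\xi)$ are all placed at or above $h':=(h_n+1)\cdot 2^{s\cdot|\Sigma|}$; then $|S_\ell|=|S_{h_n}|<h_n+2$ for $h_n\le\ell<h'$ (singletons only), while for $h'\le\ell\le h_{n+1}$ one has $|S_\ell|\le(h_n+1)\cdot 2^{s\cdot|\Sigma|}=h'\le\ell<\ell+2$. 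This is exactly the ``one extra splitting per index per round'' scheme your final remark (c) gestures at, but carried out on its own, with the placement condition $\ell(\xi)\ge h'$ doing all the work that you were hoping to extract from bigness. You should drop the bigness step entirely, isolate the counting argument, and specify the placement of the branching levels.
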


\begin{proof}
If $G_1(n)\le G_2(n)$ for all $n$, and $\n T$ witnesses the conclusion of the lemma for
$G_2$, then $\n T$ also witnesses the lemma for $G_1$.   So we may without 
loss of generality increase the function $G$ whenever this is convenient. 

We can assume that $p$ rapidly reads~$\n r$, i.e., $\poss(p,{<}n)$ determines
$\n r\restriction H({<}n)$ for all $n\in\ww^p$.

We can then assume that there is a strictly increasing
 function $G'$ such that $G'(n)\in\ww^p$ and
$G(n)=H({<}G'(n))$ for all $n$ (as we can increase $G$).

Also, to simplify notation, we can assume that $\ww^p=\{G'(0),G'(1),\dots\}$.
(Otherwise, just glue.)

So each $\eta\in\poss(p,{<}G'(n))$ determines a value for $\n r\restriction
G(n)$, which we call $R^n(\eta)$.
We view $\eta$ as a pair $(\eta_\typesk,\eta_\typenonsk)$ for
$\eta_t:=\eta\restriction\Xi_t$ for $t\in\{\typenonsk,\typesk\}$.
Accordingly we write $R^n(\eta_\typesk,\eta_\typenonsk)$.
If we fix $\eta_\typesk$, then $R^n(-,\eta_\typesk)$ can be viewed as a name
(for an element of $2^{G(n)}$)
which does not depend on the Sacks part, in the following way:
If there is a $\eta_\typenonsk$ compatible with the generic filter such that
$(\eta_\typenonsk,\eta_\typesk)=\eta\in\poss(p,{<}G'(n))$, then the value is
$R^n(\eta)$ (and otherwise $\emptyset$, say).

Below we will construct $q\le p$ by gluing and by strengthening Sacks columns
(and we will leave the support, the subatoms and the halving parameters
unchanged).

Assume we have such a $q$, and assume that $G'(m_0)<G'(m_1)$ are
consecutive elements of $\ww^q$. Note that
$G'(m_0)<G'(m_0+1)<\cdots<G'(m_1-1)<G'(m_1)$ 
are consecutive elements of $\ww^p$.
Fix $\eta\in\poss(q,{<}G'(m_1))$ and $m_0\le \ell \le m_1$.
Then $\eta$ extends a unique element of $\poss(q,{<}G'(\ell))$,
which we call $\eta^\ell$.
We can then restrict $\eta^\ell$ to the Sacks part:
$\eta^\ell_\typesk:=\eta^\ell\restriction\Xisk$.

Note:
\begin{itemize}
  \item $\eta^\ell_\typesk $ is 
    $\eta$  restricted to the Sacks part and to ``height $G'(\ell)$'',
    i.e.,
    \[
      \eta^\ell_\typesk:=\eta\restriction \Xisk\times (1+\max(I_{\typesk,G'(\ell)})).
    \]
  \item
$q\wedge \eta$ forces that the name  
$R^\ell(-,\eta^\ell_\typesk)$ (which does not depend on
the Sacks part) is evaluated to $\n r\restriction G(\ell)$.
\item
So $q$ forces that  $\n r\restriction G(\ell)$ is an element of
\[
  \n T^\ell := \{ R^\ell(-,\eta^\ell_\typesk):\, \eta\in\poss(q,{<}G'(m_1)) \},
\]
a name not depending on the Sacks part.
\end{itemize}

So it is enough to show that there are few $\eta^\ell_\typesk$, i.e.,
\begin{equation}\tag{$\star_\ell$}\label{eq:starstar}
  |S_\ell| < \ell+2 \text{ for }
S_\ell:=\{ \eta^\ell_\typesk:\, \eta\in\poss(q,{<}G'(m_1))\}.
\end{equation}

We will now by induction on $n$: \
\begin{enumerate}
  \item construct
    $h_n$, where $\ww^q$ will be the set $\{G'(h_0),G'(h_1),\dots\}$;
  \item construct $q$ below $G'(h_n)$,
  \item and show that 
\eqref{eq:starstar} holds for all $\ell\le h_n$.
\end{enumerate}

We set $h_0=0$; so $G'(h_0)=\min(\ww^p)$ and 
$q$ below $G'(h_0)$ has to be 
identical to $p$. And $(\star_0)$ holds as $S_0$ is a singleton.

Assume we have already constructed $h_n$ and $q$ below $G'(h_n)$,
satisfying \eqref{eq:starstar} for $\ell\le h_n$.
\begin{enumerate} 
  \item For any $I$ and $\mathfrak s\subseteq 2^I$, we write
$\sacksnor^{*}(\cs)$ for $\sacksnor^{B(G'(h_n)),G'(h_n)}(\cs)$, see~\ref{eq:sacksnor}.
(I.e., the Sacks norm that would be assigned to a Sacks column
starting at $G'(h_n)$ which has the same $\splitnor$ as $\cs$.)
Let $\Sigma:=\supp(p,G'(h_{n}))\cap\Xisk$, the set of Sacks indices
active at the current level.
Let $s$ be minimal such that $\sacksnor^{*}(2^s)\ge n$, and
define $h'$ by 
\begin{equation}\label{eq:what}
  h':=  (h_n+1)\cdot 2^{s\cdot |\Sigma|}.
\end{equation}
Finally, let $h_{n+1}$ be minimal such that
for all $\xi\in\Sigma$ there is an
$\ell(\xi)$ with $h'\le \ell(\xi)<h_{n+1}$ and
$\sacksnor^{*}(p(\xi,G'(\ell(\xi))))\ge n$.
(We can find such $\ell(\xi)$, as even
$\sacksnor  
    (p(\xi,G'(\ell)))$ diverges to infinity.)
  \item  
    $G'(h_{n})<G'(h_{n}+1)<\cdots<G'(h_{n+1}-1)<G'(h_{n+1})$ are
    consecutive elements of $\ww^p$. 
    We glue $p$ between $G'(h_{n})$ and $G'(h_{n+1}-1))$, so
    $G'(h_{n})$ and $G'(h_{n+1})$ will be consecutive elements of $\ww^q$.

    We now define the compound creature $q(G'(h_{n}))$, a pure strengthening of the 
    compound creature $\glue(p(G'(h_{n}),\dots,p(G'(h_{n+1}-1))))$:
    The subatoms are unchanged. So we just
    have to specify for each $\xi\in\supp(p,h_{n})\cap\Xisk$
    the new Sacks column
    $q(\xi,h_{n})\leq p(\xi,G'(h_{n}))\otimes\cdots\otimes p(\xi,G'(h_{n+1}-1))$
    as follows:
    Recall that there is one $\ell(\xi)$ such that
    $h'\le \ell(\xi) < h_{n+1}$ and $\sacksnor^{*}(p(\xi,G'(\ell(\xi))))\ge n$.
    Choose a singleton subset of $p(\xi,G'(m))$ for
    all $m\neq \ell(\xi)$, and at $m=\ell(\xi)$
    pick a subtree of $p(\xi,G'(m))$ which is 
    isomorphic to $2^s$ (in the sense that each branch has $s$ splitting points).

    By the definition of $s$, we have 
    $\sacksnor(q(\xi,h_{n}))\ge n$, and therefore
    $\nor(q(h_n))\ge \min(n,\nor(p(h_n),\dots,\nor(p(h_{n+1}-1))))$.
    So in particular the $q$ we get after the induction will be an
    element of $\QQQ$.
  \item 
    As we  choose singletons below $G'(h')$, 
    $|S_{h_n}|=|S_{h_n+1}|=\cdots=|S_{h'-1}|$.
    By induction, $|S_{h_n}|<h_n+2$; so 
    ($\ast_\ell$) holds for $\ell\le h'$.
    For each $h'\le  \ell \le h_{n+1}$,
    we added at each $\xi\in \Sigma$ at most once
    at most $2^s$ many possibilities.
    So $|S_{\ell}|\le (h_n+1)\cdot 2^{s\cdot |\Sigma|}< \ell+2$,
    by~\eqref{eq:what}.\qedhere
\end{enumerate}
\end{proof}

By Lemma~\ref{lem:laversacks}(3), we conclude: 
  
\begin{cor}\label{cor:cnlekcn}
\begin{enumerate}
\item
  If $\n N$ is the name of a null set and $p$ a condition, then there
  is a $q\leq p$ and some name of a null set  $\n N'$ not depending on any Sacks
  indices such that $q$ forces $\n N\subseteq \n N'$.
\item
  $\QQQ$ forces $\cof(\NULL)\le \kappa_\typecn$.
\end{enumerate}
\end{cor}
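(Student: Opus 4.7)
The plan is to assemble the corollary directly from the Sacks property of the quotient established in Lemma~\ref{lem:blubb}(2), together with the general fact in Lemma~\ref{lem:laversacks}(3) and the cardinal counting from Lemma~\ref{lem:fewrealsreadcontinuously}.

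For part~(1), I would first observe that Lemma~\ref{lem:blubb}(2) says $\QQQ/\QQQ_{\Xinonsk}$ has the Sacks property. Now apply Lemma~\ref{lem:laversacks}(3) inside the intermediate model: if $G \subseteq \QQQ$ is generic and $G_0 := G \cap \QQQ_{\Xinonsk}$, then working in $V[G_0]$ the quotient forcing adds no null set not already contained in a null set coded in $V[G_0]$. Thus, given a $\QQQ$-name $\n N$ of a null set, in $V[G]$ there is a null set from $V[G_0]$ containing it; pulling this back through the quotient presentation, there is a $\QQQ_{\Xinonsk}$-name $\n N'$ of a null set and a $q \le p$ forcing $\n N \subseteq \n N'$. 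By Lemma~\ref{lem:quotient}, $\QQQ_{\Xinonsk}$ is a complete subforcing, so $\n N'$ is simultaneously a $\QQQ$-name which ``does not depend on any Sacks indices'' in the sense required.

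For part~(2), it follows from part~(1) that the family of null sets coded in $V[G_0]$ is cofinal in the null ideal of $V[G]$. Since $\QQQ_{\Xinonsk}$ is of the same type of forcing as $\QQQ$ (cf.\ Lemma~\ref{lem:quotient}), every real in $V[G_0]$ is continuously (indeed, rapidly) read only using indices in $\Xinonsk$. Under the ordering hypothesis $\kappa_\typenm \le \kappa_\typenn \le \kappa_\typecn$ we have $|\Xinonsk| = \kappa_\typenm + \kappa_\typenn + \kappa_\typecn = \kappa_\typecn$, and by Lemma~\ref{lem:fewrealsreadcontinuously} this bounds the number of reals in $V[G_0]$ by $\kappa_\typecn^{\aleph_0} = \kappa_\typecn$. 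The number of Borel codes for null sets in $V[G_0]$ is therefore at most $\kappa_\typecn$, giving a cofinal family in $\NULL$ of that size.

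There is essentially no obstacle: the entire proof is an assembly of results already in place. The only point requiring some care is the transfer of the Sacks-property conclusion from the quotient back into a statement about $\QQQ$-names, but this is routine by the standard correspondence between $\QQQ_{\Xinonsk}$-names and $\QQQ$-names that ignore the Sacks coordinates, guaranteed by Lemma~\ref{lem:quotient}.
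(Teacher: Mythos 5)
Your argument is correct and follows essentially the same route as the paper: the corollary is obtained by combining the Sacks property of the quotient $\QQQ/\QQQ_{\Xinonsk}$ from Lemma~\ref{lem:blubb} with the general criterion in Lemma~\ref{lem:laversacks}(3), and then (for part~(2)) counting the reals read only from $\Xinonsk$ via Lemma~\ref{lem:fewrealsreadcontinuously} and the hypothesis $\kappa_\typenm\le\kappa_\typenn\le\kappa_\typecn=\kappa_\typecn^{\aleph_0}$. The paper leaves the assembly and the cardinal computation implicit; you have merely spelled them out.
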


\subsection{Lim-inf and lim-sup: \texorpdfstring{$\non(\MEAGER)\le\kappa_\typenm$}{non(M) <= kappa-nm}}

The following does not require any knowledge about the particular subatoms
used in the forcing construction, the only relevant fact is that 
the  $\typenm$ indices are the only ones that use a
lim-inf construction.

\begin{lem}\label{lem:nmleknm}
  $\mathbb Q$ forces $\non(\MEAGER)\le \kappa_\typenm$.
\end{lem}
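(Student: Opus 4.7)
The strategy is to exhibit a specific non-meager set of size at most $\kappa_\typenm$. In $V$, fix for each $\xi \in \Xi_\typenm$ a Borel injection $\yy_\xi \mapsto \hat y_\xi \in 2^\omega$ (obtained by identifying the compact product $\prod_{\slu} \POSS_{\xi,\slu}$ over subatomic sublevels $\slu$ with a closed subset of $2^\omega$). Set $A := \{\hat y_\xi : \xi \in \Xi_\typenm\}$; then $|A| \le \kappa_\typenm$, so it suffices to show $\QQQ$ forces $A$ non-meager. The candidate non-meager set is the natural one, and the content is that at each lim-inf index $\xi$ the generic real $\yy_\xi$ escapes every meager set coded from the other indices.

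Fix a condition $p$ and a name $\n M$ for a meager subset of $2^\omega$. Write $\n M \subseteq \bigcup_n \n F_n$ where each $\n F_n$ names a closed nowhere-dense set with defining tree $\n T_n \subseteq 2^{<\omega}$, and bundle these trees into a single real $\n r \in 2^\omega$. Using Lemma~\ref{lem:rapidreading}, strengthen $p$ so that $p$ rapidly reads $\n r$, and then (by the standard nice-name argument underlying Lemma~\ref{lem:fewrealsreadcontinuously}) further arrange that $p$ reads $\n r$ using only indices from a countable set $\Xi_0 \subseteq \Xi$. Since $\kappa_\typenm^{\aleph_0} = \kappa_\typenm$ forces $\kappa_\typenm \ge \aleph_1$, choose $\xi^* \in \Xi_\typenm \setminus \Xi_0$; the reading of $\n r$, and therefore of each $\n T_n$, is independent of the $\xi^*$-coordinate.

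The heart of the argument is a fusion $p = q_{-1} \ge q_0 \ge q_1 \ge \cdots$ with $q_n \Vdash \hat y_{\xi^*} \notin \n F_n$ and $\xi^*$-norms in $q_n$ controlled, converging to some $q \le p$. At stage $n$, assume $q_{n-1}$ has lim-inf creatures at $\xi^*$ of norm $\ge n+1$ above some level $h_n$. Pick a subatomic sublevel $\slu > h_n$ large enough that $\n T_n$ has already been decided by $q_{n-1}$ to a depth exceeding any $\hat y_{\xi^*}$-prefix available at the next few sublevels. Since $\n T_n$ is (forced to be) nowhere dense, for every $\eta \in \poss(q_{n-1}, {<}\slu)$ and every candidate prefix $s$ for $\hat y_{\xi^*}$ compatible with $q_{n-1}$, some extension $t \supsetneq s$ satisfies $t \notin \n T_n$. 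Applying strong bigness of the $\xi^*$-subatoms at finitely many sublevels above $\slu$ (Lemma~\ref{lem:schoeneslemma} together with Fact~\ref{fact:bigisramsey}), thin these subatoms uniformly so that every remaining possibility already forces a prefix of $\hat y_{\xi^*}$ witnessing $\hat y_{\xi^*} \notin \n F_n$, with each affected subatomic norm dropping by only $\nicefrac{1}{b(\cdot)}$.

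The main obstacle is the lim-inf constraint at $\xi^*$: to keep $q$ a condition, compound norms must diverge at \emph{every} level, not merely cofinally as in the lim-sup case. The resolution is that $b(\slu)$ grows so rapidly (Lemma~\ref{lem:defb}) that the aggregate $\xi^*$-norm loss over the whole fusion is summable and swamped by the lim-inf norms originally present; combined with the pure-decision and halving apparatus (Lemmas~\ref{lem:blablbagh} and~\ref{lem:unhalving}) this produces the desired fusion, whose limit $q$ forces $\hat y_{\xi^*} \notin \bigcup_n \n F_n \supseteq \n M$. Ranging over all $p$ and names $\n M$, density gives that in the generic extension, for every meager set $M$ some $\xi \in \Xi_\typenm$ has $\hat y_\xi \notin M$, so $A \not\subseteq M$ and $A$ is non-meager.
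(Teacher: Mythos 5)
There is a genuine gap, and in fact the candidate non-meager set is wrong. You take $A := \{\hat y_\xi : \xi \in \Xi_\typenm\}$ (the canonically coded $\typenm$-generic reals), but this set is \emph{meager} in the extension. This follows directly from the paper's own Lemma~\ref{lem:nmgeknm} and Corollary~\ref{cor:nmgeknm}: fix any $\alpha\in\Xi_\typenm$. For every $\beta\in\Xi_\typenm$ with $\beta\ne\alpha$, the real $\hat y_\beta$ is read rapidly only using the index $\beta$, hence not using $\alpha$; so by Lemma~\ref{lem:nmgeknm} every condition (reading $\hat y_\beta$) forces $\hat y_\beta\in\n M_\alpha$. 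Therefore $A\subseteq\{\hat y_\alpha\}\cup\n M_\alpha$, a union of a singleton and a meager set, which is meager. So no amount of cleverness in the fusion can make this choice of $A$ work.

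The failure shows up concretely in the step where you invoke bigness to ``thin these subatoms uniformly so that every remaining possibility forces a prefix of $\hat y_{\xi^*}$ witnessing $\hat y_{\xi^*}\notin\n F_n$.'' Bigness (Lemma~\ref{lem:schoeneslemma}, Fact~\ref{fact:bigisramsey}) only provides a Ramsey-type homogenization: it produces a homogeneous subproduct of the $\xi^*$-subatoms, but you cannot choose which homogeneity class you land in, so it cannot be used to force the ``escaping'' colour. To actually guarantee escape you would need to \emph{narrow} each affected $\xi^*$-subatom to its escaping subset; but the set of values of the active $\xi^*$-subatom that exit $\n T_n$ (relative to a fixed branch of the other coordinates) can be tiny or empty at any single sublevel, since nowhere-denseness only promises \emph{some} exit at a much deeper level $f_n(|s|)$. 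Worse, the tree $\n T_n$ depends on the other $\typenm$-indices, and $\hat y_{\xi^*}$ at a given sublevel depends only on the $\xi^*$-coordinate; so a single narrowed $\xi^*$-value must exit simultaneously for all $\maxposs({<}\slu)$ branches of the non-$\xi^*$-generic, and the intersection of all the escape sets may be empty. Meanwhile the lim-inf constraint forbids shrinking the $\xi^*$-atoms to singletons at too many sublevels of a level, so you cannot fall back on trunk extension either, because at almost every sublevel $\xi^*$ already contributes a singleton (modesty) whose value was fixed before $\n T_n$ was known.

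The paper sidesteps both obstructions in one move: its witness set is not $A$ but the set of \emph{all} reals continuously read from $\Xi_\typenm$ (a set of size $\le\kappa_\typenm$ by Lemma~\ref{lem:fewrealsreadcontinuously}), and given $\n M$ it builds a fresh $\typenm$-name $\n r$ whose value on each segment can depend on the $\typenm$-generic at many indices and many levels. This branch-dependence is exactly what $\hat y_{\xi^*}$ lacks: the constructed $\n r$ can place its escape from $\n T_n$ at a different segment, and use a different string, for each different $\typenm$-past, while keeping only singleton non-$\typenm$ subatoms between the relevant levels so that $\n T_n$ becomes a $\typenm$-name locally. No narrowing of lim-inf subatoms is needed at all; the argument never has to fight the lim-inf norm.
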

\begin{proof}
  We claim that the 
  set of all reals that can be read continuously from $\typenm$-indices
  is not meager. This set has size $\le \kappa_\typenm$ by Lemma~\ref{lem:fewrealsreadcontinuously}.
  
  Let  $\n M $ be a name for a meager set.
  We can find names
   $\n T_n\subseteq 2^{<\omega}$ for nowhere dense trees
   such that $\n M=\bigcup_{n\in\omega}[\n T_n]$ is forced. 
  We want to show that we can continuously read a real $\n r\notin \n M$
  using only the $\typenm$-indices.

  As $\mathbb Q$ is $\omega^\omega$-bounding and $\n T_n$ is nowhere dense, there is in $V$ a 
  function $f_n:\omega\to\omega$ such that for each $\nu\in 2^k$ there is a 
  $\nu'\in 2^ {f_n(k)}$ extending $\nu$ and not in $\n T_n$.

  We fix some $p\in \mathbb Q$ forcing the above, and assume that $p$ is pruned
  and continuously reads $\n T_n$ for each $n$. 
  We will construct (in $V$) a $q\leq p$ and an $\n r$ continuously read by $q$ only using
$\typenm$ indices,
  such that $q$ forces $\n r\notin \n M$.

  Assume we have already constructed $q$ below some $k_n\in\ww^q$, and 
  that we already have some
  $h_n\in\omega$ and a name  $\n \ell_n$ for an element of $2^{h_n}$
  that is decided by $\poss(q,{<}k_n)\restriction \Xinm$.
  (The real $\n r$ will be the union of the $\n \ell_n$.)
  We also
  assume that is already guaranteed that $\n \ell_n$ is not in 
  $\n T_0\cup\cdots\cup\n T_{n-1}$).

  Enumerate $\poss(q,{<}k_n)$ as $\eta_0,\dots,\eta_{K-1}$.
 
  Set $k^0:=k_n$, $h^0:=h_n$, $\n\ell^0:=\n\ell_n$, and we define  $q'$ below $k^0$ to be $q$.
  By induction on $r\in K$ we now deal with $\eta_r$:
  Assume we are given
  a name $\n \ell^r$ for an element of $2^{h^r}$
  that is decided by $\poss(q',{<}k^r)\restriction \Xinm$,
  and that we have constructed $q'$ below $k^r\in\ww^p$, in a way that between $k^0$
  and $k^r$ on the non-$\typenm$ indices, all subatoms and Sacks columns in $q'$
  are singletons.

  Set $h^{r+1}:=f_n(h^r)$. Choose $k^{r+1}\in\ww^p$ bigger than $k^r$ and large enough  to determine $\n X:=\n T_n\restriction h^{r+1}$. I.e., there is a function $F$
from $\poss(p,{<}k^{r+1})$ to potential values of $\n X$.
We now define $q'$ between $k^r$ and $k^{r+1}$:
  The $\typenm$-subatoms are unchanged (i.e., the ones of $p$), for the other subatoms
  and Sacks columns, we choose arbitrary singletons.
  A $\nu\in\poss(p,{<}k^{r+1})$ consists of: 
  the part below $k^r$ called $A$, then non-$\typenm$-part above $k^r$ called $B$,
  and the $\typenm$-part above $k^r$ called $C$.
  So we can write $\n X=F(A,B,C)$.
  If we assume that the generic chooses $\eta_r$ (i.e., $A=\eta_r$)
  and then follows
  the singleton values of $q$ on the non-$\typenm$-part
  (which determines $B$ to be some $B_q$), then
  $\n X$ can be written as $\typenm$-name. More formally:
  We can define $\n X'$ as $F(\eta_r,B_q,-)$, which is a $\typenm$-name
  and forced by $q$ to be $\n X$.
  
  Also, we know 
  that $p$ forces that there is an element $\ell'\in 2^{h^{r+1}}$ which 
  extends $\n \ell^r$ (which by induction 
  is already determined by the $\typenm$-part of $\eta_r$) and which  is not in $\n X$.
  So (in $V$) we can pick
  for all choices of $C$ an $\ell'(C)\in 2^{h^{r+1}}\setminus F(\eta_r,B_q,C)$
  extending $\n\ell^r$.
  Then $\n \ell^{r+1}=\ell(-)$ is a $\typenm$-name determined below $k^{r+1}$, and 
  $q$ forces that $\n\ell^{n+1}$ extends $\n\ell^n$, and $q\wedge \eta_r$ forces that
  $\n\ell^{n+1}\notin \n T_n$.
  
  We repeat the construction for all $r\in K$,
  and set $\ell_{n+1}:=\ell^K$, $h_{n+1}:=h^K$ and
  set  $k_{n+1}$ to be the $\ww^p$-successor of $k^K$,
  where we use the Sacks columns and subatoms of $p$ between $k^K$ and $k_{n+1}$.
  We now glue the condition between $k_n$ and $k_{n+1}$.
  This results in a condition that still has ``large'' norm,
  as described in Lemma~\ref{lem:blab}.
\end{proof}

\section{The \texorpdfstring{$\typenm$}{nm} part}\label{ss:nm}

\subsection{The subatomic creatures for type \texorpdfstring{$\typenm$}{nm}}\label{ss:nmdef}

We now describe the subatomic family $\cK'_{\typenm,b}$ used at $\typenm$-indices (depending on the parameter $b$).

\begin{definition}\label{def:Knm}
\begin{enumerate}
  \item
Fix a finite index set $I\subseteq \omega$ which is large enough so 
that item~(\ref{item:nmlarge}) below is satisfied.  For
notational simplicity, we assume that $I$ is disjoint to all intervals already
used.\footnote{This is a bit fuzzy, but it does not matter how we interpret it.
More specifically, we could use any of the following:
``disjoint to all $I$ that are associated to smaller parameter values
 $b'<b$'', or: ``disjoint to all $I$ that have actually been used 
in type $\typenm$ for some $\cK_{\xi,\slv}$'';
and since $H'(\typenm,{=}b)$ is larger than $\max(I)$,
it would also follow from:
``the minimum of $I$ is bigger than $H({<}\slu')$, where
$\slu'$ is the predecessor of the current sublevel''.}
\item $\POSS_{\typenm,b}:=2^I$.
\item 
A subatomic creature $x$ is just a nonempty subset of $2^I$, 
where we set $\poss(x):=x$ and 
\[
  \nor(x):=\frac{1}{b}\log_b(|\poss(x)|). 
\]
\item\label{item:nmlarge}
We require $\nor(\POSS)>b$ (thus satisfying~\eqref{eq:paramlarge}).
\item\label{item:nmH}
We set $H'(\typenm,b):=2^{\max(I)+1}$.
\end{enumerate}
\end{definition}
Clearly, the norm satisfies strong $b$-bigness (i.e., satisfies the requirement~\eqref{eq:parambig}).
\begin{nb}\label{nb:othernorm}
We just used the simplest possible norm here. 
It turns out that the details of the definition of this  norm are not relevant,
as long as the norm has bigness.   Later in section~\ref{sec:switching}
we will use a different norm to get a different constellation of cardinal 
characteristics.
\end{nb}

\subsection{The generic object}\label{ss:nmgeneric}

Recall that (according to Section~\ref{ss:forcing88})
when constructing the forcing at subatomic sublevels $\slu$, we use for all
$\xi\in\Xi_\typenm$ the subatomic family
$\cK_{\xi,\slu}=\cK'_{\typenm,b(\slu)}$ living on some
interval $I$, which we will call $I_{\typenm,\slu}$.

Fix $\alpha$ of type $\typenm$.
Recall that the generic object $\yy_\alpha$ 
assigns to each subatomic sublevel $\slu$
the element of $\POSS_{\alpha,\slu}$ chosen by the generic filter.

We define the name $\n M_\alpha$ of a meager set as follows: 
\proofclaim{eq:defmalpha}{A real
$r\in2^\omega$  is in $\n M_\alpha$ iff for all but finitely many levels $\ell$
there is a subatomic sublevel $\slu=(\ell,j)$ such that
$r\restriction I_{\typenm,\slu}\neq \yy_\alpha(\slu)$.}
If $p$ rapidly reads $\n r$, then
according to~\eqref{eq:Hprimerapid} and \ref{def:Knm}(\ref{item:nmH}),
\proofclaim{eq:bla7788}{
$\n r\restriction I_{\typenm,\slu}$
is decided $\le\slu$.}

Also, since $b(\slu)>\maxposs({<}\slu)$, we get:
\proofclaim{eq:blubb66}{
If the norm of a $\typenm$-subatom $x$ at sublevel~$\slu$ is at least $1$, 
then $|\poss(x)|>\maxposs({<}\slu)$.
}
(Recall Note~\ref{nb:othernorm}: This is true whenever the norm has bigness.)

\subsection{\texorpdfstring{$\non(\MEAGER)\ge \kappa_\typenm$}{non(M) >= kappa-nm}}\label{ss:nmkappa}

\begin{lem}\label{lem:nmgeknm}
   Let $\n r$ be a name of a real, 
   $p$ a condition that rapidly reads $\n r$ not using\footnote{cf.~\ref{def:continuousreading}}
   $\alpha\in\Xi_\typenm$.
   Then $p$ forces that $\n r\in \n M_\alpha$.
\end{lem}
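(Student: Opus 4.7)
The plan is a density argument: for every $q\leq p$ I will construct $r\leq q$ forcing that, for all but finitely many levels $\ell$, some sublevel $j\in J_\ell$ satisfies $\n r\restriction I_{\typenm,(\ell,j)}\neq \yy_\alpha((\ell,j))$. Since ``$\n r\in\n M_\alpha$'' is a $\Sigma^0_2$ statement of exactly this shape, such a density conclusion yields $p\Vdash\n r\in\n M_\alpha$. Without loss of generality I first replace $q$ by a stronger condition with $\alpha\in\supp(q)$, using the procedure from the proof of Lemma~\ref{lem:unioncreature} (the rapid reading of $\n r$ without $\alpha$ is preserved). Combined with~\eqref{eq:bla7788}, the hypothesis then gives for each subatomic sublevel $\slu$ a function $R_\slu\colon\poss(q,{<}\slu)\restriction(\Xi\setminus\{\alpha\})\to 2^{I_{\typenm,\slu}}$ deciding $\n r\restriction I_{\typenm,\slu}$, whose image $\Delta_\slu$ has cardinality at most $\maxposs({<}\slu)$; by the modesty clause of Definition~\ref{def:cc}, this really is a function on the non-$\alpha$ possibilities only.

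For each level $\ell$ large enough that the compound norm of $q$ at level $\ell$ is at least $2$ (cofinitely many, since the compound norms of $q$ diverge and the lim-inf subnorm at $\alpha$ controls $\nor(q(\alpha,\ell))$), I pick a sublevel $j_\ell\in J_\ell$ with $\nor(q(\alpha,(\ell,j_\ell)))\geq 2$. By~\eqref{eq:blubb66}, $|\poss(q(\alpha,(\ell,j_\ell)))|>\maxposs({<}(\ell,j_\ell))\geq|\Delta_{(\ell,j_\ell)}|$, so I let $y_\ell\in\cK'_{\typenm,b(\ell,j_\ell)}$ be the subatom with $\poss(y_\ell):=\poss(q(\alpha,(\ell,j_\ell)))\setminus\Delta_{(\ell,j_\ell)}$. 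Because $b:=b(\ell,j_\ell)\geq 2\maxposs({<}(\ell,j_\ell))$ by Lemma~\ref{lem:defb} and $|\poss(q(\alpha,(\ell,j_\ell)))|\geq b^{2b}$ is enormous, the subatom norm drops by at most $\log_b 2/b\leq 1/b$. (One may alternatively invoke the strong $b$-bigness of $\cK'_{\typenm,b}$ with the two-coloring ``$\in\Delta_{(\ell,j_\ell)}$'' vs.\ ``$\notin\Delta_{(\ell,j_\ell)}$''; the ``$\in\Delta$'' homogenization is ruled out by the lower bound on $\nor(y_\ell)$.)

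Let $r$ be the condition obtained from $q$ by replacing $q(\alpha,(\ell,j_\ell))$ by $y_\ell$ at every such level. To see that $r\in\QQQ$, observe that the $\alpha$-atom norm at level $\ell$ can be witnessed by a subset of $J_\ell\setminus\{j_\ell\}$, whose $\mu^{\maxwidth{\ell}}$-measure is only negligibly smaller than that of $J_\ell$ (since $J_\ell$ is enormous by construction~$(*4)$), and on which no subatom has been altered; every other contribution to the compound norm of $r(\ell)$ is unaffected. By construction, for every modified $\ell$ and every $\eta'\in\poss(r,{<}(\ell,j_\ell))\restriction(\Xi\setminus\{\alpha\})\subseteq\poss(q,{<}(\ell,j_\ell))\restriction(\Xi\setminus\{\alpha\})$, the generic value $\yy_\alpha((\ell,j_\ell))$ lies in $\poss(y_\ell)$ and hence differs from $R_{(\ell,j_\ell)}(\eta')=\n r\restriction I_{\typenm,(\ell,j_\ell)}$. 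Therefore $r$ forces the required disequality at $(\ell,j_\ell)$ for all large $\ell$. The main obstacle is preserving the norm-divergence of $r$ under simultaneous perturbation at infinitely many levels: this is precisely what the exponential gap between $b(\slu)$ and $\maxposs({<}\slu)$ from Lemma~\ref{lem:defb}, together with the enormousness of $J_\ell$ from~$(*4)$, is engineered to make harmless.
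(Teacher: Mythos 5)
Your proof is correct and follows essentially the same route as the paper's: at each sufficiently high level you single out one active $\alpha$-sublevel $\slu$, where modesty together with rapid reading not using $\alpha$ guarantee that $\n r\restriction I_{\typenm,\slu}$ has at most $\maxposs({<}\slu)$ potential values, and the cardinality gap from Lemma~\ref{lem:defb} and \eqref{eq:blubb66} lets you strengthen the $\alpha$-subatom to avoid all of them, perturbing the lim-inf atom norm negligibly thanks to the size of $J_\ell$. The only cosmetic difference is that the paper shrinks the $\alpha$-subatom at $\slu$ to a singleton $\{s\}$ with $s\notin\Delta_\slu$ and drops $j$ from the atom-norm witnessing set, whereas you retain the large complement $\poss(x)\setminus\Delta_\slu$ and note that its own subatom norm barely decreases; both variants equally preserve the norm divergence.
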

\begin{proof}
It is enough to prove that some $q\leq p$ forces that 
$\n r\in \n M_\alpha$:
Assume that $p$ does not force $\n r\in \n M_\alpha$,
then some $p'\leq p$ forces the
negation; $p'$ still rapidly reads $\n r$ not using $\alpha$,
so if we know that there is a $q\leq p'$ as claimed, we get a contradiction.

We can assume that $p$ is pruned and that $\alpha\in\supp(p)$.
We will construct a $q$ purely stronger than $p$
(in particular with the same $\ww$, halving parameters, and trunk).
Actually, we will only strengthen one subatom at index $\alpha$
for each level $h\ge \min(\ww^p)$.

For all $h\ge \min(\ww^p)$ (not necessarily in $\ww^p$), there are several
$j\in J_h$ such that $\nor(x)>1$ for the subatom $x=p(\alpha,(h,j))$.
For each such $h$ we pick exactly one subatomic sublevel $\slu(h)=(h,j)$, with
$x(h)$ the according subatom.

According to~\eqref{eq:bla7788}, $\n r\restriction I_{\typenm,\slu}$ is decided $\le\slu$
and therefore even below $\slu$ (since $\alpha$
is the active index at sublevel $\slu$; according 
to modesty no other index can be active; and $\n r$ does not depend on $\alpha$). Therefore there are at most $\maxposs({<}\slu)$ many possibilities
for $\n r\restriction I_{\typenm,\slu}$. According to~\eqref{eq:blubb66}
there has to be at least one element $s$ of $\poss(x(h))$ which differs from
all of these possibilities. So we can in $q$ replace the 
subatom $x(h)$ with the singleton $\{s\}$. Then the norms in $q$ will
still be large. (If $A\subseteq J_h$ witnesses the large $\linor$ of
$p$, then $A\setminus \{j\}$ for $\slu(h)=(h,j)$
witnesses that the $\linor$ of $q$ decreases only slightly.)

So $q$ is constructed by strengthening each $x(h)$ in this way. Clearly $q\leq p$ is still a valid condition,
and forces $\n r\in \n M_\alpha$, as $\n r\restriction  I_{\typenm,(h,\slu(h))}$ disagrees with $\yy_\alpha $
for all $h\ge\min(\ww^p)$.
\end{proof}

\begin{cor}\label{cor:nmgeknm}
  $\mathbb Q$ forces $\non(\MEAGER)\ge \kappa_\typenm$.
\end{cor}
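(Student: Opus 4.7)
The plan is to derive the corollary from Lemma~\ref{lem:nmgeknm} by the standard ``not enough supports'' argument, working in the generic extension $V[G]$ and showing that any set of reals of size $<\kappa_\typenm$ must be contained in some $\n M_\alpha^G$, hence is meager.

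First I would fix, in $V[G]$, an arbitrary set $B$ of reals with $|B|=\lambda<\kappa_\typenm$. For each $r\in B$, since $\QQQ$ rapidly reads every element of $2^\omega$ (by the theorem at the end of Section~\ref{sec:proper}), a density argument yields a condition $p_r\in G$ and a name $\n r$ with $\n r^G=r$ such that $p_r$ rapidly reads $\n r$. Each $\supp(p_r)$ is countable by the definition of $\QQQ$, so
\[
  \Xi_0 := \bigcup_{r\in B}\supp(p_r)
\]
has cardinality at most $\lambda\cdot \aleph_0 = \lambda<\kappa_\typenm$ in $V[G]$. Since $\QQQ$ preserves cardinals (Baumgartner's Axiom~A plus $\aleph_2$-cc), $\Xi_\typenm$ still has cardinality $\kappa_\typenm$ in $V[G]$, so we may pick some $\alpha\in\Xi_\typenm\setminus\Xi_0$.

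Now the key observation is that for each $r\in B$, the condition $p_r$ rapidly reads $\n r$ \emph{not using} $\alpha$: indeed $\alpha\notin\supp(p_r)$, so no possibility $\eta\in\poss(p_r,{<}\slu)$ has an $\alpha$-coordinate at all, and the reading functions trivially do not depend on $\alpha$ in the sense of Definition~\ref{def:continuousreading}. Applying Lemma~\ref{lem:nmgeknm}, $p_r$ forces $\n r\in \n M_\alpha$; since $p_r\in G$ this gives $r\in \n M_\alpha^G$. Therefore $B\subseteq \n M_\alpha^G$, which is meager by definition (see~\eqref{eq:defmalpha}), and the corollary follows.

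I do not expect a serious obstacle: all of the work is in Lemma~\ref{lem:nmgeknm} and in the construction of $\QQQ$ itself. The only mild point to check carefully is the cardinal arithmetic and the ``not using $\alpha$'' clause, but the former follows from cardinal preservation and the latter is immediate from $\alpha\notin\supp(p_r)$.
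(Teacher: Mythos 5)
Your proof is correct and the core idea is the same as the paper's (``there are not enough supports to use up all of $\Xi_\typenm$, so pick an untouched $\alpha$ and apply Lemma~\ref{lem:nmgeknm}''), but your presentation differs slightly: you argue in $V[G]$, choosing for each real $r$ in the small set $B$ a condition $p_r\in G$ rapidly reading a name for $r$, and you appeal to cardinal preservation to find $\alpha\in\Xi_\typenm\setminus\bigcup_r\supp(p_r)$. The paper instead argues on the ground-model side: given names $(\n r_i)_{i<\kappa}$ and a condition $p$, it fixes for each $i$ a maximal antichain $A_i$ below $p$ of conditions rapidly reading $\n r_i$, uses the $\aleph_2$-cc to bound $|A_i|\le\aleph_1$, and picks $\alpha$ outside $\bigcup_{i,a\in A_i}\supp(a)$; the maximality of each $A_i$ then lets it conclude that $p$ itself forces $\n r_i\in\n M_\alpha$. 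Both are standard and equivalent in strength; the paper's version avoids an explicit appeal to cardinal preservation (deriving everything from the chain condition and countable supports directly), while yours is perhaps a bit more transparent as a ``work in the extension'' argument. Your observation that $\alpha\notin\supp(p_r)$ vacuously gives ``not using $\alpha$'' is exactly the point the paper relies on as well.
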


\begin{proof}
  Assume that $\kappa_\typenm>\al_1$ (otherwise there is nothing to show).
  Fix a condition $p$ and $\kappa<\kappa_\typenm$ and names  $(\n r_i)_{i\in\kappa}$ of reals. It is enough to show that there is an $\alpha\in\Xinm$
  such that $p$ forces that
  $\{\n r_i:\, i\in\kappa\}$  is a subset of the meager set $\n M_\alpha$.

  For each $i$ fix a maximal antichain $A_i$ below $p$ such that each $a\in A_i$ 
  rapidly reads $\n r_i$.
  Due to $\al_2$-cc, and since $\kappa_\typenm>\al_1$ and $\kappa_\typenm>\kappa$,
  we can find an index $\alpha\in\Xi_\typenm$ not appearing in the support
  of any condition in any $A_{i}$.
  According to the previous lemma, every element $a\in A_i$ (and hence also $p$ itself) 
  forces that $\n r_i\in \n M_\alpha$.
\end{proof}

\section{The \texorpdfstring{$\typenn$}{nn} part}\label{ss:nn}
\subsection{The subatomic creatures for type \texorpdfstring{$\typenn$}{nn}}\label{ss:nndef}

We describe the subatomic families $\cK'_{\typenn,b}$ , depending on a
parameter $b$.

\begin{definition}\label{def:Knn}
\begin{enumerate}
  \item Fix an interval $I$ large enough such that (\ref{item:nnlarge}) is satisfied
(and in particular $|I|>b$).
As in the $\typenm$ subatoms, we assume that this interval $I$ is disjoint to
all intervals previously chosen. 
\item The basic set of all possibilities, $\POSS$, consists of all
subsets $X$ of $2^I$ with relative size $1-1/2^b$:
  \[
\POSS:=\{X\subseteq 2^I:\, |X|=(1-1/2^b)|2^I|\}.
\]
  \item A subatom $C=\poss(C)$ is a subset of $\POSS$, where we set
  \begin{gather*}
  \nor(C):=\frac{1}{b}\log_b(\nor_0(C)),
  \text{ where }
  \\
  \nor_0(C):=\min\{|Y|:\, Y\subseteq 2^I,\ (\forall X\in \poss(C))\, X\cap Y\neq \emptyset\}.
  \end{gather*}
  \item\label{item:nnlarge}
    We require $\nor(\POSS)>b$ (thus satisfying~\eqref{eq:paramlarge}).
  \item We set $H'(\typenn,{=}b):=\max(I)+1$.
\end{enumerate}
\end{definition}
Note that $\nor_0$ of the subatom with full possibility set
is approximately $2^{|I|}/2^b$. In particular, for large $I$ the norm
gets large, i.e., we can satisfy~(\ref{item:nnlarge}).
\begin{lem}\label{lem:bla}
\begin{enumerate}
  \item The subatomic family has strong $b$-bigness (i.e., satisfies the requirement~\eqref{eq:parambig}).
  \item Given $E\subseteq 2^I$ and a subatom $C$,
    then the
    subatom $C'$ with possibilities $\{H\in\poss(C):\, H\cap E=\emptyset\}$
    satisfies $\nor_0(C')\ge \nor_0(C)-|E|$.
    \item From
     the above it follows that:  If $|E| \le \nicefrac{b^{\nor(C)}}{2}$, then
    $\nor(C')\ge \nor(C)-\log_b(2)$.
\end{enumerate}
\end{lem}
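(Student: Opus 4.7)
The strategy is to reduce all three claims to elementary properties of the auxiliary function $\nor_0$, viewed as the minimum size of a ``hitting set'' for $\poss(C)$ inside $2^I$.

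For (1), I would partition $\poss(C)$ into the color classes $C_0,\dots,C_{b-1}$ of a given coloring $F\colon \poss(C)\to b$. The crucial observation is subadditivity of $\nor_0$: if for each $i<b$ we choose a minimum hitting set $Y_i$ for $C_i$, then $\bigcup_i Y_i$ hits every $H\in\poss(C)$, so
\[
\nor_0(C)\le \sum_{i<b}\nor_0(C_i)\le b\cdot\max_{i<b}\nor_0(C_i).
\]
Taking $i^*$ to realize the maximum, the subatom $C'$ with $\poss(C'):=C_{i^*}$ is $F$-homogeneous, and $\nor_0(C')\ge \nor_0(C)/b$ translates under the definition $\nor=(1/b)\log_b\nor_0$ into $\nor(C')\ge\nor(C)-1/b$. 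This is the whole point of the $(1/b)$ scaling: splitting into $b$ classes loses one unit in $\log_b\nor_0$ but only $1/b$ in $\nor$.

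For (2), the mirror argument gives the bound: if $Y'$ is a minimum hitting set for $\poss(C')$, then $Y'\cup E$ hits every $H\in\poss(C)$, since either $H$ meets $E$ (in which case $H$ is irrelevant because $H\notin\poss(C')$) or $H\in\poss(C')$, and then $H$ meets $Y'$. Hence $\nor_0(C)\le \nor_0(C')+|E|$, which rearranges to the claim.

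Finally, (3) is a direct consequence of (2) together with some log bookkeeping. The key estimate is
\[
b^{\nor(C)}=\nor_0(C)^{1/b}\le \nor_0(C),
\]
valid whenever $\nor_0(C)\ge 1$ (i.e.\ $\poss(C)\neq\emptyset$). Thus the hypothesis $|E|\le b^{\nor(C)}/2$ gives $|E|\le \nor_0(C)/2$, and (2) yields $\nor_0(C')\ge \nor_0(C)/2$. Applying $(1/b)\log_b$ to both sides produces $\nor(C')\ge \nor(C)-(\log_b 2)/b\ge \nor(C)-\log_b 2$, which is the stated (looser) form. I do not foresee any real obstacle: each part is a short counting or algebraic manipulation of the definitions, standard in the creature-forcing setting. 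The only mildly delicate point is making sure the $(1/b)$ normalization is carried consistently through the inequalities in (1) and (3).
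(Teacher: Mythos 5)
Your proof is correct and follows the paper's approach for parts (1) and (2) almost verbatim: the subadditivity of $\nor_0$ over the colour classes and the ``$Y'\cup E$'' hitting-set observation are exactly what the paper uses. For part (3) you take a slightly cleaner route: you replace the paper's estimate $\nicefrac{b^{\nor(C)}}{2}\le(1-\nicefrac1{2^b})\nor_0(C)$ (which leads to $\nor_0(C')\ge\nor_0(C)/2^b$ and a loss of exactly $\log_b 2$) with the simpler bound $b^{\nor(C)}=\nor_0(C)^{1/b}\le\nor_0(C)$, giving $\nor_0(C')\ge\nor_0(C)/2$ and hence a loss of only $\nicefrac{\log_b 2}{b}$, which is stronger than required. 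This sidesteps the paper's somewhat awkward algebraic display and makes the normalization bookkeeping more transparent; your final ``looser'' weakening to $\log_b 2$ is of course all that the statement asks for.
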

\begin{proof}
(1): Fix $F:\poss(C)\to b$. Let $C_i$ be the subatom with
$F\restriction \poss(C_i)=i$ for all $i\in b$.
Assume that all $C_i$ have $\nor_0$ at most $r$, witnesses by $X_i\subseteq 2^I$.
Then $\bigcup X_i$ witnesses that $\nor_0(C)\le b\cdot r$.
So $\nor(C)\le \nicefrac{\log_b(b\cdot r)}{b}\le \nicefrac{1}{b}+\max(\nor(C_i))$.
So there is at least one $i$ with $\nor(C_i)\ge \nor(C)-\nicefrac{1}{b}$, as required.

(2): Assume $Y$ witnesses $\nor_0(C')$, then $Y\cup E$ witnesses $\nor_0(C)$.

(3): 
\begin{align*}
   \frac{b^{\nor(C)}}{2} &=  
   \frac{\nor_0(C)^{\nicefrac{1}{b}}}{2}  =
 \biggl( \frac{\nor_0(C)}{2^b}\biggr)^{\nicefrac1b} \le \\ 
    & \le \biggl[ (1-\frac1{2^b})^{\nicefrac1b} \cdot \nor_0(C)^b\biggr]^{\nicefrac1b} = 
                  (1-\frac1{2^b}) \cdot \nor_0(C) \qedhere
\end{align*}
\end{proof}

\subsection{The generic object}\label{ss:nngeneric}

The following paragraph is just as in the $\typenm$ case~\ref{ss:nmgeneric}:

According to Section~\ref{ss:forcing88},
when constructing the forcing at subatomic sublevels $\slu$, we use for all
$\xi\in\Xi_\typenn$ the subatomic family
$\cK_{\xi,\slu}=\cK'_{\typenn,b(\slu)}$ living on some
interval $I$, which we temporarily call $I_{\typenn,\slu}$. Also,
if $p$ rapidly reads $\n r$, then 
$\n r\restriction I_{\typenn,\slu}$
is decided below $\le \slu$.

Fix $\alpha$ of type $\typenn$.  Recall that the generic object $\yy_\alpha$
assigns to each subatomic sublevel $\slu$ the element $\n R_{\alpha,\slu}$ of
$\POSS_{\alpha,\slu}$ chosen by the generic filter.  So $\n R_{\alpha,\slu}$
is a subset of $2^{I_{\typenn,\slu}}$ of  relative size
$(1-1/2^{b(\slu)})$.

Note that $b(\slu)$ is strictly monotone (cf.~\eqref{eq:defbA}),
and hence
$\prod_{\slu\text{ subatomic sublevel}}(1-1/{2^{b(\slu)}})>0$.
Therefore
\[
  \{x\in 2^\omega:\, \forall \slu:\, x\restriction I_{\typenn,\slu}\in \n R_{\alpha,\slu}\}
\]
is positive, and
\[
  \{x\in 2^\omega:\, \forall^\infty \slu:\, x\restriction I_{\typenn,\slu}\in \n R_{\alpha,\slu}\}
\]
has measure one.
Therefore 
\begin{equation}\label{eq:N.alpha}
  \n N_\alpha:=  \{x\in 2^\omega:\, \exists^\infty \slu:\, x\restriction I_{\typenn,\slu}\notin \n R_{\alpha,\slu}\}
\end{equation}
is a null set. (Here, $\slu$ ranges over all subatomic sublevels.)

\subsection{\texorpdfstring{$\non(\NULL)\ge\kappa_\typenn$}{non(N) >= kappa-nn}}\label{ss:blabla}

\begin{lem}\label{lem:nngeknn}
  Let $p\in \QQQ$ rapidly read $\n r\in 2^\omega$ not using $\alpha\in\Xinn$.
  Then $p$ forces $r\in N_\alpha$.
\end{lem}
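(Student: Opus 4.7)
The plan is to mirror the proof of Lemma~\ref{lem:nmgeknm}, with Lemma~\ref{lem:bla}(2)--(3) playing the role of~\eqref{eq:blubb66}. As in that argument, it suffices to produce, for an arbitrary $p$ as in the hypothesis (which we may take to be pruned, with $\alpha \in \supp(p)$), some $q \le p$ forcing $\n r \in \n N_\alpha$; a density argument below $p$ then yields the forcing statement.

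I would construct $q$ as a pure strengthening of $p$. Because $\alpha \in \Xinn \subseteq \Xils$, the lim-sup norm at $\alpha$ of each $p(k)$ is large, so for every sufficiently large $k \in \ww^p$ there exist $h_k \in [k, k^+)$ and $j_k \in J_{h_k}$ such that the subatom $C_k := p(\alpha, \slu_k)$ at $\slu_k := (h_k, j_k)$ has large norm (in particular $> 1$, so non-singleton). By modesty no other index is active at $\slu_k$, and since $\n r$ is rapidly read without using $\alpha$, the value $\n r \restriction I_{\typenn, \slu_k}$ is already decided by $\poss(p, {<}\slu_k)$. The set
\[
  E_k := \{R(\eta) : \eta \in \poss(p, {<}\slu_k)\} \subseteq 2^{I_{\typenn, \slu_k}}
\]
of its potential values therefore satisfies $|E_k| \le \maxposs({<}\slu_k) \le \tfrac{1}{2}\, b(\slu_k)$ by Lemma~\ref{lem:defb}.

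Now apply Lemma~\ref{lem:bla}(2)--(3) to replace $C_k$ by
\[
  C'_k := \{X \in \poss(C_k) : X \cap E_k = \emptyset\},
\]
losing at most $\log_{b(\slu_k)}(2)$ in norm. Since $b(\slu_k)$ grows very rapidly the losses tend to $0$, and substituting $C'_k$ for $C_k$ (leaving all other subatoms, Sacks columns, trunk, halving parameters, $\ww^p$, and $\supp(p)$ untouched) yields a valid $q \in \QQQ$ purely stronger than $p$. By construction, at each $\slu_k$ the generic $\n R_{\alpha, \slu_k} \in \poss(C'_k)$ is disjoint from $E_k$, so $\n r \restriction I_{\typenn, \slu_k} \notin \n R_{\alpha, \slu_k}$; since this happens at infinitely many subatomic sublevels, $q \Vdash \n r \in \n N_\alpha$. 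The main technical point to verify is that the cumulative norm loss does not destroy validity of $q$, and this reduces to the fast growth of $b(\slu)$ together with the estimate in Lemma~\ref{lem:bla}(3), exactly as in the analogous control of the $\typenm$ case.
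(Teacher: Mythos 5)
Your proof is correct and follows essentially the same route as the paper's: pass to a pure strengthening of a pruned $p$ with $\alpha\in\supp(p)$, use modesty and rapid reading not using $\alpha$ to bound the set $E$ of potential values of $\n r\restriction I_{\typenn,\slu}$ by $\maxposs({<}\slu)\le\tfrac12 b(\slu)$, and invoke Lemma~\ref{lem:bla}(2)/(3) with Lemma~\ref{lem:defb} to discard those values from the active $\typenn$-subatom at index $\alpha$ while losing less than $1$ in subatom norm, hence at most $1$ in the lim-sup (and compound) norm at each level. Your accounting is in fact a bit more explicit than the paper's (you verify $\nor(C_k)>1$ so that the hypothesis of Lemma~\ref{lem:bla}(3) applies, and you note the loss is $\log_{b(\slu_k)}(2)$ rather than just ``at most $1$''), but the argument is the same.
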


\begin{proof}
As in~\ref{lem:nmgeknm}, it is enough to find a $q\le p$ forcing  $r\in N_\alpha$; and we
assume that $p$ is pruned and that $\alpha\in\supp(p)$.

We construct $q$ purely stronger than $p$ by induction, only modifying 
subatoms at index $\alpha$ (and decreasing their subatom norms by at
most $1$):

Pick a subatomic 
sublevel $\slu$ (higher than any sublevel previously considered) where $\alpha$ is active with the subatom $C$ ``living'' on $I:=I_{\typenn,\slu}$. 

$\n r\restriction I$ is decided $\le\slu$ and therefore even below $\slu$
(as $\n r$ is read from $p$ not using $\alpha$; and due to modesty $\alpha$
is the only index active at sublevel $\slu$).
So the set $E$ of possibilities
for $\n r\restriction I$ has size at most $\maxposs({<}\slu)$,
and we can remove them all from the subatom at $C$ while decreasing
the norm by at most 1, according to 
Lemma~\ref{lem:bla}(2) and~\eqref{lem:defb}.

Repeat this for infinitely many sublevels $\slu$.
\end{proof}

Just as in~\ref{cor:nmgeknm}, this implies:
\begin{cor}\label{cor:nngeknn}
  $\mathbb Q$ forces $\non(\NULL)\ge \kappa_\typenn$.
\end{cor}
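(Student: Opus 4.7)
The plan is to mirror the proof of Corollary~\ref{cor:nmgeknm} essentially verbatim, substituting Lemma~\ref{lem:nngeknn} for Lemma~\ref{lem:nmgeknm} and the null set $\n N_\alpha$ from~\eqref{eq:N.alpha} for the meager set $\n M_\alpha$. Since $\n N_\alpha$ has already been verified (in Subsection~\ref{ss:nngeneric}, via the convergence of $\prod_\slu (1 - 1/2^{b(\slu)})$) to be a null set in the generic extension, the only remaining task is to produce, given any small collection of names for reals, an index $\alpha\in\Xi_\typenn$ which covers them all by $\n N_\alpha$.

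More concretely: I would assume $\kappa_\typenn > \aleph_1$, since otherwise there is nothing to prove. Fix a condition $p$, a cardinal $\kappa < \kappa_\typenn$, and names $(\n r_i)_{i \in \kappa}$ for elements of $2^\omega$; it suffices to find $\alpha \in \Xi_\typenn$ such that $p \Vdash \{\n r_i : i \in \kappa\} \subseteq \n N_\alpha$. For each $i \in \kappa$, invoking rapid reading (Lemma~\ref{lem:rapidreading}) together with the density of pruned conditions (Fact~\ref{lem:variousprunedcrap}), choose a maximal antichain $A_i$ below $p$ each of whose members rapidly reads $\n r_i$.

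By the $\aleph_2$-cc, $|A_i| \le \aleph_1$, and since each condition has countable support, the set
\[
  S := \bigcup_{i \in \kappa} \bigcup_{a \in A_i} \supp(a)
\]
has cardinality at most $\kappa \cdot \aleph_1 < \kappa_\typenn$. Pick any $\alpha \in \Xi_\typenn \setminus S$. Then every $a \in A_i$ rapidly reads $\n r_i$ not using $\alpha$, so by Lemma~\ref{lem:nngeknn} we get $a \Vdash \n r_i \in \n N_\alpha$. Maximality of $A_i$ below $p$ then yields $p \Vdash \n r_i \in \n N_\alpha$ for every $i$, which is exactly what is required.

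There is no real obstacle here: the genuine work has already been done in Lemma~\ref{lem:nngeknn} (the ``$\alpha$-free rapid reading forces landing in $\n N_\alpha$'' step, which in turn rested on the strong bigness of the $\typenn$-subatoms as quantified in Lemma~\ref{lem:bla}(2), together with $|E| \le \maxposs({<}\slu) \ll b(\slu)^{\nor(C)}/2$ from Lemma~\ref{lem:defb}) and in the verification that $\n N_\alpha$ is null. The role of this corollary is merely to combine those facts with the $\aleph_2$-cc bookkeeping, exactly as in the $\typenm$-case.
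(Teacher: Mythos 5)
Your proof is correct and exactly matches the paper's approach, which simply invokes the argument of Corollary~\ref{cor:nmgeknm} verbatim with Lemma~\ref{lem:nngeknn} and $\n N_\alpha$ in place of Lemma~\ref{lem:nmgeknm} and $\n M_\alpha$. The $\aleph_2$-cc bookkeeping and the choice of a fresh index $\alpha$ outside all relevant supports are exactly as in the paper.
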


\section{Some simple facts about counting}\label{sec:counting}

We now list some simple combinatorial properties that will be used for the definitions and proofs
in the $\typecn$-part.

\subsection{Large families of positive sets have positive intersection, \texorpdfstring{$\norint$}{nor(cap)}}

\begin{lem}\label{lem:Mepsilon}
	For $\delta\in(0,1)$ and $\ell\in\omega $
	there are $M(\delta,\ell)\in\omega $
        and $\epsint(\delta,\ell)>0$ 
        such that:
	Whenever we have a probability space $\Omega$ and a family $(A_i:\, i<M)$ 
	of sets of measure $\ge\delta$, 
	we can find a subfamily of $\ell$
	many sets  
        whose intersection has measure at least $\epsint(\delta,\ell)$.
\end{lem}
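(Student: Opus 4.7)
My plan is to prove this by a double-counting (averaging) argument applied to the indicator functions of the $A_i$.

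\textbf{Setup.} Let $f(x) := \sum_{i<M} \mathbf{1}_{A_i}(x)$, so that $\int f \, d\mu = \sum_{i<M} \mu(A_i) \geq M\delta$. Since $f \leq M$ everywhere, a standard Markov-style split shows that the set $B := \{ x : f(x) \geq M\delta/2 \}$ satisfies $\mu(B) \geq \delta/2$: indeed, $M\delta \leq \int f \, d\mu \leq (M\delta/2)\cdot(1 - \mu(B)) + M\cdot \mu(B)$.

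\textbf{Double counting.} For each $x$, the number of $\ell$-subsets $S \subseteq M$ with $x \in \bigcap_{i \in S} A_i$ is $\binom{f(x)}{\ell}$. Summing (i.e.\ integrating) over $x$ and exchanging the order of summation gives
\[
\sum_{\substack{S \subseteq M \\ |S| = \ell}} \mu\Bigl(\bigcap_{i \in S} A_i\Bigr) \;=\; \int \binom{f(x)}{\ell} \, d\mu(x) \;\geq\; \int_B \binom{M\delta/2}{\ell} \, d\mu \;\geq\; \tfrac{\delta}{2} \binom{M\delta/2}{\ell}.
\]
Therefore, by averaging over the $\binom{M}{\ell}$ choices of $S$, there exists an $\ell$-subset $S$ with
\[
\mu\Bigl(\bigcap_{i \in S} A_i\Bigr) \;\geq\; \frac{\delta}{2} \cdot \frac{\binom{M\delta/2}{\ell}}{\binom{M}{\ell}}.
\]

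\textbf{Choosing the parameters.} Provided $M \geq 4\ell/\delta$, the falling factorial $\binom{M\delta/2}{\ell}$ is at least $(M\delta/4)^\ell/\ell!$, while $\binom{M}{\ell} \leq M^\ell/\ell!$, so the ratio is at least $(\delta/4)^\ell$. Hence it suffices to take
\[
M(\delta,\ell) := \lceil 4\ell/\delta \rceil \quad \text{and} \quad \epsint(\delta,\ell) := \delta^{\ell+1}/2^{2\ell+1},
\]
and the desired intersection $\bigcap_{i \in S} A_i$ of size $\ell$ with measure $\geq \epsint(\delta,\ell)$ exists.

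There is no serious obstacle: the whole argument is two applications of Markov/averaging. The only small point to be careful about is the asymptotic behaviour of the ratio $\binom{M\delta/2}{\ell}/\binom{M}{\ell}$, which is controlled by the explicit lower bound $M \geq 4\ell/\delta$ used above.
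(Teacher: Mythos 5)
Your proof is correct, and it takes a genuinely different (and quantitatively sharper) route than the paper's. Both proofs open with essentially the same first-moment estimate on $\int \sum_i \chi_{A_i}$, but they part ways in the second step. The paper sets $X := \{x : \text{$x$ lies in at least } \ell \text{ of the } A_i\}$, shows $\mu(X) \ge \nicefrac{\delta}{2}$ when $M > 2\ell/\delta$, assigns to each $x\in X$ the full index set $M_x := \{i : x\in A_i\}$, partitions $X$ by this data into at most $2^M$ classes, and selects $\ell$ indices from the common $M_x$ of a heavy class; this gives $\epsint(\delta,\ell) = \nicefrac{\delta}{2\cdot 2^M}$, which is exponential in $M$. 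You instead use the set $B := \{f \ge M\delta/2\}$ where the count is large (not merely $\ge \ell$), and then double-count over $\ell$-subsets via the identity $\sum_{|S|=\ell}\mu\bigl(\bigcap_{i\in S}A_i\bigr) = \int \binom{f}{\ell}\,d\mu$, a finer averaging than the paper's $2^M$-fold pigeonhole. The payoff is $\epsint(\delta,\ell) = \nicefrac{\delta^{\ell+1}}{2^{2\ell+1}}$, which is polynomial rather than exponential in $M\approx \ell/\delta$. Either bound suffices for the lemma as stated, but your version is quantitatively much better and is the more standard combinatorial technique.
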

\begin{proof}
By straightforward counting.\footnote{Originally we used a stronger statement
for which we only had a more complicated proof. We are grateful to William
B.~Johnson for pointing out in \url{http://mathoverflow.net/q/108380} that the
statement in the current form has the obvious straightforward proof.}

We write $\chi_B$ for the characteristic function of $B$.
Assume we have $M$ many sets $A_i$,
and set $X\subset \Omega$ to contain all points
that lie in at least $\ell$ many of the $A_i$. Then
\[
  \delta\cdot M \le \int \sum_{i\in M}\chi_{A_i} \le 
    \mu(X)\cdot M + \mu(\Omega\setminus X) \cdot (\ell-1) \le 
    \mu(X)\cdot M + \ell,
\]
and $\mu(X)\ge \delta-\nicefrac{\ell}M$.
So if we set 
\[
  M>2\frac{\ell}{\delta},
\]
 then there are at least
$\nicefrac{\delta}{2}$ ``many'' points in $X$.
We can assign to each point $x\in X$ a subset $M_x$ of $M$ (of size
at least $\ell$) by
\[
  i\in M_x \text{ iff } x\in A_i.
\]
This partitions  $X$ into at most $2^M$ many sets;
and at least one of the pieces has to have size at least
\[
  \epsint(\delta,\ell):=\frac{\delta}{2\cdot 2^M}.\qedhere
\]
\end{proof}

Let us set $F_b^0:=1$ and  $F_b^{n+1}= M(\nicefrac1b,F_b^n)$.  We can use this
notion to define a norm on natural numbers:
\begin{definition}\label{def:nor1}
  For $m> 0$:  
  $\norint_b(m)\ge n$ iff $m\ge F_b^n$.
\end{definition}

So we get the following:
\proofclaim{eq:hjhjhjh7}{
  Fix a measure space $\Omega$ and a 
  sequence $(T_i)_{i\in A}$ of sets of measure $\ge \nicefrac1b$.
  Then there is a subset $B\subseteq A$ such that $\norint_b(|B|)\ge \norint_b(|A|)-1$
  and $\bigcap_{i\in B} T_i$ has measure $\geq \epsint(\nicefrac1b,|A|)$.}

Note that without loss of generality the function $\epsint$
satisfies: $\epsint(\delta,\ell_1)\ge \epsint(\delta,\ell_2)$
whenever $\ell_2>\ell_1>0$. 
We write down the following trivial consequence of~\eqref{eq:hjhjhjh7}
for later reference:
\proofclaim{eq:jjkwr}{
  Assume that $A$ is a subset of some finite set $\POSS$.
  Fix a measure space $\Omega$ and a 
  sequence $(T_i)_{i\in A}$ of sets of measure $\ge \nicefrac1b$.
  Then there is a subset $B\subseteq A$ such that $\norint_b(|B|)\ge \norint_b(|A|)-1$
  and $\bigcap_{i\in B} T_i$ has measure $\geq \epsint(\nicefrac1b,|\POSS|)$.}

\subsection{Most large subsets do not cover a half-sized set}

Let $\Omega$ be the set of subsets of some finite set $A\in\omega$
of relative size $1-\epsilon$ (for $0<\epsilon <\nicefrac14$).
(Since $A\in\omega$, we can write $A$ for the cardinality $|A|$.)
I.e.:
$x\in \Omega$ implies $x\subseteq A$ and $|x| = A\cdot (1-\epsilon)$.
We can assume $A\gg \nicefrac1{\epsilon}$ and that $A\cdot\epsilon$ is an integer.

Let $T\subseteq A$ be of relative size $\ge 1/2$, i.e.,
$|T|\ge \nicefrac{A}2$.
Let $\Omega_T$ be the elements of $\Omega$ that
cover $T$, i.e., $x\in\Omega^T$ iff $x\in\Omega$ and $T\subseteq x$.

We will use the following easy fact from combinatorics: 
\begin{fact}\label{fact:combi}
For any natural number $k\ge 2$, the quotient
\[
  \frac{\binom{2Nk }N}{\binom{Nk}{N}}
\]
tends to infinity with  $N\to \infty$.
\end{fact}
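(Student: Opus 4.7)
The plan is to express the quotient as an explicit finite product and bound each factor from below by $2$. Using the standard formula $\binom{m}{N} = m(m-1)\cdots(m-N+1)/N!$, the $N!$ factors in numerator and denominator cancel, leaving
\[
\frac{\binom{2Nk}{N}}{\binom{Nk}{N}} = \prod_{i=0}^{N-1}\frac{2Nk-i}{Nk-i}.
\]
Since $k\ge 1$, for $0\le i\le N-1$ one has $Nk-i\ge N(k-1)+1>0$, so every factor is a well-defined positive real.

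Next I would observe that for each $0\le i\le N-1$ the inequality $2Nk-i\ge 2(Nk-i)$ simplifies to $i\ge 0$, which holds trivially. Hence each of the $N$ factors in the product is at least $2$, and therefore
\[
\frac{\binom{2Nk}{N}}{\binom{Nk}{N}} \ge 2^{N},
\]
which clearly tends to infinity with $N$.

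I do not anticipate any real obstacle: the whole argument consists of one algebraic rearrangement and one elementary inequality. Note that the hypothesis $k\ge 2$ is not actually used; the argument works verbatim for $k=1$. In fact the factors at larger $i$ are bigger than $2$ (the factor at $i=N-1$ is approximately $\frac{2k-1}{k-1}$), so one could easily obtain a sharper bound of the form $c_k^{N}$ for some $c_k>2$ when $k\ge 2$, but the crude estimate $2^{N}$ is already more than sufficient for the application.
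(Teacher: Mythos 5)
Your proof is correct. It is a genuinely different (and slightly sharper) route than the paper's. You write the ratio as the product $\prod_{i=0}^{N-1}\frac{2Nk-i}{Nk-i}$ and note that each factor is at least $2$, giving the bound $2^N$; the paper instead applies the crude binomial bounds $\frac{(a-b)^b}{b!}\le\binom{a}{b}\le\frac{a^b}{b!}$ to numerator and denominator separately and arrives at $\left(2-\frac1k\right)^N$. Your factorization yields a stronger conclusion: it gives a bound independent of $k$ and, as you observe, works already for $k=1$, whereas the paper's estimate degenerates to $1^N$ at $k=1$ and hence genuinely needs $k\ge 2$. Both arguments are of comparable length and entirely elementary; the factor-by-factor comparison avoids introducing the auxiliary inequality and may be considered the more transparent of the two.
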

\begin{proof}
   This can be checked with Stirling's approximation formula, or with the following
   elementary estimate: From 
\[ 
   \forall a,b: \frac{(a-b)^b}{b!}\le \binom ab \le \frac{a^b}{b!}
\]
   we get 
 \[ 
  N!\cdot {\binom{2Nk }N} \ge  (2Nk-N)^N \text{\quad and\quad} 
  N!\cdot {\binom{Nk }N} \le  (Nk)^N,  
\]
and hence 
 \[ 
  \frac{\binom{2Nk }N}{\binom{Nk}{N}} \ge \frac{ (2Nk-N)^N }{(Nk)^N  }\ge (2-\frac1k)^N\to \infty.
\]
\end{proof}

\begin{lem}\label{def:nor2}
  Fix $b>2$ and a finite set $I$ with $|I|>b$.
  Let $\POSS$ 
  be the family of subsets of $2^I$ of relative size $1-\nicefrac{1}{2^b}$.
  For $m\in\omega$ we 
  define $\norhalf_{I,b}(m):=\lfloor \nicefrac{m}{\binom{2^{|I|-1}}{2^{|I|-b}}} \rfloor$.

  Then:
\begin{enumerate}
\item
For any $T\subseteq 2^I$ of at least relative size $\nicefrac1{2}$
  and for any $C\subseteq \POSS$ there is a subset $D\subseteq C$
  with $\norhalf_{I,b}(|D|)\ge \norhalf_{I,b}(|C|)-1$ and $T\not\subseteq x$ for all $x\in D$.
\item   
  If $I$ is chosen sufficiently large (with respect to $b$), 
  then $\norhalf_{I,b}(\POSS)$ is large.
\end{enumerate}
\end{lem}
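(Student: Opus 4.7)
My plan is to unpack the definition of $\norhalf_{I,b}$ and reduce both statements to a single counting estimate for the cardinality of
\[ \POSS_T := \{ x \in \POSS : T \subseteq x \}, \]
the set of $x \in \POSS$ that cover a given $T \subseteq 2^I$.

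\textbf{Counting estimate.} If $x \in \POSS$ covers $T$, then the complement $2^I \setminus x$ is a subset of $2^I \setminus T$ of size exactly $2^{|I|}/2^b = 2^{|I|-b}$. Hence $|\POSS_T|$ equals the number of such complements, i.e.,
\[ |\POSS_T| = \binom{|2^I \setminus T|}{2^{|I|-b}}. \]
When $T$ has relative size at least $\tfrac12$, we get $|2^I \setminus T| \le 2^{|I|-1}$, and so
\[ |\POSS_T| \le \binom{2^{|I|-1}}{2^{|I|-b}}. \]
This is precisely the denominator appearing in the definition of $\norhalf_{I,b}$.

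\textbf{Part (1).} Given $C \subseteq \POSS$ and $T$, I set $D := C \setminus \POSS_T$. Then every $x \in D$ fails to contain $T$, and $|D| \ge |C| - \binom{2^{|I|-1}}{2^{|I|-b}}$. Applying the floor function in the definition of $\norhalf_{I,b}$, the norm drops by at most one:
\[ \norhalf_{I,b}(|D|) \;\ge\; \Bigl\lfloor \tfrac{|C|}{\binom{2^{|I|-1}}{2^{|I|-b}}} \Bigr\rfloor - 1 \;=\; \norhalf_{I,b}(|C|) - 1. \]

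\textbf{Part (2).} I need $|\POSS|/\binom{2^{|I|-1}}{2^{|I|-b}}$ to be arbitrarily large when $|I|$ is chosen large enough. Set $N := 2^{|I|-b}$ and $k := 2^{b-1}$, so that $Nk = 2^{|I|-1}$ and $2Nk = 2^{|I|}$. Then
\[ \frac{|\POSS|}{\binom{2^{|I|-1}}{2^{|I|-b}}} \;=\; \frac{\binom{2Nk}{N}}{\binom{Nk}{N}}, \]
and (since $b > 2$ ensures $k \ge 2$) Fact~\ref{fact:combi} gives that this quotient tends to infinity as $N \to \infty$, i.e., as $|I| \to \infty$.

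The argument is essentially bookkeeping; the only real content is the observation that the denominator $\binom{2^{|I|-1}}{2^{|I|-b}}$ in the definition of $\norhalf_{I,b}$ was chosen exactly so that the trivial upper bound $|\POSS_T| \le \binom{2^{|I|-1}}{2^{|I|-b}}$ makes part (1) cost exactly one unit of norm. No obstacle should arise beyond checking that the floor in $\norhalf_{I,b}$ behaves well under this subtraction, which it does.
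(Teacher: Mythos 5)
Your proof is correct and follows essentially the same route as the paper: both remove the (at most $\binom{2^{|I|-1}}{2^{|I|-b}}$ many) $x \in C$ covering $T$ and observe via the binomial count that this costs at most one unit of $\norhalf_{I,b}$, and part (2) invokes Fact~\ref{fact:combi} with the identical substitution $N=2^{|I|-b}$, $k=2^{b-1}$. The only cosmetic difference is in the floor-function bookkeeping (you compute $\lfloor (|C|-M)/M \rfloor$ directly, the paper uses the implication $x\le y \wedge \lfloor x-y\rfloor\le 1 \Rightarrow \lfloor x\rfloor - \lfloor y\rfloor\le 1$), which is just as valid.
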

\begin{proof}
\begin{enumerate}
\item
 It is enough to show this in case $T$ has exactly size $2^{|I|-1}$. 
  If $x\in C\setminus D$, then the set $2^I\setminus x$ has size  $2^{|I|-b}$
  and is 
  a subset of $2^I\setminus T$.  So there are at most $\binom{2^{|I|-1}}{2^{|I|-b}}$
  possibilities for $2^I\setminus x$, hence (by definition of $\norhalf_{I,b}$)
  we get $\norhalf(C\setminus D)\le 1$.  From the implication 
  \[   x\le y \text{ and } \lfloor x - y \rfloor  \le 1 \ \Rightarrow \      
                          \lfloor x \rfloor - 
                          \lfloor y \rfloor \le 1 
 \] 
 we get 
  $\norhalf_{I,b}(C)-\norhalf_{I,b}(D)\le 1$. 
\item
 Note that the cardinality of $\POSS$ is equal to 
$\binom{2^{|I|}}{2^{|I|-b}}$.   Using Fact~\ref{fact:combi} with 
$N:=2^{|I|-b}$ and $k:=2^{b-1}$  we get that 
$\nicefrac{\binom{2^{|I|}}{2^{|I|-b}}}{\binom{2^{|I|-1}}{2^{|I|-b}}}$ is large for large~$I$.\qedhere
\end{enumerate}
\end{proof}

\subsection{Providing bigness}

In this section, we write $\log$ to denote $\log_2$.

Apart from unimportant rounding effects,
$\log$ of $\norhalf$ satisfies $2$-bigness (and the same for
$\norint$).   Instead of thinking about
such effects, we just define for any norm a $2$-big version. Actually, we
define a $2$-big version
of the combinations of two norms (of course, any finite number of norms 
can be combined in this way):

\begin{definition}\label{def:golnor}
  Assume that $\nor_1,\nor_2:\omega\to\omega$ are
  weakly increasing and converge to infinity.

  Then we define $\gol=\gol(\nor_1,\nor_2):\omega\to\omega$ as follows:
  By induction on $m$, we define $\gol(x)\ge m$ by the conjunction of the following clauses:
  \begin{itemize}
    \item $\nor_1(x)\ge m$ and $\nor_2(x)\ge m$.
    \item $\gol(\lfloor \frac{x}{2} \rfloor )\ge m-1$.
    \item If $y\in\omega$ and $i\in\{1,2\}$ satisfies
      $\nor_i(y)\ge \nor_i(x)-1$, then 
      $\gol(y)\ge m-1 $.
  \end{itemize}
  We set $\gol(x):=\gol(\norint,\norhalf)$.
\end{definition}
\begin{lem}\label{lem:golnor}
  Let $\gol=\gol(\nor_1,\nor_2)$.
  \begin{itemize}
    \item $\gol(x)$ is a well-defined natural number for all $x$, i.e.,
      there is a maximal $m$ such that $\gol(x)\ge m$ holds.
    \item $\gol$ is weakly increasing and diverges to infinity.
    \item $\gol$ has $2$-bigness:
If $F:m\to 2$ is a coloring function
and $\gol(m)=n$,
then there is some $c\in 2$ such that
$\gol(F^{-1}(c))\ge n-1$.
    \item So if we define $\nor_b(x)$ as $\frac{\gol(x)}{\lceil \log(b)\rceil}$,
      then $\nor_b$ will be $b$-big.
    \item If $\nor_i(y)\ge \nor_i(x)-1$ for some $i\in\{1,2\}$,
      then $\gol(y)\ge \gol(x)-1$.
  \end{itemize}
\end{lem}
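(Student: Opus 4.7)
The plan is to establish the five items in order, since the later ones rest on the earlier.

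For item (1), the predicate $\gol(x) \ge m$ is defined by induction on $m$, with $\gol(x) \ge 0$ holding vacuously, so it is well-defined for every pair $(x,m)$. The first clause forces $\gol(x) \ge m \Rightarrow m \le \nor_1(x)$, so $\{m : \gol(x) \ge m\}$ is a nonempty bounded subset of $\omega$ and admits a maximum.

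Item (2), monotonicity, I would prove by induction on $m$: if $x \le x'$ and $\gol(x) \ge m$, then $\gol(x') \ge m$. The first clause transfers via monotonicity of $\nor_i$, the second via monotonicity of $\lfloor \cdot /2 \rfloor$ plus the induction hypothesis, and the third because $\nor_i(x') \ge \nor_i(x)$ forces every witness $y$ for $x'$ to also be a witness for $x$. For divergence, I would inductively construct thresholds $N_n$ with $\gol(x) \ge n$ for every $x \ge N_n$, starting with $N_0 = 0$. Given $N_n$, clauses (1) and (2) for level $n+1$ reduce respectively to $\nor_i(x) \ge n+1$ (holds for $x$ large since $\nor_i$ diverges) and $\lfloor x/2 \rfloor \ge N_n$. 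For clause (3) I use that $\nor_i$ is monotone and diverges, so the map $k \mapsto \min\{y : \nor_i(y) \ge k\}$ also diverges; choose $k^*$ with $\min\{y : \nor_i(y) \ge k^*\} \ge N_n$ for both $i = 1,2$, and then take $N_{n+1}$ large enough that $\nor_i(x) \ge k^* + 1$ for $x \ge N_{n+1}$. Then any $y$ with $\nor_i(y) \ge \nor_i(x) - 1 \ge k^*$ satisfies $y \ge N_n$, so $\gol(y) \ge n$ by the inductive hypothesis, and all three clauses hold.

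Item (3) is then immediate: a 2-coloring $F : m \to 2$ has a majority class of size at least $\lceil m/2 \rceil \ge \lfloor m/2 \rfloor$; the second clause gives $\gol(\lfloor m/2 \rfloor) \ge \gol(m) - 1$, and the monotonicity from (2) extends the bound to the larger class. Item (4) follows from (3) by the standard encoding used in Lemma~\ref{lem:bbigfrom2big}: a $b$-coloring is viewed as $\lceil \log b \rceil$ successive $2$-colorings, each costing a unit of $\gol$ and hence $1/\lceil \log b \rceil$ of $\nor_b = \gol/\lceil \log b \rceil$; aggregated, $\nor_b$ drops by at most $1$. Item (5) is read off directly: setting $m := \gol(x)$ in the third clause, any $y$ with $\nor_i(y) \ge \nor_i(x) - 1$ has $\gol(y) \ge m - 1 = \gol(x) - 1$.

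The main piece of real work is the divergence argument in (2); everything else is a more or less direct application of one of the defining clauses. The only subtle point there is that clause (3) involves a universal quantifier over $y$, which is handled by observing that the set of $y$'s with $\nor_i(y) \ge k$ is automatically ``large'' (and in particular begins above $N_n$) once $k$ is chosen large enough, using monotonicity and divergence of $\nor_i$.
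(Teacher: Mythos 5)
Your argument is correct and follows essentially the same line as the paper: well-definedness from $\gol(x)\le\nor_i(x)$, monotonicity by induction from the monotonicity of the $\nor_i$, $2$-bigness from the second defining clause plus monotonicity, $b$-bigness via Lemma~\ref{lem:bbigfrom2big}, and the final item directly from the third clause. The only cosmetic difference is in the divergence argument, where you construct explicit thresholds $N_n$ with $\gol(x)\ge n$ for $x\ge N_n$, while the paper phrases the contrapositive---for each $m$ only finitely many $x$ satisfy $\gol(x)<m$---but the two are the same argument.
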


\begin{proof}
  ``Well-defined'' follows from $\gol(x)\leq \nor_i(x)$.
 
  Monotonicity follows from the monotonicity of $\nor_1$ and $\nor_2$.
 
  We now prove that by induction on $m$ that
  there are only finitely many $x$ with $\gol(x)<m$.
  For $m=0$ this is obvious, as all 
  $x$ satisfy $\gol(x)\ge 0$.
  For $m>0$:
  $\gol(x) <m$  iff either  $\nor_1(x)<m$ or  $\nor_2(x)<m$ or
  $\gol(\lfloor \frac{x}{2} \rfloor )<m-1$
  or there is some $y$ and some $i\in\{1,2\}$ with $\nor_i(y)\ge \nor_i(x)-1$
  and $\gol(y)< m-1 $; for each case there are only finitely many possibilities.

  $2$-bigness and the last item follow directly from the definition.
  $b$-bigness is Lemma~\ref{lem:bbigfrom2big}.
\end{proof}

\section{The \texorpdfstring{$\typecn$}{cn} part}\label{ss:cn}
\subsection{The subatomic creatures for type  \texorpdfstring{$\typecn$}{cn}}\label{ss:cndef}
We now describe the subatomic families $\cK'_{\typecn,b}$ used for the 
$\typecn$-indices. 

\begin{definition}\label{def:Kcn}
\begin{enumerate}
  \item Fix an interval $I$ which is large enough to satisfy~(\ref{item:cnlarge}).
In particular, $|I|>b$.
Again, we assume that this interval is disjoint to all intervals previously
chosen.
  \item
The basic set of all possibilities and the set of subatoms is the same
as in the $\typenn$-case~\ref{def:Knn} (but the norm will
be different). So $\POSS$ consists of all
subsets $X$ of $2^I$ with relative size $1-1/2^b$:
\[
  \POSS=\{X\subseteq 2^I:\, |X|= (1-1/2^b)|2^I|\}.
\]
\item 
A subatom $C$ is a subset of $\POSS$, with $\poss(C):=C$, and 
\[
  \nor(C):=\frac{\gol(\norint_b,\norhalf_{I,b})(|C|)}{2^{\min(I)}\cdot b^2}.
\]
  \item\label{item:cnlarge}
    We require $\nor(\POSS)>b$ (thus satisfying~\eqref{eq:paramlarge}).
  \item\label{item:Hcn}
We set 
$H'(\typecn,{=}b):=\max(H'_0,H'_1)$
for 
$H'_0:=  
   2^{\binom{2^{|I|}}{2^{|I|-b}}}  $ and
$H'_1:=\nicefrac1{\epsint(\nicefrac 1b,|\POSS|)}$, where
$\epsint$ is defined in~\ref{lem:Mepsilon}. 
\end{enumerate}
\end{definition}

Note that $H'(\typecn,{=}b)>|\cK'_{\typecn,b}|$ (this is what we need $H'_0$ for).

Recall that $\gol$ satisfies $2$-bigness, so after dividing by $b$
(actually, $\lceil \log_2(b)\rceil \cdot b$ would be sufficient) we get strong
$b$-bigness (i.e., the norm satisfies the requirement~\eqref{eq:parambig}).

Note that (in contrast to the $\typenn$ case) this norm
is a counting norm, i.e., $\nor(C)$ only depends on $|C|$,
not on the ``structure'' of $C$.

\subsection{The generic object}

Just as in the $\typenn$-case, we set $I_{\typenn,\slu}$ to be the $I$ used for
$\cK'_{\typenn,b(\slu)}$;  and we define $\n N_\alpha$ analogously to the $\typenn$-case.%
\footnote{Of course, generally
$I_{\typecn,\slu}\neq I_{\typenn,\slu}$, so $\n N_\alpha$
for $\alpha\in\Xinn$
lives on a different domain than $\n N_\beta$ for $\beta\in\Xicn$.}

As before, $\n N_\alpha$ is a name for a null set, and a real $r$ is in $\n
N_\alpha$ iff there are infinitely many sublevels $\slu$ such that
$r\restriction I_{\typecn,\slu}$  is not in the possibility $X$ of
$\cK'_{\typecn,\slu}=\cK_{\alpha,\slu}$ that is chosen by the generic filter.

This time, the purpose  of $\n N_\alpha$ is not to cover all reals not
depending on $\alpha$, but rather to avoid being covered by any null set not
depending on $\alpha$.

\begin{lem}\label{lem:lemmaA}
  Fix a subatomic sublevel $\slu$, an index $\alpha\in \Xicn$
  and a subatom $C\in \cK'_{\typecn,\slu}=\cK_{\alpha,\slu}$.
  \begin{enumerate}
  \item
  Given $T\subseteq 2^{I_{\typecn,\slu}}$ 
  of relative size $\ge \nicefrac12$ 
  we can strengthen $C$ to $D$, decreasing the norm by at most
  $\nicefrac1{2^{\min(I)}\cdot b(\slu)}$  
  such that $T\not\subseteq X$ for all $X\in \POSS(D)$.
  \item
	Fix a probability space $\Omega$ and a function 
	$F$ that maps every $X\in\poss(C)$ 
	to $F(X)\subseteq \Omega$ 
	of measure $\ge \nicefrac1{b(\slu)}$.
	Then we can strengthen $C$ to $D$, decreasing the norm
	by at most $\nicefrac1{2^{\min I}\cdot b(\slu)}$
	such that $\bigcap_{X\in\poss(D)} F(X)$ has measure at least 
        $\nicefrac1{b(\slu+1)}$. Here, $\slu+1$ denotes the smallest
        subatomic sublevel above $\slu$.
\end{enumerate}
\end{lem}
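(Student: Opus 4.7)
\medskip

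\textbf{Plan of proof.} Both parts follow the same pattern: apply the appropriate combinatorial lemma from Section~\ref{sec:counting} to shrink the possibility set, then invoke the last clause of Lemma~\ref{lem:golnor} to see that the combined norm $\gol(\norint_b,\norhalf_{I,b})$ (and hence $\nor$) drops by at most the asserted amount. Writing $b := b(\slu)$, the denominator in the definition of $\nor$ is $2^{\min(I)}\cdot b^2$, so a drop of $\gol$ by $1$ translates into a drop of $\nor$ by $1/(2^{\min(I)}\cdot b^2) \le 1/(2^{\min(I)}\cdot b)$ as required.

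\medskip

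For~(1), I would apply Lemma~\ref{def:nor2} to the set $T$ and to $\poss(C) \subseteq \POSS$, obtaining a subset $D \subseteq \poss(C)$ with $\norhalf_{I,b}(|D|) \ge \norhalf_{I,b}(|C|)-1$ and with $T \not\subseteq X$ for every $X \in D$. Viewing $D$ as a subatom stronger than $C$, the last bullet of Lemma~\ref{lem:golnor} (applied with $i=2$) gives $\gol(|D|) \ge \gol(|C|)-1$, so dividing by $2^{\min(I)}\cdot b^2$ yields the desired bound on the norm drop.

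\medskip

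For~(2), I would apply~\eqref{eq:jjkwr} with $A := \poss(C)$, ambient set $\POSS$, and the family $(F(X))_{X \in A}$, each of measure $\ge 1/b$. This produces $B \subseteq A$ with $\norint_b(|B|) \ge \norint_b(|A|)-1$ and $\bigcap_{X \in B} F(X)$ of measure at least $\epsint(1/b,|\POSS|)$. Taking $D$ to be the subatom with $\poss(D) = B$ and again invoking the last bullet of Lemma~\ref{lem:golnor} (this time with $i=1$) gives $\gol(|D|)\ge\gol(|C|)-1$, hence $\nor(D) \ge \nor(C) - 1/(2^{\min(I)}\cdot b)$.

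\medskip

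The only non-routine point will be verifying the measure estimate $\epsint(1/b(\slu),|\POSS|) \ge 1/b(\slu+1)$, and this is where I would carefully unwind the definitions in Section~\ref{sec:complete.construction} and Definition~\ref{eq:hfromhprime}. By Definition~\ref{def:Kcn}(\ref{item:Hcn}), $H'(\typecn,{=}b(\slu)) \ge H'_1 = 1/\epsint(1/b(\slu),|\POSS|)$, so $\epsint(1/b(\slu),|\POSS|) \ge 1/H'(\typecn,{=}b(\slu))$. On the other hand, the inductive definition of $H$ guarantees $H({<}(\slu{+}1)) \ge H'(\typecn,{=}b(\slu))$, which together with $b(\slu{+}1) \ge B(\slu{+}1) = 2^{H({<}(\slu{+}1))\cdot\maxposs({<}(\slu{+}1))}$ gives $b(\slu{+}1) \ge H'(\typecn,{=}b(\slu))$ and hence $1/b(\slu{+}1) \le \epsint(1/b(\slu),|\POSS|)$, completing the argument. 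The definitions were set up precisely to make this inequality work, so no additional combinatorics is needed beyond bookkeeping.
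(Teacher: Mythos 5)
Your proposal is correct and follows essentially the same route the paper takes: the paper's own proof simply says the lemma is immediate from \eqref{eq:jjkwr}, Lemma~\ref{def:nor2} and Lemma~\ref{lem:golnor}, "just note that $b(\slu+1) > H'(\typecn,{=}b(\slu)) \ge \nicefrac1{\epsint(\nicefrac 1{b(\slu)},|\POSS|)}$." Your argument unwinds exactly those three ingredients and, in the last paragraph, supplies the derivation of $b(\slu+1)>H'(\typecn,{=}b(\slu))$ (which the paper asserts without detail in the lines leading to \eqref{eq:blghgh}) from \eqref{eq:defbA}, \eqref{eq:cc4b}, and the inductive definition of $H$; this is the intended bookkeeping and is carried out correctly.
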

\begin{proof}
This is an immediate consequences of~\eqref{eq:jjkwr},
\ref{def:nor2} and~\ref{lem:golnor},
just note that
\[
  b(\slu+1) > H'(\typecn,{=}b(\slu)) \ge
  \nicefrac1{\epsint(\nicefrac 1{b(\slu)},|\POSS|)}.  \qedhere
\]
\end{proof}

Again, let $\slu+1$ denote the smallest
        subatomic sublevel above $\slu$. Then
\[
b(\slu+1) > H'(\typecn,{=}b(\slu)) >  |\cK_{\typecn,b(\slu)}|.
\]
In other words,
\proofclaim{eq:blghgh}{
The cardinality of $\cK_{\typecn,b(\slu)}$ is less than $b(\slu+1)$.
}

\subsection{Names for null sets}\label{sec:namesnull}

Let $T\subseteq 2^{{<}\omega}$ be a tree (without terminal nodes) of measure $\nicefrac12$.
(Such trees correspond bijectively to closed sets of measure $\nicefrac12$.)
Then the set 
\begin{equation}
  N_T:=2^\omega\setminus \bigcup \{r+[T]:\, r\in\mathbb Q\}.
\end{equation}
is a null set (closed under rational translations). Conversely, 
for every null set $N$ there is such a $T$ with $N\subseteq N_T$.

The relative measure of $s$ in $T$ (for $s\in 2^{n}$, $n\in\omega$)
is defined as $\mu([T]\cap [s])\cdot 2^n$.
For completeness, we say that the relative measure of $s$ is
$0$ if $s\notin T$.
(Analogously, we can define the relative measure of a node $s$ 
in a finite tree $T\subseteq 2^{\le m}$ with no terminal nodes
of height $<m$.)
Note the following easy consequence of the Lebesgue density theorem:
\begin{fact}\label{fact:lebdens}
  If $T$ is a tree without terminal nodes, 
  $s\in T$ has positive relative measure, and $\delta<1$,
  then there is a $t>s$ with relative measure $>\delta$.
  (And for all levels above the level of $t$, there is an 
  extension $t'>t$ which also has relative measure $>\delta$.)
\end{fact}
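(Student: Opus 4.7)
The plan is to apply the Lebesgue density theorem to the closed set $[T] \cap [s]$. Since $s$ has positive relative measure, the set $[T] \cap [s]$ has positive Lebesgue measure in $2^\omega$. By the Lebesgue density theorem (in its form for the Cantor space with the standard basis $\{[u] : u \in 2^{<\omega}\}$), almost every point of $[T] \cap [s]$ is a density point of this set, meaning that for almost every $x \in [T] \cap [s]$,
\[
\lim_{n \to \infty} \frac{\mu([T] \cap [x \restriction n])}{\mu([x \restriction n])} = 1.
\]
In particular, such a density point $x$ exists (and necessarily $x \in [T]$, and $x$ extends $s$).

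Fix such a density point $x$ and the given $\delta < 1$. By the displayed limit, there is some level $N$ (which we may take at least as large as the length of $s$) such that for every $n \geq N$,
\[
\frac{\mu([T] \cap [x \restriction n])}{\mu([x \restriction n])} > \delta,
\]
i.e., the relative measure of $x \restriction n$ in $T$ exceeds $\delta$. Setting $t := x \restriction N$, we get $t > s$ (as $x$ extends $s$ and $N$ is at least the length of $s$), $t \in T$, and $t$ has relative measure $> \delta$.

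For the parenthetical strengthening: given any level $m > N$, the choice $t' := x \restriction m$ also extends $t$, lies in $T$, and has relative measure $> \delta$ by the same inequality applied to $n = m$. Hence for every level above (the level of) $t$, there is an extension of $t$ in $T$ of that level with relative measure $> \delta$. This is entirely routine; the only ``content'' is the invocation of Lebesgue density, and there is no real obstacle.
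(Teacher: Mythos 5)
Your proof is correct and is precisely the argument the paper has in mind: the Fact is stated without proof after the remark that it is an ``easy consequence of the Lebesgue density theorem,'' and you apply exactly that theorem to $[T]\cap[s]$, pick a density point $x$, and read off $t=x\restriction N$ together with all its longer restrictions. (The only trivial nit is that you should take $N$ \emph{strictly} greater than $|s|$, not merely $\ge|s|$, to guarantee $t>s$ rather than $t\ge s$; this costs nothing since the density inequality holds for all sufficiently large $n$.)
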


By removing nodes with relative measure $0$, the measure of $T$ does not
change.  We give such trees a name:
\begin{definition}
$T$ is a \prunedtree tree, if $T\subseteq 2^{<\omega}$ has measure $\nicefrac12$
and has no
nodes of relative measure zero (and in particular no terminal nodes).
\end{definition}
Note that each null set is contained in $N_T$ for some  \prunedtree $T$.
So instead of investigating arbitrary names for null sets,
we will consider names $\n T$ for \prunedtree trees.

Note that there are fewer than $2^{2^h}$ many possibilities for 
the level $h$ of $\n T$. So we can 
``code'' $\n T$
by a real 
$\n r\in 2^\omega$ 
such that $\n T\restriction h$
is determined by $\n r\restriction 2^{2^{(h+1)}}$.

Assume that $p$ rapidly reads this $\n r$.
Then
$\n T\restriction(\max(I_{\typecn,\slu })+1)$ is determined $\le \slu$
(according to~\eqref{eq:Hprimerapid} and~\ref{def:Kcn}(\ref{item:Hcn})).

We will describe this situation by ``$p$ rapidly reads $\n T$''.

\subsection{\texorpdfstring{$\cof(\NULL)\ge \kappa_\typecn$}{cof(N) >= kappa-cn}}\label{ss:cnA}

\begin{lem}\label{lem:basiccn}
  Let $p\in Q$ rapidly read the \prunedtree tree $\n T$ not using the index
  $\alpha\in\Xicn$. 
  Then $p$ forces that $\n N_\alpha $ is not a subset of $N_{\n T}$, i.e.,\footnote{as $\n N_\alpha$
  is closed under rational translates} there is some
  $s\in \n N_\alpha\cap [\n T]$.
\end{lem}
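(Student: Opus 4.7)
The plan is: for an arbitrary $p_0\le p$ (which still rapidly reads $\n T$ not using $\alpha$, by Lemma~\ref{lem:strongerstillcont}), construct $q\le p_0$ and a name $\n s$ for an element of $2^\omega$ with $q\Vdash \n s\in[\n T]\cap\n N_\alpha$; by density this shows $p$ forces the conclusion. WLOG $p_0$ is pruned with $\alpha\in\supp(p_0)$. Enumerate cofinally the subatomic sublevels $\slu_0<\slu_1<\cdots$ at which $\alpha$ is the modest-active index in $p_0$; write $C_n:=p_0(\alpha,\slu_n)$ and $m_n:=\max I_{\typecn,\slu_n}$. By~\eqref{eq:Hprimerapid} and Definition~\ref{def:Kcn}(\ref{item:Hcn}), the finite tree $\n T\restriction(m_n+1)$ is decided $\le\slu_n$; since $\n T$ is read without using $\alpha$, this decision depends only on the non-$\alpha$ components, yielding a finite tree $T_\eta^{\slu_n}$ for each $\eta\in\poss(p_0,{<}\slu_n^+)$, each forced to be the initial segment of a \prunedtree tree.

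First I would construct $q$ purely stronger than $p_0$ (modifying only each $C_n$ to some $D_n\le C_n$) together with functions $\Phi_n\colon \poss(q,{<}\slu_n^+)\to 2^{m_n+1}$ such that, for every $\eta\in\poss(q,{<}\slu_n^+)$ with restrictions $\eta^-\in\poss(q,{<}\slu_n)$ and $\eta^{--}\in\poss(q,{<}\slu_{n-1}^+)$: (i)~$\Phi_n(\eta)\in T_\eta^{\slu_n}$ extends $\Phi_{n-1}(\eta^{--})$; (ii)~$\Phi_n(\eta)\restriction I_{\typecn,\slu_n}\notin\eta(\alpha,\slu_n)$, i.e., avoids the generic's $\alpha$-choice at $\slu_n$; (iii)~the forced asymptotic density $\mu([\n T]\cap[\Phi_n(\eta)])/\mu([\Phi_n(\eta)])\ge\nicefrac12$. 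Granted this, the name $\n s:=\bigcup_n\Phi_n(\eta_n^G)$ (with $\eta_n^G\in\poss(q,{<}\slu_n^+)$ the generic's choice) will be forced by $q$ to lie in $[\n T]\cap\n N_\alpha$, as desired.

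For the inductive step, fix $\eta^-\in\poss(q,{<}\slu_n)$ and let $\sigma:=\Phi_{n-1}(\eta^{--})$. The density condition (iii) for $\sigma$, combined with the elementary inequality $\mu([\n T]\cap[\sigma])\le|A_{\eta^-}|\cdot 2^{-|I_{\typecn,\slu_n}|}\mu([\sigma])$, where $A_{\eta^-}\subseteq 2^{I_{\typecn,\slu_n}}$ is the projection of $\{\tau\in T_{\eta^-}^{\slu_n}\cap 2^{m_n+1}:\tau\supseteq\sigma\}$ onto $I_{\typecn,\slu_n}$, yields $|A_{\eta^-}|\ge 2^{|I_{\typecn,\slu_n}|-1}$ (relative size $\ge\nicefrac12$). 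Iterating Lemma~\ref{lem:lemmaA}(1) over the (at most $\maxposs({<}\slu_n)$) choices of $\eta^-$ produces $D_n\le C_n$ with total norm drop at $\slu_n$ bounded by $\maxposs({<}\slu_n)/(2^{\min I_{\typecn,\slu_n}}\cdot b(\slu_n))$, tiny by Lemma~\ref{lem:defb}. For each $\eta^-$ and each $X\in\POSS(D_n)$ the lemma furnishes $t=t_X^{\eta^-}\in A_{\eta^-}$ with $t\notin X$; I then define $\Phi_n(\eta)$ (for $\eta$ restricting to $\eta^-$ and with $\alpha$-component $X$ at $\slu_n$) as an extension of $\sigma$ in $T_{\eta^-}^{\slu_n}$ with $I_{\typecn,\slu_n}$-restriction $t$, further refined using Fact~\ref{fact:lebdens} and the \prunedtree-ness of $\n T$ to preserve (iii). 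Since norm drops sum to a bounded amount while $\nor(C_n)\to\infty$, $q$ is a valid condition of $\QQQ$.

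The main obstacle is preserving condition (iii) through the inductive step: a naively chosen witness $t$ may extend only to low-asymptotic-density prefixes. The fix is to slightly strengthen (iii) (e.g., demand density $\ge\nicefrac34$) and first restrict $A_{\eta^-}$ to its ``dense sub-projection'' $A'_{\eta^-}$, consisting of those $t$ for which the conditional set $[\sigma]\cap\{s\restriction I_{\typecn,\slu_n}=t\}\cap[\n T]$ has relative $[\n T]$-density $\ge\nicefrac12$; under the stronger hypothesis, $A'_{\eta^-}$ retains relative size $\ge\nicefrac12$ in $2^{I_{\typecn,\slu_n}}$. Apply Lemma~\ref{lem:lemmaA}(1) to $A'_{\eta^-}$ instead of $A_{\eta^-}$; after choosing a witness $t$, use Fact~\ref{fact:lebdens} to pick an extension of $\sigma$ with $I_{\typecn,\slu_n}$-restriction $t$ whose asymptotic density is boosted back to the stronger threshold. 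The norm bookkeeping still goes through because $b(\slu_n)\gg\maxposs({<}\slu_n)$ by Lemma~\ref{lem:defb}.
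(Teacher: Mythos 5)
Your overall plan --- prune $p$, take $\alpha\in\supp(p)$, modify the $\alpha$-subatoms at cofinally many sublevels using $\norhalf$-type bigness (Lemma~\ref{lem:lemmaA}(1)), pay norm drops bounded by roughly $\nicefrac{\maxposs({<}\slu)}{b(\slu)}$ (small by Lemma~\ref{lem:defb}), and then produce a branch through $\n T$ that diverges from the generic $\alpha$-choices infinitely often --- coincides with the paper's. The genuine difference is in how the branch is produced. You try to build a \emph{name} $\n s$, essentially decided level-by-level by $q$, carrying an explicit density invariant~(iii). The paper instead builds $q$ so that the strengthened subatom $D\le C$ at each chosen sublevel handles \emph{all} pairs $(T^*,t)$ with $T^*$ a possible value of $\n T\restriction(\max I+1)$ and $t\in T^*\cap 2^{\min I}$ of relative measure $\ge\nicefrac12$, and then constructs the branch $r$ directly \emph{in the generic extension}, invoking Fact~\ref{fact:lebdens} afresh at every step starting from an arbitrary positive-measure node of $\n T$. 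Not carrying an invariant is exactly what makes the paper's argument clean: every node of a \prunedtree tree has positive relative measure, so Fact~\ref{fact:lebdens} always supplies a $\ge\nicefrac12$-density continuation at a higher level, and that level can always be pushed past the next modified sublevel of~$q$.

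This is where your version has a real gap. Your clause~(iii) is demanded of $\Phi_n(\eta)$ at the \emph{fixed} level $m_n+1=\max I_{\typecn,\slu_n}+1$, but after applying Lemma~\ref{lem:lemmaA}(1) to the dense sub-projection $A'_{\eta^-}$ you have a witness $t$ with conditional density $\ge\nicefrac12$, not $\ge\nicefrac34$. You propose to ``boost'' back to $\nicefrac34$ via Fact~\ref{fact:lebdens}, but that fact only produces a high-density node at \emph{some} higher level, a priori unbounded and dependent on $\n T$ (equivalently, on $\eta$ at sublevels $>\slu_n$); the density at the fixed level $m_n+1$, once you also fix the free middle coordinates $[\,m_{n-1}+1,\min I_{\typecn,\slu_n})$, can be as low as the $\nicefrac12$ you started from, or lower for the particular extension you choose. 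Keeping $\Phi_n$ valued in $2^{m_n+1}$ with (i) referring to $T_{\eta^-}^{\slu_n}$ is precisely what blocks the boost. To salvage this route you would have to let $\Phi_n$ live at a variable height settled by a K\"onig's-lemma argument as in Lemma~\ref{lem:rapidreading}; but at that point you have reinvented the paper's ``construct $r$ in the extension, re-applying Lebesgue density from scratch'' strategy with heavier bookkeeping. The paper's upfront enumeration of all good pairs $(T^*,t)$ is lighter exactly because it never needs a running density invariant.
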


\begin{proof}
  We can assume that $p$ is pruned and that $\alpha\in \supp(p)$. 
  It is enough to find  a name $\n r\in 2^\omega$ and a $q\leq p$ 
  forcing $\n r\in \n N_\alpha \cap[T]$.
  For this, we will inductively modify $p$ at infinitely many sublevels
  $\slu$ (resulting in the $1$-purely stronger $q$):

  Let $\slu$ be a subatomic sublevel (above all the sublevels that 
  we have already modified), 
  where $\alpha$ is the active index
  with subatom $C$ of norm at least $10$,
  living on the interval $I:=I_{\typecn,\slu}$.

  The finite tree
  $\n T':=\n T\restriction \max(I)+1$ is determined $\le\slu$,
  and even $<\slu$,
  as $\n T$ does not depend on $\alpha$
  (as usual, note that due to modesty $\alpha$ is the only active
  index at sublevel $\slu$).
  In particular the set $Y$ of potential values of $\n T'$ has size
  $\le\maxposs({<}\slu)$.

  We now enumerate all $T^*\in Y$ and 
  $t\in T^*\cap 2^{\min(I)}$ with relative measure 
  (in $T^*$) at least $\nicefrac12$.
  There are at most 
  $\maxposs({<}\slu)\times 2^{\min(I)}$ many such pairs $(T^*,t)$.
  
  Starting with $C^0:=C$, we iteratively use Lemma~\ref{lem:lemmaA}(1)
  to strengthen the subatom
  $C^n$ to some $C^{n+1}$ such that for the current 
  $(T^*,t)$
  and all $X\in \poss(C^{n+1})$
  there is some $t'\in 2^I\setminus X$ such that 
  $t^\frown t'\in T^*$.

  So in the end we get a subatom $D\leq C$ of norm $\ge\nor(C)-1$ such that for
  all $(T^*,t)$ and $X\in \poss(D)$ there is some $t' \in 2^I\setminus X$ with
  $t^\frown t'\in T^*$.

  In this way, we modify infinitely many sublevels $\slu$, resulting in
  a condition $q\leq p$. 
 
  Now work in the forcing extension, 
  where $q$ is in the generic filter.
  We can now construct by induction an element $r$ 
  of $\n N_\alpha\cap [\n T]$ (i.e., 
  $r\restriction I_{\typecn,\slu}$ is not in the
  generically chosen $X$ at index $\alpha$ and sublevel
  $\slu$, for infinitely many sublevels $\slu$.)

  Assume we already have $r\restriction n\in \n T$ for some $n$.
  Since $\n T$ has no nodes of relative norm $0$, there is a $h'>n$ and an
  $t'\in T\cap 2^{h'}$ extending $r\restriction n$ with relative measure $\ge\nicefrac12$ (see~\ref{fact:lebdens}).
  Pick a sublevel $\slu$ such that: $\min(I)=:h>h'$ for $I:=I_{\typecn,\slu}$,
  and $\slu$ was considered in our construction of $q$.
  There is still some 
  $t\in 2^{h'}$ extending $\n r\restriction n$ 
  of relative measure $\nicefrac12$.
  Set $T^*:=\n T\restriction \max(I)+1$.
  Note that in our construction of $q$, when considering $\slu$,
  we dealt with the pair
  $(T^*,t)$, and thus made sure for all $X\in\poss(q(\alpha,\slu))$
  (so in particular for the one
  actually chosen by the generic filter)
  there is some $t'\in 2^I$ such that
  $t^\frown t'\in T^*$ and $t'\notin X$.
  So we can just set $r\restriction \max I:=t^\frown t'$.
\end{proof}

\begin{cor}\label{cor:cngekcn}
  $\QQQ$ forces that $\cof(\NULL)\ge \kappa_\typecn$.
\end{cor}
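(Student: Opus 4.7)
The plan is to mirror the pattern established in Corollaries~\ref{cor:nmgeknm} and~\ref{cor:nngeknn}, using Lemma~\ref{lem:basiccn} as the key input. We may assume $\kappa_\typecn > \aleph_1$, since otherwise there is nothing to show. Fix a condition $p$ and a family of names $(\n N_i)_{i\in\kappa}$ for null sets, where $\kappa < \kappa_\typecn$; it suffices to produce a null set in the extension that is not contained in any $\n N_i$, because then $\{\n N_i : i < \kappa\}$ cannot be cofinal in $\NULL$.

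First, as in Section~\ref{sec:namesnull}, replace each $\n N_i$ by a name $\n T_i$ for a \prunedtree tree such that $\n N_i \subseteq N_{\n T_i}$ is forced, and recall that $\n T_i$ can be coded by a real in $2^\omega$. By Lemma~\ref{lem:rapidreading} (and the density of rapid reading established in Section~\ref{sec:bigness}), for each $i < \kappa$ the set of conditions rapidly reading this code is dense below $p$; pick a maximal antichain $A_i$ of such conditions below $p$. By the $\aleph_2$-chain condition, $|A_i|\le \aleph_1$, and each $a \in A_i$ has countable support.

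Next, count supports: the set
\[
   \Sigma := \bigcup_{i < \kappa} \bigcup_{a \in A_i} \supp(a)
\]
has cardinality at most $\kappa\cdot\aleph_1\cdot\aleph_0 = \max(\kappa,\aleph_1) < \kappa_\typecn$. Hence we may pick an index $\alpha \in \Xicn \setminus \Sigma$. For each $i$ and each $a \in A_i$, the condition $a$ rapidly reads $\n T_i$ not using~$\alpha$, so Lemma~\ref{lem:basiccn} yields that $a$ forces $\n N_\alpha \not\subseteq N_{\n T_i}$, and consequently $\n N_\alpha \not\subseteq \n N_i$. Since $A_i$ is maximal below $p$, the condition $p$ itself forces $\n N_\alpha \not\subseteq \n N_i$ for every $i < \kappa$, so $\n N_\alpha$ is not covered by any member of the purported cofinal family.

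I do not expect a genuine obstacle here — the real work is done in Lemma~\ref{lem:basiccn}. The only points that deserve a moment of care are (i) that ``rapid reading of the \prunedtree tree $\n T_i$'' in the sense required by Lemma~\ref{lem:basiccn} really is dense below $p$ (which is exactly the content of Section~\ref{sec:namesnull} plus Lemma~\ref{lem:rapidreading}), and (ii) the bookkeeping of supports, which goes through because $\kappa_\typecn > \aleph_1$ and $\kappa < \kappa_\typecn$ together with the countability of each $\supp(a)$ keep $|\Sigma| < \kappa_\typecn$.
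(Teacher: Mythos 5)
Your proof is correct and follows essentially the same route as the paper's: reduce to \prunedtree trees, take maximal antichains $A_i$ of conditions rapidly reading each $\n T_i$, count supports to find $\alpha\in\Xicn$ untouched by any $A_i$, and invoke Lemma~\ref{lem:basiccn}. The only difference is cosmetic — you spell out the cardinality bound $|\Sigma|\le\max(\kappa,\aleph_1)$ where the paper asserts it directly.
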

\begin{proof}
  This is very similar to the proof of~\ref{cor:nmgeknm}:
  Assume that there is a $\aleph_1\le\kappa<\kappa_\typecn$
  and a $p$ forcing that $(\n N^*_i)_{i\in\kappa}$
  is a basis of null sets.
  As described above, we can assume that each $\n N^*_i=N_{\n T_i}$
  for some \prunedtree tree $\n T_i$ of measure $\nicefrac12$.
  For each $i$, fix a maximal antichain $A_i$ below $p$
  of conditions rapidly reading~$\n T_i$. 
  $X:=\bigcup_{i\in \kappa, q \in A_i} \supp(q)$ has size $\kappa$, 
  so there
  is an $\alpha\in\Xicn\setminus X$.
  Each $a\in A_i$ rapidly reads $\n T_i$
  not using $\alpha$.
  So by the preceding lemma, 
  $\n N_\alpha\not\subseteq N_{\n T_i}$ is forced by $a$
  (and therefore by $p$, as $A_i$ is predense below $p$).
\end{proof}

\subsection{\texorpdfstring{$\non(\NULL)\le \kappa_\typenn$}{non(N) <= kappa-nn}}\label{ss:nnB}

We want to show that the set $X$ of reals reals that are added by (or more precisely:
rapidly read from) the $\typenm$ and $\typenn$ parts 
(i.e., not depending on the $\typecn$ and Sacks parts)
is not null.

Let $\QQQ_{\Xinonsk}$ be the set of 
conditions $p$ with $\supp(p)\cap \Xisk=\emptyset$.
Recall that according to Lemma~\ref{lem:quotient},
$\QQQ_{\Xinonsk}$ is a complete subforcing of $\QQQ$
(and satisfies $\omega^\omega$-bounding, rapid reading, etc).
We have seen in~\ref{sec:SacksB} that the quotient of $\QQQ$ and 
$\QQQ_{\Xinonsk}$ satisfies the Sacks property, and in particular that
every null set $N$ in the $Q$-extension is contained in a 
null-set $N'\supseteq N$ in the intermediate $\QQQ_{\Xinonsk}$-extension.

So it is enough to show that $X$ is still non-null in the $\QQQ_{\Xinonsk}$-extension;
in other words, we can in the rest of the paper ignore 
the Sacks indices altogether (i.e., work in $\QQQ_{\Xinonsk}$, or in other
words assume that $\Xisk=\emptyset$).

We have seen that the sets of the form $N_T$ for \prunedtree trees $T$ form a
basis of null sets; so we just have to show the following:

\begin{lem}\label{lem:khwtetew}
  Let $\n T^*$ be a \prunedtree tree 
  rapidly read by $p$. 
  Then there is a $q\leq p$ continuously reading some $\n r\in 2^\omega$
  not
  using the $\typecn$ part, such that 
  $q$ forces $\n r\in [\n T^*]$.
  (As described above, the Sacks part is not used at all.)
\end{lem}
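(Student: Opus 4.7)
The plan is to emulate the downward-induction homogenization scheme of Lemma~\ref{lem:rapidreading}, but calibrated to eliminate the dependence of $\n T^*$ on the $\typecn$-coordinates rather than to propagate dependence downward in sublevels. The key observation is that after such homogenization, the function $\n T^* \restriction K$ becomes measurable with respect only to the $\Xi'$-part of the generic filter, where $\Xi' := \Xinn \cup \Xinm$; once this is achieved we can pick a branch by simple level-by-level extension within the pruned-$\nicefrac{1}{2}$ tree.

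Assume $p$ is pruned. Choose increasing $\typecn$-active milestone sublevels $\slv_0 < \slv_1 < \cdots$ with $\nor(p(h)) \geq n+3$ for $h \geq \slv_n$, and set $K_n := \max(I_{\typecn,\slv_n}) + 1$, so that by rapid reading $\n T^* \restriction K_n$ is decided $\leq \slv_n$ (cf.\ the remarks after Definition~\ref{def:Kcn}). For each target index $N$, I construct by downward induction on $\typecn$-active $\slu \leq \slv_N$ strengthenings $\cd^N_\slu \leq p(\slu)$ (subatomic norm dropping by at most $1$) together with functions $R^N_{\slv_m}: \poss'({<}\slv_m) \restriction \Xi' \to 2^{K_m}$ for $m \leq N$: at each $\slu$, apply Fact~\ref{fact:bigisramsey} to the coloring sending $X \in \poss'(p,{=}\slu)$ to the function describing $\n T^* \restriction K_N$ in terms of the remaining possibilities, yielding a $\cd^N_\slu$ that makes $\n T^* \restriction K_N$ independent of $\poss(\cd^N_\slu)$; and then define $R^N_{\slv_m}$ inductively by extending $R^N_{\slv_{m-1}}(\eta' \restriction \slv_{m-1})$ within the now-$\Xi'$-determined tree $\n T^* \restriction K_m$, using the absence of terminal nodes in $\n T^*$. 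The color count is bounded by $(2^{2^{K_N+1}})^{\maxposs({<}\slv_N)}$, and is dominated by the bigness $B(\slu) = 2^{H({<}\slu)\cdot\maxposs({<}\slu)}$ thanks to the oversized choice $H'(\typecn,{=}b) \geq 2^{\binom{2^{|I|}}{2^{|I|-b}}}$ in Definition~\ref{def:Kcn}.

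Since each subatomic family $\cK_{\xi,\slu}$ is finite, only finitely many $\cd^N_\slu$ are possible at each $\slu$, and König's lemma yields a compatible sequence $(\cd^*_\slu, R^*_{\slv_m})$ with the property that for every $M$ there is $N \geq M$ with $(\cd^N_\slu, R^N_{\slv_m}) = (\cd^*_\slu, R^*_{\slv_m})$ for all $\slu \leq \slv_M$ and $m \leq M$. Let $q$ be the condition obtained from $p$ by replacing each $\typecn$-subatom at $\slu$ with $\cd^*_\slu$; non-$\typecn$ subatoms are untouched and each subatom is strengthened once (norm loss at most $1$), so $q$ is a valid condition purely stronger than $p$. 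The crucial transfer step is that, for each $M$, the $\Xi'$-measurability of $\n T^* \restriction K_M$ established in $\cd^N$ (for a König-matching $N \geq M$) survives in $q$: rapid reading forces $\n T^* \restriction K_M$ to depend only on $\poss'({<}\slv_M + 1)$, and $\cd^*$ agrees with $\cd^N$ below that sublevel. Then setting $\n r \restriction K_n := R^*_{\slv_n}(\eta')$ where $\eta'$ is the $\Xi'$-restriction of the generic below $\slv_n$, one verifies that $q$ continuously reads $\n r$ using only $\Xi'$-coordinates, and $q$ forces $\n r \in [\n T^*]$.

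The main obstacle is the bigness bookkeeping in the homogenization step: uniformly for $\typecn$-active $\slu \leq \slv_N$, including $\slu$'s much smaller than $\slv_N$ where $H({<}\slu)$ may be much smaller than $H({<}\slv_N)$, one must verify $2^{K_N+1}\cdot\maxposs({<}\slv_N) \leq H({<}\slu)\cdot\maxposs({<}\slu)$. This reduces to the recursive over-design of $H'(\typecn,{=}b)$ built into Section~\ref{sec:complete.construction} -- the double-exponential growth of $H'(\typecn,{=}b)$ in $|I|$ outstrips the merely exponential $2^{K_N+1}$. Executing this estimate cleanly, in tandem with the König-limit transfer of $\Xi'$-measurability, is the only technically delicate point and closely parallels the bookkeeping at the end of the rapid-reading proof.
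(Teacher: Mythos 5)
Your high-level plan — homogenize so that $\n T^*$ becomes $\Xi'$-measurable, then pick a branch level by level — is natural, but the homogenization step as you describe it cannot be carried out, and you have in fact put your finger on exactly the reason without realizing it is fatal.

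In the downward induction for rapid reading, the value being propagated past a sublevel $\slu'$ has its resolution \emph{truncated} to $\n r\restriction H({<}\slu')$, so the number of ``colors'' at sublevel $\slu'$ is at most $2^{H({<}\slu')\cdot\maxposs({<}\slu')} = B(\slu')$ and bigness applies. In your scheme there is no truncation: for a fixed target $N$ you propagate the entire finite tree $\n T^*\restriction K_N$ down to every $\typecn$-active $\slu\le\slv_N$, including $\slu\ll\slv_N$. The number of colors you must handle there is, as you compute, of order $\bigl(2^{2^{K_N+1}}\bigr)^{\maxposs({<}\slv_N)}$, which depends on $N$ through $K_N$; but $B(\slu)$ is a fixed number once $\slu$ is fixed. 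As $N\to\infty$ with $\slu$ fixed and $\slv_N\to\infty$, the required bigness blows up while $B(\slu)$ does not. No ``recursive over-design of $H'(\typecn,{=}b)$'' can repair this; $H({<}\slu)$ is determined before $N$ is chosen, and the needed inequality $2^{K_N+1}\cdot\maxposs({<}\slv_N)\le H({<}\slu)\cdot\maxposs({<}\slu)$ fails for all sufficiently large $N$. So the K\"onig tree you construct is not well-founded-from-finiteness: at a fixed $\slu$ there is no single strengthening $\cd^*_\slu$ that works for cofinally many $N$, because the finitely many candidates at $\slu$ cannot be $B(\slu)$-homogeneous for colorings with unboundedly many colors.

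The paper's proof is structured precisely so as to avoid this obstacle, and this is where the measure-theoretic ingredients you did not use come in. It never attempts to make the full tree $\n T^*\restriction K$ independent of $\typecn$; instead, at each stage $n$ it fixes one level $m$, extracts (via Lemma~\ref{lem:treejkjkljl}) a name $\n L'$ for a set of ``fat'' nodes at level $m$ of relative measure bounded below, already independent of the $\typecn$-part below $k_n$, and then does the sublevel-by-sublevel induction using Lemma~\ref{lem:lemmaA}(2) — i.e.\ the intersection property $\norint$/$\epsint$ from Section~\ref{sec:counting}, which is built into the $\typecn$ norm $\gol(\norint_b,\norhalf_{I,b})$ — to intersect at most $\maxposs({<}\slu)$ many positive-measure sets at sublevel $\slu$ while the norm drops by only $\nicefrac{\maxposs({<}\slu)}{b(\slu)}$. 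The crucial point is that the per-sublevel cost is controlled by $\maxposs({<}\slu)$ and $b(\slu)$, both fixed at $\slu$ and independent of $m$ or of the target index $n$; the output is not a homogenized tree but a positive-measure set of nodes $\n L''$, from which a single $\Xi'$-named node is selected and the induction advances one stage. Your proposal treats $\typecn$-subatoms as generic ``subatoms with bigness''; but in fact the $\typecn$ norm was deliberately designed with the $\norint$ component exactly so that this intersection trick works. The bigness-only route you propose is not merely harder — it is blocked.
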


As $\n r\in [\n T^*]$ implies $\n r\notin N_{\n T^*}$, and
$\n r$ only depends on the $\typenm$ and $\typenn$
parts, we get:

\begin{cor}\label{lem:nnleknn}
  $\mathbb Q$ forces $\non(\NULL)\le \kappa_\typenn$.
\end{cor}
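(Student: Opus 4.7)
By the Sacks property of the quotient $\QQQ/\QQQ_{\Xinonsk}$ argued just above the statement, it suffices to work inside $\QQQ_{\Xinonsk}$, so I pretend $\Xisk=\emptyset$. Without loss of generality $p$ is pruned. I will construct $q\leq p$ \emph{purely stronger} than $p$ (same $\ww$, trunk, halving, support; I only strengthen subatoms, and only at $\typecn$-active sublevels) together with a name $\n r\in 2^\omega$ that $q$ continuously reads using only coordinates in $\Xinm\cup\Xinn$, forcing $\n r\in[\n T^*]$. The construction proceeds by induction along the active subatomic sublevels $\slu_0<\slu_1<\cdots$ of $p$ above its trunk, committing at stage $k$ to a stem $s_k\in 2^{L_k}$ which, as a name, is decided $\le\slu_k$ and depends only on $\poss(q,{<}\slu_k)|_{\Xinm\cup\Xinn}$. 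Throughout I maintain the density invariant
\[
  (\mathrm{R}_k):\quad s_k(\eta|_{\Xinm\cup\Xinn}) \text{ has relative measure } \geq 1-\varepsilon_k \text{ in } \n T^*(\eta)
\]
for every $\eta\in\poss(p,{<}\slu_k)$, with $\varepsilon_k>0$ chosen so that $\maxposs({<}\slu_k)\cdot\varepsilon_k<1/b(\slu_k)$.

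At an $\slu_k$ whose active index is of type $\typenm$ or $\typenn$ I leave the subatom untouched; Fact~\ref{fact:lebdens} applied to $\n T^*(\eta)$ allows me to extend $s_k(\eta|_{\Xinm\cup\Xinn})$ to a longer canonical stem $s_{k+1}(\eta|_{\Xinm\cup\Xinn})$ with relative measure as close to $1$ as I wish; the choice is continuous in the $\Xinm\cup\Xinn$-coordinates since $p$ rapidly reads $\n T^*$. Reinitialising $\varepsilon_{k+1}$ small enough to absorb the next stage's needs preserves $(\mathrm{R}_{k+1})$.

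The crucial case is a $\typecn$-active sublevel $\slu=\slu_k$ with $\alpha\in\Xicn$ active, subatom $C=p(\alpha,\slu)$ and interval $I=I_{\typecn,\slu}$. By rapid reading and the generous value of $H'(\typecn,{=}b(\slu))$ set in Definition~\ref{def:Kcn}, the finite tree $\n T^*\restriction(\max(I)+1)$ is decided $\le\slu$. Fix $\eta'\in\poss(q,{<}\slu)|_{\Xinm\cup\Xinn}$ and, for each $X\in\poss(C)$, set
\[
F_{\eta'}(X) := \bigcap_{\eta''}\bigl\{\, t\in 2^{[L_k,\max(I)+1)} : s_k(\eta')^\frown t\in\n T^*(\eta',\eta'',X)\,\bigr\},
\]
where $\eta''$ ranges over the finitely many ($\le\maxposs({<}\slu)$) possibilities for $\typecn$-values at the $\typecn$-active sublevels strictly below $\slu$ in $q$. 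By $(\mathrm{R}_k)$ and the choice of $\varepsilon_k$ each intersection $F_{\eta'}(X)$ has relative measure $\geq 1-\maxposs({<}\slu)\cdot\varepsilon_k\geq 1/b(\slu)$ in $2^{[L_k,\max(I)+1)}$. Lemma~\ref{lem:lemmaA}(2) now strengthens $C$ to some $D_{\eta'}\le C$ with norm loss $\le 1/(2^{\min I}b(\slu))$ so that $\bigcap_{X\in\poss(D_{\eta'})}F_{\eta'}(X)$ is nonempty. Iterating this over the $\le\maxposs({<}\slu)$ many $\eta'$ yields a single $D\le C$ that works uniformly in $\eta'$, with cumulative norm loss $\le\maxposs({<}\slu)/(2^{\min I}b(\slu))$. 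For each $\eta'$ I then pick, canonically, $t^*_{\eta'}\in\bigcap_{X\in\poss(D)}F_{\eta'}(X)$ and set $s_{k+1}(\eta'):=s_k(\eta')^\frown t^*_{\eta'}$. By construction $s_{k+1}$ depends only on $\eta'$, is decided $\le\slu$, and lies in $\n T^*(\eta)$ for every completion $\eta=(\eta',\eta'',X,\ldots)$ compatible with $q$; one further density extension (via Fact~\ref{fact:lebdens}) reestablishes $(\mathrm{R}_{k+1})$ with the smaller $\varepsilon_{k+1}$ needed for the next stage.

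Finally, at each fixed $\alpha\in\Xicn$ the total norm loss summed over stages is bounded by $\sum_k\maxposs({<}\slu_k)/(2^{\min(I_{\typecn,\slu_k})}b(\slu_k))$, which is summable (indeed vanishing term by term) by Lemma~\ref{lem:defb} and the very rapid growth of $b(\slu)$ versus $\maxposs({<}\slu)$; so the lim-sup norms of $q$ at each $\typecn$-index still diverge and $q\in\QQQ$. The name $\n r:=\bigcup_k s_k$ is continuously read by $q$ using only $\Xinm\cup\Xinn$, and $q\Vdash\n r\in[\n T^*]$. The main obstacle is the simultaneous tracking of the density invariant across many $\typecn$-stages while producing, at each such stage, an extension of the stem independent of the current $\typecn$-subatom choice; this is exactly what Lemma~\ref{lem:lemmaA}(2)—encoding the ``positive measure of large intersections'' provided by Lemma~\ref{lem:Mepsilon} and the choice of $H'(\typecn,{=}b)$—makes possible.
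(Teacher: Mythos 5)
There is a genuine gap, and it is precisely at the point you flag as ``the main obstacle'': re-establishing the density invariant $(\mathrm{R}_{k+1})$ after a $\typecn$-stage is not justified, and cannot in fact be done inside the ``purely stronger'' framework you commit to.

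The element $t^*_{\eta'}$ is chosen to be \emph{some} point of the positive-measure set $\bigcap_{X\in\poss(D)}F_{\eta'}(X)$; nothing in Lemma~\ref{lem:lemmaA}(2) or anywhere else guarantees that $s_{k+1}(\eta')=s_k(\eta')^\frown t^*_{\eta'}$ is a \emph{fat} node of $\n T^*$. To make $(\mathrm{R}_{k+1})$ hold you then invoke one ``further density extension via Fact~\ref{fact:lebdens}'', but a fat extension of $s_{k+1}(\eta')$ inside $\n T^*(\eta)$ is a node of $\n T^*(\eta)$, and the tree $\n T^*$ genuinely depends on the $\typecn$-coordinates of the generic --- rapid reading does not make the name $\n T^*$ independent of $\Xicn$. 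So the fat extension you pick depends on the full $\eta$, not only on $\eta|_{\Xinm\cup\Xinn}$, and your claim that ``the choice is continuous in the $\Xinm\cup\Xinn$-coordinates since $p$ rapidly reads $\n T^*$'' confuses rapid reading with $\typecn$-independence. To make the choice uniform across the equivalence class of $\eta$'s with fixed $\eta|_{\Xinm\cup\Xinn}$ you must intersect the sets of fat nodes over that class, and the class necessarily grows as you go up through more $\typecn$-active sublevels; you are then running the density theorem at a level that is forced on you (e.g.\ $\max(I_{\typecn,\slu})+1$), at which Fact~\ref{fact:lebdens} gives no guarantee at all, and the intersection cost $\maxposs({<}\slu)\cdot\epsilon$ involves an $\epsilon$ you cannot shrink without climbing higher and thereby incurring a larger $\maxposs$. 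This circularity is exactly what the paper's Lemma~\ref{lem:khwtetew} breaks by \emph{not} staying purely stronger: it picks $m$ freely via Lemma~\ref{lem:blablbagh} applied to the name $\n m$ from Lemma~\ref{lem:treejkjkljl}, glues the block $[k_n,k_{n+1})$, and replaces the lim-sup (in particular all $\typecn$) subatoms between an intermediate $k^*$ and $k^{**}=k_{n+1}$ by singletons, so that the equivalence-class size in the intersection argument is bounded by $\maxposs({<}k^*)$ rather than $\maxposs({<}k^{**})$; the parameter $\epsilon=1/(\maxposs({<}k^*)\maxposs({<}k))$ is then fixed before $k^{**}$ is chosen and the arithmetic closes. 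Without these structural moves the induction does not go through, and since you explicitly stipulate that $q$ has the same $\ww$, the same trunk and the same non-$\typecn$ subatoms as $p$, they are not available to you. The plan's outer shell (reduce to $\QQQ_{\Xinonsk}$ via the Sacks property, build a $\typecn$-independent branch of $\n T^*$ by shrinking $\typecn$-subatoms with Lemma~\ref{lem:lemmaA}(2)) is correct and matches the paper, but the inner inductive step is not established.
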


To prove Lemma~\ref{lem:khwtetew} we will use:
\begin{lem}\label{lem:treejkjkljl}
  Let $T$ be a tree of positive measure and fix $\epsilon>0$.
  Then for all sufficiently large $m\in\omega$ 
  there are many fat nodes in $T\cap 2^m$, by which we mean:
  \[
    \mu([T^{[s]}])\ge 2^{-m}(1-\epsilon)\text{ for at least }
    |[T]\cap  2^m|\cdot (1-\epsilon)\text{ many }s\in T\cap 2^m.
  \]
\end{lem}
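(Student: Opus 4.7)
The plan is to give a short counting argument based on the fact that the upper approximations of the measure of $[T]$ converge down to $\mu([T])$.

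First I would set $r_m := |T \cap 2^m| \cdot 2^{-m}$, which is exactly the measure of the cylinder set $\bigcup_{s \in T \cap 2^m}[s]$. Since $T$ has no terminal nodes (which is implicit in ``positive measure'' being meaningful via $[T]$), these cylinders form a decreasing sequence with $[T] = \bigcap_m \bigcup_{s \in T \cap 2^m}[s]$, so $r_m \searrow \mu([T]) > 0$.

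Next I would split $T \cap 2^m$ into fat nodes $F_m := \{s \in T \cap 2^m : \mu([T^{[s]}]) \geq 2^{-m}(1-\epsilon)\}$ and skinny nodes $S_m := (T \cap 2^m) \setminus F_m$, and estimate the measure of $[T]$ in two ways. Using $\mu([T^{[s]}]) \leq 2^{-m}$ for $s \in F_m$ and $\mu([T^{[s]}]) < 2^{-m}(1-\epsilon)$ for $s \in S_m$:
\[
\mu([T]) = \sum_{s \in F_m} \mu([T^{[s]}]) + \sum_{s \in S_m} \mu([T^{[s]}]) \leq |F_m| \cdot 2^{-m} + |S_m| \cdot 2^{-m}(1-\epsilon) = r_m - \epsilon \cdot 2^{-m} |S_m|.
\]
Rearranging gives $|S_m|/|T \cap 2^m| \leq (r_m - \mu([T]))/(\epsilon \cdot r_m)$, and since $r_m \to \mu([T])$ the right-hand side tends to $0$. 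Thus for all sufficiently large $m$ we have $|S_m| \leq \epsilon \cdot |T \cap 2^m|$, i.e., $|F_m| \geq (1-\epsilon) \cdot |T \cap 2^m|$, as required.

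This is essentially a finite-level Lebesgue density computation; there is no real obstacle, only the mild observation that $r_m \to \mu([T])$, which uses the absence of terminal nodes (any node of relative measure zero may be pruned as in the preceding discussion of pruned-$\nicefrac12$ trees without changing $[T]$ or $\mu([T])$).
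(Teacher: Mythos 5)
Your proof is correct and follows the same counting strategy as the paper's: split $T\cap 2^m$ into fat and skinny nodes, bound $\mu([T])$ from above by summing the contribution of each node (at most $2^{-m}$ for fat, at most $2^{-m}(1-\epsilon)$ for skinny), and use that $|T\cap 2^m|\cdot 2^{-m}$ decreases to $\mu([T])>0$ to force the proportion of skinny nodes below $\epsilon$. The only cosmetic difference is that the paper fixes a margin $\mu\epsilon^2$ in advance and states the conclusion relative to $2^m\mu$, whereas you divide through by $r_m$ and land directly on the stated proportion; it is the same computation.
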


\begin{proof}
  Write $\mu$ for the measure of $[T]$.
  Note that $|T\cap 2^m|\cdot 2^{-m}$ decreases and converges to $\mu$.
  Hence from some $m$ on, we have 
  \begin{equation}\label{eq:mar1}
    |T\cap 2^m|\cdot 2^{-m}-\mu\epsilon^2\le \mu.
  \end{equation}
  Let $l$ be the number of fat nodes at level $m$, and $s=|T\cap 2^m|-l$
  the number of non-fat nodes. We want to show $l\ge 2^m\mu\cdot (1-\epsilon)$.

  Clearly,
  \begin{equation}\label{eq:mar2}
\mu <  l\cdot  2^{-m} + s \cdot  2^{-m}(1-\epsilon)=|T\cap 2^m|\cdot 2^{-m}-2^{-m}s\epsilon .
  \end{equation}
  
  Combining~\eqref{eq:mar1} and~\eqref{eq:mar2}, we get 
  $|T\cap 2^m|\cdot 2^{-m}-\mu\epsilon^2\le |T\cap 2^m|\cdot
  2^{-m}-2^{-m}s\epsilon $, and hence $s\le 2^m \mu\cdot\epsilon $.
  As $l+s=|T\cap 2^m|\ge 2^m \mu$, we get $l\ge 2^m\mu\cdot (1-\epsilon) $,
  as required.
\end{proof}

\begin{proof}[Proof of Lemma~\ref{lem:khwtetew}]
We can assume that $p$ is pruned. By induction on $n\in\omega$, we construct:
\begin{enumerate}[(a)]
  \item $k_n\in \omega$.
  \item\label{item:nor1a} A condition $q_n\le p$ with $k_n\in\ww^{q_n}$ such that 
    $\nor(q_n,k' )\ge n+6$ for all $k'\ge k_n$ in $\ww^{q_n}$.
  \item\label{item:nor1b} We will additionally require:
    $q_{n+1}\le q_n$; $q_{n+1}$ is identical to $q_n$ below $k_n$,
    and has norms $\ge n$ between $k_n$ and $k_{n+1}$.

    (Therefore there is a limit condition $q_\omega$ stronger than
    each $q_n$.)
  \item $i_n\in \omega$ and 
    a name $\n s_n$ for an element of $\n T^*\cap 2^{i_n}$ 
    such that $q_n$ decides $\n s_n$ below $k_n$ not using any 
    $\typecn$-indices. 
  \item\label{item:notoolarge}
    We additionally require that $i_n$ is ``not too large'' with respect to $k_n$,
    more particularly:
    \[
      2^{i_n+2}<b((k_n,0)).
    \]
    ($(k_n,0)$ is the  the smallest subatomic sublevel above $k_n$.)
    (As $b$ is strictly monotone, it suffices to have $k_n>2^{i_n+2}$.)
  \item
    We additionally require: $i_{n+1}>i_n$, and
    $\n s_{n+1}$ is forced (by $q_{n+1}$) to extend $\n s_n$.

    So $q_\infty$ will force that the 
    union of the $\n s_n$ will be the required branch
    through $\n T^*$, proving the Lemma.
  \item We will also construct a name $\n T_n$,
   which is (forced by $q_n$ to be) a subtree of $\n T^*$
   with stem $\n s_n$ 
   and relative measure $>\nicefrac12$ (i.e., $\mu ([\n T_n])> \nicefrac1{2} \cdot 2^{-i_n}$),
   which is read continuously by $q_n$ not using any $\typecn$-indices below
   $k_n$.\footnote{I.e.: For all $\ell$ there is a $k$ and a function defined on
   $\poss(q_n,{<}k)$ giving the value of $\n T_n\cap 2^\ell$ such that the value is the 
   same for $\eta,\eta'\in\poss(q_n,{<}k)$ that differ only on the 
   $\typecn$-part below
   $k_n$.} 
\end{enumerate}

We set $i_0:=0$, $\n s_0:=\langle\rangle$ and $\n T_0=\n T^*$.  We choose $k_0$
such that the norms of the compound creatures in $p$ are $\ge 6$
above $k_0$ and set $q_0$ to be $p$ where
we increase the trunk to $k_0$. So $\n T_0$ does not depend on any
$\typecn$-indices below $k_0$ (as below $k_0$ there is only trunk and
thus a unique possibility).

So assume we already have the objects mentioned above for some $n$
(i.e., $k_n$, $q_n$, $i_n$, $\n s_n$ and $\n T_n$).
For notational simplicity we refer to them without the subscript $n$, 
i.e., we set $k:=k_n$ etc.   We will
now construct the objects for $n+1$.

\begin{enumerate}
   \item  We choose $k^*$ so large that for each $\xi\in \supp(q(k))\cap \Xils$
     there is an atom $q_n(\xi,\ell)$ 
     of norm $>n+2$ for some $\ell$ between $k$ and $k^*$.
   \item It is forced that Lemma~\ref{lem:treejkjkljl} holds
     for $\n T$ and for
 $\epsilon:=\nicefrac1{\maxposs({<}k^*)\cdot\maxposs({<}k)}$. So we get
     a name $\n m$ for a level where there are many 
     fat nodes.
     Using Lemma~\ref{lem:blablbagh}, we strengthen $q$ to $q^1$,
     not changing anything below $k^*$ and keeping all norms $\ge n+4$,
     such that we can find (in $V$) some $m>i$ which is forced by
     $q^1$ to be $\ge\n m$. Note that Lemma~\ref{lem:treejkjkljl}
     is forced to hold
     for this $m\ge \n m$ as well, i.e., there is a name of 
     a ``large'' set $\n L \subseteq 2^m$ of ``fat'' nodes.

     This $m$ will be our $i_{n+1}$. So $i_{n+1}>i_n$ is satisfied.
  \item So can further strengthen $q^1$ to $q^2$ not changing anything below $k^*$
     and keeping all norms $\ge n+2$
     such that $\n L\subseteq 2^m$ is essentially decided,
     i.e., decided below some level $k^{**}> k^*$. 
     Since we already assumed that $\n T$ 
     is read continuously, we can assume that 
     $q^2$ also decides $\n T\cap 2^m$ below $k^{**}$. 
     Also, we can assume that 
     all norms of compound creatures 
     in $q^2$ above (including)  $k^{**}$ are $>n+7$, and that 
     $k^{**}>2^{m+2}$.

     This $k^{**}$ will be $k_{n+1}$. Note that this ensures 
     item~(\ref{item:notoolarge}) for $n+1$.
  \item
     $\n L$ is forced to be a subset of $\n T\cap 2^m$ of 
     relative size $\ge (1-\epsilon)$, and both 
     $\n L$  and $\n T\cap 2^m$ are decided below $k^{**}$.
     Also, $\n T\cap 2^m$ does not depend on the $\typecn$-part below $k$.
     Therefore, we can construct a  name $\n L'\subseteq \n L$
     that also does not depend on such coordinates, and such that 
     $\n L'\subseteq \n T\cap 2^m$ has relative size 
     $\ge (1- \epsilon \cdot\maxposs({<}k)) \ge \nicefrac12$.
\begin{quote}
     Proof: Each $\eta\in\poss(q_2,k^{**})$ determines objects
     $L_\eta\subseteq S_\eta$ (where $q^2\wedge\eta$
     forces ``$L_\eta=\n L$ and $S_\eta=\n T\cap 2^m$'').
     We call $\eta_1,\eta_2$ equivalent if they differ 
     only on the $\typecn$-part below $k$ (which implies $S_{\eta_1}=
     S_{\eta_2}$).
     Clearly, each equivalence class has size at most $\maxposs({<}k)$.
     For an equivalence class $[\eta]$, we set $L'_{[\eta]}:=\bigcap_{\eta'\in[\eta]}L_{\eta'}$.
     So the map assigning $\eta$ to $L'_{[\eta]}$ defines a name
     (not depending on the $\typecn$-part below $k$)
     of a subset of $S_\eta$ of relative size
     $\ge \nicefrac12$. 
\end{quote}

     Recall  that  $\n T$ is forced to have 
     stem $s\in 2^i$ and measure $> \nicefrac1{2}\cdot 2^{-i}$, so
     the cardinality of $\n T\cap 2^m$ is forced to be
     $> 2^{m-i-1}$, and thus the cardinality of
     $\n L'$ is 
     forced to be $>2^{m-i-1}(\nicefrac12)= 2^{m-i-2}>\nicefrac{2^m}{b((k,0))}$,
     according to item~(\ref{item:notoolarge}).
     
     To summarize:
     \begin{itemize}
     \item
     $\n T\cap 2^m$ and its subset $\n L'$
     are decided by $q^2$ below $k^{**}$, not using the $\typecn$-part below
     $k$.
     \item We set $\Omega=2^m$.
     (As a finite set, it carries the uniform probability measure.)
     $\n L'$ as subset of $\Omega$ is forced to have measure $>\nicefrac1{b((k,0))}$.
     \item
     $q^2$ forces that each $s\in \n L'$ satisfies $\mu([\n T^{[s]}])\ge 2^{-m}(1-\epsilon)$.
   \end{itemize}
  \item Now we glue $q^2$ between $k$ and $k^{**}$, 
     and replace  
     all 
     lim-sup subatoms between $k^*$ and $k^{**}$ with singletons
     (not changing the lim-inf subatoms, nor anything between $k$ and $k^*$),
     resulting in $q^*$ and the 
     compound creature $\cd^*=q^*(k)$ (with
     $\mdn(\cd^*)=k$, $\mup(\cd^*)=k^{**}$ and $\supp(\cd^*)=\supp(q,k)$).
     So above $k^{**}$, $q^*$ is identical to $q^2$, and 
     below $k^*$ it is identical to $q$. 

     Note that $\nor(\cd^*)\ge n+2$:
     Gluing results in a norm
     at least the minimum of the norms of the glued creatures; and 
     replacing lim-sup subatoms above $k^*$ with singletons does not drop the
     norm below $n+2$ as we made sure that there are large
     subatoms between $k$ and $k^*$.

     We will in the following find a strengthening $\cd^{**}$ of $\cd^*$ with
     $\nor(\cd^{**})\ge\nor(\cd^*)-2\ge n$
     and we will set $q_{n+1}$ to
     be $q^*$ where we replace $\cd^*$ with $\cd^{**}$. 
     Then items~(\ref{item:nor1a}) and~(\ref{item:nor1b})
     will be satisfied for $n+1$.
  \item Recall that  $q^*$ decides both $\n L'$ and $\n T\cap 2^m$ below $k^{**}$,
     not using the $\typecn$-part below $k$.
     Note that $\poss(q^*,{<}k^{**})$
     is isomorphic to $X\times Y\times Z$, for 
     \begin{itemize}
       \item $X:=\poss(q^*,{<}k)=\poss(q,{<}k)$,
       \item $Y$ are the possibilities of $\cd^*$ between $k$ and $k^*$, and
       \item $Z$ are the possibilities of $\cd^*$ between $k^*$ and $k^{**}$
     (which we can restrict to the lim-inf part, as there are only singletons in the lim-sup-part).
     \end{itemize}
  \item\label{item:induction} Fix a $\nu\in Z$.
    We will now perform an induction on the (subatomic) sublevels $\slu$ between $k$ and $k^*$,
    starting with the lowest one, $(k,0)$.
    We assume that we have arrived in this construction
    at sublevel $\slu$ with the
    active subatom $C$, and that we
    already have constructed the following:
    \begin{itemize}
      \item The (final) subatoms for all sublevels $\slv$ below $\slu$ (and above $k$), 
        with subatom-norm at most $2$
        smaller than the norms of the original subatoms (i.e., those in $\cd^*$).
      \item (Preliminary) subatoms for all sublevels $\slu'$ above (including) $\slu$
        (and below $k^*$),
        where the norm of the subatom at $\slu'$ 
        has been reduced from the original one by at most 
        $\nicefrac{K}{b(\slu')}$,
        where $K$ is the number of steps already performed in the 
        current induction (i.e., $K$ is the number of subatomic sublevels 
        between $k$ and $\slu$). So our current $C$ is one of these ``preliminary subatoms''.
      \item A function $F^{\slu}$ that maps each possibility $\eta\in X\times Y$ to
        a subsets $F^{\slu}(\eta)$ of $2^m$; such that for all $\eta$
        \begin{itemize}
          \item 
            $F^{\slu}(\eta)$ is forced to be a subset of $\n L'$ by the condition
            $q^*$ modulo the fixed $\nu\in Z$, modulo $\eta$
            and modulo the already 
            constructed subatoms (the final ones as well as the preliminary ones).\footnote{See~\eqref{eq:bla2342} for a definition of ``modulo''. 
If $\eta$ is not a compatible with the currently constructed
(final and preliminary) subatoms, then $F^{\slu}(\eta)$ is irrelevant.}
          \item $F^{\slu}(\eta)\subseteq 2^m$ is of relative size $\ge \nicefrac{1}{b(\slu)}$.
          \item $F^{\slu}(\eta)$ does not depend on any $\typecn$-indices 
           below $\slu$.
        \end{itemize}
    \end{itemize}
    The first sublevel, $(k,0)$, is clear: there are no sublevels below 
    where we have to define final subatoms,
    the preliminary subatoms above are just the original ones, and $F^{(k,0)}$
    is just given by the name $\n L'$.

    Now we perform the inductive step.
    If our subatom $C$ is not of $\typecn$-type, we do nothing\footnote{slightly more formally: 
we make the current preliminary subatom final, and set $F^{\slu+1}:=F^{\slu}$}
    and go to the next step.
    So let us assume that the current (preliminary) $C$ is of $\typecn$-type.

    Let $Y^-$ be $Y$ restricted to the sublevels below $\slu$,
    and $Y^+$ to the ones above.
    Every\footnote{We are concerned only about the $\eta$ still are
    compatible with the currently constructed preliminary/final subatoms.}
    $\eta\in X\times Y$ can be written as $(\eta^-,\eta^\slu,\eta^+)$
    for $\eta^-\in X\times Y^-$, $\eta^\slu\in\poss(C)$ and $\eta^+\in Y^+$.
    
    When we fix some $\eta^-\in X\times Y^-$ and $\eta^+\in Y^+$, 
    the function $F^{\slu}$ reduces to a function $F^{\eta^-,\eta^+}$ that maps 
    $\poss(C)$ to subsets of $2^m$ of relative size $\ge \nicefrac{1}{b(\slu)}$.
    So we can use Lemma~\ref{lem:lemmaA}(2) and strengthen $C$ to $D(\eta^-,\eta^+)$ 
    decreasing the norm by at most $\nicefrac1{b(\slu)}$
    such that
    \[
    F'(\eta^-,\eta^+):=\bigcap_{\mu\in\poss(D(\eta^-,\eta^+))} F^{\eta^-,\eta^+}(\mu)\]
is a set of measure  $\ge\nicefrac{1}{b(\slu+1)}$.

    For fixed $\eta^+\in Y^+$, we can iterate this strengthening
    for all
    $\eta^-\in X\times Y^-$: From $D$ to some $\tilde D:=D(\eta^-,\eta^+)$, then 
    from $\tilde D$ to $D(\eta^{\prime -},\eta^+)$ for the next $\eta^{\prime -}$, etc.,
    resulting in a $D(\eta^+)$ with norm reduced by at most 
    $\nicefrac{\maxposs({<}\slu)}{b(\slu)}<1$.

    Note that there are less than $b(\slu+1)$
    many possibilities for $D(\eta^+)$, cf~\eqref{eq:blghgh}.
    Finally we can use bigness of the $Y^+$-part, as stated in
    Lemma~\ref{lem:schoeneslemma}, to find successor subatoms at all sublevels
    above $\slu$, resulting in a new set of possibilities 
    $\tilde Y^+\subseteq Y^+$
    such that for each $\eta^+\in\tilde Y^+$ 
    we get the same $D:=D(\eta^+)$. 
    This $D$ will be the (final) subatom at our current level $\slu$.

    We can now define 
    \[
    F^{\slu+1}(\eta):=\bigcap_{\mu\in\poss(D)} F^{\slu}(\eta^-,\mu,\eta^+).
    \]
    As above, this is a set of measure 
    $\ge\nicefrac{1}{b(\slu+1)}$, does not depend on the $\typecn$-part $\le \slu$,
    and it is forced (modulo $D$) to be 
    a subset of $\n L'$.

    We have now chosen the new final subatom $D$, the new preliminary subatoms and 
    $F^{\slu+1}$ in a way that we can perform the next step of the iteration.
  \item We perform the whole inductive construction of~(\ref{item:induction}) 
    for every $\nu\in Z$ independently
    (i.e., we start at the original $\cd^*$ for each $\nu\in Z$).

    So for every $\nu$ we get a different sequence $\bar D(\nu)$ of 
    subatoms between $k$ and $k^*$.
    Using bigness (again as in Lemma~\ref{lem:schoeneslemma}), 
    we can thin out the subatoms between $k^*$
    and $k^{**}$,
    resulting in $Z'\subseteq Z$,  such that
    for each $\nu \in Z'$ we get the same sequence $\bar D(\nu)=:\bar D$ which
    finally defines the compound creature $\cd^{**}$ stronger than $\cd^*$.

    We set $q_{n+1}$ to be $q^*$ with $\cd^*$ strengthened to $\cd^{**}$, and
    we set $i_{n+1}:=m$ and $k_{n+1}:=k^{**}$.
  \item
    Now work modulo $q_{n+1}$.
    So the final function $F$ of the induction in~(\ref{item:induction}) gives
    us a name for a subset $\n L''\subseteq \n L\subseteq 2^m$ of positive relative size 
    (in $2^m$), and the name $\n L''$ does 
    not depend on any $\typecn$ indices:
    Not on any below $k$, since we started with the name $\n L'$ which did not depend 
    on such subatoms;
    not on any
    between $k$ and $k^*$, as we removed this dependence sublevel by sublevel during the 
    induction; and not on any $\typecn$ subatoms
    between $k^*$ and $k^{**}$, as $\typecn$ indices are of lim-sup type, and we have
    only singleton subatoms for the lim-sup part between $k^*$ and $k^{**}$.

    So we can pick a non-$\typecn$-name $s_{n+1}$ for an arbitrary (the leftmost, say) element 
    of $\n L$.
  \item 
    $q_{n+1}$ forces that $s_{n+1}$ is in $\n L$, i.e., a ``fat'' node, more specifically:
    $\n T':= \n T_n^{[s_{n+1}]}$ has a measure greater than $\frac{1-\epsilon}{2^{m}}$.

    The tree $\n T'$ is read continuously by $q_n$ and therefore also by $q_{n+1}$.
    In particular, for each $\ell>m$ the finite tree
    $\n T'\cap 2^\ell$ is decided below some $\ell'$.
    For $\eta\in\poss(q_{n+1},{<}\ell')$ let $T^{\ell,\eta}$ be the
    according value of $\n T'\cap 2^\ell$ (a subset of $2^\ell$ with
    at least $2^\ell \cdot  \frac{1-\epsilon}{2^{m}}$ elements).
    We call $\eta$ and $\eta'$ equivalent if they differ only
    on the $\typecn$ part below $k^{**}$.
Each  equivalence class has size $\le\maxposs({<}k^{*})$, as there are only singleton 
values in the lim-sup part between $k^*$ and $k^{**}$. We assign to each equivalence
class $[\eta]$ the tree
 $T^{\ell,[\eta]}:=\bigcap_{\eta'\in[\eta]}T^{\ell,\eta'}$.
Then $T^{\ell,[\eta]}$ has size at least $2^\ell\cdot \frac{1-\maxposs({<}k^{*})\cdot \epsilon}{2^{m}}$ (and of course does not depend on the $\typecn$-part below $k^{**}$). 
So the family $T^{\ell,[\eta]}$ defines a continuous name for a tree $\n T_{n+1}$ not depending 
on the $\typecn$-part below $k^{**}$ with root $s_{n+1}$ and measure
$>\nicefrac{1}{2^{m+1}}$, as required.
\qedhere
\end{enumerate}
\end{proof}

\section{Switching $\typenm$ and $\typenm$}\label{sec:switching}
It turns out that the same proof can be used for the following variant of
Theorem~\ref{thm:main}, where the order of $\kappa_\typenm$ and $\kappa_\typenn$
is reversed:
\begin{thm}\label{thm:switching}
   Assume (in $V$) CH, 
   $\kappa_\typenn \le \kappa_\typenm\le\kappa_\typecn\le\kappa_\typesk$
   and $\kappa_t^{\aleph_0}=\kappa_t$ for $t\in\{\typenm,\typenn,\typecn,\typesk\}$.
   Then there is a forcing  $\mathbb  Q$ which   forces
   \begin{enumerate}
     \item $\cov(\NULL)=\mathfrak d=\aleph_1$,
     \item $\non(\NULL)=\kappa_\typenn$,
     \item $\non(\MEAGER)=\cof(\MEAGER)=\kappa_\typenm$,
     \item $\cof(\NULL)=\kappa_\typecn$,
     \item $2^{\aleph_0}=\kappa_\typesk$.
   \end{enumerate}
   Moreover, $\mathbb Q$ preserves all cardinals and all cofinalities. 
\end{thm}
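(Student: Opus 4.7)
The plan is to run the entire construction from Part~2 after a single structural change: swap the roles of $\typenn$ and $\typenm$ in the lim-inf/lim-sup partition of $\Xi$. Concretely, take $\Xili := \Xinn$ and $\Xils := \Xinm \cup \Xicn$ while keeping $\Xisk$ as before, and then apply the framework of Part~1. The subatomic families $\cK'_{\typenn,b}$ of Definition~\ref{def:Knn} and $\cK'_{\typecn,b}$ of Definition~\ref{def:Kcn} can be used verbatim; the norm on $\cK'_{\typenn,b}$ already satisfies strong $b$-bigness and admits a subatom of norm $>b$, which is all that the lim-inf role requires. For $\cK'_{\typenm,b}$ any norm with strong $b$-bigness works, and following the flag raised in Note~\ref{nb:othernorm} one can replace the simple log-counting norm by a norm of $\gol$-type (cf.\ Section~\ref{sec:counting}) if that is combinatorially convenient for the hard upper bound below. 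With this setup all of Part~1 applies without change, so $\QQQ$ still satisfies Axiom~A, is $\omega^\omega$-bounding, has rapid reading and the $\aleph_2$-cc, and the Sacks-property quotient analysis of Section~\ref{sec:SacksB} is unaffected. Consequently $\cov(\NULL) = \mathfrak d = \aleph_1$, $2^{\aleph_0} = \kappa_\typesk$, $\cof(\NULL) \le \kappa_\typecn$, and all cardinals and cofinalities are preserved.

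The lower bounds $\non(\MEAGER) \ge \kappa_\typenm$, $\non(\NULL) \ge \kappa_\typenn$, and $\cof(\NULL) \ge \kappa_\typecn$ carry over. Lemmas~\ref{lem:nmgeknm}, \ref{lem:nngeknn} and \ref{lem:basiccn} are each local to a single active index $\alpha$ and a single sublevel: they invoke only the bigness of the subatom at $\alpha$ together with modesty, and are entirely indifferent to whether $\alpha$'s type is globally lim-inf or lim-sup. Thus $\n M_\alpha$ for $\alpha \in \Xinm$ remains meager and catches every real rapidly read without $\alpha$, $\n N_\alpha$ for $\alpha \in \Xinn$ remains null and does the same, and the $\Delta$-system plus $\aleph_2$-cc arguments of Corollaries~\ref{cor:nmgeknm}, \ref{cor:nngeknn} and \ref{cor:cngekcn} then deliver the lower bounds.

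The two upper bounds swap roles. Since $\Xinn$ is now the only lim-inf type, the analog of Lemma~\ref{lem:nmleknm} gives $\non(\NULL) \le \kappa_\typenn$: covering any null set by $N_{\n T}$ for a \prunedtree{} tree $\n T$ as in Section~\ref{sec:namesnull}, one builds by induction a real $\n r \in [\n T]$ continuously read from $\Xinn$-indices alone. Where the original Lemma~\ref{lem:nmleknm} combined bigness of $\typenm$-subatoms with $\omega^\omega$-bounding evasion of nowhere dense trees, the present argument uses lim-inf bigness of the $\typenn$-subatoms together with Lemma~\ref{lem:treejkjkljl} (most nodes of $\n T$ at a large level are ``fat''): a large subset $X \subseteq 2^I$ (of relative size $1 - 2^{-b}$) must meet the fat-node set $F$ (of relative size $\ge 1 - \epsilon$) in a nonempty intersection, so $\n r \restriction I_\slu$ can be defined as a ground-model function of the generic $\typenn$-choice at sublevel $\slu$. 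By Lemma~\ref{lem:fewrealsreadcontinuously} the set of such reals has size at most $\kappa_\typenn^{\aleph_0} = \kappa_\typenn$. Dually, $\non(\MEAGER) \le \kappa_\typenm$ will be proved by adapting Lemma~\ref{lem:khwtetew}: after reducing to $\QQQ_{\Xinonsk}$ one shows that for every meager $\n M^* = \bigcup_n [\n T^*_n]$ (with $\n T^*_n$ nowhere dense) there are $q \le p$ and a name $\n r$ continuously read by $q$ without $\typecn$-indices with $q \Vdash \n r \notin \n M^*$; since $\kappa_\typenn \le \kappa_\typenm$, the set of such $\n r$ has size $(\kappa_\typenn + \kappa_\typenm)^{\aleph_0} = \kappa_\typenm$.

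The main obstacle will be this meager analog of Lemma~\ref{lem:khwtetew}, and specifically the inner sublevel-by-sublevel induction absorbing the dependence of the intermediate tree name on the $\typecn$-indices. The outer $(k_n, i_n, \n s_n, \n T_n, q_n)$-induction adapts verbatim, but Lemma~\ref{lem:treejkjkljl} must be replaced by the $\omega^\omega$-bounding fact that each nowhere dense $\n T^*_n$ has a ground-model escape function $f_n$: for every $\nu \in 2^h$ there exists $\nu' \in 2^{f_n(h)}$ extending $\nu$ with $\nu' \notin \n T^*_n$. The name $\n L'$ replacing the original ``fat extensions'' is the set of such $\nu'$'s, which is only guaranteed to be nonempty---not of relative size $\ge \nicefrac12$---so Lemma~\ref{lem:lemmaA}(1) cannot be applied as-is. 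The remedy is to pick $f_n$ so large that the number of $\nu'$'s avoiding $\n T^*_n$ exceeds any prescribed ground-model threshold determined by the parameters $H'(\typecn,{=}b)$ and $\maxposs$, and then to substitute for Lemma~\ref{lem:lemmaA}(1) a purely combinatorial counting lemma (in the spirit of Section~\ref{sec:counting}) that shrinks a $\typecn$-subatom while preserving a substantial fraction of the admissible extensions. Once this combinatorial step is in place, the bigness-driven thinning of Lemma~\ref{lem:schoeneslemma}, the application of Lemma~\ref{lem:lemmaA}(2) for intersecting large subsets at $\typecn$-sublevels, and the K\"onig-lemma extraction of the limit condition all carry through as in Lemma~\ref{lem:khwtetew}, giving the required $\non(\MEAGER) \le \kappa_\typenm$ and completing the proof.
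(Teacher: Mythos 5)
Your proposal takes a genuinely different route from the paper, and that route contains a fatal structural error. The paper does \emph{not} swap the lim-inf/lim-sup partition: it keeps $\Xili = \Xinm$ and $\Xils = \Xinn \cup \Xicn$, and the only change is to replace the simple log-counting norm on the $\typenm$-subatoms by the $\gol$-type counting norm used for $\typecn$ (exactly the move you raise in passing and then abandon). With that single modification, the only proof that needs revisiting is that of $\non(\NULL)\le\kappa_\typenn$: in the inner sublevel-by-sublevel induction of Lemma~\ref{lem:khwtetew} one must now absorb the dependence of the continuously-read tree on $\typenm$-indices as well as on $\typecn$-indices (one could not just ``ignore'' them, since in the switched regime $\kappa_\typenm$ may exceed $\kappa_\typenn$). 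Because the $\typenm$-norm is now the same $\gol$-type counting norm as the $\typecn$-norm, Lemma~\ref{lem:lemmaA}-style shrinking applies verbatim at $\typenm$-sublevels, and the induction goes through. Everything else---all of Part~1, Lemmas~\ref{lem:nmleknm}, \ref{lem:nmgeknm}, \ref{lem:nngeknn}, \ref{lem:basiccn}, and the Sacks-quotient analysis---is untouched. No ``meager analog of Lemma~\ref{lem:khwtetew}'' is ever needed.

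The swap you propose ($\Xili := \Xinn$, $\Xinm \subseteq \Xils$) cannot work, for two reasons. First, Lemma~\ref{lem:nmleknm} is insensitive to the specific subatoms and applies to whichever type is lim-inf: in your setup it would show that the set of reals read continuously from $\Xinn$-indices alone is non-meager, forcing $\non(\MEAGER)\le\kappa_\typenn$. Since the theorem requires $\non(\MEAGER)=\kappa_\typenm$ and allows $\kappa_\typenn < \kappa_\typenm$, this is an outright contradiction; your construction could at best handle the degenerate case $\kappa_\typenn = \kappa_\typenm$. Second, your claim that Lemma~\ref{lem:nmgeknm} is ``entirely indifferent'' to whether $\alpha$'s type is lim-inf or lim-sup is false. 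The meager name $\n M_\alpha$ is defined by $\forall^\infty\ell\,\exists\slu$ at level $\ell$ with $r\restriction I_{\typenm,\slu}\ne\yy_\alpha(\slu)$, and the proof of Lemma~\ref{lem:nmgeknm} arranges a disagreement at every level $h\ge\min(\ww^p)$ by shrinking a large $\typenm$-subatom at each such $h$. That many large subatoms exist at index $\alpha$ \emph{precisely because} $\alpha$ is lim-inf (so the compound norm enforces large atoms at each level in the block). With $\typenm$ lim-sup, a compound creature only guarantees one large atom per $\ww^p$-interval, the required per-level disagreement is unavailable, and the natural $\exists^\infty$-weakening of $\n M_\alpha$ is comeager rather than meager. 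The $\typenn$- and $\typecn$-lower bounds survive under both partitions exactly because their null/covering names use $\exists^\infty$ over sublevels, which is a lim-sup-compatible quantifier; the meager name does not, which is why the paper never dislodges $\typenm$ from the lim-inf slot.
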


\begin{proof}
We now use the $\typecn$-norm for the $\typenm$ part as well.
(Recall~\ref{nb:othernorm}: We can use any $\typenm$-norm, as long as
bigness is satisfied.)
The proofs above do not change, apart the one of
$\non(\NULL)\le \kappa_\typenn$:
In the inductive construction, we only had to do something at the 
$\typecn$-indices, and we could ignore the  $\typenm$-indices (as there were only few).
In the new version,  we have to include the $\typenm$-indices as well.
But this is no problem: We now do exactly the same at $\typenm$-indices  as at
$\typecn$-indices (which we can, as the $\typenm$-norm is the same as the $\typecn$-norm).
\end{proof}

\bibliographystyle{alpha}     
\bibliography{1044}

\end{document}